\documentclass[12pt, a4paper]{amsart}
\usepackage{enumerate, longtable}
\usepackage{amsmath, amscd, amsfonts, amsthm, amssymb, latexsym, comment, stmaryrd, graphicx}
\usepackage{xcolor}
\usepackage[all]{xy}
\usepackage{fullpage}
\usepackage[
        colorlinks,
        backref,
]{hyperref}

\usepackage{amstext} 
\usepackage{array}

\theoremstyle{plain}
\newtheorem{theorem}{Theorem}[section]
\newtheorem{proposition}[theorem]{Proposition}
\newtheorem{lemma}[theorem]{Lemma}
\newtheorem{question}[theorem]{Question}
\newtheorem{conjecture}[theorem]{Conjecture}
\newtheorem{corollary}[theorem]{Corollary}

\theoremstyle{definition}
\newtheorem{definition}[theorem]{Definition}
\newtheorem{remark}[theorem]{Remark}

\newtheorem{example}[theorem]{Example}

\numberwithin{equation}{section}

\newcommand{\Fr}{{\rm Fr}}

\newcommand{\Gal}{{\rm Gal}}

\newcommand{\Mon}{\mathrm{Mon}}

\newcommand{\F}{\mathbb{F}}
\newcommand{\Z}{\mathbb{Z}}
\newcommand{\ZZ}{\mathbb{Z}}
\newcommand{\Q}{\mathbb{Q}}

\newcommand{\C}{\mathbb{C}}
\newcommand{\FF}{\mathbb{F}}

\def\FF{\mathbb{F}}

\def\AA{\mathbb{A}}

\def\AA{\mathbb{A}}

\def\disc{\Delta} 

\def\Disc{\disc}





\newcommand{\Fb}{\overline{\F}}

\newcommand{\A}{\mathbb{A}}
\renewcommand{\P}{\mathbb{P}}
\newcommand{\Spec}{\mathrm{Spec}}
\newcommand{\ab}{\mathrm{ab}}
\newcommand{\lc}{\mathrm{lc}}
\newcommand{\Res}{\mathrm{Res}}

\newcommand{\HH}{H}

\newcommand{\case}[1]{\vspace{0.3cm}\noindent\framebox{#1.}}
\newcommand{\maincase}[1]{{\setlength{\fboxrule}{1.5pt}\vspace{0.3cm}\noindent\framebox{#1.}}}

\newcommand{\ram}{r}
\renewcommand{\gcd}{}

\begin{document}

\title[Minimal ramification problem for function fields]{The minimal ramification problem
for\\ rational function fields over finite fields}

\author{Lior Bary-Soroker}
\address{Raymond and Beverly Sackler School of Mathematical Sciences, Tel Aviv University, Tel Aviv 69978, Israel}
\email{barylior@tauex.tau.ac.il}
\author{Alexei Entin}
\address{Raymond and Beverly Sackler School of Mathematical Sciences, Tel Aviv University, Tel Aviv 69978, Israel}
\email{aentin@tauex.tau.ac.il}
\author{Arno Fehm}
\address{Technische Universit\"at Dresden, Fakult\"at Mathematik, Institut f\"ur Algebra, 01062 Dresden, Germany}
\email{arno.fehm@tu-dresden.de}

\thanks{MSC2010: 11T55 (primary), 11E25, 12F10 (secondary)}

\begin{abstract}
We study the minimal number of ramified primes in Galois extensions of rational function fields over finite fields with prescribed finite Galois group.
In particular,
we obtain a general conjecture in analogy with the 
well studied case of number fields,
which we establish
for abelian, symmetric and alternating groups in many cases.
\end{abstract}

\maketitle

\section{Introduction}

\noindent
Let $G$ be a nontrivial finite group.
We consider the minimal number of ramified primes in 
Galois extensions $L/K$ with Galois group $G$,
in various situations:

In the case $K=\mathbb{Q}$,
the primes are as usual equivalence classes of absolute values,
corresponding to the prime numbers and the usual absolute value as the prime at infinity.
Let $\ram_K(G)$ denote the minimal number of ramified primes\footnote{The infinite prime is said to ramify in $L/\mathbb{Q}$
if $L$ is not totally real.}
 in a Galois extension $L/K$ with Galois group $G$.
Based on class field theory, Boston and Markin came to the following conjecture:
\begin{conjecture}[Boston--Markin \cite{BM}]\label{conj:BM}
For every nontrivial finite group $G$,
$$
 \ram_\mathbb{Q}(G) = \max\left\{d(G^{\rm ab}),1\right\},
$$
where $d(G)$ is the smallest cardinality of a set of 
generators of $G$. 
\end{conjecture}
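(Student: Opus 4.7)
The plan is to separate the conjectured equality $\ram_\mathbb{Q}(G)=\max\{d(G^{\rm ab}),1\}$ into the matching lower and upper bounds. The lower bound is a more or less routine consequence of class field theory, while the upper bound is the hard constructive direction that accounts for essentially all of the conjectural content and is only known in special cases.

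For the lower bound $\ram_\mathbb{Q}(G)\ge\max\{d(G^{\rm ab}),1\}$, suppose $L/\mathbb{Q}$ is Galois with group $G$, ramified only at a finite set $S$ of primes (with the archimedean prime included in $S$ if $L$ is not totally real, per the footnote). The fixed field $M=L^{[G,G]}$ is an abelian extension of $\mathbb{Q}$ with $\Gal(M/\mathbb{Q})\cong G^{\rm ab}$, and $M/\mathbb{Q}$ is again unramified outside $S$. By class field theory (equivalently, by Kronecker--Weber together with the fact that $\mathbb{Q}(\zeta_n)/\mathbb{Q}$ ramifies exactly at the primes dividing $n$ and, if $n>2$, at $\infty$), the Galois group of the maximal abelian extension of $\mathbb{Q}$ unramified outside $S$ is topologically generated by the images of the inertia subgroups at the $|S|$ primes of $S$; hence $d(G^{\rm ab})\le|S|$. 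The auxiliary bound $|S|\ge 1$ for nontrivial $G$ is Minkowski's theorem on the nonexistence of everywhere unramified extensions of $\mathbb{Q}$.

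For the upper bound, the task is to construct, for every finite $G$, a Galois extension of $\mathbb{Q}$ with group $G$ ramified at exactly $\max\{d(G^{\rm ab}),1\}$ primes. The natural attack splits by group-theoretic type: for solvable $G$ one refines Shafarevich's theorem via a Scholz--Reichardt style induction, arranging that the auxiliary primes that arise when solving embedding problems can be merged with or absorbed into the primes already ramified in the abelian base; for symmetric and alternating groups one searches for explicit polynomial families whose discriminants are supported on a prescribed set of primes of minimal size; for non-abelian simple $G$ one combines rigid tuples and specialization of regular realizations from $\mathbb{Q}(t)$, choosing the specialization point so as to absorb residual ramification into a single prime.

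The main obstacle is the upper bound in full generality. Even the seemingly benign case of perfect groups such as $A_n$ (where $d(G^{\rm ab})=0$ and the conjecture predicts a single ramified prime) is open for most $n$, and there is no known uniform mechanism for converting bare realizability of $G$ over $\mathbb{Q}$ into realizability with minimal ramification. One therefore expects the conjecture to be approached group by group, which is indeed the pattern in the function-field analogue developed in the remainder of the paper, where the presence of geometric tools (Hilbert irreducibility over $\FF_q(t)$, specialization of covers, twisting) makes the minimal-ramification problem substantially more tractable.
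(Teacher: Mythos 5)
The statement you were asked about is a \emph{conjecture}: the paper does not prove it, and no proof is known. The paper merely records it (attributing it to Boston and Markin) as motivation for the function-field analogue (Conjecture \ref{conj}), so there is no ``paper's own proof'' to compare against. Your proposal is honest about this, and the part of it that is actually an argument --- the lower bound $\ram_\mathbb{Q}(G)\ge\max\{d(G^{\rm ab}),1\}$ --- is correct in outline and is exactly the known direction: pass to $M=L^{[G,G]}$, invoke Kronecker--Weber to see that $\Gal(M/\mathbb{Q})$ is generated by the images of the inertia subgroups at the primes of $S$, and use Minkowski for $|S|\ge 1$. One point you gloss over is that the inertia subgroup of $\Gal(\mathbb{Q}(\zeta_{2^\infty})/\mathbb{Q})\cong\mathbb{Z}_2^\times\cong\mathbb{Z}/2\times\mathbb{Z}_2$ at $p=2$ is not procyclic, so ``one generator per prime in $S$'' requires the convention (built into the footnote) that $\infty$ is counted as ramified when $L$ is not totally real; with that bookkeeping the count closes. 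This is the same accounting that the paper carries out carefully in the function-field setting in the proof of Theorem \ref{thm:abelian}, where tameness at the finite primes and at infinity makes each inertia group genuinely cyclic.

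For the upper bound you offer a research program (Scholz--Reichardt refinements for solvable groups, discriminant-controlled polynomial families for $S_n$ and $A_n$, rigidity plus specialization for simple groups), not a proof, and indeed no such proof exists: this direction is the entire open content of the conjecture. So there is a genuine gap, but it is the unavoidable one. For calibration against what the paper actually establishes: it proves the function-field analogue for abelian groups (Theorem \ref{thm:abelian}), hence the general lower bound over $\mathbb{F}_q(T)$ (Corollary \ref{cor:lowerbound}); it proves the analogue for products of many symmetric and alternating groups (Theorem \ref{thm:main1}); and over $\mathbb{Q}$ itself it proves $\ram_\mathbb{Q}(S_n)=1$ only conditionally on Schinzel's hypothesis H (Theorem \ref{thm:S_n_over_Q}), by transporting an $\mathbb{F}_p(T)$-realization with irreducible discriminant to $\mathbb{Q}(T)$ and applying the sieve machinery of \cite{BSS}. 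If you want to contribute to the upper bound, that last argument --- rather than the three generic strategies you list --- is the concrete mechanism the paper offers.
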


While this 
was verified by Boston and Markin for abelian groups and for groups of order up to 32,
and since then was proven for certain solvable groups \cite{Plans,KisilevskySonn,KisilevskySonn2010,KisilevskyNeftinSonn},
it is widely open for most other groups.
For example, for the symmetric group $G=S_n$, $n>1$, this conjecture predicts realizations
with only one ramified prime.
While this can of course be verified for small $n$,
the best general bound to date is the recent result of Bary-Soroker and Schlank \cite{BSS}, who obtain $\ram_\mathbb{Q}(S_n)\leq 4$ for all $n$.

In the case where $K=k(T)$ is a rational function field over a field $k$, the primes are equivalence classes of ultrametric absolute values on $K$ trivial on $k$,
which correspond to the monic irreducible polynomials $P\in k[T]$ 
and the degree valuation as the prime at infinity.
As before we let $\ram_K(G)$ be
the minimal number of ramified primes
in Galois extensions $L/K$ with Galois group $G$,
where in addition we require that $L/K$ is {\em geometric}, i.e.~$k$ is algebraically closed in $L$.
The geometric situation $k=\mathbb{C}$ is well understood:

\begin{theorem}[Riemann Existence Theorem]\label{thm:RET}
For every nontrivial finite group $G$,
$$
 \ram_{\mathbb{C}(T)}(G) = d(G)+1.
$$
\end{theorem}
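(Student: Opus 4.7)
The plan is to derive the theorem directly from the classical topological form of the Riemann Existence Theorem, which identifies geometric Galois extensions of $\mathbb{C}(T)$ ramified only over a finite set $S=\{p_1,\ldots,p_r\}\subseteq\mathbb{P}^1(\mathbb{C})$ with surjections from the topological fundamental group $\pi_1(\mathbb{P}^1(\mathbb{C})\setminus S)$ onto $G$. This is where the work sits: once one accepts this correspondence, the statement becomes a purely group-theoretic counting question.

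First I would recall the well-known presentation
$$\pi_1(\mathbb{P}^1(\mathbb{C})\setminus S)\;=\;\langle\gamma_1,\ldots,\gamma_r\mid\gamma_1\cdots\gamma_r=1\rangle,$$
where $\gamma_i$ is a small loop around $p_i$; equivalently this group is free on $r-1$ generators. Combining this with the Riemann Existence Theorem, I would record the following reformulation: a finite group $G$ is realized as the Galois group of a geometric extension of $\mathbb{C}(T)$ ramified in exactly (at most) $r$ primes if and only if there exist elements $g_1,\ldots,g_r\in G$ generating $G$ with $g_1\cdots g_r=1$.

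For the lower bound $\ram_{\mathbb{C}(T)}(G)\geq d(G)+1$, I would observe that if $g_1,\ldots,g_r$ generate $G$ and satisfy $g_1\cdots g_r=1$, then $g_r=(g_1\cdots g_{r-1})^{-1}$ lies in the subgroup generated by $g_1,\ldots,g_{r-1}$, so those $r-1$ elements already generate $G$. Hence $r-1\geq d(G)$.

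For the upper bound, I would pick a minimal generating set $g_1,\ldots,g_d$ of $G$ with $d=d(G)$, adjoin $g_{d+1}:=(g_1\cdots g_d)^{-1}$, and note that $g_1,\ldots,g_{d+1}$ still generate $G$ and multiply to $1$; the Riemann Existence Theorem then furnishes a geometric Galois extension of $\mathbb{C}(T)$ with group $G$ ramified at exactly $d+1$ chosen primes. The only real obstacle is the Riemann Existence Theorem itself (including its compatibility with the algebraic notion of ramification via GAGA); everything else is a short exercise in generation.
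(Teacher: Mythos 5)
Your argument is correct, and there is nothing in the paper to compare it against: the paper states Theorem~\ref{thm:RET} as a classical fact with no proof, so the standard derivation you give (topological Riemann Existence Theorem plus the presentation $\pi_1(\mathbb{P}^1(\mathbb{C})\setminus S)=\langle\gamma_1,\dots,\gamma_r\mid\gamma_1\cdots\gamma_r=1\rangle$, then the elementary counting of generators) is exactly what is intended. One small remark: in the upper bound you claim ramification at \emph{exactly} $d+1$ primes, which requires each $g_i$ to be nontrivial; this does hold --- the $g_1,\dots,g_d$ are nontrivial by minimality of the generating set, and $g_{d+1}\neq 1$ because $g_1\cdots g_d=1$ would let one drop a generator, contradicting $d=d(G)$ --- but for the statement of the theorem only ``at most $d+1$'' is needed, so the point is harmless either way.
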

In positive characteristic, wild ramification
leads to smaller bounds:

\begin{theorem}[Abhyankar conjecture for the line, \cite{Ray94,Har94}]\label{thm:Abhyankar}
For every prime number $p$ and every nontrivial finite group $G$,
$$
 \ram_{\overline{\mathbb{F}}_p(T)}(G) = d(G/p(G)) + 1,
$$
where $p(G)$ is the subgroup of $G$ generated by the $p$-Sylow subgroups of $G$.
\end{theorem}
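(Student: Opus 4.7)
My approach separates the two inequalities, the first being a standard consequence of the structure of the tame fundamental group and the second being the deep Raynaud--Harbater construction.

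For the inequality $\ram_{\overline{\F}_p(T)}(G) \geq d(G/p(G)) + 1$, let $L/\overline{\F}_p(T)$ be a geometric Galois extension with group $G$ ramified at exactly $r$ primes $v_1, \ldots, v_r$, and set $\bar G := G/p(G)$. Since $\bar G$ has order coprime to $p$, the intermediate extension $L^{p(G)}/\overline{\F}_p(T)$ with Galois group $\bar G$ is tamely ramified within $\{v_1, \ldots, v_r\}$ and hence corresponds to a finite quotient of the tame fundamental group of $\mathbb{P}^1_{\overline{\F}_p} \setminus \{v_1, \ldots, v_r\}$. By Grothendieck's specialization theorem, this tame fundamental group is the prime-to-$p$ profinite completion of the free group on $r-1$ generators, so any finite quotient is generated by $r-1$ elements. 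Thus $d(\bar G) \leq r-1$, giving $r \geq d(\bar G) + 1$.

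For the reverse inequality I would follow the two-step strategy of Raynaud and Harbater. Step A (Raynaud): first prove the case $\bar G = 1$, namely that every quasi-$p$ group $G$ is realized as the Galois group of an étale cover of $\mathbb{A}^1_{\overline{\F}_p}$, equivalently a $G$-cover of $\mathbb{P}^1_{\overline{\F}_p}$ branched only at $\infty$. The induction is on $|G|$: pick a minimal normal subgroup $N \trianglelefteq G$ and split into cases according to the structure of $N$ (a $p$-group, an elementary abelian prime-to-$p$ group, or a product of non-abelian simple groups). In each case one constructs a semistable ``mock'' $G$-cover over a discrete valuation ring---a union of covers of components of a reducible curve glued compatibly along nodes using the inductive hypothesis---and then smooths it by rigid-analytic or formal patching to obtain an irreducible $G$-cover on the generic fiber.

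Step B (Harbater): for general $G$, first produce a tame Galois cover of $\mathbb{P}^1_{\overline{\F}_p}$ with group $\bar G$ branched at exactly $d(\bar G) + 1$ prescribed points, using tame Riemann existence in characteristic zero together with Grothendieck's specialization. At one chosen branch point, apply Harbater's formal patching theorem to graft in an étale cover of a punctured formal disc whose Galois group is a quasi-$p$ subgroup of $G$ mapping onto the local tame inertia of the $\bar G$-cover; such local wild covers are furnished by Step A. The patched global cover then has Galois group $G$ and is ramified at precisely the same $d(\bar G) + 1$ primes. The hardest step will be Step A, specifically the case of a non-abelian quasi-$p$ simple group, for which the induction yields no useful proper quasi-$p$ quotient. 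There one must directly construct a wildly ramified $G$-cover of $\mathbb{A}^1$, typically by degenerating an auxiliary cover from characteristic zero (or from a larger group) and then controlling the Galois action on the components of the resulting semistable reduction. Ensuring that the smoothed generic cover retains the full group $G$, and does not collapse to a proper quotient, is the core technical difficulty of the entire proof.
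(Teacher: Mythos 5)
The paper offers no proof of this theorem --- it is quoted from Raynaud \cite{Ray94} and Harbater \cite{Har94} --- and your outline is precisely the strategy of those papers (the tame fundamental group bound for $\geq$, Raynaud's induction on quasi-$p$ groups plus Harbater's formal patching for $\leq$), so the approach matches the intended source. One small imprecision in your lower-bound argument: the tame fundamental group of $\P^1_{\overline{\F}_p}\setminus\{v_1,\dots,v_r\}$ is \emph{not} the prime-to-$p$ completion of the free group on $r-1$ generators (it admits finite quotients of order divisible by $p$); what Grothendieck's specialization theorem actually gives, and what suffices here since $G/p(G)$ has order prime to $p$, is that this group is topologically generated by $r-1$ elements, equivalently that its maximal prime-to-$p$ quotient is the prime-to-$p$ free profinite group of rank $r-1$.
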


For the global field $K=\mathbb{F}_q(T)$,
in analogy with Conjecture \ref{conj:BM},
we thus conjecture the following:

\begin{conjecture}\label{conj}\label{bm}
For every prime power $q=p^\nu$ and every nontrivial finite group $G$,
$$
 \ram_{\mathbb{F}_{q}(T)}(G) = \max\left\{d((G/p(G))^{\rm ab}),1\right\}.
$$
\end{conjecture}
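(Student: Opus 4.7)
The plan is to split the equality into its lower and upper bounds. The lower bound admits a uniform class-field-theoretic argument, while the upper bound is where the conjectural content lies and must be attacked group family by group family.

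For the lower bound, let $L/K$ be a geometric Galois extension with $K = \mathbb{F}_q(T)$, $\Gal(L/K) = G$, ramified at a finite set $S$ of primes of $K$. Set $A = (G/p(G))^{\mathrm{ab}}$ and let $M = L^H$ be the fixed field of $H = \ker(G \twoheadrightarrow A)$. Then $M/K$ is a geometric abelian extension of order prime to $p$, tame at every prime, with Galois group $A$ and ramification locus contained in $S$; each inertia subgroup $I_P \subseteq A$ for $P \in S$ is cyclic. Since $\mathbb{P}^1_{\mathbb{F}_q}$ admits no nontrivial geometric everywhere-unramified cover, the family $\{I_P\}_{P\in S}$ must generate $A$, giving $|S| \geq d(A)$. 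Since $G$ is nontrivial we also have $|S|\geq 1$ for free, and the lower bound follows.

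For the upper bound I would argue by group family. In the \emph{abelian} case, decompose $G = A \times B$ with $|A|$ prime to $p$ and $B$ a $p$-group; then $p(G) = B$ and the target is $d(A)$. Realize the $A$-part inside Carlitz--Hayes cyclotomic function fields, where inertia at a prescribed monic irreducible of $\mathbb{F}_q[T]$ is cyclic of any order dividing $q^d-1$ (and the behaviour at infinity is explicit enough to be neutralised by a twist), and realize the $B$-part via Artin--Schreier--Witt extensions wildly ramified at a single chosen prime; matching the two ramification loci produces $G$ with exactly $d(A)$ ramified primes. In the \emph{symmetric} and \emph{alternating} cases one has $(G/p(G))^{\mathrm{ab}} \in \{1, \mathbb{Z}/2\}$, so the target is $1$: the aim is to exhibit a polynomial $f(T,X) \in \mathbb{F}_q[T,X]$ whose splitting field has Galois group $S_n$ (resp.\ $A_n$) over $\mathbb{F}_q(T)$ and whose discriminant is, up to a unit, a power of a single irreducible of $\mathbb{F}_q[T]$, with the inertia at infinity controlled by a change of variable. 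Trinomials of the form $X^n + aX + bT^m$ and their twists are natural candidates; the Galois group is then pinned by producing an inertia transposition (resp.\ $3$-cycle) at the single finite ramified prime, combined with transitivity and, in the $A_n$ case, the discriminant-square condition.

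The main obstacle is the upper bound for a general $G$. One naturally wants to descend the Abhyankar realization over $\overline{\mathbb{F}}_q(T)$ provided by Theorem \ref{thm:Abhyankar}, which achieves $d(G/p(G))+1$ branch points over the algebraic closure, to a geometric realization over $\mathbb{F}_q(T)$. Naive Galois descent replaces each geometric branch point by its Frobenius orbit and therefore tends to \emph{enlarge} the ramification locus, so the conjectured collapse to $\max\{d((G/p(G))^{\mathrm{ab}}),1\}$ must exploit the fact that the inertia at a prime of $\mathbb{F}_q(T)$ of large residue degree is no longer constrained to be a single cyclic factor in the ambient Galois group, and that the Frobenius substitution at an unramified prime contributes additional generators. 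Turning this heuristic into a uniform construction, and combining it with an embedding-problem strategy that lifts from the tame abelian quotient $(G/p(G))^{\mathrm{ab}}$ to the whole of $G$, is where the genuine difficulty of the conjecture in full generality resides.
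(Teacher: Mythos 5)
The statement you are addressing is Conjecture \ref{conj}, which the paper does not prove: it establishes the lower bound in general (Corollary \ref{cor:lowerbound}), the full equality for abelian groups (Theorem \ref{thm:abelian}), and the equality for certain products of symmetric and alternating groups (Theorem \ref{thm:main1}), with several further cases only conditional on a function field analogue of Schinzel's hypothesis H. Your lower bound argument is correct and is essentially the paper's: pass to the tame abelian quotient $A=(G/p(G))^{\mathrm{ab}}$, use that inertia in a tame abelian extension is cyclic and that $\mathbb{P}^1_{\mathbb{F}_q}$ has no nontrivial geometric unramified covers, and conclude $|S|\ge d(A)$. Your abelian upper bound is also the paper's (Carlitz--Hayes is exactly the explicit class field theory the paper invokes), except for one detail you should fix: the maximal abelian extension that is \emph{regular over $\mathbb{F}_q$} and unramified outside a single degree-$d$ prime has tame cyclic part $C_{(q^d-1)/(q-1)}$, not $C_{q^d-1}$ --- killing the inertia at infinity forces the quotient by $\mathbb{F}_q^\times$ --- so you must choose the degrees of your primes so that $k(q-1)\mid q^{\deg P_i}-1$, where $k$ is the exponent of the prime-to-$p$ part.

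The genuine gap is in the upper bound for $S_n$ and $A_n$ (and a fortiori for general $G$, which you rightly flag as open). Trinomials $X^n+aX+bT^m$ and their twists will not deliver the conjecture: the discriminant of such a polynomial, as a polynomial in $T$, is typically not a power of a single irreducible, and even when one arranges a construction whose finite branch locus is a single prime $\mathcal{F}$ (the paper does this with $f(X)-Tc(X)$ for carefully chosen $f,c$, using the Adrianov--Zvonkin/Jones classification of monodromy groups, hence the CFSG, to pin down the Galois group), one is still left with tame ramification at infinity. Removing it by the substitution $U=h(T)$ requires an $h$ of degree divisible by the inertia index at infinity such that $\mathcal{F}(h)$ is \emph{irreducible} --- a Schinzel-type statement that the paper can prove unconditionally only in restricted ranges of $(q,n)$, via Weil bounds, Cohen's theorem on primitive elements, or bounded gaps between primes in $\mathbb{F}_q[T]$. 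This is precisely why the paper's unconditional results give only $\ram_{\mathbb{F}_q(T)}(S_n)\le 2$ in general, with equality to $1$ only under extra hypotheses on $p,q,n$ or conditionally on hypothesis H; and for $n\ge p$ an entirely different, wildly ramified Abhyankar-type construction is required. Your proposal correctly identifies the shape of the problem, but none of the steps after the abelian case constitutes a proof.
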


We note that $(G/p(G))^{\rm ab}=G^{\rm ab}/p(G^{\rm ab})$,
and that the close analogy with the case of $K=\mathbb{Q}$ builds in an essential way on our restriction 
that 
the extension $L$ of $K=\mathbb{F}_q(T)$ with Galois group $G$ is geometric.
In Section \ref{sec:abelian} we prove this conjecture for abelian groups,
thereby showing that the right hand side is always a lower bound for $\ram_{\mathbb{F}_{q}(T)}(G)$.
There we also discuss the contribution of De Witt \cite{DeWitt},
who proves Conjecture \ref{conj} for certain solvable groups.

\begin{remark}\label{rem:joachim} 
Another potential constraint on $\ram_{\mathbb{F}_{q}(T)}(G)$ was pointed out to us by Joachim K\"onig. 
For a finite geometric Galois extension $L$ of $K=\mathbb{F}_q(T)$,
the conjugacy classes of inertia groups generate $\Gal(L/K)$,
and each inertia group is cyclic-by-$p$, i.e.~an extension of a cyclic group by a $p$-group (see \cite[Corollary IV.2.4]{Ser79}). Thus Conjecture \ref{conj} is possibly true only if any finite group $G$ is generated by $\max\{d((G/p(G))^{\rm ab}),1\}$ conjugacy classes of cyclic-by-$p$ groups. 
We give the proof of this elementary but nontrivial group-theoretic fact also in Section \ref{sec:generators}. 
A similar remark applies to Conjecture \ref{conj:BM}, where Lemma~\ref{lem:gen_by_cyc} ensures there is no group-theoretic obstruction.
\end{remark}

In the main part of this work (Sections \ref{sec:tworamified}  and \ref{sec:pleqn}) we prove various results for certain non-solvable groups, both conditional and unconditional,
with a focus on symmetric and alternating groups.
Note that Conjecture \ref{conj}
predicts 
$$
 \ram_{\mathbb{F}_q(T)}(S_n)=1,\quad  \ram_{\mathbb{F}_q(T)}(A_n)=1
$$
for every $n>2$ and every prime power $q=p^\nu$.
For example, we obtain the following for $S_n$ and $A_n$:

\begin{theorem}\label{thm:Sn}
Let $n\geq 2$ and $q=p^\nu$ a prime power.
Then 
\begin{enumerate}
\item $\ram_{\mathbb{F}_q(T)}(S_n)\leq 2$, and 
\item $\ram_{\mathbb{F}_q(T)}(S_n)=1$ in each of the following cases:
\begin{enumerate}
\item $p<n-1$ or $p=n$ or $p=2$
\item $q\equiv 1\mbox{ mod } 4$
\item $q>(2n-3)^2$
\item The function field analogue of Schinzel's hypothesis H (Conjecture \ref{conj:SchinzelFF}) holds for $\mathbb{F}_q(T)$.
\end{enumerate}
\end{enumerate}
\end{theorem}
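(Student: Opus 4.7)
My plan is to prove both parts by constructing explicit polynomials $f(X,T) \in \mathbb{F}_q[T][X]$ of degree $n$ in $X$ and analyzing the splitting field of $f$ over $\mathbb{F}_q(T)$. The ramified finite primes of this extension are among the irreducible factors of $\mathrm{disc}_X(f) \in \mathbb{F}_q[T]$, and ramification at the infinite prime is controlled by the Newton polygon of $f$ at infinity.

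For Part (1), I would start from a trinomial $f(X,T) = X^n + aX + b(T)$ with $a \in \mathbb{F}_q^\times$ and $b \in \mathbb{F}_q[T]$ of low degree, since its discriminant is, up to sign, $(-1)^{n-1}(n-1)^{n-1}a^n + n^n b(T)^{n-1}$, a polynomial in $T$ whose factorization we can tune by choosing $a$ and the coefficients of $b$. The goal is to arrange this discriminant to be (up to $\mathbb{F}_q^\times$) a power of a single irreducible $P(T)$, while also ensuring that the inertia at infinity is a long cycle (an $n$-cycle or $(n-1)$-cycle, depending on $\deg b$ and the characteristic). One then verifies: (i) $f$ is irreducible over $\mathbb{F}_q(T)$, giving transitivity; (ii) the Galois group contains a transposition, from tame inertia at a simple zero of the discriminant; and (iii) the action is primitive, from the long cycle at infinity. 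These three together force $\mathrm{Gal}(f) = S_n$, while ramification is confined to $P$ and infinity, giving $\ram_{\mathbb{F}_q(T)}(S_n) \leq 2$. If a bare trinomial fails because the discriminant cannot be made irreducible over the given $\mathbb{F}_q$, I would enlarge the family, e.g.\ by composing with a substitution $T \mapsto g(T)$ or by adding an Artin--Schreier term.

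For Part (2), we must further eliminate ramification at infinity (or absorb it into the finite ramified prime) to reach $\ram = 1$, and the argument then branches by hypothesis. In case (a), the conditions on $p$ are precisely those under which $S_n$ admits a single cyclic-by-$p$ subgroup generating it (compare Theorem~\ref{thm:Abhyankar}), so I would construct an Abhyankar/Artin--Schreier-style cover with a single wildly ramified prime whose inertia alone suffices, and arrange the construction to descend to $\mathbb{F}_q$. In cases (b) and (c), I would specialize a parametric family coming from Part (1) so that the discriminant becomes irreducible over $\mathbb{F}_q$ and simultaneously the cover becomes unramified at infinity; case (b) exploits $-1$ being a square in $\mathbb{F}_q$ via a twist or symmetry, while case (c) uses the Weil bound on the $\mathbb{F}_q$-points of an auxiliary curve parametrizing ``good'' specializations, with the genus of that curve (and a bound on its irreducible components) explaining the threshold $q > (2n-3)^2$. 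Case (d) is the most direct: Schinzel's hypothesis H applied to the polynomial in $T$ arising as a specialization of the discriminant immediately yields the required irreducible specializations.

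The main obstacles I foresee are, first, verifying that the Galois group is exactly $S_n$ and not a proper transitive subgroup: ruling out $A_n$ requires checking that $\mathrm{disc}_X(f)$ is not a square in $\mathbb{F}_q(T)$, and ruling out imprimitive subgroups requires careful analysis of the cycle structure of inertia. A secondary concern is geometricity---ensuring no constant-field extension occurs in the splitting field. Finally, for case (c), making the Weil-bound counting quantitatively tight enough to yield the explicit threshold $(2n-3)^2$ seems to be the most delicate step, and I expect it to drive the specific shape of the auxiliary curve used in the construction.
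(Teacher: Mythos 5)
Your overall architecture is the right one -- build an extension ramified over one finite prime and (tamely) over infinity, then pull back along $T\mapsto h(T)$ with $\deg h$ divisible by the ramification index at infinity so that Abhyankar's lemma kills the infinite ramification, with Schinzel's hypothesis supplying the irreducible specialization $\mathcal F(h)$ in case (d). But there is a genuine gap already in Part (1): you give no mechanism that makes $\disc_X(f)$ a prime power for \emph{every} pair $(n,q)$. For the trinomial $X^n+aX+b(T)$ with $b$ linear, the discriminant has the shape $A\,b(T)^{n-1}+C$, and a binomial $A s^{n-1}+C$ is \emph{never} irreducible when $n-1$ has a prime factor $\ell\nmid q-1$ (every element of $\F_q^\times$ is then an $\ell$-th power); so the bare family fails for, e.g., $n=4$, $q\equiv 2\pmod 3$, and "composing with $T\mapsto g(T)$" only replaces the problem by the question of whether $D(g(T))$ can be made irreducible -- which is exactly the nontrivial property $\HH(q,d,e)$ of Proposition~\ref{lem:Hqdm}, unconditionally available only under extra hypotheses on $q$. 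The paper closes this gap by three separate mechanisms depending on how $p$ compares with $n$: explicit wildly ramified Abhyankar-type covers for $n\ge p$ (Theorem~\ref{thmlist}); for $p>n$, splitting fields of $f(X)-Tc(X)$ with $\deg c=m$ chosen by parity, where a Weil/character-sum argument (Proposition~\ref{prop:g}) produces $f$ with $f'c-fc'$ irreducible so that Frobenius permutes the critical values transitively and $\disc_X(f-Tc)$ is automatically a prime power; and, as an alternative, the trinomial $X^n-X^{n-1}-U$ whose discriminant has \emph{two} prime factors, handled via bounded gaps between primes in $\F_q[T]$ (Theorem~\ref{thm:main2}). Note also that in the $f/c$ construction your "primitive plus a transposition" shortcut is unavailable -- the finite inertia has cycle type $1^a2^b$ with $b$ large, not a single transposition -- and the identification of the Galois group requires the CFSG-based classification of primitive groups containing an $(n-m)$-cycle (Theorem~\ref{thm:jones} and the Adrianov--Zvonkin tables); your route works only when the discriminant is irreducible \emph{and} separable of degree $n-1$, which is essentially the Morse case and is achievable in general only when $n-1$ is prime (Proposition~\ref{prop:Morse2}).

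For Part (2) the gaps are of the same nature. In (a) the hypotheses $p<n-1$, $p=n$, $p=2$ are not characterized by a clean generation criterion for $S_n$; they are exactly the cases where the explicit wild constructions of Section~\ref{sec:pleqn} succeed (six subcases for odd $p$, Jones's theorem for $n=p+2$, Abhyankar--Ou--Sathaye--Yie for $p=2$), and your sketch supplies none of them. In (b) the relevant use of $q\equiv 1\pmod 4$ is not a twist by $\sqrt{-1}$ but the condition $\mathrm{rad}'(4)\mid q-1$ of Proposition~\ref{lem:Hqdm}(3), which guarantees a degree-$4$ substitution $h$ with $\mathcal F(h)$ irreducible (one takes $m=n-4$). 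In (c) the threshold $q>(2n-3)^2$ does not come from counting points on a curve of specializations; it is the condition $q^{1/2}>2m+1$ with $m=n-2$ under which the character-sum estimate in Proposition~\ref{prop:g} produces an irreducible $g=f'c-fc'$, after which $\HH(q,2n-3,2)$ holds unconditionally for odd $q$. So while your guesses for (b)--(d) point in roughly the right direction, the constructions that make (1) and (2a) true -- and the group-theoretic input needed to identify the Galois group in the remaining cases -- are missing.
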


\begin{theorem}\label{thm:An}
Let $n\geq 3$ and $q=p^\nu$ a prime power.
\begin{enumerate}
\item If $p>2$ or $\F_q\supseteq\F_4$, then $\ram_{\mathbb{F}_q(T)}(A_n)\leq 2$, and 
\item $\ram_{\mathbb{F}_q(T)}(A_n)=1$ in each of the following cases:
\begin{enumerate}
\item $2<p<n-1$ or $p=n$ or $p=n-1,\F_q\supseteq\F_{p^2}$ or $p=3,n=4$
\item $p=2$ and $\F_q\supseteq\F_4$ or $10\neq n\ge 8,n\equiv 0,1,2,6,7\pmod 8$ 
\item $n=10$, $n=11$ or $n\ge 13$, and the function field analogue of Schinzel's hypothesis H (Conjecture \ref{conj:SchinzelFF}) holds for $\mathbb{F}_q(T)$.
\end{enumerate}
\end{enumerate}
\end{theorem}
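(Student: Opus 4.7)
The plan is to construct, for each case listed in the theorem, an explicit polynomial $f(T,X)\in\F_q(T)[X]$ of degree $n$ whose splitting field over $\F_q(T)$ is geometric, has Galois group exactly $A_n$, and is ramified at only the prescribed number of primes. This refines the approach of Theorem~\ref{thm:Sn}: the construction problem is the same except that for $A_n$ we must additionally arrange $\Delta_X(f)\in(\F_q(T)^\times)^2$ in odd characteristic, or that the Berlekamp--Arf invariant vanish in characteristic $2$.

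For part (1), I would begin with the $S_n$-polynomial of Theorem~\ref{thm:Sn}, ramified at a single prime, and analyze the square class of its discriminant in $\F_q(T)^\times/(\F_q(T)^\times)^2$. In odd characteristic this class is determined by the parity of the valuation at the ramified prime together with a residue class in $\F_q^\times/(\F_q^\times)^2$, so a quadratic twist introducing one additional ramified prime of prescribed inertia trivializes it. In characteristic $2$ with $\F_q\supseteq\F_4$, the role of the discriminant is played by an Artin--Schreier class, and one uses an Artin--Schreier twist at a single additional prime in place of the quadratic twist.

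For part (2) we seek constructions with just one ramified prime. In cases (a) and (b), the presence of wild ramification allows for much richer inertia groups than in characteristic zero. I would look for Abhyankar-type trinomials $X^n+aT^iX^j+bT^k$ whose inertia at $\infty$ is a single cyclic-by-$p$ subgroup generating all of $A_n$. Verifying the Galois group is exactly $A_n$, and not a proper primitive subgroup such as $\mathrm{AGL}_d(\F_p)$ or $\mathrm{PSL}_2(\F_p)$, uses classical theorems of Jordan and Marggraff on transitive permutation groups containing specific cycle structures. The dichotomy of congruence classes $n\bmod 8$ in case (b) should fall out of the parity computation for the Berlekamp--Arf invariant of the explicit trinomial in characteristic $2$. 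For case (c), conditional on Schinzel, I would write down a one-parameter family of $A_n$-polynomials whose ramification is concentrated at a single prime precisely when certain auxiliary polynomials take simultaneously irreducible values, and apply the function field Schinzel hypothesis (Conjecture~\ref{conj:SchinzelFF}) to produce such a specialization.

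The main obstacle is to control the Galois group to be exactly $A_n$ while simultaneously limiting the number of ramified primes: these two requirements pull against each other, since forcing the discriminant to be a square typically requires extra ramification, and the wild-ramification constructions that enable single-prime realizations have inertia groups that need not a priori lie in $A_n$. The technical heart should be a Newton polygon analysis at $\infty$ that, in the favorable characteristics, simultaneously controls the square class of the discriminant and the cycle type of an inertia generator; the failure of this analysis for the congruences $n\equiv 3,4,5\pmod 8$ in characteristic $2$ and for a few small $n$ presumably forces the Schinzel-conditional statement in part (2)(c).
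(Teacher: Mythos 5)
Your sketch captures some of the right flavor for part (2)(a),(b) --- the paper does indeed use explicit Abhyankar-type (near-)trinomials with wild inertia at one point, Jordan's theorem on primitive groups containing a $p$-cycle, and (in characteristic $2$) the Berlekamp discriminant computations of Abhyankar--Ou--Sathaye--Yie, which is exactly where the congruences mod $8$ come from. But there are genuine gaps elsewhere. The central one is the ``quadratic twist'' step in part (1): an $S_n$-extension $L/\F_q(T)$ with non-square discriminant cannot be converted into a geometric $A_n$-extension of $\F_q(T)$ by twisting, because the $A_n$-subextension of $L$ lives over the quadratic resolvent field $\F_q(T,\sqrt{\Disc})$, which is a different base that need not be rational. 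What the paper actually does is one of three things: for $p>n$ it passes to the fixed field $K^{A_n}$, proves by Riemann--Hurwitz that it has genus $0$ (hence is rational), and then re-runs the whole argument over the new rational base, carefully recounting the ramified primes (Theorem~\ref{thm:main2}); for $n\ge p$, $p>2$ it exploits $A_n=p(S_n)$ and performs a Kummer base change $T\mapsto T^{1/e}$ to strip off the tame cyclic quotient (Lemma~\ref{lemreal}); and in the tame constructions it forces the \emph{geometric} monodromy into $A_n$ from the start by arranging all finite ramification indices to equal $3$ (via $f'c-fc'=g^2$ with $g$ irreducible, produced by a Weil-bound character-sum argument) and then adjusts constants so that the discriminant is a square in $\F_q$. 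None of these is a twist of an $S_n$-polynomial, and your proposed step as written does not produce the required extension.

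The second substantive gap is that you give no mechanism for passing from two ramified primes to one. All of the single-prime realizations in the paper are obtained by first constructing an extension ramified over one finite prime $\mathcal F$ and \emph{tamely} over $\infty$ with index $e$, and then composing with $U=h(T)$ where $e\mid\deg h$ and $\mathcal F(h)$ is irreducible: Abhyankar's Lemma kills the ramification at infinity and linear disjointness preserves the Galois group. The existence of such $h$ is exactly where Schinzel's hypothesis (or its provable cases $\HH(q,d,e)$) enters; your case (2)(c) gestures at ``simultaneously irreducible values'' but does not identify this base-change mechanism, without which the conditional statement cannot be derived. Finally, the group theory required for $p>n$ and for $n=p+2$ goes beyond Jordan/Marggraff: one needs Jones's CFSG-based classification of primitive groups containing a cycle together with the Adrianov--Zvonkin classification of monodromy of rational functions with one multiple pole, and it is precisely the exceptional groups in those tables ($M_{12}$, $M_{24}$, $L_2(7)$, $P\Gamma L_2(8)$, etc.), together with the inequality constraints of the character-sum argument, that force the excluded values $n\in\{3,\dots,9,12\}$ in part (2)(c) --- not a failure of a Newton polygon analysis at $\infty$.
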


One surprising prediction of Conjecture \ref{conj:BM} is that for an arbitrary product $G=A_{n_1}\times\ldots\times A_{n_m},n_i\ge 5$, there exists a Galois extension $L/\mathbb Q$ with $\Gal(L/\mathbb Q)=G$ which is ramified over a single prime. This is currently wide open even in the case $G=A_5\times\ldots\times A_5$. We will establish Conjecture \ref{conj} for arbitrary products of the groups appearing in the statements of Theorems \ref{thm:Sn}(2a), \ref{thm:An}(2a) and \ref{thm:An}(2b):

\begin{theorem}\label{thm:main1} Let $G=G_1\times\ldots\times G_m$ be a product with each $G_i$ being one of the following groups:
\begin{enumerate}\item $S_n,n\ge p$ with $n\neq p+1$ or $p=2$.
\item $A_n, n\ge p>2$ with $n\neq p+1$ or $\F_q\supseteq\F_{p^2}$ or $p=3$.
\item $A_n$ if $p=2$ with either $\F_q\supseteq\F_4$ or $10\neq n\ge 8,n\equiv 0,1,2,6,7\pmod 8.$
\end{enumerate}
Then $\ram_{\mathbb{F}_q(T)}(G)=\max\{d((G/p(G))^{\rm ab}),1\}$, i.e.\ Conjecture \ref{bm} holds for $G$.
\end{theorem}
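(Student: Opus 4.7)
The lower bound $\ram_{\F_q(T)}(G)\geq r$ with $r:=\max\{d((G/p(G))^{\rm ab}),1\}$ is already established in Section~\ref{sec:abelian} for arbitrary finite $G$, so the task reduces to producing a $G$-realization with at most $r$ ramified primes. My plan is to take a compositum of single-prime realizations of the individual factors, as supplied by the theorems already proved for symmetric and alternating groups.

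First I would compute $(G/p(G))^{\rm ab}$ for $G$ of the stated form. Among the allowed factors, $p(G_i)=G_i$ in every case \emph{except} when $G_i=S_n$ in case (1) with odd $p$, where $p(G_i)=A_n$ and $G_i/p(G_i)\cong\Z/2$: indeed, a $p$-Sylow of $S_n$ (or $A_n$) with $p\leq n$ contains a $p$-cycle whose conjugates generate $A_n$, and by simplicity of $A_n$ for $n\geq 5$ (plus direct inspection of the small $n$'s) this gives $p(A_n)=A_n$, while transpositions generate $S_n$ when $p=2$. Letting $k$ count the odd-$p$ $S_n$-factors, we obtain $(G/p(G))^{\rm ab}\cong(\Z/2)^k$ and $r=\max\{k,1\}$.

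Next I would use Theorems~\ref{thm:Sn}(2a),~\ref{thm:An}(2a), and~\ref{thm:An}(2b) to obtain for each $i$ a Galois extension $L_i^{(0)}$ of $\F_q(T)$ with group $G_i$ ramified at a single prime, and then exploit the $\mathrm{PGL}_2(\F_q)$-action on $\F_q(T)/\F_q$ together with variation of free parameters in the underlying polynomial families to relocate the ramified prime and produce pairwise non-isomorphic realizations sharing a given prime. I would pick distinct $\F_q$-rational primes $Q_1,\ldots,Q_k$ (only $Q_1$ if $k=0$), realize each odd-$p$ $S_n$-factor at its own $Q_i$, and realize every remaining factor at $Q_1$. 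The compositum $L=L_1\cdots L_m$ is then ramified at most in $\{Q_1,\ldots,Q_k\}$, realizing $G$ with at most $r$ ramified primes \emph{provided} the $L_i$ are linearly disjoint over $\F_q(T)$.

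Establishing this linear disjointness is the main obstacle. A Goursat-lemma analysis splits it into two parts: the abelian $(\Z/2)^k$ quotient, and the simple non-abelian composition factors $A_n$. The abelian side is immediate because the $k$ quadratic subfields coming from the odd-$p$ $S_n$-factors are ramified at distinct primes $Q_1,\ldots,Q_k$ and are therefore pairwise distinct, so together they realize $(\Z/2)^k$. On the non-abelian side, factors involving non-isomorphic $A_n$'s are automatically disjoint, so the crux is to ensure that several factors carrying the same $A_n$ and assigned the same prime $Q_1$ can be realized by pairwise non-isomorphic extensions. This is precisely the parameter-flexibility assertion above, and verifying it uniformly across the three families of one-prime constructions---particularly the wild characteristic-$2$ constructions underlying Theorem~\ref{thm:An}(2b)---is the substantive piece of work.
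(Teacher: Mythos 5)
There is a genuine gap --- in fact two. First, your computation of $(G/p(G))^{\rm ab}$ is wrong when $p=2$: the groups $A_3$ and $A_4$ are admissible factors in case (3) (when $\F_q\supseteq\F_4$) and are \emph{not} quasi-$2$, since $p(A_3)=1$ and $p(A_4)\cong(\Z/2\Z)^2$, so each such factor contributes a copy of $\Z/3\Z$ to $(G/p(G))^{\rm ab}$. Hence $r$ is not $\max\{k,1\}$ with $k$ the number of odd-$p$ symmetric factors, and your assembly would, for instance, try to realize $A_3\times A_3$ over $\F_4(T)$ with a single ramified prime, which is impossible by Corollary~\ref{cor:lowerbound}. (This is exactly why Proposition~\ref{thmprod} is stated with an auxiliary prime $\ell$, taken to be $2$ when $p>2$ and $3$ when $p=2$.)

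Second, and more seriously, the step you explicitly defer --- producing pairwise linearly disjoint realizations of \emph{identical} quasi-$p$ factors all ramified over the \emph{same} degree-one prime --- is the real content of the theorem, and the tools you propose do not deliver it. Most of the one-prime realizations in Section~\ref{secthmlist} are single explicit polynomials with no free parameters (e.g.\ $X^7+TX^4+X^2+1$ for $A_7$ in characteristic $2$), and the subgroup of ${\rm PGL}_2(\F_q)$ preserving the branch locus gives no a priori reason for the translated covers to be distinct, let alone for $m$ of them to be linearly disjoint. The paper avoids this issue entirely: it never realizes the quasi-$p$ factors independently and intersects afterwards, but instead takes the fiber product of the covers over $\A^m$ and specializes along a morphism $\A^1\to\A^m$ chosen via an explicit Hilbert irreducibility argument (Propositions~\ref{hit} and~\ref{propprod}), which produces the full product as Galois group in one stroke while keeping the ramification over a single prime; only the non-quasi-$p$ factors are then separated onto distinct finite primes, where disjointness is automatic from disjoint branch loci. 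Some such mechanism is what your proof is missing. A smaller point: the single-factor inputs should be cited as Theorem~\ref{thmlist} together with Lemma~\ref{removeinf}, not as Theorems~\ref{thm:Sn}(2a) and~\ref{thm:An}(2a,2b), since the latter are themselves deduced from Theorem~\ref{thm:main1}.
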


En route to proving Theorem~\ref{thm:main1} we will obtain evidence for the following conjecture  of Abhyankar (see \cite[\S 16]{Abh01},\cite[\S 5]{HOPS}, a weaker form was stated in \cite[Conjecture 9.2(C)]{Abh95})
which is an analogue over $\mathbb{F}_q$ of the Abhyankar conjecture for the affine line (which is the case $G=p(G)$ of Theorem~\ref{thm:Abhyankar}):

\begin{conjecture}[Abhyankar's arithmetic conjecture for the affine line]\label{conj:abhyankar} 
Let $G$ be a finite group which is cyclic-by-quasi-$p$, meaning that $G/p(G)$ is cyclic (see Definition~\ref{def:quasip} below).
Then for every power $q$ of $p$ there exists a Galois extension $L/\F_q(T)$ (not necessarily geometric)
ramified only over the infinite prime
such that $\Gal(L/\F_q(T))=G$.
\end{conjecture}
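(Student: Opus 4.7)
My strategy is to combine the geometric Abhyankar conjecture for the affine line (Theorem~\ref{thm:Abhyankar}), which provides the quasi-$p$ piece $N:=p(G)$, with arithmetic descent from $\overline{\F}_q$ to $\F_q$, which is intended to supply the cyclic quotient $C:=G/N$. The subgroup $N$ is itself quasi-$p$ (since $[G:p(G)]$ is coprime to $p$, so $p(p(G))=p(G)$), hence Theorem~\ref{thm:Abhyankar} applied over $\overline{\F}_q$ produces a geometric Galois extension $M/\overline{\F}_q(T)$ ramified only at $\infty$ with $\Gal(M/\overline{\F}_q(T))\cong N$.

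As a finite extension, $M$ is defined over some $\F_{q^m}$: there is an integer $m\geq 1$ and a Galois extension $L_0/\F_{q^m}(T)$, geometric in the sense that $\F_{q^m}$ is algebraically closed in $L_0$, unramified away from $\infty$, with $\Gal(L_0/\F_{q^m}(T))\cong N$, whose base change to $\overline{\F}_q$ recovers $M$. Viewing $L_0$ as an extension of $\F_q(T)$ and using the exact sequence
\begin{equation*}
1\to \Gal(L_0/\F_{q^m}(T))\to \Gal(L_0/\F_q(T))\to \Gal(\F_{q^m}/\F_q)\to 1,
\end{equation*}
we see that $L_0/\F_q(T)$ is already a Galois extension, ramified only at $\infty$, whose group is an extension of a cyclic group (of order $m$) by $N$ — that is, cyclic-by-quasi-$p$ with the correct quasi-$p$ part. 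Varying the descent field $\F_{q^m}$ (over multiples of the minimal field of definition of $M$) and the twist of the descent, and passing to subextensions when necessary, one can hope to realize various such extension groups, and in particular $G$.

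The central obstacle is to match the extension class of $G$ exactly. The possible descents of $M$ are parametrized by a Galois cohomology class in $H^1(\Gal(\overline{\F}_q/\F_q),\mathrm{Aut}(M))$, and different choices realize different outer actions of Frobenius on $N$; the target $G$, as an extension $1\to N\to G\to C\to 1$, specifies both an outer action of a generator of $C$ on $N$ and the value of its $|C|$-th power in $N$. Arranging a Raynaud--Harbater cover whose arithmetic Frobenius realizes this precise outer action, and for which the resulting $2$-cocycle is the one defining $G$, is what is genuinely open about Conjecture~\ref{conj:abhyankar}: the Raynaud--Harbater construction gives essentially no control over the Frobenius action on the cover. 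For the concrete groups relevant to Theorem~\ref{thm:main1} --- symmetric and alternating groups, whose $N$ and $C$ are small and very explicit --- one sidesteps this difficulty by writing down explicit covers by hand, which is the route carried out in Sections~\ref{sec:tworamified} and~\ref{sec:pleqn} and which constitutes the evidence for Conjecture~\ref{conj:abhyankar} advertised above.
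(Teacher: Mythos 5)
The statement you were asked about is a \emph{conjecture}, and the paper does not prove it: Conjecture~\ref{conj:abhyankar} is presented as open, with the paper supplying only evidence in special cases, namely Theorem~\ref{thm:abhyankar} for $A_n$ (with $n\ge p>2$, $n\neq p+1$ unless $\F_q\supseteq\F_{p^2}$), which is deduced from the explicit $\Lgt(q)$-realizations of Theorem~\ref{thmlist} together with Lemma~\ref{lemreal}. Your proposal, to its credit, does not claim to close the argument either: you correctly identify that the geometric Abhyankar conjecture (Raynaud--Harbater) produces the quasi-$p$ kernel $N=p(G)$ over $\overline{\F}_q$ with no control over the arithmetic of the cover, so that after descending to a finite field one obtains \emph{some} cyclic-by-quasi-$p$ extension of $\F_q(T)$ with kernel $N$, but there is no mechanism to force the resulting extension class (outer Frobenius action on $N$ plus the $2$-cocycle) to be the one defining $G$, nor to control the residue field degree $m$ against the prescribed order of $G/p(G)$. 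This diagnosis matches the state of the art reflected in the paper, and your closing remark — that the accessible cases are handled by explicit equations for $S_n$ and $A_n$ — is essentially how Sections~\ref{sec:tworamified} and~\ref{secthmlist} proceed.

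Two smaller corrections to your sketch. First, the step ``$L_0/\F_q(T)$ is already a Galois extension'' is not automatic: a Galois extension $L_0/\F_{q^m}(T)$ need not be normal over $\F_q(T)$; one needs the model of $M$ to be stable under (a lift of) Frobenius, i.e.\ the descent datum must be chosen compatibly, which is exactly the part one cannot control. Second, the paper's positive results do not realize the full extension $G$ ``by hand'' as an abstract extension class; rather, they produce explicit polynomials whose splitting fields are geometric $G$-extensions ramified over at most two degree-one primes with tame ramification at one of them ($\Lgt(q)$-realizations), and then Lemma~\ref{lemreal} extracts from such a realization an extension of $p(G)$ ramified over a single prime — which, for quasi-$p$ groups such as $A_n$ with $n\ge p>2$, is precisely the assertion of Conjecture~\ref{conj:abhyankar}.
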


We will show that Conjecture \ref{conj:abhyankar} holds for the alternating group $A_n$ provided $n\ge p>2,n\neq p+1$ 
(note that for $n>3$ the group $A_n/p(A_n)$ is cyclic only if $n\ge p$).

\begin{theorem}\label{thm:abhyankar} Assume $n\ge p>2$. If $n=p+1$ assume additionally that $\F_q\supseteq\F_{p^2}$. Then there exists a Galois extension $L/\F_q(T)$ ramified only over the infinite prime
with $\Gal(L/\F_q(T))=A_n$.
\end{theorem}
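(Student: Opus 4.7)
My plan is to exhibit an explicit polynomial $f(T,Y)\in\F_q[T,Y]$, monic of degree $n$ in $Y$, whose splitting field $L$ over $\F_q(T)$ has $\Gal(L/\F_q(T))=A_n$ and is ramified only at the infinite prime. The key reduction is that the latter condition is equivalent to $\mathrm{disc}_Y(f)\in\F_q^{\times}$: a finite prime $\mathfrak{p}\subset\F_q[T]$ can ramify in $L$ only if $\mathfrak{p}$ divides the discriminant, so a nonzero constant discriminant guarantees that only $\infty$ ramifies. For $n\ge 5$ the Galois group equation $\Gal(L/\F_q(T))=A_n$ automatically forces $L$ to be geometric over $\F_q$ (any intermediate constant extension would be a cyclic quotient of the simple group $A_n$), so ``not necessarily geometric'' is only needed for the small cases $n=3,4$.

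A natural candidate family is the Abhyankar-style trinomial
\[
f(Y)=Y^n+aTY^k+b,\qquad a,b\in\F_q^{\times},
\]
with $k$ chosen in terms of $n$ and $p$ (for instance $k=n-p+1$, so that in characteristic $p$ one has $f'(Y)=Y^{k-1}\bigl(nY^{n-k}+akT\bigr)$). For suitable $a,b,k$ one can arrange that $f$ takes the same nonzero value on every root of $f'$, making $\Res(f,f')\in\F_q^{\times}$ and so the discriminant constant. Having such an $f$, I would verify $\Gal(L/\F_q(T))=A_n$ in four steps: (a) irreducibility of $f$ over $\F_q(T)$ via the Newton polygon at $\infty$, which also gives an explicit inertia generator above $\infty$; (b) primitivity of the resulting transitive subgroup $G\subseteq S_n$; (c) the presence of a $p$-cycle in the inertia above $\infty$ (again from the Newton polygon), which together with primitivity forces $A_n\subseteq G$ by Jordan's theorem; and (d) $\mathrm{disc}_Y(f)$ being a square in $\F_q^{\times}$, which gives $G\subseteq A_n$ and hence equality. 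When the discriminant is a non-square in $\F_q^\times$, one descends via a quadratic twist after passing to $\F_{q^2}$, which accounts for the extra hypothesis $\F_q\supseteq\F_{p^2}$ in the case $n=p+1$.

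The main obstacle I anticipate is the narrow cases $n=p$ and $n=p+1$. Jordan's classical theorem (requiring a $p$-cycle with $p\le n-3$) does not apply here; one needs instead the refined fact (due to Marggraf, or the classification of primitive groups with a large cycle) that a primitive subgroup of $S_p$ or $S_{p+1}$ containing a $p$-cycle already contains $A_p$ or $A_{p+1}$. Compatibly realising this together with the constant-discriminant and square-discriminant conditions by a single explicit polynomial---potentially after descending via a quadratic twist into the larger constant field $\F_{p^2}$---is the delicate part of the argument, and is precisely where the structural hypotheses in the statement of the theorem enter.
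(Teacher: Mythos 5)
Your overall architecture (an explicit polynomial with constant nonzero discriminant, inertia at infinity read off a Newton polygon, Jordan/Jones for primitivity plus a long cycle, a square discriminant to land in $A_n$) is the classical Abhyankar strategy, but as written the plan has concrete gaps exactly where the theorem's hypotheses bite. First, the trinomial $Y^n+aTY^k+b$ has constant discriminant only when $p\mid n-k$: at a root $\rho$ of $nY^{n-k}+akT$ one computes $f(\rho)=aT\rho^k\frac{n-k}{n}+b$, so with your suggested $k=n-p+1$ (i.e.\ $n-k=p-1$) the resultant $\Res(f,f')$ is a nonconstant polynomial in $T$ and finite primes do ramify. Second, imposing $p\mid n-k$ makes the family degenerate or give the wrong group precisely in the delicate cases: for $n=p$ it forces $k=0$ and $f=Y^p+aT+b$ is inseparable, while for $n=p+1$ the admissible trinomial is $Y^{p+1}+aTY+b$, a projective polynomial whose roots are parametrized by $\P^1(\F_p)$, so its Galois group lies in $P\Gamma L_2(p)=PGL_2(p)$ and cannot contain $A_{p+1}$ once $p\ge 5$. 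Third, the fallback ``descend via a quadratic twist after passing to $\F_{q^2}$'' does not produce an $A_n$-extension of $\F_q(T)$: if the constant discriminant is a nonsquare in $\F_q$, the arithmetic group is $S_n$ as a permutation group, which has no normal subgroup of order $2$, and base change only realizes $A_n$ over $\F_{q^2}(T)$. Arranging the square class of the discriminant over $\F_q$ itself is exactly where $\F_q\supseteq\F_{p^2}$ enters, and it cannot be finessed afterwards. (Primitivity in step (b) also needs an argument: for a cover of $\A^1$ ramified only over $\infty$ there is no Riemann--Hurwitz obstruction to intermediate covers.)

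The paper takes a structurally different and more flexible route. It constructs geometric covers ramified over two degree-one primes, wildly over one and tamely with index $e$ prime to $p$ over the other (there primitivity, cycles and discriminants are easy to control), and then removes the tame branch point by pulling back along the Kummer cover $S\mapsto S^e$, using Abhyankar's Lemma (this is Lemma~\ref{lemreal}). Moreover, for $p>2$ and $n\ne p+1$ it never verifies a square-discriminant condition for $A_n$ at all: it realizes $S_n$ by such a two-point cover and obtains $A_n=p(S_n)$ as the Galois group of the pulled-back cover, the quadratic subextension being absorbed into the Kummer layer. Only $n=p+1$ is handled by a bespoke polynomial $(X+1)(X+\frac{a}{a-1})^p-T^sX^a$ from Abhyankar's work, whose discriminant becomes a square once $\F_q\supseteq\F_{p^2}$. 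To salvage your plan you would need different equations for $n=p$ and $n=p+1$ and a case-by-case control of the square class of the constant discriminant; the paper's detour through two-point covers is precisely what makes these issues tractable in general.
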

Theorem~\ref{thm:abhyankar} will follow from Theorem~\ref{thmlist} below. For some values of $q,n$ the theorem above is implicit in the work of Abhyankar \cite{Abh92,Abh93}, but not in the above generality.

To obtain these results we apply a variety of tools and recent results
from Galois theory, finite group theory (including the Classification of Finite Simple Groups for some of the results; see Remark \ref{remark:cfsg_details} for the list of cases which can be proved without the CFSG), analytic number theory
and number theory over function fields,
and we also employed the help of a computer to eliminate some small exceptional cases.
See the summary in Section \ref{sec:summary} 
for the proof of Theorems \ref{thm:Sn} and \ref{thm:An},
and the corresponding sections  for the full strength and full generality of the results.
Using our results for $\mathbb{F}_q(T)$ we also obtain a conditional result for $\mathbb{Q}$ (Theorem~\ref{thm:S_n_over_Q}):

\begin{theorem}
Schinzel's hypothesis H (Conjecture \ref{conj:Schinzel}) implies that
$\ram_\mathbb{Q}(S_n)=1$ for every $n>1$,
i.e.~Conjecture \ref{conj:BM} holds for all symmetric groups.
\end{theorem}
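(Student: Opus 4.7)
Since $\mathbb{Q}$ admits no unramified finite extension (Minkowski), $\ram_\mathbb{Q}(S_n)\ge 1$, so the task is to construct an $S_n$-extension of $\mathbb{Q}$ ramified at exactly one prime. Because the discriminant of any number field of degree $\ge 2$ has absolute value $>1$, the unique ramified prime cannot be the archimedean place alone, so the extension must ramify at exactly one finite prime and be totally real.

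The approach is to apply Schinzel's hypothesis H to the discriminant polynomial of a carefully chosen one-parameter integer family. Let $g\in\mathbb{Z}[X]$ be a polynomial of degree $n$ with $n$ distinct real roots, and consider the Morse family
\[
f_t(X)=g(X)+t\in\mathbb{Z}[X].
\]
Its $X$-discriminant is
\[
D(t)=\Disc_X(f_t)=\pm n^n\prod_{g'(\beta)=0}(t+g(\beta))\in\mathbb{Z}[t],
\]
a polynomial of degree $n-1$ in $t$ with leading coefficient $\pm n^n$. I would choose $g$ so that (i) the $n-1$ critical values $\{g(\beta)\}$ form a single Galois orbit over $\mathbb{Q}$ of size $n-1$, making $D(t)$ irreducible in $\mathbb{Q}[t]$; (ii) after rescaling $g(X)\mapsto M^n g(X/M)$ with $M\in\mathbb{Z}_{>0}$ large, the totally real window $I=\{t\in\mathbb{R}:f_t\text{ has $n$ real roots}\}$ contains arbitrarily many integers (it dilates by a factor of $M^n$); and (iii) no fixed prime divides $D(t)$ for all $t\in\mathbb{Z}$ (the Bouniakowski condition needed for Schinzel H).

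Given such a family, Schinzel's hypothesis H yields infinitely many $t\in\mathbb{Z}$ with $|D(t)|$ prime. The Hilbert set of $t\in\mathbb{Z}$ for which $\mathrm{Gal}(f_t/\mathbb{Q})=S_n$ has thin complement (of size $O(N^{1/2}\log N)$ in $[-N,N]$), while Schinzel H predicts $\gg N/\log N$ integer values of $t\in[-N,N]$ with $|D(t)|$ prime; combined with the real-root window~(ii), there is $t_0\in\mathbb{Z}\cap I$ with $|D(t_0)|=p$ prime and $\mathrm{Gal}(f_{t_0}/\mathbb{Q})=S_n$. For this $t_0$, $f_{t_0}$ is irreducible over $\mathbb{Q}$ with squarefree discriminant $\pm p$, so $K_{t_0}=\mathbb{Q}[X]/(f_{t_0})$ has ring of integers $\mathbb{Z}[\alpha]$ and field discriminant $\pm p$; thus $p$ is the only ramified finite prime of $K_{t_0}/\mathbb{Q}$. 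The ramified primes of the Galois closure $L_{t_0}/\mathbb{Q}$ coincide with those of $K_{t_0}/\mathbb{Q}$ (unramified primes in a subfield stay unramified in every conjugate, hence in the compositum), and total reality of $f_{t_0}$ means $L_{t_0}$ is totally real. So $L_{t_0}/\mathbb{Q}$ is an $S_n$-extension ramified at exactly one prime, giving $\ram_\mathbb{Q}(S_n)=1$.

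The main obstacle is establishing~(i) for every $n$: producing an integer polynomial $g$ of degree $n$ with $n$ real roots whose critical values form one Galois orbit over $\mathbb{Q}$. The natural symmetric choice $g(X)=\prod_{i=1}^n(X-i)$ fails for even $n\ge 4$ because its palindromic symmetry forces coincident critical values; a nonsymmetric choice $g(X)=\prod(X-a_i)$ with distinct integers $a_i$ in general position is expected to succeed (the critical-value resolvent generically has Galois group $S_{n-1}$ acting transitively on the critical values), but providing an explicit construction uniformly in $n$ is the nontrivial content. Once~(i) is in hand, conditions~(ii) and~(iii) are straightforward and the deduction from Schinzel's hypothesis H then goes through.
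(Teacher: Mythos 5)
Your overall strategy (specialize a one\--parameter Morse family so that the discriminant takes a prime value, then use squarefreeness of the discriminant to control ramification) is the right general shape, and your reduction of the problem to conditions (i)--(iii) is sensible. However, there are two genuine gaps, one of which you acknowledge and one of which you do not, and together they are exactly the points the paper's proof is engineered to overcome.

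First, the acknowledged gap (i) is not a loose end but the core difficulty: you need, for \emph{every} $n$, a degree\--$n$ polynomial $g\in\mathbb{Z}[X]$ with $n$ real roots whose $n-1$ critical values are a single Galois orbit, and ``generically this should hold'' is not a proof. The paper resolves precisely this by constructing the family over $\mathbb{F}_p$ first: by the Prime Polynomial Theorem and a Lang--Weil count (Proposition \ref{thm:large_q} together with Lemma \ref{lem:f'Morse_disc_irred}), for $p$ large a positive proportion of monic degree\--$n$ polynomials over $\mathbb{F}_p$ are Morse with irreducible derivative, hence have irreducible discriminant $\disc_X(f_p-U)$; irreducibility mod $p$ then forces irreducibility of the lifted discriminant over $\mathbb{Q}$. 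Total reality is arranged \emph{separately and afterwards} by Lemma \ref{lem:lift_real}, using the freedom to perturb the lift within its residue classes; it is never required of the same polynomial that witnesses irreducibility.

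Second, and more seriously, your archimedean argument does not work with Hypothesis H as stated. The set $I$ of $t$ for which $g+t$ is totally real is a \emph{bounded} interval, and rescaling $g$ only dilates it to another bounded interval. Conjecture \ref{conj:Schinzel} asserts only that $|D(t)|$ is prime for infinitely many integers $t$; it gives no control whatsoever on whether any of these $t$ lie in a prescribed bounded interval (your count ``$\gg N/\log N$'' is Bateman--Horn, a quantitative strengthening, and the same objection applies to your use of it against the thin Hilbert exceptional set). Without repairing this, your construction only produces an $S_n$-extension ramified at one finite prime and (for $n\geq 5$) at the infinite prime as well --- i.e.\ Plans' result (Remark \ref{remark:plans}), not the theorem. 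The paper avoids this by specializing at \emph{rational} points of $\mathbb{P}^1$ through the Bary-Soroker--Schlank sieve (\cite[Proposition 1.11]{BSS}): one works with the homogeneous branch form $D(X,Y)$ and specializes along a line in the $(a,b)$-plane whose direction $a/b$ tends to a point of the totally real window, so that all but finitely many specializations on the line are automatically totally real, and Hypothesis H is applied to the restriction of $D$ to that line. To complete your proof you would either need to invoke a quantitative form of Hypothesis H, or reorganize the specialization step along these projective lines --- at which point you have essentially reconstructed the paper's use of \cite{BSS}.
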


This improves upon a result of Plans (see Remark \ref{remark:plans} below), which shows that under Schinzel's hypothesis H there exists a Galois extension $L/\mathbb Q$ with Galois group $S_n$ and ramified over a single finite prime and the infinite prime, whereas our construction has no ramification at the infinite prime.

\section{Preliminaries and notation}
\noindent
We start by collecting a few definitions and statements that we will use throughout the paper.

\subsection{Function fields}

For a thorough introduction to the subject see \cite{Ros02,Stichtenoth}. Here we only recall some terminology, notation and a few basic facts. 
Let $k$ be a base field. 
A function field (of one variable) over $k$ is a finite extension of the rational function field $k(T)$. 
A \emph{global function field} is a function field over a finite base field $k=\F_q$ (this will be our main case of interest).

A function field $K/k$ is {\em regular}
if $K$ is linearly disjoint over $k$ from the algebraic closure $\bar{k}$ of $k$,
which in case that $k$ is perfect simply means that
$k$ is relatively algebraically closed in $K$.
If $K/k$ is a regular function field, there exists a unique geometrically integral non-singular projective curve $C/k$ such that $K$ is the function field of $C$. Conversely the function field of a curve as above is a regular function field over $k$.

Let $K/k$ be a regular function field and $C$ the corresponding curve. 
A \emph{prime} $P$ of $K/k$ is an
equivalence class of nontrivial absolute values on $K$ trivial on $k$. Each such absolute value corresponds to a discrete valuation $v$,
and it corresponds to a prime divisor on $C$. 
If $P$ is a prime of $K$, the residue field $k(P)$ at $P$ is a finite extension of $k$ and $\deg P=[k(P):k]$ is called the \emph{degree} of $P$. By a \emph{point} or \emph{geometric point} of $C$ we will always mean a closed point of $C\times_k\bar k$. 
For an extension $L/k$ we denote by $C(L)$ the set of $L$-rational points on $C$.

\renewcommand{\div}{\mathrm{div}}

The rational function field $k(T)$ corresponds to the projective line $\P^1_k$ and has primes of two types: the \emph{finite primes} of the form $\div(f)_0$ (the zero divisor of a rational function $f$) where $f\in k[X]$ is an irreducible polynomial (such a prime has degree $\deg f$) and the \emph{infinite prime} $\infty=\div(1/T)_0$ which has degree 1.

Let $L/K$ be a finite extension of function fields over $k$. It is called \emph{regular}, or \emph{geometric}, if $L$ is also a regular function field over $k$. Regular extensions correspond to finite covers of the associated curves. 
If $P$ is a prime of $K$ and $Q$ a prime of $L$ lying over $P$, we denote by $e(Q/P)$ the ramification index of $Q$ over $P$. 
We say that $L/K$ is ramified (resp. wildly ramified) at $Q$ if $e(Q/P)>1$ (resp.\ $\mathrm{char}(k)|e(Q/P)$). We say that $L/K$ is ramified (resp. wildly ramified) \emph{over} $P$ if there exists a prime $Q$ of $L$ lying over $P$ at which the extension is ramified (resp. wildly ramified).
Ramification which is not wild is said to be \emph{tame}.
If $L/K$ is tamely ramified over at least one prime, then $L/K$ is separable.

If $C_L$ and $C_K$ are the underlying curves then the extension $L/K$ corresponds to a finite morphism $w\colon C_L\to C_K$. 
This correspondence defines an equivalence of categories between regular function fields over $k$ and absolutely irreducible non-singular projective curves defined over $k$. 
If $P$ is a prime divisor of $C_L$, and $Q=w(P)$ (a prime divisor of $C_K$), we say that $w$ is ramified (resp. tamely ramified) at $P$ whenever the extension $L/K$ is ramified (resp. tamely ramified) at the corresponding prime of $L$ which we identify with $P$. 
In this case we also call the geometric points corresponding to $P$ {\em ramification points} of $w$,
and the geometric points corresponding to $Q$ {\em branch points} of $w$.

\subsection{Ramification theory}

Let $K$ be a field. For a separable polynomial $f\in K[X]$ of degree $n$ we define its Galois group $\Gal(f/K)$ to be the Galois group of its splitting field over $K$.
We will always interpret the Galois group ${\rm Gal}(f/K)$ as a subgroup of the symmetric group $S_n$ via its action on the roots of $f$. The embedding into $S_n$ is well-defined up to conjugation.

\begin{lemma}\label{lemcycle} 
Let $K$ be a function field over an algebraically closed field $k$
of characteristic $p\geq 0$,
let $f\in K[X]$ be a separable irreducible polynomial and let $L=K(\alpha)$ where $\alpha$ is a root of $f$. 
Let $P$ be a prime (finite or infinite) of $K$ and let $Q_1,\ldots,Q_r$ be the primes of $L$ lying over $P$. 
Let $e_i=e(Q_i/P)$ be the ramification indices.
\begin{enumerate}\item[(i)]
Assume that $p\nmid e_i$ for all $i$. 
Then $\Gal(f/K)$ contains a permutation which is a product of $r$ disjoint cycles of lengths $e_1,\ldots,e_r$.
\item[(ii)]
Assume that
$p\nmid e_i$ and $(e_i,e_1)=1$ for $2\le i\le r$, 
and either $e_1=p$ or $p\nmid e_1$. 
Then $\Gal(f/K)$ contains a cycle of length $e_1$.\end{enumerate}\end{lemma}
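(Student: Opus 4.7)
The plan is to pass to the Galois closure and use the structure of inertia groups over an algebraically closed residue field. Let $M$ be the Galois closure of $L/K$, set $G=\Gal(M/K)$ and $H=\Gal(M/L)$, so that the roots of $f$ are identified with $G/H$. Pick a prime $\mathfrak{P}$ of $M$ above $P$ with decomposition group $D$ and inertia group $I$. Because $k$ is algebraically closed, all residue field extensions are trivial and hence $D=I$. A standard computation then identifies the $I$-orbits on $G/H$ with the primes $Q_1,\ldots,Q_r$ of $L$ above $P$, the orbit $O_i$ corresponding to $Q_i$ having size $e_i$.

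The key structural observation I would use is the following. The wild inertia $I_1\trianglelefteq I$ (the Sylow $p$-subgroup of $I$) is normal, so $I/I_1$ permutes the $I_1$-orbits in each $I$-orbit $O_i$ transitively; hence all $I_1$-orbits inside $O_i$ have a common size, and this size is both a power of $p$ and a divisor of $e_i$. Consequently, whenever $p\nmid e_i$, $I_1$ acts trivially on $O_i$.

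For part (i), the hypothesis $p\nmid e_i$ for every $i$ therefore forces $I_1$ to act trivially on all of $G/H$. Since $G$ acts faithfully on $G/H$ (as $M$ is the splitting field of $f$), this gives $I_1=1$, so $I$ is tame and cyclic. Any generator $\sigma$ of $I$ then acts on each $O_i$ as a single cycle of length $e_i$, yielding the desired permutation.

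For part (ii), the case $p\nmid e_1$ follows immediately from (i): the element $\sigma$ produced there has cycle type $(e_1,\ldots,e_r)$, and because $\gcd(e_1,e_2\cdots e_r)=1$ the power $\sigma^{e_2\cdots e_r}$ is an $e_1$-cycle. The main case, and the one I expect to be the real content, is $e_1=p$. Here the same divisibility argument still shows $I_1$ acts trivially on $O_i$ for $i\geq 2$. On $O_1$ (of size $p$) the $I_1$-orbits all have common $p$-power size dividing $p$, so either $I_1$ acts trivially on $O_1$ or it acts transitively. Triviality would mean the $I$-action on $O_1$ factors through the cyclic tame quotient $I/I_1$ of order prime to $p$, which cannot act transitively on a set of size $p$; hence $I_1$ is transitive on $O_1$. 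Choosing any $\tau\in I_1$ not fixing a chosen $x_0\in O_1$, the $\langle\tau\rangle$-orbit of $x_0$ has $p$-power size $>1$ dividing $p$, so equals $p$; thus $\tau$ acts as a $p$-cycle on $O_1$ and as the identity on $O_2\cup\cdots\cup O_r$, giving the required $p$-cycle in $\Gal(f/K)$.
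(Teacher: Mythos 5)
Your proof is correct. Note that the paper itself does not prove this lemma; it simply cites Abhyankar's Generalized Cycle Lemma from \cite{Abh93}, so you have in effect supplied the missing argument. Your route --- pass to the Galois closure, use that the residue field is algebraically closed to get $D=I$ and orbit sizes $e_i$, and exploit that the wild inertia $I_1$ is a normal Sylow $p$-subgroup with cyclic tame quotient --- is exactly the standard mechanism behind Abhyankar's lemma, and every step checks out: the common size of the $I_1$-orbits inside an $I$-orbit is a $p$-power dividing $e_i$, so tameness of $e_i$ kills $I_1$ there; faithfulness of the action on $G/H$ (trivial core of $H$, since $M$ is the Galois closure) gives $I_1=1$ in part (i); and in the case $e_1=p$ the counting argument correctly forces $I_1$ to be transitive on $O_1$ and produces a $p$-cycle fixing the other orbits pointwise. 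One cosmetic remark: when you say the $\langle\tau\rangle$-orbit of $x_0$ has size ``dividing $p$,'' the honest justification is that it is a $p$-power which is $>1$ and at most $|O_1|=p$, hence equal to $p$; orbit sizes divide the group order, not the ambient set size. This does not affect the validity of the argument.
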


\begin{proof} See \cite[\S 3, Generalized Cycle Lemma]{Abh93}.\end{proof}

\begin{lemma}[Abhyankar's Lemma] \label{lem:abhyankar} 
Let $k$ be perfect, $K/k$ a function field, and
$L,M$ finite separable extensions of $K$. 
Let $P$ be a prime of $K$, $Q$ a prime of $L$ lying over $P$, $P'$ a prime of $M$ lying over $P$. 
Assume that 
\begin{enumerate}
\item $Q$ is tamely ramified over $P$, and
\item $e(Q/P)$ divides $e(P'/P)$.
\end{enumerate}
Then every prime $Q'$ of $LM$ lying over both $Q$ and $P'$ is unramified over $M$, i.e.~$e(Q'/P')=1$.
\end{lemma}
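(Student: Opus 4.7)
The plan is to reduce the statement to a computation in complete discretely valued fields via passage to completions, and then to apply the structure theory of tamely ramified extensions.

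First I would pass to the completions. Let $\hat{K}_P$ be the completion of $K$ at $P$, and similarly $\hat{L}_Q$, $\hat{M}_{P'}$, $\widehat{LM}_{Q'}$. Ramification indices are invariant under completion, so it suffices to show that $\widehat{LM}_{Q'}/\hat{M}_{P'}$ is unramified. Fixing an embedding into an algebraic closure of $\hat{K}_P$ so that $\widehat{LM}_{Q'}$ contains the compositum $\hat{L}_Q \cdot \hat{M}_{P'}$, it actually suffices to prove that this compositum is unramified over $\hat{M}_{P'}$.

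Next I would invoke the structure of tamely ramified extensions of complete discretely valued fields (as in Serre, \emph{Local Fields}, Ch.~IV). Writing $e = e(Q/P)$, which is coprime to the residue characteristic $p$ by hypothesis, and letting $\pi$ be a uniformizer of $\hat{K}_P$, the extension $\hat{L}_Q/\hat{K}_P$ is the compositum of an unramified subextension with $\hat{K}_P(\pi^{1/e})$ (up to a suitable choice of uniformizer, which is possible because $k$ is perfect so the residue field extensions are separable). Passing the unramified part over to $\hat{M}_{P'}$ only produces an unramified extension of $\hat{M}_{P'}$, so the question reduces to showing that $\hat{M}_{P'}(\pi^{1/e})/\hat{M}_{P'}$ is unramified.

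Finally I would compute: let $\pi'$ be a uniformizer of $\hat{M}_{P'}$ and let $e' = e(P'/P)$, so that $\pi = u\,(\pi')^{e'}$ for some unit $u \in \hat{M}_{P'}^\times$. Using the divisibility $e \mid e'$ and writing $e' = ef$, one gets $\pi^{1/e} = u^{1/e} (\pi')^{f}$ inside a fixed algebraic closure, so $\hat{M}_{P'}(\pi^{1/e}) = \hat{M}_{P'}(u^{1/e})$. Since $u$ is a unit and $p \nmid e$, the polynomial $X^e - u$ has separable reduction modulo $\pi'$, so adjoining $u^{1/e}$ yields an unramified extension of $\hat{M}_{P'}$. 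The main obstacle, if any, is ensuring the clean form of the structure theorem for tame extensions (in particular, that after an unramified extension one may really take a single $e$-th root of a uniformizer of $\hat{K}_P$ itself, rather than of the larger unramified subfield); everything else is bookkeeping of valuations.
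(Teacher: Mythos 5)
Your proof is correct, but it takes a different route from the paper: the paper does not argue from scratch at all, it simply cites Stichtenoth's form of Abhyankar's Lemma ($e(Q'/P)=\mathrm{lcm}(e(Q/P),e(P'/P))$ when one of the two lower extensions is tame) and divides by $e(P'/P)$ using multiplicativity of ramification indices. What you have written is essentially the standard proof of that cited theorem in the special case needed here: completion, the Kummer-type structure of tame extensions, and the computation $\pi^{1/e}=u^{1/e}(\pi')^{f}$. Two small points. First, $\widehat{LM}_{Q'}$ does not merely contain the compositum $\hat L_Q\cdot\hat M_{P'}$ -- for embeddings chosen compatibly with $Q'$ it \emph{equals} it (since $LM\cdot\hat K_P=(L\hat K_P)(M\hat K_P)$), which is why your reduction is legitimate; as stated ("contains") the reduction would not suffice. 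Second, the "obstacle" you flag at the end is real but is resolved by exactly the computation you already perform in your last step: the tame structure theorem gives $\hat L_Q=T(\alpha)$ with $\alpha^e=\pi_T$ a uniformizer of the inertia field $T$, and writing $\pi_T=u\pi$ with $u\in\mathcal O_T^\times$ one gets $\hat L_Q\subseteq T(u^{1/e})(\pi^{1/e})$ with $T(u^{1/e})/\hat K_P$ unramified (again because $X^e-u$ has separable reduction), after which your argument goes through verbatim. The hypothesis that $k$ is perfect enters exactly where you say it does, to guarantee separability of the residue extensions. Your approach buys a self-contained argument at the cost of the local structure theory; the paper's buys brevity at the cost of an external reference.
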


\begin{proof} 
This follows from \cite[Theorem 3.9.1]{Stichtenoth}
by the multiplicativity of the ramification index.
\end{proof}

\subsection{Group theory}
We denote by $C_n$, $D_n$, $S_n$, $A_n$ 
the cyclic, dihedral, symmetric respectively alternating group of degree $n$.
We will need the following group-theoretic result of Jones, which relies on the Classification of Finite Simple Groups (CFSG) for its proof.

\begin{theorem}[Jones]\label{thm:jones} Let $G\leqslant S_n$ be a primitive permutation group containing a cycle of length $\ell>1$.
\begin{enumerate}\item[(i)]If $\ell\le n-3$ then $G\geqslant A_n$.
\item[(ii)]If $\ell=n-2$ is prime then either $G\geqslant A_n$ or $\ell=2^k-1$ is a Mersenne prime and ${PGL}_2(2^k)\leqslant G\leqslant{P\Gamma L}_2(2^k)$ with its standard action on $\P^1(\F_{2^k})$.
\end{enumerate}
\end{theorem}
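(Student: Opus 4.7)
The plan is to follow the strategy of Jones by combining the O'Nan--Scott theorem with the Classification of Finite Simple Groups (CFSG). The cycle $\sigma\in G$ of length $\ell$ fixes exactly $n-\ell$ points, so when $\ell\geq n-3$ we are asking for a primitive group containing an element with at most $3$ fixed points and a single long cycle. This is an extremely restrictive condition, and the classical theorems of Jordan and Marggraf already give the result when $\ell$ is prime, so the main work is in handling composite $\ell$, which is exactly where CFSG intervenes.

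First I would use the O'Nan--Scott theorem to reduce to the almost simple case. A primitive $G\leqslant S_n$ is either affine, of diagonal type, of product action type, twisted wreath, or almost simple. In the affine case $n=p^d$ and an element with a single cycle of length $\ell\geq n-3$ forces $d=1$ (otherwise the fixed-point space of $\sigma$ is an affine subspace of dimension $\geq 1$, so has $\geq p$ fixed points, incompatible with $\ell\geq n-3$ once $n\geq 5$), and one checks the one-dimensional case by hand. In the diagonal, product action, and twisted wreath cases the natural block structure forces every element to act with orbit lengths that cannot add up to a single cycle of length $\ell\geq n-3$, as $n$ is a proper prime power in a nontrivial way and the minimal degree of the group is too large. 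This eliminates all cases except when $G$ is almost simple with socle $T$.

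Next, assuming $G$ almost simple, I would invoke CFSG and go through the possibilities for $T$:
\begin{enumerate}
\item[(a)] $T=A_m$ acting on $k$-subsets or on cosets of a maximal subgroup: if $m=n$ the action is the natural one and $G\geqslant A_n$; for other actions the minimum number of cycles of a nonidentity element is too large, ruling out a cycle of length $\geq n-3$.
\item[(b)] $T$ a classical group: here the key family is $T={\rm PSL}_2(q)$ acting on $\P^1(\F_q)$, giving $n=q+1$; a Singer-type element produces a cycle of length $q$, i.e.\ a cycle of length $n-1$ fixing one point, and combined with field automorphisms and diagonal automorphisms one recovers cycles of length $n-2$ when $q=2^k$ and $\ell=q-1=2^k-1$ is a Mersenne prime. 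All other classical actions are ruled out by examining the subdegrees and the orders of cyclic subgroups relative to $n$.
\item[(c)] Exceptional and sporadic groups: one checks the (finite) list of $2$-transitive and nearly $2$-transitive actions, and verifies that no cycle of length $\geq n-3$ occurs.
\end{enumerate}
Collecting what survives yields exactly $G\geqslant A_n$ in case (i), and additionally the family ${\rm PGL}_2(2^k)\leqslant G\leqslant{\rm P\Gamma L}_2(2^k)$ on $\P^1(\F_{2^k})$ in case (ii).

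The main obstacle is the CFSG-based enumeration: one genuinely needs the classification of $2$-transitive groups (since a primitive group containing a cycle of length $\geq n-3$ is $2$-transitive after the easy reductions) together with detailed information on the cycle types of elements in each of the families. The delicate point in (ii) is that when $\ell=n-2$ is prime one must rule out all non-almost-simple possibilities even when $n-\ell=2$ (so $\sigma$ has exactly two fixed points), and within the almost simple classical family one must show that only the ${\rm PSL}_2(2^k)$ example on the projective line survives, which amounts to showing that for any other classical almost simple action the largest cyclic subgroup is strictly smaller than $n-2$, or that its elements of order $n-2$ cannot act as a single long cycle plus two fixed points.
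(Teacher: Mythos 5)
The paper gives no proof of this statement at all --- it is quoted verbatim from Jones \cite{Jon14} and used as a black box --- so the comparison here is between your sketch and the proof in the literature. Your outline does capture the broad architecture of that proof (reduce to the almost simple, indeed $2$-transitive, case and then run a CFSG-based enumeration of the possible socles and of the cycle types of their elements), but as written it is a programme rather than a proof: every substantive elimination (``one checks the one-dimensional case by hand'', ``the minimal degree is too large'', ``ruled out by examining the subdegrees and the orders of cyclic subgroups'') is asserted without argument, and those verifications are precisely the content of the theorem.

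Beyond this, there is a concrete logical slip that leaves most of part (i) unaddressed. Part (i) assumes $\ell\le n-3$, i.e.\ the cycle fixes \emph{at least} three points, whereas you write ``when $\ell\ge n-3$ we are asking for \ldots\ an element with at most $3$ fixed points,'' and all of your O'Nan--Scott eliminations are tailored to elements with few fixed points (e.g.\ in the affine case you bound the size of the fixed-point subspace from below to get a contradiction with $\ell\ge n-3$). Jordan's theorem disposes of prime $\ell\le n-3$, and Marggraf-type results of short cycles, but the remaining case of a \emph{composite} cycle of length $\ell\le n-3$ may fix a large number of points, and none of your reductions applies to it; this is exactly where the new CFSG-dependent input enters. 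The standard remedy is to first establish $2$-transitivity (a primitive group containing a cycle with at least one fixed point is $2$-transitive, via Jordan's theorem on Jordan sets) and then induct on the number of fixed points, and some such step must be supplied. Even in the few-fixed-points regime your eliminations have holes: over $\F_2$ an affine subspace can consist of exactly two points, so for part (ii) the affine case is not excluded by your dimension count but only by the extra hypothesis that $n-2$ is prime, which you never invoke there.
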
 

\begin{proof} This is a direct consequence of \cite[Corollary 1.3]{Jon14}.\end{proof}

\begin{remark}\label{remark:jordan}
In the case that $\ell$ is prime Theorem~\ref{thm:jones}(i) is a classical (and elementary) result of Jordan \cite[Theorem 3.3E]{DiMo96}. Most (though not all) of the applications in Section~\ref{sec:pleqn} only require this weaker version. In Section~\ref{sec:tworamified} Jones's theorem is used in its full form.
\end{remark}

We will also make use of the following elementary fact:

\begin{lemma}\label{lem:primtrans}
Let $G\leqslant S_n$ be a primitive group containing a transposition. Then $G=S_n$.
\end{lemma}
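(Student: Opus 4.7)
The plan is to use the classical block-system argument: starting from the given transposition, I produce a nontrivial system of blocks for $G$ on $\{1,\ldots,n\}$ which, by primitivity, must consist of a single block, forcing every transposition to lie in $G$.

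First I would define a relation on $\{1,\ldots,n\}$ by declaring $i\sim j$ iff either $i=j$ or $(i\,j)\in G$. Reflexivity and symmetry are immediate; for transitivity, if $i,j,k$ are distinct with $(i\,j),(j\,k)\in G$, then conjugation gives $(i\,j)(j\,k)(i\,j)^{-1}=(i\,k)\in G$, so $\sim$ is an equivalence relation. Note that since $G$ contains at least one transposition $(a\,b)$, the class of $a$ contains $b$, so at least one equivalence class has size $\geq 2$.

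Next I would check that the equivalence classes form a system of blocks for $G$. For any $g\in G$ and any $i\sim j$ with $i\neq j$, one has $g(i\,j)g^{-1}=(g(i)\,g(j))\in G$, so $g(i)\sim g(j)$. Hence $g$ sends $\sim$-classes to $\sim$-classes. Combined with transitivity of $G$ (which follows from primitivity), all classes have the same size and form a $G$-invariant partition, i.e.\ a block system.

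Finally, I would invoke primitivity: the only block systems are the trivial ones, consisting of singletons or of one block of size $n$. The singleton partition is ruled out because the class of $a$ has size $\geq 2$. Therefore the unique class is all of $\{1,\ldots,n\}$, meaning $(i\,j)\in G$ for every pair $i\neq j$. Since $S_n$ is generated by transpositions, $G=S_n$. There is no real obstacle here; the only subtlety is verifying transitivity of the relation via the conjugation identity, which is why the argument works for transpositions specifically.
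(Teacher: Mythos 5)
Your argument is correct and complete: the relation $i\sim j$ iff $i=j$ or $(i\,j)\in G$ is indeed an equivalence relation (the conjugation identity handles transitivity), its classes form a $G$-invariant partition, and primitivity together with the existence of a class of size $\geq 2$ forces a single class, so $G$ contains all transpositions and equals $S_n$. The paper does not prove this lemma itself but simply cites Wielandt [Theorem 13.3], and your block-system argument is precisely the standard textbook proof of that classical fact, so there is nothing to add or correct.
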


\begin{proof} See \cite[Theorem 13.3]{Wie64}.\end{proof}

\subsection{Discriminants}

Let $K$ be a field. 
For a polynomial $f\in K[X]$ of degree $n$ we denote its leading coefficient by $\lc(f)$,
and if $\alpha_1,\dots,\alpha_n\in\overline{K}$ are the zeros of $f$ (listed with multiplicity), the discriminant of $f$ is defined by
\begin{equation}\label{eq:discdefinition}
\disc(f) \;=\; \lc(f)^{2n-2}\prod_{i<j}(\alpha_i-\alpha_j)^2 \in K.
\end{equation}
Sometimes we will use the notation $\disc_X(f)$ to indicate the variable with respect to which the discriminant is taken.

If $0\neq f,g\in K[X]$ have roots $\alpha_1,\ldots,\alpha_m$ respectively $\beta_1,\ldots,\beta_n$, their resultant is defined by
\begin{equation}\label{eq:resdefinition}
\Res(f,g)\;=\;\lc(f)^{{\rm deg}(g)}\lc(g)^{{\rm deg}(f)}\prod_{i=1}^m\prod_{j=1}^n(\alpha_i-\beta_j)\;=\;
\lc(f)^{{\rm deg}(g)}\prod_{i=1}^m g(\alpha_i)\in K.
\end{equation}

We will now summarize some formulae which are useful for computing discriminants, valid in any characteristic. See \cite[\S 3]{CCG} for the proofs (but note that there $\Disc(f)$ is defined without the factor $\lc(f)^{2n-2}$). First we have
\begin{eqnarray} 
\label{eq:disc}\Disc(f)&=&(-1)^{\frac{\deg f\cdot(\deg f-1)}2+\deg f\cdot\deg f'}\lc(f)^{\deg f-\deg f'-2}\Res(f',f)\\
&=&(-1)^{\frac{\deg f\cdot(\deg f-1)}2+\deg f\cdot\deg f'}\lc(f)^{\deg f-\deg f'-2}\lc(f')^{\deg f}\prod_{f'(\rho)=0}f(\rho)
\label{eq:disc2}
\end{eqnarray}
provided $f'\neq 0$.
Here $f'$ is the formal derivative of $f$ and the product is over all roots of $f'$ counted with multiplicity.
Now let $f,g,h\in K[X]$.
From (\ref{eq:discdefinition}) and (\ref{eq:resdefinition}) we see that
\begin{equation}\label{eq:discprod}
 \Disc(fg)=\Disc(f)\Disc(g)\Res(f,g)^2
\end{equation} 
The resultant is bi-multiplicative, i.e.\
\begin{equation}\label{eq:resbimult}\Res(f,gh)\;=\;\Res(f,g)\Res(f,h),\quad\Res(fg,h)\;=\;\Res(f,h)\Res(g,h)
\end{equation}
and satisfies a symmetry property 
\begin{equation}\label{eq:ressym}\Res(f,g)=(-1)^{\deg f\cdot\deg g}\Res(g,f).\end{equation}
Finally if $g\equiv h\pmod f$ then
\begin{equation}\label{eq:resalt}\Res(f,g)=\lc(f)^{\deg g-\deg h}\Res(f,h).\end{equation}

\begin{lemma}\label{lem:discgalois}
If $\mathrm{char}(K)\neq 2$ and $f\in K[X]$ is a separable polynomial of degree $n$ then $\Gal(f/K)\subseteq A_n$ if and only if $\Disc(f)$ is a square in $K$. 
In particular, ${\rm Gal}(f/K(\sqrt{\Disc(f)}))={\rm Gal}(f/K)\cap A_n$.
\end{lemma}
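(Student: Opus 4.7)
The plan is to introduce the ``square root of the discriminant'' as an element of the splitting field, and to observe that the Galois action on it is exactly the sign character.

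Let $L$ be the splitting field of $f$ over $K$, with roots $\alpha_1,\ldots,\alpha_n\in L$, and set $G=\Gal(L/K)$, viewed as a subgroup of $S_n$ via its action on the $\alpha_i$. Define
\[
\delta \;=\; \lc(f)^{n-1}\prod_{i<j}(\alpha_i-\alpha_j)\in L,
\]
so that $\delta^2=\Disc(f)$ by (\ref{eq:discdefinition}). Since $f$ is separable, $\delta\neq 0$. The first key observation is that for every $\sigma\in G\le S_n$, one has $\sigma(\delta)=\sgn(\sigma)\,\delta$, which is immediate from the definition of $\delta$ and of the sign of a permutation. (This is also where $\mathrm{char}(K)\ne 2$ enters: otherwise $\sgn(\sigma)=1$ in $K$ and the criterion collapses.)

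For the first claim, $\Disc(f)$ is a square in $K$ if and only if $\delta\in K$: one direction is clear since $\delta^2=\Disc(f)$, and for the other, if $\Disc(f)=c^2$ with $c\in K$, then $\delta=\pm c\in K$ because in characteristic $\neq 2$ the equation $X^2=\Disc(f)$ has at most two roots in $L$. By Galois theory, $\delta\in K$ is equivalent to $\sigma(\delta)=\delta$ for all $\sigma\in G$, and by the key observation this is equivalent to $\sgn(\sigma)=1$ for all $\sigma\in G$, i.e.\ $G\subseteq A_n$.

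For the ``in particular'' statement, set $K'=K(\sqrt{\Disc(f)})=K(\delta)\subseteq L$. The splitting field of $f$ over $K'$ is still $L$, so $\Gal(f/K')=\Gal(L/K')$, and by the usual correspondence,
\[
\Gal(L/K')\;=\;\{\sigma\in G:\sigma(\delta)=\delta\}\;=\;\{\sigma\in G:\sgn(\sigma)=1\}\;=\;G\cap A_n,
\]
again using $\mathrm{char}(K)\ne 2$. This is exactly the claimed identity.

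There is no real obstacle here; the proof is entirely formal once one writes down $\delta$ and identifies the Galois action on it with the sign character. The only points requiring minor care are the use of separability to ensure $\delta\neq 0$ (so the square roots of $\Disc(f)$ in $L$ are precisely $\pm\delta$), and the role of $\mathrm{char}(K)\ne 2$ in distinguishing these two square roots.
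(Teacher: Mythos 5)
Your proof is correct and is exactly the standard argument the paper has in mind: the paper simply states that the lemma ``is easy to deduce from (\ref{eq:discdefinition})'', and your computation with $\delta=\lc(f)^{n-1}\prod_{i<j}(\alpha_i-\alpha_j)$ and the identity $\sigma(\delta)=\sgn(\sigma)\delta$ is that deduction, carried out with the right attention to separability (so $\delta\neq 0$) and to $\mathrm{char}(K)\neq 2$ (so $\delta\neq-\delta$).
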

\begin{proof} This standard fact is easy to deduce from (\ref{eq:discdefinition}).\end{proof}

\begin{lemma}\label{lem:Dedekind}
Let $K=k(T)$ and $f\in k[T,X]$ irreducible and monic in $X$.
If $P\in k[T]$ is a finite prime which is ramified in the splitting field of $f$ over $K$, then $P|\Disc(f)$. 
Moreover if $P$ divides $\Disc(f)$ exactly once, then in the extension $K(\alpha)/K$, where $\alpha$ is a root of $f$,
there is exactly one ramified prime lying over $P$, and its ramification index is 2. 
\end{lemma}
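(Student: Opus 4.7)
The plan is to reduce ramification of $P$ in the splitting field to ramification in the simpler subextension $K(\alpha)/K$ and then to carry out a purely local analysis at $P$.

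For the first assertion, let $M$ denote the splitting field of $f$ over $K$. Then $M$ is the compositum of the fields $K(\alpha_i)$ as $\alpha_i$ runs over the roots of $f$, and each $K(\alpha_i)$ is $K$-isomorphic to $K(\alpha)$. Since a compositum of separable extensions all unramified at $P$ is unramified at $P$, the prime $P$ ramifies in $M/K$ if and only if it ramifies in $K(\alpha)/K$. The standard local discriminant relation
$$v_P(\Disc(f))\;=\;2\,v_P\!\bigl([\mathcal{O}_{K(\alpha),P}:\mathcal{O}_{K,P}[\alpha]]\bigr)+v_P\!\bigl(\Disc(\mathcal{O}_{K(\alpha)}/\mathcal{O}_K)\bigr)$$
then shows that $\Disc(\mathcal{O}_{K(\alpha)}/\mathcal{O}_K)$ divides $\Disc(f)$ at $P$, so ramification of $P$ in $K(\alpha)/K$ forces $P\mid\Disc(f)$.

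For the second assertion, suppose $v_P(\Disc(f))=1$ and pass to the completion $K_P$. Factor $f=f_1\cdots f_r$ into monic irreducibles in $K_P[X]$; the primes $\mathfrak{q}_i$ of $K(\alpha)$ above $P$ correspond bijectively to the $f_i$, and the local completion at $\mathfrak{q}_i$ is $K_P[X]/(f_i)$. Iterating \eqref{eq:discprod} gives
$$v_P(\Disc(f))\;=\;\sum_{i=1}^r v_P(\Disc(f_i))+2\sum_{i<j}v_P\!\bigl(\Res(f_i,f_j)\bigr),$$
a sum of nonnegative integers equal to $1$. Hence the resultant sum vanishes and exactly one index $i_0$ satisfies $v_P(\Disc(f_{i_0}))=1$, while $v_P(\Disc(f_i))=0$ for every $i\neq i_0$. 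Translating to the local extensions, write $e_i,\phi_i$ for the ramification index and residue degree of $\mathfrak{q}_i/P$, $d_i$ for the valuation of the different, and $c_i\ge 0$ for the local conductor index; then $v_P(\Disc(f_i))=2c_i+\phi_i d_i$, with $d_i\ge e_i-1$ and strict inequality in the wildly ramified case. Thus $v_P(\Disc(f_i))=0$ forces $e_i=1$ (unramified), while $v_P(\Disc(f_{i_0}))=1$ forces $c_{i_0}=0$, $\phi_{i_0}=1$ and $d_{i_0}=1$, which is compatible only with tame ramification of index $e_{i_0}=2$, giving the unique ramified prime claimed.

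The principal obstacle is the local different-conductor bookkeeping in the last step, which must treat wild ramification and possible non-maximality of $\mathcal{O}_K[\alpha]$ at $P$ simultaneously. A clean shortcut is to observe at the outset that $v_P(\Disc(f))=1$ already forces $v_P([\mathcal{O}_{K(\alpha),P}:\mathcal{O}_{K,P}[\alpha]])=0$ (since $2c\le 1$), so Dedekind's theorem on the factorization of primes applies verbatim: the primes of $\mathcal{O}_{K(\alpha)}$ above $P$ correspond to the irreducible factors of $\bar f\in\kappa(P)[X]$ with ramification indices read off from their multiplicities, and the conclusion $e_{i_0}=2$, together with the uniqueness of the ramified prime, follows by a short inspection of the resulting discriminant contribution.
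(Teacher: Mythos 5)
Your proof is correct and follows essentially the route the paper itself indicates: the paper gives no argument but simply cites Rosen's book for this ``standard fact deduced from the properties of the different and its connection to the discriminant,'' and your local analysis (reduction to $K(\alpha)$, the index--discriminant relation $v_P(\Disc(f_i))=2c_i+\phi_i d_i$, and the bound $d_i\ge e_i-1$ with strict inequality in the wild case) is precisely that standard deduction, correctly yielding the unique tamely ramified prime of index $2$.
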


\begin{proof} 
This standard fact can be easily deduced from the properties of the different and its connection to the discriminant, see e.g. \cite[Proposition 7.9 and Corollary 7.10.2]{Ros02}.
\end{proof}

\subsection{Critical points and critical values}
Let $k$ be a perfect field
and let $w\in k(X)$ be a non-constant rational function. 
One can view $w$ also as a morphism $w\colon\P^1\to\P^1$ defined over $k$.
For convenience we will use $X$ for the variable in the domain of $w$, which we denote $\P^1_X$ and $U$ in the range of $w$, which we denote $\P^1_U$ and we adopt a similar convention for the affine line $\A^1=\P^1\setminus\{\infty\}$. A point $\alpha\in\P^1_X(\bar k)$ is called a \emph{critical point} of $w$ if $w$ is ramified at $\alpha$, 
i.e.~if the corresponding prime of $\bar{k}(X)$ is ramified in the extension of function fields $\bar{k}(X)/\bar{k}(U)$ induced by $w$. 
A point $\beta\in\P^1_U(\bar k)$ is called a \emph{critical value} for $w$ if there exists a critical point $\alpha\in\P^1_X(\bar k)$ such that $\beta=w(\alpha)$, or equivalently, if $w$ is branched over $\beta$. 

The following lemma gives a useful description of the critical points and values of a rational function in terms of derivatives and discriminants.

\begin{lemma}\label{lem:crit}
Let $w=\frac{f}{c}$ with $f,c\in k[X]$, $(f,c)=1$, be a non-constant rational function.
\begin{enumerate}
\item[(i)]
Let $\alpha\in\bar k\setminus w^{-1}(\infty)$,
let $e$ be the ramification index of $w$ at $\alpha$
and $\nu$ the order of the derivative $w'$ at $\alpha$.
Then $e=\nu+1$ if the ramification at $\alpha$ is tame,
and $e\leq\nu$ otherwise.
In particular, $\alpha$ is a critical point of $w$ iff $\alpha$ is a root of $f'c-fc'$.
\item[(ii)] Assume additionally that $\deg f>\deg c$, $f'\neq0$, and that $c$ is separable. 
Then the finite critical values of $w$ are the roots of $\Disc_X(f-Uc)$, more precisely
$$
\Disc_X(f(X)-Uc(X))=a\cdot\prod_{(f'c-fc')(\alpha)=0}(U-w(\alpha)),
$$
with $a\in k^\times$ and the product over the roots of $f'c-fc'$ in $\bar k$ with multiplicity.
\end{enumerate}
\end{lemma}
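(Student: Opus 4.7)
For (i), I would work locally at $\alpha$. Since $\alpha\notin w^{-1}(\infty)$ we have $c(\alpha)\neq 0$, so $\beta:=w(\alpha)\in\bar k$ is defined and $w-\beta$ lies in the completed local ring $\bar k[[t]]$ with uniformizer $t=X-\alpha$. By the very definition of ramification, the index $e$ of $w$ at $\alpha$ equals the $t$-adic order of $w-\beta$, so we may write $w-\beta=t^e u(t)$ for some $u\in\bar k[[t]]^\times$. Differentiating gives
$$
w' \;=\; e\,t^{e-1}u(t) + t^e u'(t).
$$
If $p\nmid e$, the first summand has coefficient $e\,u(0)\neq 0$ and exact order $e-1$, so $\nu=e-1$, i.e.\ $e=\nu+1$. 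If $p\mid e$, the first summand vanishes and $w'=t^e u'(t)$, so $\nu\geq e$. The ``in particular'' statement follows from $w'=(f'c-fc')/c^2$ and $c(\alpha)\neq 0$.

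For (ii), I set $F=f-Uc\in k[U][X]$ and $g=f'c-fc'\in k[X]$. The crucial identity, verified by a one-line expansion, is
$$
c\,F'_X - c'\,F \;=\; g,
$$
so $c\,F'_X\equiv g\pmod F$. Note that $\lc_X(F)=\lc(f)\in k^\times$ is a nonzero constant in $U$, and a short argument using $(f,c)=1$ together with $f'\neq 0$ shows both $g\neq 0$ and $(g,c)=1$ (any common zero of $g$ and $c$ would also have to be a zero of $f$ or a multiple root of $c$). Combining formula~(\ref{eq:disc}) expressing $\Disc_X(F)$ through $\Res_X(F,F'_X)$ with the bi-multiplicativity~(\ref{eq:resbimult}) applied to $\Res_X(F,cF'_X)=\Res_X(F,c)\,\Res_X(F,F'_X)$ and the reduction~(\ref{eq:resalt}) applied to $cF'_X\equiv g\pmod F$, I obtain
$$
\Disc_X(F)\;=\;a_0\cdot\frac{\Res_X(F,g)}{\Res_X(F,c)}
$$
for some $a_0\in k^\times$ (all the leading-coefficient contributions in $U$ reduce to powers of $\lc(f)\in k^\times$).

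It then remains to evaluate the two resultants as polynomials in $U$. Using the dual form $\Res_X(F,c)=\pm\,\lc(c)^{\deg_X F}\prod_{\gamma:\,c(\gamma)=0}F(\gamma)$ with $\gamma\in\bar k$, the fact that $F(\gamma)=f(\gamma)$ together with $(f,c)=1$ yields $\Res_X(F,c)\in k^\times$. For $\Res_X(F,g)$, the separability of $c$ and the coprimality $(g,c)=1$ ensure $c(\mu)\neq 0$ at every root $\mu\in\bar k$ of $g$, whence
$$
F(\mu)\;=\;f(\mu)-U\,c(\mu)\;=\;-c(\mu)\bigl(U-w(\mu)\bigr),
$$
so $\Res_X(F,g)$ equals a nonzero element of $k$ times $\prod_\mu(U-w(\mu))$, with $\mu$ running over the roots of $g$ counted with multiplicity. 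Assembling these yields the claimed identity with $a\in k^\times$. The main technical nuisance is bookkeeping the exponents of $\lc(F)$ and $\lc(F'_X)$ and the signs produced by~(\ref{eq:disc}) and~(\ref{eq:resalt}); however, since every such factor is a nonzero element of $k$ and the statement only requires $a\in k^\times$, there is no need to compute $a$ explicitly.
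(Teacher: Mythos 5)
Your proof is correct. Part (i) is essentially the paper's own argument (expand $w-\beta=(X-\alpha)^e u$ and differentiate), just phrased in the completed local ring. Part (ii), however, takes a genuinely different and cleaner route. The paper factors $f'-Uc'=h\cdot(u-Uv)$ with $h=(f',c')$, evaluates $\Res(h,f-Uc)$ and $\Res(u-Uv,f-Uc)$ separately (the latter via a further division by $\Res(u-Uv,v)$), and only at the end recombines the two products using $f'c-fc'=h\cdot(uc-fv)$ and the identity $f'(\alpha)/c'(\alpha)=w(\alpha)$ at roots of $uc-fv$. You instead exploit the one-line congruence $cF_X'\equiv f'c-fc'\pmod{F}$ to replace the $U$-dependent derivative by the $U$-free polynomial $g=f'c-fc'$ in a single application of (\ref{eq:resbimult}) and (\ref{eq:resalt}), paying only the harmless factor $\Res(F,c)\in k^\times$ (which your coprimality observation disposes of). This avoids the greatest-common-divisor bookkeeping entirely, uses the hypotheses ($c$ separable giving $(g,c)=1$, and $f'\neq 0$ giving $g\neq 0$) exactly where they are needed, and treats the case $\deg c=0$ uniformly, whereas the paper handles it as a separate easy case. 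Both arguments rest on the same resultant toolbox, so neither is more general, but yours is shorter and easier to check.
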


\begin{proof}
{\bf (i).} We identify $U=w(X)$, viewing $U$ as an element of $k(X)$. This gives the extension of function fields $k(X)/k(U)$ corresponding to the rational map $w$. Let $\alpha\in\bar k$ be such that $c(\alpha)\neq 0$ and denote $\beta=w(\alpha)$. Then $X-\alpha$ is a local parameter for $\P^1_X$ at the point $\alpha$ and $U-\beta$ is a local parameter for $\P^1_U$ at the point $\beta$. 
We have $U-\beta=w(X)-\beta=(X-\alpha)^eu(X)$ 
with $u\in k(X)$ which has no pole at $\alpha$. 
Taking derivatives we obtain
$$
 w'(X)=e(X-\alpha)^{e-1}u(X)+(X-\alpha)^eu'(X).
$$ 
As $\alpha$ is neither a pole nor a zero of $u$, and hence also not a pole of $u'$,
the multiplicity $\nu$ of the zero $\alpha$ of $w'$ 
is $\nu=e-1$ if ${\rm char}(k)\nmid e$, and $\nu\geq e$ if ${\rm char}(k)\mid e$.

{\bf (ii).} Denote $h=(f',c')\in\bar k[X]$ and write $f'=hu,c'=hv,u,v\in\bar k[X]$. 
As $f'\neq 0$, also $u\neq 0$.
If $\deg c=0$ the 
assertion of (ii) is immediate from (i) and (\ref{eq:disc2}), so let us assume $\deg c>0$ and then 
$c',v\neq 0$ by our assumption that $c$ is separable. Using the properties of resultants and 
discriminants (\ref{eq:resdefinition}),(\ref{eq:disc}),(\ref{eq:resbimult}),(\ref{eq:ressym}),(\ref{eq:resalt}) and using $\sim$ to denote
equality up to a non-zero multiplicative constant (in $k$),
we calculate:
\begin{eqnarray*}
\Disc_X(f-Uc)&\stackrel{(\ref{eq:disc})}{\sim}&\Res(f'-Uc',f-Uc)\\
&\stackrel{(\ref{eq:resbimult})}=& \Res(h,f-Uc)\cdot\Res(u-Uv,f-Uc)\\
&\stackrel{(\ref{eq:resbimult})}=&\Res(h,f-Uc)\cdot\frac{\Res(u-Uv,fv-Ucv)}{\Res(u-Uv,v)}\\
&\stackrel{(\ref{eq:ressym}),(\ref{eq:resalt})}\sim&\Res(h,f-Uc)\cdot\frac{\Res(u-Uv,fv-uc)}{\Res(u,v)}\\
&\stackrel{(\ref{eq:ressym})}\sim&\Res(h,f-Uc)\cdot\Res(uc-fv,u-Uv)\\
&\stackrel{(\ref{eq:resdefinition})}\sim&\prod_{h(\alpha)=0}\left(c(\alpha)U-f(\alpha)\right)\prod_{(uc-fv)(\alpha)=0}(u(\alpha)-Uv(\alpha))\\
&\sim&\prod_{h(\alpha)=0}\left(U-\frac{f(\alpha)}{c(\alpha)}\right)\prod_{(uc-fv)(\alpha)=0}\left(U-\frac{f'(\alpha)}{c'(\alpha)}\right)\\
&\sim&\prod_{(f'c-fc')(\alpha)=0}(U-w(\alpha)).
\end{eqnarray*}
In the last line we used the fact that for a root $\alpha$ of $(f'c-fc')/h$ we have $f'(\alpha)/c'(\alpha)=w(\alpha)$.
\end{proof}

\subsection{Monodromy of rational functions}\label{sec:monodromy}
The definitions and results in this subsection will be used only in Section \ref{sec:two}.
Let $k$ be a field of characteristic $p\geq0$. 
The {\em (arithmetic) monodromy} $\Mon(w)$ of a non-constant rational function $w\in k(X)$ 
for which the extension $k(X)/k(w)$ is separable
is the Galois group of the Galois closure of $k(X)/k(w)$.
When ambiguity about the base field may arise we write $\Mon_k(w)$. 
We usually view $\Mon(w)$ as a subgroup of $S_n$ with $n=\deg w$ via its action on the generic fiber of the corresponding morphism $w\colon \P^1\to\P^1$.
A rational function $w\in k(X)$ is called \emph{indecomposable} if it cannot be written as a composition $w=u\circ v$ with $u,v\in
k(X)$ where $\deg u,\deg v>1$.

\begin{lemma}\label{lem:indecomposable_primitive}
Let $w\in k(X)$ with $k(X)/k(w)$ separable.
Then $w$ is indecomposable if and only if
$\Mon(w)\leqslant S_n$ is primitive.
\end{lemma}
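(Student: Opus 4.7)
The plan is to pass through the Galois closure of $k(X)/k(w)$ and combine two classical correspondences: the Galois correspondence for intermediate fields, and L\"uroth's theorem for rational subfields of $k(X)$. Let $L$ be the Galois closure of the (separable) extension $k(X)/k(w)$ inside some algebraic closure, set $G=\Gal(L/k(w))=\Mon(w)$ and $H=\Gal(L/k(X))$. Then $[G:H]=[k(X):k(w)]=n$, and the transitive permutation action of $G$ on the coset space $G/H$ is equivalent to the monodromy action on the generic fiber of $w$ that defines the embedding $\Mon(w)\hookrightarrow S_n$.

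Recall the elementary group-theoretic fact that a transitive action on $G/H$ is primitive exactly when $H$ is a maximal subgroup of $G$. So the task reduces to showing: $w$ is decomposable if and only if there exists an intermediate subgroup $H\subsetneq M\subsetneq G$. By the fundamental theorem of Galois theory applied to the Galois extension $L/k(w)$, such $M$ correspond bijectively, via $M\mapsto L^M$, to intermediate fields $F$ with $k(w)\subsetneq F\subsetneq k(X)$.

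It remains to match these intermediate fields with nontrivial compositional factorisations of $w$. By L\"uroth's theorem, any field $F$ with $k\subsetneq F\subseteq k(X)$ is of the form $F=k(v)$ for some $v\in k(X)$; this applies to every intermediate field above, since $k(w)$ already has transcendence degree one over $k$. A strict chain $k(w)\subsetneq k(v)\subsetneq k(X)$ amounts to $w\in k(v)\setminus k$, so we may write $w=u(v)$ for some rational function $u$ with $\deg u=[k(v):k(w)]>1$ and $\deg v=[k(X):k(v)]>1$; this is exactly a nontrivial decomposition $w=u\circ v$. Conversely, any such decomposition supplies an intermediate field $k(v)$.

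Chaining the three equivalences yields the lemma. The only point requiring care is the bookkeeping that ``$\deg u,\deg v>1$'' in the definition of decomposability matches the strictness of both inclusions $k(w)\subsetneq k(v)$ and $k(v)\subsetneq k(X)$, which is immediate from the multiplicativity of degrees. There is no serious obstacle here: separability of $k(X)/k(w)$ ensures the Galois closure is well-defined and the Galois correspondence applies, and the rest is a translation of standard algebra.
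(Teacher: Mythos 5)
Your proof is correct. It is essentially the standard argument, and in fact it is more self-contained than what the paper writes: the paper proves the direction ``decomposable $\Rightarrow$ imprimitive'' by exhibiting the explicit block system $v^{-1}(\alpha_1),\dots,v^{-1}(\alpha_m)$ coming from a decomposition $w=u\circ v$, and for the converse it cites Fried's Lemma~2 together with a remark that the rational-function case follows from L\"uroth's theorem. You instead run both directions through a single chain of equivalences --- primitivity of the action on $G/H$ $\Leftrightarrow$ maximality of $H=\Gal(L/k(X))$ in $G=\Mon(w)$ $\Leftrightarrow$ absence of proper intermediate fields $k(w)\subsetneq F\subsetneq k(X)$ $\Leftrightarrow$ (via L\"uroth and multiplicativity of degrees) indecomposability of $w$ --- which is precisely the content of the cited reference. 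The only real payoff of the paper's explicit block construction is that it avoids invoking L\"uroth for the easy direction; your version buys a uniform, fully written-out proof at the cost of using the Galois correspondence for both implications. All the individual steps you use (identification of the coset action with the monodromy action, primitive $\Leftrightarrow$ point stabilizer maximal, L\"uroth over an arbitrary base field, and $\deg u=[k(v):k(u(v))]$, $\deg v=[k(X):k(v)]$) are standard and correctly applied, and separability is used exactly where needed, namely to form the Galois closure and apply the Galois correspondence.
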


\begin{proof}
If $w=u\circ v$ with $\deg u>1$, $\deg v>1$, 
and $\alpha_1,\dots,\alpha_m\in\overline{k(T)}$ are the roots of $u-T$,
then $v^{-1}(\alpha_1),\dots,v^{-1}(\alpha_m)$ is a nontrivial partition of the roots of $w-T$,
which is preserved by $\Mon(w)$.
The converse implication is proven for example in \cite[Lemma 2]{Fried} in the case of polynomials,
and the case of rational functions can be proven similarly using L\"uroth's theorem.
\end{proof}

\begin{definition}\label{def:ramification type}
Let $w\colon \P^1\to\P^1$ be a tamely ramified rational function defined over an algebraically closed field $k$. 
The \emph{ramification type} of $w$ (called \emph{passport} in \cite{AdZv15}) is the unordered tuple of partitions $(\lambda_1,\ldots,\lambda_r)$, where $b_1,\ldots,b_r$ are the distinct branch points of $w$ and $\lambda_i$ is the partition of $n=\deg w$ given by the ramification indices over $b_i$.
We write partitions of $n$ as $n_1^{e_1}\dots n_r^{e_r}$ with $0\leq n_1<\dots<n_r$ and $\sum_{i=1}^re_in_i=n$.
\end{definition} 
We recall that the inertia subgroup of $\Mon(w)\leqslant S_n$ corresponding to a branch point $b_i$ of 
the tamely ramified rational function $w$ is a cyclic subgroup of $S_n$ with generator of cycle type
precisely the partition of $n$ given by the ramification indices over $b_i$.
Moreover, $\Mon(w)$ is generated by the inertia subgroups.

\begin{lemma}\label{lem:sga} Let $w\colon\P_k^1\to\P_k^1$ be a tamely ramified rational function defined over an algebraically closed field $k$ with monodromy group $G\leqslant S_n$ and ramification type $\Pi=(\lambda_1,\ldots,\lambda_r)$. Then there exists a rational function $W\colon\P^1_\C\to\P^1_\C$ with the same monodromy and ramification type.\end{lemma}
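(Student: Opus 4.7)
The plan is to extract the monodromy data of $w$ in characteristic $p$, transfer it to characteristic $0$ via Grothendieck's comparison theorem for tame fundamental groups, and then realize the same data over $\C$ using the Riemann Existence Theorem (Theorem~\ref{thm:RET}).

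First I would extract the monodromy data. Let $B=\{b_1,\dots,b_r\}\subset\P^1_k$ be the set of branch points of $w$ and let $H\leqslant G$ be the stabilizer of a geometric point in the generic fibre, so that $[G:H]=n$ and the given inclusion $G\hookrightarrow S_n$ is identified with the action of $G$ on $G/H$. The Galois closure of $w$ is unramified outside $B$ and is tame (tameness is preserved under Galois closure), hence corresponds to a continuous surjection $\phi\colon\pi_1^t(\P^1_k\setminus B)\twoheadrightarrow G$. For each $i$, a topological generator of the inertia at $b_i$ maps under $\phi$ to an element $g_i\in G$ whose cycle structure on $G/H$ is exactly $\lambda_i$, since the ramification indices of $w$ over $b_i$ are precisely the orbit lengths of $g_i$ on $G/H$; tameness moreover forces the order of each $g_i$ to be coprime to $p=\mathrm{char}(k)$.

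Next I would invoke Grothendieck's theorem on tame fundamental groups of punctured curves (SGA 1, Expos\'e XIII). For any choice of points $B'=\{b_1',\dots,b_r'\}\subset\P^1_\C$ of cardinality $r$ it supplies a surjection
$$
\pi_1^{\mathrm{top}}(\P^1_\C\setminus B')^{\wedge}\;\twoheadrightarrow\;\pi_1^t(\P^1_k\setminus B)
$$
of profinite groups which sends a standard loop $\gamma_i$ around $b_i'$ to a topological generator of the inertia at $b_i$. Composing with $\phi$ and using the presentation $\pi_1^{\mathrm{top}}(\P^1_\C\setminus B')=\langle\gamma_1,\dots,\gamma_r\mid\gamma_1\cdots\gamma_r=1\rangle$, one gets a surjective homomorphism $\psi$ of this discrete group onto $G$ with each $\psi(\gamma_i)$ a $G$-conjugate of $g_i$. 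Since conjugation inside $G\leqslant S_n$ is conjugation inside $S_n$ and therefore preserves cycle type, each $\psi(\gamma_i)$ still has cycle type $\lambda_i$. The Riemann Existence Theorem in its monodromy-datum form then associates to the pair $(\psi,H)$ a connected degree $n$ cover $W\colon\P^1_\C\to\P^1_\C$ branched exactly over $B'$, with arithmetic monodromy $\mathrm{Mon}(W)=\mathrm{im}(\psi)=G\leqslant S_n$ and inertia generator at each $b_i'$ of cycle type $\lambda_i$, giving $W$ the same monodromy and ramification type as $w$.

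The main obstacle is the second step: one has to cite Grothendieck's specialization theorem in precisely the formulation that identifies the standard topological loops $\gamma_i$ in characteristic zero with topological generators of the inertia subgroups in characteristic $p$. It is this matching of inertia generators, rather than merely the existence of a surjection of fundamental groups, that ensures the preservation of cycle types across characteristics; once this is in hand, the rest of the argument is formal.
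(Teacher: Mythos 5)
Your proposal follows essentially the same route as the paper: use Grothendieck's comparison/specialization theorem for the tame fundamental group of the punctured line (SGA~1, Expos\'e~XIII) to transport the generating tuple $(\sigma_1,\dots,\sigma_r)$ with $\prod\sigma_i=1$ and cycle types $\lambda_i$ to characteristic zero, then apply the Riemann Existence Theorem. The only step you assert without justification is that the resulting cover has source $\P^1_\C$ rather than a curve of higher genus; the paper closes this by noting that in the tame case the Riemann--Hurwitz formula computes the genus of the covering curve purely from the ramification type, and the same computation applied to $w$ (whose source is $\P^1_k$) forces that genus to be $0$.
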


\begin{proof} This is essentially a direct consequence of the comparison theorem for the \'etale 
fundamental group \cite[\S XIII, Corollaire 2.12]{GrRa71}. We use the terminology and notation of 
\cite[\S XIII]{GrRa71}. Let $b_1,\ldots,b_r\in\P^1(k)$ be the branch points of $w$ and denote $U=
\P^1\setminus\{b_1,\ldots,b_r\}$. By \cite[\S XIII, Corollaire 2.12]{GrRa71} there exist elements 
$x_i\in\pi_1^t(U),1\le i\le r$ such that $x_1,\ldots,x_r$ generate $\pi_1^t(U)$, $\prod_{i=1}^rx_i=1$ and each $x_i$ generates a tame inertia group at $b_i$. 

The map $w$ induces a finite \'etale cover $V=w^{-1}(U)\to U$ and therefore (using the assumption that $w$ is tamely ramified) a surjection $\delta\colon\pi_1^t(U)\twoheadrightarrow G$. Denote $\sigma_i=\delta(x_i)$. The elements $\sigma_1,\ldots,\sigma_r$ generate $G$, satisfy $\prod_{i=1}^r\sigma_i=1$ and each $\sigma_i\in G\leqslant S_n$ has cycle structure $\lambda_i$. Now by the Riemann Existence Theorem (see e.g. \cite[Corollary 5.11]{vol96}) there exists a finite irreducible cover $W\colon Y\to \P^1_\mathbb C$ with monodromy group $G$ and ramification type $\Pi$. The genus of $Y$ can be determined by the Riemann-Hurwitz formula from its ramification type and is therefore 0 (since the same calculation gives the genus of the domain of $w$, since $w$ is tamely ramified).
\end{proof}

\begin{lemma}\label{lem:primitive}
Let $w\colon\mathbb{P}^1\rightarrow\mathbb{P}^1$ 
be a rational function of degree $n>1$ over the algebraically closed field $k$
with monodromy group $\Mon(w)\leqslant S_n$.
Assume that $w$ has at most two critical values $a,b$,
and that $w^{-1}(a)$ contains only one critical point $a'$.
If the ramification index of $a'$ is prime, and all ramification over $b$ is 
tame, then $\Mon(w)$ is primitive.
\end{lemma}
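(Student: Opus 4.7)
By Lemma~\ref{lem:indecomposable_primitive} it suffices to show that $w$ is indecomposable. So I argue by contradiction: assume $w=u\circ v$ for some $u,v\in k(X)$ with $\deg u,\deg v>1$, and derive an impossibility by applying Riemann--Hurwitz to $u$ after severely restricting where and how $u$ can ramify.

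\textbf{Step 1: $u$ is unramified above $a$.} For any $x\in w^{-1}(a)$ the multiplicativity of ramification indices in the tower gives $e_w(x)=e_v(x)\cdot e_u(v(x))$. Suppose, towards contradiction, that some $c\in u^{-1}(a)$ satisfies $e_u(c)>1$. Then for every $x\in v^{-1}(c)$ the product $e_v(x)\cdot e_u(c)$ exceeds $1$, so every point of $v^{-1}(c)$ is a critical point of $w$ lying above $a$. Since $a'$ is the unique such point, one deduces $v^{-1}(c)=\{a'\}$, whence $e_v(a')=\deg v>1$. But then $e=e_w(a')=\deg v\cdot e_u(c)$ realises the prime $e$ as a product of two integers both $>1$, which is absurd. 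Hence $e_u(c)=1$ for every $c\in u^{-1}(a)$.

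\textbf{Step 2: $u$ is tamely ramified, and only above $b$.} Any critical value of $u$ is forced to be a critical value of $w$: if $e_u(c)>1$ then any $x\in v^{-1}(c)$ satisfies $e_w(x)>1$, and $w(x)=u(c)$. So the critical values of $u$ lie in $\{a,b\}$, and by Step~1 only $b$ can actually occur. Moreover the ramification of $u$ above $b$ is tame, because for $x$ above $b$ the product $e_v(x)\cdot e_u(v(x))=e_w(x)$ is coprime to $\mathrm{char}(k)$ by hypothesis, so each factor is. Now apply Riemann--Hurwitz to the separable cover $u\colon\P^1\to\P^1$:
\[
2\deg u-2 \;=\; \deg R_u \;=\; \sum_{c\in u^{-1}(b)}(e_u(c)-1) \;=\; \deg u - |u^{-1}(b)| \;\le\; \deg u - 1,
\]
which forces $\deg u\le 1$, contradicting $\deg u>1$. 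Therefore $w$ admits no such nontrivial decomposition, so $\Mon(w)$ is primitive by Lemma~\ref{lem:indecomposable_primitive}.

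\textbf{Main obstacle.} The crux of the argument is Step~1: the conjunction of the hypotheses ``only one critical point over $a$'' and ``its ramification index is prime'' is precisely what is needed to prevent both $u$ and $v$ from contributing simultaneously to $e_w(a')$. If $e$ were composite, one could split $e=e_v(a')\cdot e_u(v(a'))$ non-trivially and the obstruction disappears; if there were a second critical point above $a$ one loses the totality of ramification of $v$ at $a'$. Once Step~1 is established, pushing all ramification of $u$ onto a single tame branch point $b$ and invoking Riemann--Hurwitz is routine.
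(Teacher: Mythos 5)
Your proof is correct and follows essentially the same route as the paper's: the paper phrases the argument in terms of an intermediate field $k(T)\subseteq M\subsetneq k(X)$ (equivalent to your decomposition $w=u\circ v$ via L\"uroth and the Galois correspondence), uses the primality of the ramification index at $a'$ to show $M/k(T)$ is unramified over $a$ exactly as in your Step~1, and then kills $M/k(T)$ by Riemann--Hurwitz exactly as in your Step~2. The only cosmetic difference is that you route the conclusion through Lemma~\ref{lem:indecomposable_primitive} while the paper directly invokes ``minimal extension $\Rightarrow$ maximal subgroup $\Rightarrow$ primitive''; both are fine (and the separability of $u$, which you assert when applying Riemann--Hurwitz, is immediate since $k(v)/k(w)$ sits inside the separable extension $k(X)/k(w)$).
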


\begin{proof}
Let $L$ be the Galois closure of the extension $k(X)/k(T)$ induced by $w$.
Let $k(T)\subseteq M\subsetneq k(X)$ be an intermediate extension. 
Since there is only one ramified prime of $k(X)$ over $T=a$ and it has prime ramification index, 
the extension $M/k(T)$ has to be unramified over $T=a$. 
However, it is also tamely ramified over $T=b$ and unramified elsewhere, 
hence $M=k(T)$ by the Riemann-Hurwitz formula.
So $k(X)/k(T)$ is a minimal proper extension,
hence by Galois theory $\Gal(L/k(X))$ is maximal in $\Gal(L/k(T))$,
which means that $\Mon(w)=\Gal(L/k(T))\leqslant S_n$ is primitive.
\end{proof}

\section{Abelian groups}
\label{sec:abelian}
\label{sec:generators}

\noindent
In this section we first prove the group-theoretic fact mentioned in Remark \ref{rem:joachim} 
and then verify Conjecture \ref{conj} for abelian groups. 

If $G$ is a group and $a\in G$ is an element we denote by $a^G$ the conjugacy class of $a$. For a subset $S\subseteq G$ we denote $S^G=\bigcup_{a\in S}a^G$.
Recall that for a finite group $G$ we denote by $p(G)$ the subgroup generated by its $p$-Sylow subgroups.
 A \emph{cyclic-by-$p$} group is an extension of a cyclic group by a $p$-group. 

\begin{lemma}\label{lem:gen_by_cyc} 
Let $G$ be a finite group. There exist $x_1,\ldots,x_r\in G$ with $r=\max\{d(G^\ab),1\}$ such that $G=\langle x_1^G,\ldots,x_r^G\rangle$.\end{lemma}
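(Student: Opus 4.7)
My plan is to prove the lemma by induction on $|G|$. The lower bound is immediate: conjugate elements share an image in $G^{\mathrm{ab}}$, so any normal generating tuple of $G$ projects to a generating tuple of $G^{\mathrm{ab}}$, giving $r \geq d(G^{\mathrm{ab}})$; and $r \geq 1$ trivially for nontrivial $G$. For the upper bound, set $r = \max\{d(G^{\mathrm{ab}}),1\}$; the base $|G|=1$ is trivial (take $x_1 = 1$). For $|G| > 1$, pick a minimal normal subgroup $N \trianglelefteq G$ and split into two cases according to whether $N \subseteq [G,G]$.

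\emph{First case: $N \subseteq [G,G]$.} Then $(G/N)^{\mathrm{ab}} = G^{\mathrm{ab}}$, and the induction hypothesis applied to $G/N$ produces $\bar{x}_1,\ldots,\bar{x}_r \in G/N$ normally generating $G/N$. Lift to $x_1,\ldots,x_r \in G$ and let $K = \langle x_1^G,\ldots,x_r^G\rangle$. Then $KN = G$, and $K \cap N \in \{1,N\}$ by minimality, so either $K = G$ or $G = K \times N$ (both normal with trivial intersection, hence $[K,N]=1$). The second possibility is vacuous when $N$ is abelian, since then $N \subseteq [G,G] = [K,K] \subseteq K$ forces $N = 1$. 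When $N = S^k$ is nonabelian (with $S$ a nonabelian simple group), I would modify $x_1$ to $x_1 n$ for any $n \in N \setminus \{1\}$; the identity $(x_1,n)\bigl((x_1,n)^{(1,m)}\bigr)^{-1} = (1,[n,m])$ in $G = K \times N$ shows the new normal closure $J$ satisfies $J \cap N \supseteq [n,N]$. Since $Z(N) = 1$ this is nontrivial, so by minimality $J \cap N = N$, whence $J = G$.

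\emph{Second case: $N \not\subseteq [G,G]$.} Here $N$ must be abelian (nonabelian minimal normals are perfect, hence inside $[G,G]$), and minimality forces $N \cap [G,G] = 1$, so $N$ embeds in $G^{\mathrm{ab}}$ as $\mathbb{F}_p^k$. Writing $r' = \max\{d((G/N)^{\mathrm{ab}}),1\}$ one has $r' \leq r \leq r' + k$. Apply induction to $G/N$, lift the resulting generators, and inspect the normal closure $K$. If $N \subseteq K$, pad with copies of the identity to reach $r$ classes. Otherwise $G = K \times N$ with $N$ central, and I would replace the lifts $x_i$ by $x_i n_i$ and adjoin additional generators $m_1,\ldots,m_{r-r'} \in N$. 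A commutator computation using $(x_i^k, n_i)(x_i, n_i)^{-1} = ([k,x_i], 0)$ shows that the resulting normal closure $J$ automatically contains $[K,K] \times 1 = [G,G]$, so $J = G$ will follow once we arrange that the images $(\bar{x}_i, n_i)$ and $(0, m_j)$ generate $G^{\mathrm{ab}} = K^{\mathrm{ab}} \times N$.

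The main obstacle is this last verification. For primes $\ell \neq p$ the $\bar{x}_i$ already generate $(K^{\mathrm{ab}})_\ell$, so the question is purely about the $p$-part: one must choose the $n_i$ so that the image of the map $\ker(\mathbb{Z}^{r'} \twoheadrightarrow K^{\mathrm{ab}}) \otimes \mathbb{F}_p \to N$ given by $(a_i) \mapsto \sum a_i n_i$, together with $\langle m_1,\ldots,m_{r-r'}\rangle$, covers all of $N$. A dimension count over $\mathbb{F}_p$ shows the reachable dimension is at most $r - d((K^{\mathrm{ab}})_p)$, and this is $\geq k$ precisely because $r = d(K^{\mathrm{ab}} \times N) \geq d((K^{\mathrm{ab}})_p) + k$ is built into the definition of $r$. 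So suitable $n_i, m_j$ exist, completing the induction.
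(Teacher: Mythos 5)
Your proof is correct, but it is worth noting that the paper does not actually prove this lemma at all: it simply cites Kutzko's theorem on groups of finite weight \cite{Kut76} (see also \cite{Bae64}). What you have written is a complete, self-contained argument, and it is essentially a reconstruction of the standard inductive proof behind that citation: induct on $|G|$ via a minimal normal subgroup $N$, split according to whether $N\subseteq[G,G]$, and in each case either absorb $N$ into the normal closure of the lifted generators or, when the closure $K$ splits off as a direct factor, perturb the generators by elements of $N$. I checked the delicate points: in the nonabelian-$N$ subcase of your first case, the commutator identity together with $Z(N)=1$ and minimality does force $J\cap N=N$; in your second case, the normal closure of the elements $x_i^g x_i^{-1}$ is exactly $[K,\langle x_1^K,\ldots,x_{r'}^K\rangle]=[K,K]=[G,G]$, so the problem correctly reduces to generating $G^{\mathrm{ab}}=K^{\mathrm{ab}}\times N$; and the final counting argument works because $r=d(K^{\mathrm{ab}}\times N)\geq d((K^{\mathrm{ab}})_p)+k$, so after using $d((K^{\mathrm{ab}})_p)$ of the lifts to hit a basis of $(K^{\mathrm{ab}})_p/\Phi$, the remaining $r-d((K^{\mathrm{ab}})_p)\geq k$ free parameters among the $n_i$ and $m_j$ suffice to cover $N$. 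One small wording quibble: ``the reachable dimension is at most $r-d((K^{\mathrm{ab}})_p)$'' should say that the reachable dimension \emph{can be made equal to} $\min\{k,\,r-d((K^{\mathrm{ab}})_p)\}$; the inequality you then invoke shows this equals $k$, which is what you need. The payoff of your route is a proof from scratch (including the fact that every nontrivial perfect group is normally generated by one element, which falls out of your first case); the payoff of the paper's route is brevity.
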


\begin{proof} 
This is a special case of the main theorem in \cite{Kut76},
see also \cite{Bae64}.
\end{proof}

\begin{proposition}\label{prop:gen_by_cyc_p} 
Let $G$ be a finite group and $p$ a prime number. There exist subgroups $Q_1,\ldots,Q_r$ of $G$ such that $r=\max\{d((G/p(G))^\ab),1\}$, each $Q_i$ is cyclic-by-$p$ and $G=\langle Q_1^G,\ldots,Q_r^G\rangle$.
\end{proposition}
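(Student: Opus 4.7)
The strategy is to apply Lemma~\ref{lem:gen_by_cyc} to the quotient $\bar G := G/p(G)$ and then lift the resulting generators back to $G$ in a way that exposes the cyclic-by-$p$ structure. Since $p(G)$ contains every $p$-Sylow of $G$, the order of $\bar G$ is coprime to $p$. Applying Lemma~\ref{lem:gen_by_cyc} to $\bar G$ produces elements $\bar x_1,\ldots,\bar x_r \in \bar G$ with $r=\max\{d(\bar G^{\ab}),1\}$ whose conjugacy classes $\bar x_i^{\bar G}$ generate $\bar G$.

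The key step is the choice of lifts. The plan is to fix a $p$-Sylow subgroup $S$ of $G$; since $[G:p(G)]$ is coprime to $p$, $S$ is also a $p$-Sylow of the normal subgroup $p(G)$. The Frattini argument then yields $G = p(G)\cdot N_G(S)$, so the natural map $N_G(S)\to \bar G$ is surjective, and we may select preimages $h_i \in N_G(S)$ of the $\bar x_i$. I would then define
\[
Q_i \;:=\; \langle h_i, S\rangle \;\leqslant\; N_G(S).
\]
Because $h_i$ normalizes $S$, the $p$-group $S$ is normal in $Q_i$ with $Q_i/S = \langle h_i S\rangle$ cyclic, so $Q_i$ is cyclic-by-$p$ in the paper's sense.

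It then remains to check that $H := \langle Q_1^G,\ldots,Q_r^G\rangle$ equals $G$. Every $Q_i$ contains $S$, so $H$ contains the union of all $G$-conjugates of $S$, which by the definition of $p(G)$ generates $p(G)$. Modulo $p(G)$, the classes $Q_i^G$ project onto $\bar x_i^{\bar G}$, and these together generate $\bar G$ by construction; hence $H$ maps onto $\bar G$ while containing $p(G)$, forcing $H=G$.

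The step I expect to do the real work is choosing the lifts correctly: a naive lift $h_i\in G$ would only present $\langle h_i, p(G)\rangle$ as an extension involving the whole of $p(G)$, which need not be a $p$-group (for instance $p(A_5)=A_5$ when $p=2$). Invoking the Frattini argument to push the lifts into $N_G(S)$ replaces $p(G)$ by the genuine $p$-group $S$, and this is exactly what turns each $Q_i$ into a cyclic-by-$p$ subgroup while still covering $\bar G$ after projection; no other step beyond this requires more than bookkeeping.
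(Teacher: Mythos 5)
Your proof is correct and follows essentially the same route as the paper: apply Lemma~\ref{lem:gen_by_cyc} to $G/p(G)$, use the Frattini argument to move the lifts into $N_G(S)$ for a Sylow $p$-subgroup $S$, set $Q_i=\langle h_i\rangle S$, and recover $p(G)=\langle S^G\rangle$ from the conjugates of $S$. The only cosmetic difference is that the paper treats the case $p\nmid |G|$ separately, whereas your argument handles it uniformly (there $S=1$ and the $Q_i$ are cyclic).
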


\begin{proof} 
Let $H=G/p(G)$, so that $r=\max\{d(H^\ab),1\}$. 
By Lemma~\ref{lem:gen_by_cyc} there exist $y_1,\ldots,y_r\in H$ with $H=\langle y_1^H,\ldots,y_r^H\rangle$. 
Let $P\leqslant G$ be a $p$-Sylow subgroup. If $P=1$, i.e. $(|G|,p)=1$, then the claim holds with $Q_i=\langle y_i\rangle$, so we may assume $P\neq 1$. By Frattini's argument \cite[Theorem 3.7]{Gor80} we have $p(G)N_G(P)=G$, so we can choose $x_1,\ldots,x_r\in N_G(P)$ such that $y_i = p(G)x_i$.
Denote $Q_i=\langle x_i\rangle P$. Each $Q_i$ is a subgroup of $G$ since $x_i\in N_G(P)$, it is cyclic-by-$p$ since it is an extension of $\langle x_i\rangle/(\langle x_i\rangle\cap P)$ by $P$, and finally since
$p(G)=\langle P^G\rangle$ and $G=\langle p(G),x_1^G,\ldots,x_r^G\rangle$ we have $G=\langle P^G,x_1^G,\ldots,x_r^G\rangle=\langle Q_1^G,\ldots Q_r^G\rangle$, as required.
\end{proof}

\begin{theorem}\label{thm:abelian}
Conjecture \ref{conj} holds for $G$ abelian.
\end{theorem}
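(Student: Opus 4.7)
The claim is an equality, so the plan is to prove the two matching bounds, setting $r_0=\max\{d(G/p(G)),1\}$ (noting $G^{\rm ab}=G$ since $G$ is abelian).

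For the lower bound $\ram_{\mathbb{F}_q(T)}(G)\geq r_0$ I would follow the inertia-theoretic argument indicated in Remark~\ref{rem:joachim}. Given a geometric $G$-extension $L/\mathbb{F}_q(T)$ ramified at primes $P_1,\ldots,P_r$, each inertia group $I_{P_i}\leqslant G$ is cyclic-by-$p$ by \cite[Corollary IV.2.4]{Ser79}, so its image in $G/p(G)$ is cyclic. Since $\mathbb{P}^1_{\mathbb{F}_q}$ has trivial geometric étale fundamental group (equivalently, Riemann--Hurwitz forbids unramified covers of a genus-$0$ curve), the inertia groups jointly generate $G$; their images therefore generate $G/p(G)$, forcing $r\geq d(G/p(G))$, and the same input gives $r\geq 1$ whenever $G\neq 1$.

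For the upper bound I would explicitly construct an extension attaining $r_0$ ramified primes. Decompose $G=H\times P$ where $P=p(G)$ is the Sylow $p$-subgroup and $|H|$ is coprime to $p$, and write $H\cong C_{n_1}\times\cdots\times C_{n_d}$ with $d=d(H)=d(G/p(G))$. For the tame factor I would use Carlitz cyclotomic function fields: for each $i$, an elementary multiplicative-order calculation yields $m_i$ with $n_i\mid(q^{m_i}-1)/(q-1)$, and I choose distinct monic irreducibles $P_i\in\mathbb{F}_q[T]$ of degree $m_i$. The extension $\mathbb{F}_q(T)(\Lambda_{P_i})/\mathbb{F}_q(T)$ is a geometric cyclic extension with group $\mathbb{F}_{q^{m_i}}^\times$, totally ramified at $P_i$, tamely ramified at $\infty$ with inertia $\mathbb{F}_q^\times$, and unramified elsewhere; the unique degree-$n_i$ subfield $K_i$ inside the fixed field of that inertia at $\infty$ is therefore a cyclic $C_{n_i}$-extension ramified only at $P_i$, and $K_1\cdots K_d$ realizes $H$ with ramification locus $\{P_1,\ldots,P_d\}$. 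For the wild factor I would use Artin--Schreier--Witt theory to build a geometric $P$-extension $M$ ramified only at $P_1$ (or only at $\infty$ when $d=0$, in which case $r_0=1$). Since $(|H|,|P|)=1$, the compositum $K_1\cdots K_d\cdot M$ has Galois group $G$; it is geometric because the inertia at the $P_i$ already generates $G$; and it is ramified at exactly $r_0$ primes.

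The main obstacle is bookkeeping rather than conceptual depth: verifying the existence of $m_i$ with $n_i\mid(q^{m_i}-1)/(q-1)$ (a short reduction to prime-power $n_i$ via the lifting-the-exponent lemma) and correctly invoking the standard ramification properties of Carlitz cyclotomic extensions and of Artin--Schreier--Witt extensions with prescribed local behavior.
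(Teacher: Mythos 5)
Your proof is correct and follows essentially the same route as the paper's: the lower bound comes from the fact that the (cyclic-by-$p$) inertia groups must generate $G$, and the upper bound from class field theory --- your Carlitz cyclotomic fields and Artin--Schreier--Witt extensions ramified at a single prime are exactly the tame and wild parts of the ray class groups $R_i^\times/\F_q^\times \cong C_{(q^{\deg P_i}-1)/(q-1)}\times \Z_p^{\mathbb N}$ that the paper uses. The only cosmetic difference is in the lower bound, where the paper reduces to the prime-to-$p$ case and invokes the tame relation at infinity so that the finite inertia groups alone generate, while you count all ramified primes (including $\infty$) at once; both arguments are sound.
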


\begin{proof}
	Let $q=p^\nu$ and let $G$ be a nontrivial finite abelian group.
	The claim is that $\ram_{\mathbb{F}_q(T)}(G)=1$ if $G$ is a $p$-group and $\ram_{\mathbb{F}_q(T)}(G)=d(G/p(G))$ otherwise.
	
	We first prove the lower bound.
	Let $L/\FF_q(T)$ be an abelian Galois extension with Galois group $G$ and $\FF_q$ algebraically closed in $L$.
	If $G$ is a $p$-group, the lower bound $\ram_{\mathbb{F}_q(T)}(G)\geq 1$ follows from the Riemann-Hurwitz formula.
   If $G$ is not a $p$-group, then $G/p(G)$ is nontrivial, 
   and since $\ram_{\mathbb{F}_q(T)}(G)\geq\ram_{\mathbb{F}_q(T)}(G/p(G))$ we can assume that $p(G)=1$. 
    Let $P_1,\ldots, P_k$ be all the finite primes of $\mathbb{F}_q(T)$ ramified in $L$, and let $I_1,\ldots, I_k$ be the corresponding inertia groups (note that $I_i$ are well defined since the extension is abelian). Then each $I_i$ is cyclic, as $P_i$ is tamely ramified in $L$. 
As also the infinite prime of $\FF_q(T)$ is tamely ramified in $L$,
$G=\left< I_1,\ldots, I_k\right>$, cf.~\cite[Proposition 4.4.6]{Serre}.
In particular, $\ram_{\mathbb{F}_q(T)}(G)\geq k\geq d(G)$.

	To show that the lower bound is tight, we construct a suitable Galois extension. 
	Write $G= G/p(G) \times p(G)$, let $k$ and $p^\alpha$ be the exponents of $G/p(G)$ respectively $p(G)$,
	and let $d=\max\{d(G/p(G)),1\}$. 
	Choose distinct primes $P_1, \ldots, P_d$ such that $k(q-1)\mid q^{\deg P_i}-1$ (i.e.\ such that the multiplicative order of $q$ modulo $k(q-1)$ divides $\deg P_i$). 
	Let $R_i$ be the completion of $\FF_q[T]$ at $P_i$ and $E_i$ its fraction field.  
	By \cite[Proposition~II.5.7]{Neukirch}, $E_i^\times = \left<\pi\right> \times(\mathbb{F}_{q^{\deg P_i}})^\times \times G_i$, where 
	$\pi$ is a uniformizer and
	\[
		G_i = 1+ \pi\FF_{q^{\deg P_{i}}}[[\pi]] \cong \ZZ_p^{\mathbb{N}}.
	\]
In particular,  $R_i^\times = (\mathbb{F}_{q^{\deg P_i}})^\times \times G_i$. 
	By class field theory (take inverse limit in \cite[Proposition~2.2, Theorem~2.3, and Theorem~3.2]{Hayes}) the maximal abelian extension of $\FF_q(T)$ which is regular over $\FF_q$, tamely ramified at infinity, and unramified outside $\{P_i,\infty\}$ has Galois group $R_i^\times$ and the tame inertia group at $\infty$ corresponds to $\mathbb{F}_{q}^\times$. Hence $R_i^\times/\mathbb{F}_q^\times$ is the Galois group of the maximal abelian extension that is regular over $\FF_q$ and unramified outside $\{P_i\}$. As $\mathbb{F}_q(T)$ has no unramified extensions regular over $\mathbb{F}_q$, those extensions for different $i$ are linearly disjoint, so the Galois group of the maximal abelian extension that is regular over $\FF_q$ and unramified outside $\{P_1,\ldots, P_d\}$ is isomorphic to 
	\[
		\prod_{i=1}^d R_i^\times/\mathbb{F}_q^\times \cong \prod_{i=1}^d C_{(q^{\deg P_i}-1)/(q-1)} \times \ZZ_p^{\mathbb{N}}.
	\]
	By the assumption that $k\mid (q^{\deg P_i}-1)/(q-1)$, we get that $G/p(G)$ is a quotient of $ \prod_i C_{(q^{\deg P_i}-1)/(q-1)} $ (this is vacantly true if $G/p(G)=1$). Obviously, $p(G)$ is a quotient of $\ZZ_p^{\mathbb{N}}$. Hence $G=G/p(G)\times p(G)$ is a quotient of $\prod_i R_i^\times/\mathbb{F}_q^\times$. This implies we can realize $G$ as the Galois group of a geometric Galois extension of $\mathbb{F}_q(T)$ unramified outside $\{P_1,\ldots, P_d\}$, as needed.
\end{proof}

\begin{corollary}\label{cor:lowerbound}
For every prime power $q=p^\nu$ and every nontrivial finite group $G$,
$$
 \ram_{\mathbb{F}_{q}(T)}(G) \geq \max\left\{d((G/p(G))^{\rm ab}),1\right\}.
$$
\end{corollary}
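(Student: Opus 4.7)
The plan is to reduce the statement for an arbitrary nontrivial finite group $G$ to the abelian case already established in Theorem~\ref{thm:abelian}. The key observation is that $H:=(G/p(G))^{\rm ab}$ is a quotient of $G$, so any geometric Galois extension $L/\mathbb{F}_q(T)$ with group $G$ contains a uniquely determined intermediate extension $L^N/\mathbb{F}_q(T)$ (where $N$ is the kernel of $G\twoheadrightarrow H$) with Galois group $H$.

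First I would check that $L^N/\mathbb{F}_q(T)$ inherits the geometric property: since $\mathbb{F}_q$ is algebraically closed in $L$, it is a fortiori algebraically closed in any subfield of $L$ containing $\mathbb{F}_q(T)$, in particular in $L^N$. Next, any prime of $\mathbb{F}_q(T)$ that is unramified in $L$ remains unramified in the subextension $L^N$, so
\[
\ram_{\mathbb{F}_q(T)}(G)\;\geq\;\ram_{\mathbb{F}_q(T)}(H).
\]

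The second step is to invoke Theorem~\ref{thm:abelian} for $H$, which is abelian. One must observe that $p(H)=1$: indeed, $p(G)$ contains a $p$-Sylow subgroup of $G$, so $G/p(G)$ has order coprime to $p$, and hence so does its abelianization $H$. Therefore Theorem~\ref{thm:abelian} yields
\[
\ram_{\mathbb{F}_q(T)}(H)\;=\;\max\{d(H/p(H)),1\}\;=\;\max\{d(H),1\}\;=\;\max\{d((G/p(G))^{\rm ab}),1\},
\]
which combined with the previous inequality gives the claim. If $H$ is trivial, the bound reduces to $\ram_{\mathbb{F}_q(T)}(G)\geq 1$, which still holds since a nontrivial geometric extension of $\mathbb{F}_q(T)=\mathbb{F}_q(\mathbb{P}^1)$ must be ramified somewhere by Riemann--Hurwitz.

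There is no real obstacle here --- the corollary is essentially a formal consequence of Theorem~\ref{thm:abelian} combined with the fact that passing to a quotient of the Galois group cannot increase the number of ramified primes. The only points requiring a sentence of verification are the geometricity of the intermediate extension and the identity $p(H)=1$.
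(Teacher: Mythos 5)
Your proof is correct and is exactly the argument the paper intends: the corollary is stated without proof, but the introduction explains that it follows from the abelian case (Theorem~\ref{thm:abelian}) by passing to the quotient $(G/p(G))^{\rm ab}$, precisely as you do, and your verifications of the geometricity of the subextension, of $p(H)=1$, and of the trivial-quotient case via Riemann--Hurwitz are all the right points to check.
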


\begin{remark}
For $K=\mathbb{F}_p(T)$, a conjecture 
was posed in \cite[Conjecture 1.1]{DeWitt} and claimed to be proven
for abelian finite groups,
namely that
$$
 \ram_{\mathbb{F}_p(T)}(G) = \begin{cases} d((G/p(G))^{\rm ab})+1,&\mbox{if }p\,|\,|G^{\rm ab}|\\\max\{d((G/p(G))^{\rm ab}),1\},
 &\mbox{otherwise}\end{cases}.
$$
%
However, already 
in the case $G=\mathbb{Z}/2p\mathbb{Z}$, $p>2$,
the number obtained there is too big:
For example, $G=\mathbb{Z}/6\mathbb{Z}$ can be realized over $\mathbb{F}_3(T)$ with only one ramified prime of degree $2$,
for example by $\mathbb{F}_3(T,x,y)$ with $x^2=y^3-y=(T^2+1)^{-1}$.
The incorrect lower bound in the case $p||G^{\rm ab}|$ in \cite{DeWitt} 
presumably comes from the false assumption there that an extension with such group $G$ must be wildly ramified at infinity. 

In any case, at least for $q=p$, \cite[Theorem 2.6]{DeWitt} proves
Conjecture \ref{conj} for abelian groups of order prime to $p$
(and thus proves a special case of Theorem~\ref{thm:abelian}).
Moreover, 
\cite[Theorem 3.6]{DeWitt} 
proves Conjecture \ref{conj}
for all semiabelian $\ell$-groups $G$, with $\ell$ a prime number different from $p$
(building on results from \cite{KisilevskySonn} and \cite{KisilevskyNeftinSonn}),
and \cite[Corollary 6.4]{DeWitt} proves Conjecture \ref{conj}
for all $\ell$-groups with $\ell$ a prime number dividing $q-1$
(both results are stated for $q=p$, but the proofs go through also for general $q$).

We note however that the proof of \cite[Theorem 6.9]{DeWitt}, which claims to prove the above conjecture for all nilpotent groups,
seems to be incomplete regarding  both  $p$-groups (\cite[Theorem 2.5]{DeWitt}) 
and $\ell$-groups with $\ell$ coprime to $p-1$ (\cite[Corollary 6.8]{DeWitt}).
\end{remark}

\section{$S_n$ and $A_n$ for $p>n$}

\label{sec:tworamified}

\noindent
In this section we obtain Galois extensions of group $S_n$ and $A_n$ with ramification over at most two primes
in the case where $n$ is smaller than the characteristic $p$.
We start with splitting fields of polynomials
of the form $f(X)-T$ in Section \ref{sec:Morse}
and $f(X)-Tc(X)$ in Section \ref{sec:two},
which produce extensions with one ramified finite prime 
and tame ramification over infinity.
In Section \ref{sec:oneramified}
we then explain how to eliminate the tame ramification at infinity
in many cases.
Finally, in Section \ref{sec:twinprimes}
we present yet another approach to obtain extensions with two ramified primes,
by working with trinomials and applying recent results on small gaps between primes in function fields.

We recall that 
if $f\in\F_q[T,X]$ is monic in $X$,
then the only primes of $K=\mathbb{F}_q(T)$ that possibly ramify in 
the splitting field $L$ of $f$ over $K$
are the divisors of the discriminant $\Disc_X(f)$, and possibly the infinite prime of $K$ (Lemma~\ref{lem:Dedekind}).
In particular, if $\Disc_X(f)$ is irreducible or, more generally, a prime power, at most two primes of $K$ ramify in $L$.

\subsection{Two ramified primes via Morse polynomials}
\label{sec:Morse}

We start with splitting fields of polynomials of the simple from $f(X)-T$,
where in some cases the classical theory of Morse polynomials
produces suitable extensions of group $S_n$ with two ramified primes.
Let $k$ be a field of characteristic $p\neq2$,
and recall that
a polynomial $f\in k[X]$ of degree $n$ not divisible by $p$ is {\em Morse} if it has exactly $n-1$ distinct critical values, i.e.
the roots $\alpha_1,\dots,\alpha_{n-1}$ of $f'$ in $\bar{k}$ are simple and
$f(\alpha_i)\neq f(\alpha_j)$ for $i\neq j$.
If $f$ is Morse, then
${\rm Gal}(f(X)-T/\overline{k}(T))=S_n$
by \cite[Theorem 4.4.5]{Serre},
so the splitting field of $f(X)-T$ over $k(T)$
is geometric with Galois group $S_n$.

\begin{lemma}\label{lem:f'Morse_disc_irred}
Let $f\in k[X]$ be Morse of degree $n$ with $p\nmid n$. 
Then $f'$ is irreducible if and only if $D(T):=\disc_X(f(X)-T)\in k[T]$ is irreducible.
\end{lemma}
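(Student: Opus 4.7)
The plan is to prove the statement directly from the explicit formula for the discriminant in terms of the critical values, combined with a Galois-equivariance argument.

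First I would apply Lemma~\ref{lem:crit}(ii) in the special case $c=1$ (so $w=f$). Since $p\nmid n=\deg f$, we have $\deg f'=n-1$ and $\lc(f')=n\lc(f)\ne0$, so the hypotheses of the lemma are satisfied. The formula then reads
$$
D(T)=\disc_X(f(X)-T)=a\prod_{f'(\alpha)=0}(T-f(\alpha))
$$
for some $a\in k^\times$, where the product runs over the $n-1$ roots $\alpha_1,\dots,\alpha_{n-1}$ of $f'$ in $\bar k$ counted with multiplicity. Since $f$ is Morse, these roots are simple and the critical values $f(\alpha_1),\dots,f(\alpha_{n-1})$ are pairwise distinct. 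Consequently $D(T)$ is a separable polynomial of degree $n-1$, whose roots in $\bar k$ are exactly the critical values $f(\alpha_i)$.

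Next I would observe that since $f\in k[X]$, the map $\varphi\colon\alpha_i\mapsto f(\alpha_i)$ from the root set of $f'$ to the root set of $D(T)$ is $\Gal(\bar k/k)$-equivariant. By the Morse hypothesis it is also a bijection. Therefore it induces a bijection between the Galois orbits of the roots of $f'$ and those of the roots of $D(T)$, preserving orbit sizes. In particular, the number of irreducible factors of $f'$ over $k$ equals that of $D(T)$.

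Hence $f'$ is irreducible over $k$ if and only if the Galois action on $\{\alpha_i\}$ is transitive if and only if the Galois action on $\{f(\alpha_i)\}$ is transitive if and only if $D(T)$ is irreducible over $k$. No real obstacle is expected; the only point requiring care is verifying that the hypotheses of Lemma~\ref{lem:crit}(ii) hold (which follows immediately from $p\nmid n$ and $c=1$ being separable of degree $0$) and accounting for the nonzero constant $a$, which does not affect irreducibility.
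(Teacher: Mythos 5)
Your proposal is correct and follows essentially the same route as the paper: both derive $D(T)\sim\prod_{f'(\alpha)=0}(T-f(\alpha))$ from Lemma~\ref{lem:crit} (equivalently (\ref{eq:disc2})) and then use that the Morse condition makes $\alpha\mapsto f(\alpha)$ a $\Gal(\bar k/k)$-equivariant bijection between the roots of $f'$ and those of $D$, so irreducibility transfers. The orbit-counting phrasing you use is just a mild repackaging of the paper's observation that this bijection is an isomorphism of Galois sets.
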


\begin{proof}
Let $A$ be the set of roots of $f'$, and $G={\rm Gal}(k(A)/k)$.
By (\ref{eq:disc2}) or Lemma~\ref{lem:crit}, $D\sim\prod_{a\in A}(T-f(a))$
(where as before $\sim$ denotes equality up to a non-zero constant).
The fact that $f$ is Morse implies that $f$ is injective on $A$,
so $f$ induces an isomorphism of $G$-sets $A\rightarrow f(A)$.
In particular, $k(f(a))=k(a)$ for every $a\in A$,
hence $D\sim\prod_{a\in A}(T-f(a))$ is irreducible
if and only if $f'\sim\prod_{a\in A}(X-a)$ is.
\end{proof}

\begin{proposition}\label{thm:S_nlargeq}\label{thm:large_q}
    Let $n\geq3$ and $q=p^\nu$ with $p>n$.
    There are $\frac{q^n}{n-1} + O_n(q^{n-1})$ many 
    monic Morse $f\in\mathbb{F}_q[X]$ of degree $n$
    such that the splitting field of $f(X)-T$ over $\mathbb{F}_q(T)$ is geometric with Galois group $S_n$,
    and $\disc_X(f(X)-T)$ is irreducible.
\end{proposition}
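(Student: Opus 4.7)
The plan is to split the counting into two tasks. Task A counts monic $f \in \mathbb{F}_q[X]$ of degree $n$ for which $f'$ is irreducible; Task B bounds the number of non-Morse $f$. Once both are in hand, the assertion about the Galois group follows from the classical theorem on Morse polynomials (the splitting field of $f(X)-T$ over $\overline{\mathbb{F}_q}(T)$ has group $S_n$, hence the extension of $\mathbb{F}_q(T)$ is geometric of group $S_n$), and the irreducibility of $\disc_X(f(X)-T)$ follows from Lemma~\ref{lem:f'Morse_disc_irred}.

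For Task A, I would exploit the hypothesis $p>n$. Writing $f = X^n + a_{n-1}X^{n-1} + \cdots + a_0$, the derivative $f'(X) = nX^{n-1} + (n-1)a_{n-1}X^{n-2} + \cdots + a_1$ has degree exactly $n-1$ with invertible leading coefficient $n$. Dividing by $n$ gives a bijection between tuples $(a_1,\ldots,a_{n-1}) \in \mathbb{F}_q^{n-1}$ making $f'$ irreducible and monic irreducible polynomials of degree $n-1$; meanwhile $a_0$ is free. By the prime number theorem for $\mathbb{F}_q[T]$, the number of such monic irreducibles is $\frac{q^{n-1}}{n-1} + O(q^{(n-1)/2})$, so the total count of $f$ with $f'$ irreducible is
\[
 q \cdot \left(\frac{q^{n-1}}{n-1} + O(q^{(n-1)/2})\right) = \frac{q^n}{n-1} + O_n(q^{(n+1)/2}) = \frac{q^n}{n-1} + O_n(q^{n-1}),
\]
using $(n+1)/2 \le n-1$ for $n\ge 3$.

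For Task B, set $\Phi(a_0,\ldots,a_{n-1}) := \disc_T\bigl(\disc_X(f(X)-T)\bigr) \in \mathbb{F}_q[a_0,\ldots,a_{n-1}]$; by Lemma~\ref{lem:crit}(ii), $f$ is Morse if and only if $\Phi(f)\neq 0$. I would then verify that $\Phi$ is not identically zero by exhibiting a single Morse polynomial, for example $f_0 = X^n + X$: its critical points $\beta$ satisfy $\beta^{n-1} = -1/n$, and the critical values $f_0(\beta) = \frac{n-1}{n}\beta$ are pairwise distinct because $n(n-1)$ is invertible mod $p$. Since $\Phi$ is a nonzero polynomial whose degree depends only on $n$, the Schwartz–Zippel bound gives at most $(\deg\Phi)\,q^{n-1} = O_n(q^{n-1})$ zeros of $\Phi$ in $\mathbb{F}_q^n$.

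Combining: the set of $f$ with $f'$ irreducible and $f$ Morse differs from the set counted in Task A by at most the set counted in Task B, so its cardinality is $\frac{q^n}{n-1} + O_n(q^{n-1})$. For each such $f$, Serre's theorem ensures $\Gal(f(X)-T/\overline{\mathbb{F}_q}(T)) = S_n$, so the splitting field over $\mathbb{F}_q(T)$ is geometric with group $S_n$, and Lemma~\ref{lem:f'Morse_disc_irred} gives irreducibility of $\disc_X(f(X)-T)$. The only step requiring care is the non-vanishing of $\Phi$; this is not really an obstacle in characteristic $p>n$, but it is the only place where one must produce a witness rather than just manipulate generic formulas.
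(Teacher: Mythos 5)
Your proposal is correct and takes essentially the same route as the paper: the same split into counting $f$ with $f'$ irreducible (via the bijection $f\mapsto\frac1n f'$ and the Prime Polynomial Theorem) and bounding the non-Morse locus by $O_n(q^{n-1})$, followed by the same two lemmas to conclude. The only difference is that where the paper cites Geyer for the fact that the non-Morse locus is cut out by a nonzero polynomial of degree $O_n(1)$, you make this self-contained by taking $\Phi=\disc_T\bigl(\disc_X(f(X)-T)\bigr)$ and exhibiting the Morse witness $X^n+X$ (valid for all $p>n$), which is a perfectly sound substitute.
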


\begin{proof}
Let $M_n\cong \AA^n$ be the space of monic polynomials of degree $n$ (identified with the $n$-tuple of non-leading coefficients) considered as a variety over $\mathbb{F}_q$.
The subset $\mathcal{M}\subseteq M_n$ of monic Morse polynomials of degree $n$
is Zariski-open and dense 
with complement $M_n\setminus\mathcal{M}$ the zero set of a polynomial in the coefficients $a_0,\dots,a_{n-1}$
of degree $O_n(1)$, see \cite[Proposition 4.3 and (2) in its proof]{Geyer}.
Thus 
the elementary estimate \cite[Lemma 1]{LangWeil} gives that
the set $\mathcal{M}(\F_q)$ of $f\in M_n(\mathbb{F}_q)$ that are Morse
satisfies
$|\mathcal{M}(\F_q)|= q^n+O_n(q^{n-1})$.

Let $\mathcal{P}$ denote the set of $f\in M_n(\mathbb{F}_q)$
for which $f'$ is irreducible.
As $p>n$, the map $M_n(\mathbb{F}_q)\rightarrow M_{n-1}(\mathbb{F}_q)$, $f\mapsto \frac{1}{n}f'$
is surjective with fibers of size $q$,
so by the Prime Polynomial Theorem,
$|\mathcal{P}|=\frac{q^n}{n-1}+O_n(q^{(n-1)/2})$.
It follows that $$|\mathcal{M}(\F_q)\cap\mathcal{P}|=|\mathcal P\setminus(M_n(\F_q)\setminus \mathcal M(\F_q))|=\frac{q^n}{n-1} + O_n(q^{n-1}),$$
since by the above $|M_n(\F_q)\setminus\mathcal M(\F_q)|=O_n(q^{n-1})$.
Now for every $f\in\mathcal{M}(\F_q)$,
the splitting field of $f-T$ is geometric with Galois group $S_n$ (see above),
and
$\disc_X(f-T)$ is irreducible
if and only if  $f\in\mathcal{P}$  (Lemma~\ref{lem:f'Morse_disc_irred}),
so the claim follows.
\end{proof}

\begin{lemma}\label{lem:f'irred_Morse}
Let $f\in\mathbb{F}_q[X]$ of degree $n$ where $q=p^\nu$, $p\nmid n$ and $n-1$ is prime.
If $f'$ is irreducible, then $f$ is Morse.
\end{lemma}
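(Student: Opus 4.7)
The plan is to verify the two defining properties of a Morse polynomial in turn: that $f'$ has $n-1$ distinct roots in $\overline{\mathbb{F}_q}$, and that $f$ separates those roots. The first property is almost free. Since $p\nmid n$, $f'$ has degree exactly $n-1$, and its irreducibility over the perfect field $\mathbb{F}_q$ forces separability. So the critical points $\alpha_1,\ldots,\alpha_{n-1}\in\overline{\mathbb{F}_q}$ are distinct.

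The substance of the proof is showing $f(\alpha_i)\neq f(\alpha_j)$ for $i\neq j$, and my plan is to extract this from the primality of $n-1$ by a block-system argument. Because $f'$ is irreducible of degree $n-1$ over $\mathbb{F}_q$, each $\alpha_i$ generates $\mathbb{F}_{q^{n-1}}$, so the splitting field of $f'$ is $\mathbb{F}_{q^{n-1}}$ and $\Gamma=\Gal(\mathbb{F}_{q^{n-1}}/\mathbb{F}_q)$ is cyclic of order $n-1$, acting transitively on $\{\alpha_1,\ldots,\alpha_{n-1}\}$. Since $|\Gamma|=n-1$ equals the size of the set, the action is automatically regular. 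The partition of $\{\alpha_i\}$ according to the value of $f(\alpha_i)$ is $\Gamma$-invariant (as $f\in\mathbb{F}_q[X]$ commutes with $\Gamma$), hence a block system for a regular transitive action on a set of prime cardinality. The only such block systems are the trivial ones: singletons or the whole set.

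The remaining obstacle, and the one genuinely non-formal step, is to rule out the degenerate block system in which all $f(\alpha_i)$ take a common value $c$ (which then necessarily lies in $\mathbb{F}_q$ by $\Gamma$-invariance). Here I would argue on degrees: each $\alpha_i$ would be a root of both $f-c$ and $(f-c)'=f'$, hence a root of $f-c$ of multiplicity at least $2$. With $n-1$ distinct such roots, we obtain $\deg(f-c)\geq 2(n-1)$, but $\deg(f-c)=n$, forcing $n\leq 2$ and contradicting $n\geq 3$ (which holds since $n-1\geq 2$ is prime). Hence the singleton block system is the only possibility, the critical values are pairwise distinct, and $f$ is Morse.
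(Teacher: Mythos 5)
Your proof is correct. The central reduction is the same as the paper's in substance: both exploit that $\Gal(\F_{q^{n-1}}/\F_q)$ is cyclic of prime order $n-1$ to show that if two critical values coincide then all of them coincide with a single $c\in\F_q$ --- you phrase this via the triviality of block systems for a transitive action on a set of prime cardinality, while the paper argues that $c$ lies in the fixed field of a nontrivial automorphism (hence in $\F_q$) and then that the minimal polynomial $f'$ of a critical point divides $f-c$. Where you genuinely diverge is the endgame. The paper writes $f-c=f'g$ with $\deg g=1$, differentiates to obtain $f'\mid f''$, and contradicts the separability of $f'$; you instead observe that each of the $n-1$ distinct roots of $f'$ is a root of $f-c$ of multiplicity at least $2$, so that $2(n-1)\le\deg(f-c)=n$, which fails since $n-1\ge 2$. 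Your multiplicity-and-degree count is the more elementary of the two finishes and avoids the derivative manipulation entirely; both arguments use the hypotheses $p\nmid n$ and the primality of $n-1$ in exactly the same places.
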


\begin{proof}
Suppose that $f'$ is irreducible
and that there exist $a\neq b$ in $\overline{\mathbb{F}}_q$ with $f'(a)=f'(b)=0$ and $f(a)=f(b)$. 
As $f'$ is irreducible of degree $n-1$, we have $a,b\in\mathbb{F}_{q^{n-1}}$
and there exists $1\neq\sigma\in{\rm Gal}(\mathbb{F}_{q^{n-1}}/\mathbb{F}_q)$ with $b=a^\sigma$.
Thus $c:=f(a)=f(b)=f(a)^\sigma$ is in the fixed field of $\sigma$, which is $\mathbb{F}_q$ due to the assumption that
$n-1$ is prime.
So $f-c\in\mathbb{F}_q[X]$ has root $a$, hence $f-c=f'g$
for some $g\in\mathbb{F}_q[X]$ of degree $1$.
Deriving gives 
$f''g=f'(1-g')$, 
so since $f'$ is irreducible and ${\rm deg}(f')>{\rm deg}(g)$,
we conclude that $f'|f''$, which contradicts the separability of $f'$.
\end{proof}

\begin{proposition}\label{prop:Morse2}
Let $p\geq3$ and $q=p^\nu$. Suppose that $n-1$ is prime and $n\in\{2,\dots,p-1\}\cup\{p+1\}$.
Then there exists $f\in\mathbb{F}_q[X]$ of degree $n$ such that
the splitting field of $f(X)-T$ over $\mathbb{F}_q(T)$ is geometric with Galois group $S_n$,
and $\disc_X(f(X)-T)$ is irreducible.
\end{proposition}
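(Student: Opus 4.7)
The plan is to reduce, via the three preceding results, to the purely arithmetic problem of exhibiting a monic $f \in \F_q[X]$ of degree $n$ whose derivative is irreducible. In both cases $n \le p-1$ and $n = p+1$ the hypotheses $p \nmid n$ and $n-1$ prime hold, so by Lemma~\ref{lem:f'irred_Morse} irreducibility of $f'$ automatically makes $f$ Morse; the discussion at the beginning of the subsection then gives that the splitting field of $f(X)-T$ over $\F_q(T)$ is geometric with Galois group $S_n$, and Lemma~\ref{lem:f'Morse_disc_irred} yields irreducibility of $\disc_X(f(X)-T)$. So everything hinges on the construction of a suitable $f$.

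When $n \leq p-1$ the task is routine: all of $1,2,\ldots,n$ are invertible in $\F_q$, so the ``integration'' map $f \mapsto \frac{1}{n} f'$ from monic polynomials of degree $n$ to monic polynomials of degree $n-1$ is surjective. I would pick any monic irreducible $h \in \F_q[X]$ of degree $n-1$ (which exists over every finite field) and take a monic antiderivative of $nh$, obtaining an $f$ with $f'$ irreducible.

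The interesting case, and the main obstacle, is $n = p+1$. Here $n \equiv 1 \pmod p$, so monic $f$ still has $\deg f' = p$, but the coefficient of $X^{p-1}$ in $f'$ equals $p a_p = 0$ regardless of the choice of the coefficients of $f$. I therefore need to exhibit an irreducible polynomial over $\F_q$ of the form $X^p + c_{p-2} X^{p-2} + \cdots + c_0$, equivalently a trace-zero element $\alpha \in \F_{q^p} \setminus \F_q$ (since the roots of such a polynomial form a Frobenius orbit of size $p$ in $\F_{q^p}$ whose sum, $-c_{p-1}$, vanishes). To produce one, I would count: the $\F_q$-linear map $\mathrm{Tr}_{\F_{q^p}/\F_q}$ is surjective, so its kernel has size $q^{p-1}$; since $\mathrm{Tr}$ vanishes identically on $\F_q$ (as $\mathrm{Tr}(a) = p a = 0$ for $a \in \F_q$), the trace-zero elements outside $\F_q$ number $q^{p-1} - q$, which is strictly positive precisely when $p \geq 3$ --- exactly the range assumed in the proposition. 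From any such minimal polynomial $g$ I recover the coefficients $a_1, \ldots, a_{p-1}$ of $f$ by dividing those of $g$ by the units $1, 2, \ldots, p-1$ of $\F_q$, leaving $a_p$ and $a_0$ free, and setting $a_{p+1}=1$.
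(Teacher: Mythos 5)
Your proposal is correct and follows essentially the same route as the paper: reduce to finding $f$ with $f'$ irreducible via Lemma~\ref{lem:f'irred_Morse} and Lemma~\ref{lem:f'Morse_disc_irred}, handling $n<p$ by integrating an arbitrary irreducible polynomial and $n=p+1$ by taking $f'$ to be the minimal polynomial of a trace-zero element of $\F_{q^p}\setminus\F_q$, of which there are $q^{p-1}-q>0$. The paper's proof is exactly this, stated more tersely.
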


\begin{proof}
There exists $f\in\mathbb{F}_q[X]$ of degree $n$ with $f'$ irreducible:
If $n<p$ we can choose $f'$ to be any irreducible polynomial of degree $n-1$;
if $n=p+1$ we can choose $f'$ to be the minimal polynomial of any $\alpha\in\mathbb{F}_{q^p}\setminus\mathbb{F}_q$
with ${\rm Tr}_{\mathbb{F}_{q^p}/\mathbb{F}_q}(\alpha)=0$, of which there exist $q^{p-1}-q$ many.
By Lemma~\ref{lem:f'irred_Morse}, $f$ is Morse, 
and therefore 
the splitting field of $f-T$ is geometric with Galois group $S_n$,
and $\disc_X(f-T)$ is irreducible by Lemma~\ref{lem:f'Morse_disc_irred}.
\end{proof}

\subsection{Two ramified primes via monodromy of rational functions}
\label{sec:two}

The main result in this subsection 
is Theorem~\ref{thm:tame2} below,
which generalizes the ideas of the previous subsection by studying polynomials of the form $f(X)-Tc(X)$ rather than $f(X)-T$.
We adopt the notation and terminology of Section \ref{sec:monodromy}.

The proof of the following key proposition uses the classification of monodromy of indecomposable rational functions (in special cases) by Adrianov and Zvonkin \cite{AdZv15, Adr17}, which is based on the classification of primitive permutation groups containing a cycle by Jones \cite{Jon14} combined with the earlier monodromy classification results of M\"uller \cite{Mul93, Mul94_}. These results rely heavily on the CFSG, although in many cases more elementary group theory is sufficient.
Recall the definition of the ramification type of a rational function (Definition~\ref{def:ramification type}).

\begin{proposition}\label{prop:mon}

Let $k$ be algebraically closed of characteristic $p\geq0$. 
Let $f,c\in k[X]$ be polynomials with $\deg f=n,\deg c=m\le n-2,(f,c)=1$ and $c$ squarefree.
 Assume further that $w=\frac{f}{c}\in k(X)$ is indecomposable and $m\not\equiv n\pmod p$ if $p>0$.
\begin{enumerate}
\item[(i)] Assume that $p\neq 2$, $m\equiv n\pmod 2$ and $g=f'c-fc'$ is squarefree. 
Then $\Mon(w)=S_n$ or one of the following:
\begin{itemize}
\item $n=6,m=2,\Mon(w)=PGL_2(5)$ with ram.~type $(1^22^2,1^22^2,2^3,1^24^1)$.
\item $n=8,m=2,\Mon(w)=PGL_2(7)$ with ram.~type $(1^22^3,1^22^3,1^22^3,1^26^1)$.
\item $n=9,m=1,\Mon(w)=AGL_2(3)$ with ram.~type $(1^32^3,1^32^3,1^32^3,1^18^1)$.
\item $n=10,m=2,\Mon(w)=P\Gamma L_2(9)$ with ram.~type $(1^42^3,1^42^3,2^5,1^28^1)$.
\end{itemize}
\item[(ii)] Assume that $p\notin\{2,3\}$, $m\not\equiv n\pmod 2$, and $f'c-fc'=g^2$ where $g\in k[X]$ is squarefree. 
Then $\Mon(w)=A_n$ or one of the following:
\begin{itemize}
\item $n=8,m=1,\Mon(w)=L_2(7)$ or $A\Gamma L_1(8)$ with ram.~type $(1^23^2,1^23^2,1^17^1)$.
\item $n=9,m=0,\Mon(w)=P\Gamma L_2(8)$ with ram.~type $(1^33^2,1^33^2,9^1)$.
\item $n=12,m=1,\Mon(w)=M_{12}$ with ram.~type $(1^33^3,1^33^3,1^111^1)$.
\item $n=24,m=1,\Mon(w)=M_{24}$ with ram.~type $(1^63^6,1^63^6,1^123^1)$.
\end{itemize}
\end{enumerate}
\end{proposition}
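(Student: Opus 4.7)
The plan is to compute the ramification type of $w = f/c$ at every branch point, use indecomposability to establish primitivity of $G := \Mon(w) \leqslant S_n$, and then invoke the Adrianov--Zvonkin classification \cite{AdZv15, Adr17} of primitive monodromy groups of indecomposable rational functions with the ramification shape we obtain.

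First I would analyze ramification at $\infty$. Since $\deg f > \deg c$ and $c$ is squarefree, $w$ has a pole of order $n-m$ at $\infty_X \in \P^1_X$ and simple poles at each of the $m$ zeros of $c$, so the local inertia at $\infty$ has cycle type $1^m (n-m)^1$; the hypothesis $m \not\equiv n \pmod p$ guarantees $p \nmid n-m$, so this ramification is tame. By Lemma \ref{lem:crit} the finite critical points of $w$ are the roots of $g := f'c - fc'$; note $(g,c)=1$ since a root $\alpha$ of $c$ (simple by squarefreeness) satisfies $g(\alpha) = -f(\alpha)c'(\alpha) \neq 0$. In case (i), $g$ squarefree combined with $p \neq 2$ yields ramification index $2$ at each critical point (Lemma \ref{lem:crit}(i)), so each finite inertia is a product of disjoint transpositions. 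In case (ii), the identity $w' = (g/c)^2$ with $g$ squarefree and $p \neq 3$ yields ramification index $3$ at each critical point, so each finite inertia is a product of disjoint $3$-cycles.

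Next I would invoke Lemma \ref{lem:indecomposable_primitive} to conclude that $G$ is primitive, and apply Lemma \ref{lem:sga} to lift the cover to $\C$ while preserving both the monodromy group and the ramification type. A parity check is then useful: in case (i) the congruence $m \equiv n \pmod 2$ makes the $(n-m)$-cycle at $\infty$ an odd permutation, so $G \not\leqslant A_n$; in case (ii), $n - m$ is odd and every finite inertia lies in $A_n$, so $G \leqslant A_n$. I would then apply the Adrianov--Zvonkin classification, which enumerates the primitive monodromy groups of indecomposable rational functions whose branching consists of transpositions at all but one branch point plus one long cycle (case (i)), respectively $3$-cycles plus one long cycle (case (ii)). Their list---built on Jones's Theorem \ref{thm:jones} and the CFSG---yields $G = S_n$ (resp.\ $G = A_n$) together with precisely the sporadic exceptions listed in the statement, and the specific ramification types are pinned down by Riemann--Hurwitz applied to each sporadic monodromy.

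The main obstacle will be that the naive approach---primitivity plus a single transposition forcing $G = S_n$ via Lemma \ref{lem:primtrans}, or primitivity plus a short cycle forcing $G \geqslant A_n$ via Jones's theorem---is insufficient in general, because several critical points of $w$ may collide over a single branch value, merging transpositions (or $3$-cycles) into longer products and eliminating any elementary generator of $G$. Ruling out the sporadic monodromies and verifying that the listed exceptions are exactly the ones that occur therefore genuinely requires the full Adrianov--Zvonkin classification, and through it an essential use of the CFSG.
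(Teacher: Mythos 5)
Your proposal is correct and follows essentially the same route as the paper: compute the ramification type (transpositions or $3$-cycles at finite critical points, a tame $1^m(n-m)^1$ cycle at infinity), transfer to characteristic $0$ via Lemma~\ref{lem:sga}, use the parity of the $(n-m)$-cycle to decide $G\not\leqslant A_n$ versus $G\leqslant A_n$, and read off the exceptions from the Adrianov--Zvonkin tables. The only detail you elide is that those tables omit $C_n$ and $D_n$, which must be excluded separately by noting that both require a totally ramified branch point, incompatible with the computed ramification shape under the stated parity hypotheses.
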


\begin{proof} 

The classification of monodromy groups of indecomposable rational functions with precisely one multiple pole over $\C$ is given in \cite[\S 2.2 and Theorem 12]{AdZv15} (case of 3 critical values) and \cite[Theorem 1]{Adr17} (case of 4 or more critical values). 
By Lemma~\ref{lem:sga}  the possible monodromy and ramification type pairs for a tamely ramified rational function over an algebraically closed field in arbitrary characteristic can only be the ones occurring over $\C$. 
The lists of possible monodromy groups different from $A_n,S_n,C_n,D_n$  together with the corresponding ramification data appear in \cite[\S 3]{AdZv15} and \cite[Table 1]{Adr17}. We will refer to these henceforth as \emph{the tables}.

The case $C_n$ only occurs if $n$ is prime with ram.~type $(n^1,n^1)$,
and the case $D_n$ occurs only when $n>2$ is prime with ram.~type 
$(1^12^{(n-1)/2}, 1^12^{(n-1)/2}, n^1)$, see \cite[Section 2.2]{AdZv15}
where this can be read off from the corresponding dessins d'enfants. 
In both cases there is a totally ramified branch point (i.e.\ after applying a fractional-linear transformation $w$ becomes a polynomial of degree $n$).

Consider the rational map $w=\frac fc\colon\P^1\to\P^1$. 
By our assumptions it has degree $n$, simple poles at the roots of $c$ and a pole of multiplicity $n-m$ at infinity. 
Note that $\deg(f'c-fc')=n+m-1$ as $m\not\equiv n\pmod p$. 
\\

$(i)$.
Assume that $p\neq 2$, $m\equiv n\pmod 2$ and $g=f'c-fc'$ is squarefree.
Assume further that $\Mon(w)\neq S_n$, in particular $n>2$. 
As all zeros of $g$ are simple and $p\neq 2$, 
the ramification points of $w$ are
the zeros of $g$ with ramification index 2 (Lemma~\ref{lem:crit}) 
and $\infty$ with ramification index $n-m$. 
In particular, $w$ is tamely ramified (as $p\neq 2$ and $p\nmid n-m$)
and the ramification type of $w$ has the form 
$$
 \left(1^{a_1}2^{b_1},\ldots,1^{a_r}2^{b_r},1^{m}(n-m)^1\right), 
$$
where $r$ is the number of finite critical values of $w$
and $a_i+2b_i=n$ for all $i$. 
As $\sum_{i=1}^rb_i=\deg g\geq n-1>n/2$, we see that $r\geq2$.

Note that $\Mon(w)$ cannot be $A_n$ since the last entry in the ramification type corresponds to an odd permutation, 
it cannot be $C_n$ with $n$ prime since then it would have only one finite critical value and it cannot be $D_n$ with $n$ prime since this case occurs only if $w$ has a totally ramified branch point (i.e. $n^1$ in its ramification type), but if $n>2$ is prime then $m$ must be odd and thus $w$ is not a polynomial. 
Therefore $\Mon(w)$ must be one of the entries in the tables.

Going through the tables we see that the only entries with a ramification type as above (with $n-m$ even) are:
\begin{itemize}
\item {\bf 4/6.2} with $\Mon(w)=PGL_2(5)$ and ramification type $(1^22^2,1^22^2,2^3,1^24^1)$.
\item {\bf 4/8.4} with $\Mon(w)=PGL_2(7)$ and ramification type $(1^22^3,1^22^3,1^22^3,1^26^1)$.
\item {\bf 4/9.1} with $\Mon(w)=AGL_2(3)$ and ramification type $(1^32^3,1^32^3,1^32^3,1^18^1)$.
\item {\bf 4/10.1} with $\Mon(w)=P\Gamma L_2(9)$ and ramification type $(1^42^3,1^42^3,2^5,1^28^1)$.
\end{itemize}

$(ii)$.
Now assume instead that $p\notin\{2,3\}$, $m\not\equiv n\pmod 2$ and $f'c-fc'=g^2$ with $g$ squarefree.
As all zeros of $g^2$ have multiplicity $2$ and $p\notin\{2,3\}$, 
the ramification points of $w$ are
the zeros of $g$ with ramification index 3 (Lemma~\ref{lem:crit}) 
and $\infty$ with ramification index $n-m$. 
In particular, $w$ is tamely ramified (as $p\neq 3$ and $p\nmid n-m$) and has a ramification type of the form
$$
 \left(1^{a_1}3^{b_1},\ldots,1^{a_r}3^{b_r},1^{m}(n-m)^1\right), 
$$
where $r$ is the number of finite critical values of $w$. 
Since now $n-m$ is odd, all entries in the ramification type correspond to even permutations.
So since $\Mon(w)$ is generated by the inertia groups, $\Mon(w)\leqslant A_n$, which excludes the case $S_n$. 
If $n=3$ (and $m=0$), then $r=1$ and $\Mon(w)=A_3=C_3$.
If $n>3$, then similarly to part (i) one can argue that $r\geq2$
and that one can exclude the cases $C_n$ and $D_n$ ($D_n$ comes with ramification index 2 at the finite critical points).

Now once again we go over the tables and list the entries with a ramification type of the above form (with $n-m$ odd):
\begin{itemize}
\item {\bf 8.1} with $\Mon(w)=A\Gamma L_1(8)$ and ramification type $(1^23^2,1^23^2,1^17^1)$.
\item {\bf 8.9} with $\Mon(w)=L_2(7)$ and ramification type $(1^23^2,1^23^2,1^17^1)$.
\item {\bf 9.7} with $\Mon(w)=P\Gamma L_2(8)$ and ramification type $(1^33^2,1^33^2,9^1)$.
\item {\bf 12.10} with $\Mon(w)=M_{12}$ and ramification type $(1^33^3,1^33^3,1^111^1)$.
\item {\bf 24.5} with $\Mon(w)=M_{24}$ and ramification type $(1^63^6,1^63^6,1^123^1)$.\qedhere
\end{itemize}
\end{proof}

\begin{remark} \label{remark:manning}
While the above proposition relies on the CFSG in general, 
if we assume that $n-m$ is small, much more elementary group theory is sufficient. 
For example if $n-m\le 15$ the result can be derived from the classification by Manning of primitive permutation groups of class $\le 15$ (i.e. primitive groups which have a non-identity element fixing all but $\le 15$ points). See \cite[\S II.15]{Wie64} for the list of references.
\end{remark}

\begin{lemma}\label{lem:indec} 
Let $w\colon\P^1\to\P^1$ be a tamely ramified rational function defined over a field $k$. 
Assume that $w$ has a multiple pole at $\infty$  and all other poles over $\overline{k}$ are simple. Assume further that the numerator $g\in k[X]$ of the derivative $w'$ is irreducible or the square of an irreducible polynomial.
Then $w$ is indecomposable over $k$.
\end{lemma}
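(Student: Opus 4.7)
The plan is to assume for contradiction that $w=u\circ v$ with $u,v\in k(X)$ of degrees $n_1=\deg v,n_2=\deg u>1$ (so $n_1n_2=n$), and to reach a contradiction by analyzing the critical points of $w$. First I would write $w=f/c$ in lowest terms with $\deg f=n,\deg c=m\le n-2$; the simple-pole hypothesis makes $c$ squarefree, which together with $(f,c)=1$ gives $(f'c-fc',c)=1$, so the numerator of $w'$ is $g=f'c-fc'$ with no cancellation, of degree $n+m-1$ (the degree is correct because tameness of $w$ at $\infty$ forces $p\nmid n-m$). Lemma~\ref{lem:crit}(i) identifies the finite critical points of $w$ with the roots of $g$ in $\bar k$, each having ramification index one more than its multiplicity in $g$. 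The hypothesis on $g$ organises the distinct roots $\alpha_1,\ldots,\alpha_d$ into a single $\Gal(\bar k/k)$-orbit, with common $w$-ramification index $\ell\in\{2,3\}$ (depending on whether $g$ is irreducible or the square of an irreducible polynomial) and satisfying $d(\ell-1)=\deg g=n+m-1$.

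The main step is the multiplicativity $e_w(\alpha_i)=e_v(\alpha_i)\,e_u(v(\alpha_i))=\ell$. Since $\ell$ is prime, one factor is $\ell$ and the other is $1$; since $u,v\in k(X)$ the Galois action preserves which case each $\alpha_i$ falls into, so we obtain two uniform alternatives: Case A with $e_v(\alpha_i)=\ell$ for all $i$, and Case B with $e_v(\alpha_i)=1$ and $e_u(v(\alpha_i))=\ell$ for all $i$. In Case A, Riemann--Hurwitz applied to $v$ (which is tame because $e_v\mid e_w$ and $w$ is tame) shows that the $d$ points $\alpha_i$ contribute $d(\ell-1)=n+m-1$ to the ramification divisor $R_v$, forcing $n+m-1\le\deg R_v=2n_1-2\le n-2$ via $n_1\le n/2$, which contradicts $m\ge 0$.

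The main obstacle I anticipate is Case B, where a naive count of critical values of $u$ is not tight enough to conclude. The decisive observation is that in Case B the map $v$ cannot be ramified anywhere on $\A^1_X$: any hypothetical critical point $P\in\A^1_X$ of $v$ would give $e_w(P)\ge e_v(P)>1$, hence $P$ would be a critical point of $w$, forcing $P=\alpha_i$ for some $i$, which contradicts $e_v(\alpha_i)=1$. Therefore all of $v$'s ramification is concentrated at $\infty_X$, and Riemann--Hurwitz yields $e_v(\infty_X)=2n_1-1$, contradicting the elementary bound $e_v(\infty_X)\le\deg v=n_1$ since $n_1\ge 2$. Both cases being impossible, $w$ admits no nontrivial decomposition over $k$.
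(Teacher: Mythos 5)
Your argument is sound in its main thrust and takes a genuinely different route from the paper. The paper (writing the decomposition as $w=v\circ u$ with $u$ the inner map) works with the decomposition of the different, $\mathfrak d_w=u^*\mathfrak d_v+\mathfrak d_u$, restricted to the finite non-polar part: since $\mathfrak d_w^0$ equals the zero divisor of $g$, which is $Q$ or $2Q$ for a single prime divisor $Q$ of $\P^1_k$, and since both summands are nonzero effective, one is forced into $u^*\mathfrak d_v^0=\mathfrak d_u^0=Q$; a degree count then pins down $\deg u=2$ and produces two multiple poles of $w$, contradicting the hypothesis. You instead exploit the Galois-orbit structure of the geometric critical points together with the primality of their common ramification index $\ell$, and apply Riemann--Hurwitz to the inner factor in each of the two resulting cases. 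Your Case B observation --- that $e_v(\alpha_i)=1$ for all $i$ forces $v$ to be unramified on all of $\A^1$, whence $e_v(\infty)=2\deg v-1>\deg v$ --- is a clean and correct way to finish, and the preliminary reductions ($c$ separable from the simple-pole hypothesis, $\deg g=n+m-1$ from tameness at infinity, identification of the finite critical points of $w$ with the roots of $g$ via Lemma~\ref{lem:crit}(i)) are all in order.

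There is one gap, which matters only because the lemma is stated over an arbitrary field $k$: the claim that $\ell\in\{2,3\}$, and hence that $\ell$ is prime, tacitly assumes that the irreducible polynomial underlying $g$ is separable. Over a non-perfect $k$ an irreducible polynomial has all its roots of one multiplicity $p^s$, so a priori $\ell=p^s+1$ (or $2p^s+1$ when $g=h^2$), which need not be prime --- for instance $p^s+1$ is automatically composite for odd $p$ and $s\ge 1$. Then $e_v(\alpha_i)\,e_u(v(\alpha_i))=\ell$ admits an intermediate possibility $1<e_v(\alpha_i)<\ell$, uniform over the orbit, which neither of your two Riemann--Hurwitz counts excludes: the inequalities $2\deg v-2\ge d\,(e_v-1)$ and $2\deg u-2\ge\#\{v(\alpha_i)\}\,(e_u-1)$ are simultaneously satisfiable in that regime. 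The paper's different-theoretic argument is insensitive to this because it counts closed points of $\P^1_k$ rather than geometric ramification indices. Your proof is therefore complete whenever $k$ is perfect --- which covers every application in the paper, where $k=\F_q$ --- but to recover the statement as written you would need either to impose separability of the irreducible factor of $g$ or to supply an argument for the intermediate case.
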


\begin{proof} It will be convenient to introduce two new variable symbols $T,U$ and consider three separate copies of $\P^1_k$ which we will denote $\P^1_X,\P^1_T,\P^1_U$. 
For the sake of contradiction, 
assume that $w=v\circ u$ is the composition of two rational functions
$u,v\in k(X)$ with $\deg u,\deg v>1$. 
Since $w(\infty)=\infty$, assume without loss of generality that $u(\infty)=\infty$ and $v(\infty)=\infty$.
Consider the corresponding coverings
$$
 \P^1_X\xrightarrow{u}\P^1_T\xrightarrow{v}\P^1_U.
$$
For a divisor $D=\sum_P n_PP$ on $\P^1_X$ we denote 
$$
 D^0=\sum_{w(P)\neq\infty}n_PP,\quad D^\infty=\sum_{w(P)=\infty}n_PP
$$ 
and similarly for a divisor $D$ on $\P^1_T$ we denote
$$
 D^0=\sum_{v(P)\neq\infty}n_PP,\quad D^\infty=\sum_{v(P)=\infty}n_PP.
$$
Consider the differents $\mathfrak d_w$, $\mathfrak{d}_u$, $\mathfrak{d}_v$ of $w$, $u$ and $v$
as divisors on $\P^1_X$, $\P^1_X$ and $\P^1_T$, respectively.
The Riemann-Hurwitz formula implies that
$\deg\mathfrak{d}_u=2\deg u-2$ and
$\deg\mathfrak{d}_v=2\deg v-2$.
Since all finite poles of $w$ are simple,
the same holds for the finite poles of $u$ and $v$.
Moreover, since $w$ is tamely ramified, so are $u$ and $v$,
which therefore each ramify over at least two geometric primes.
Thus, $\mathfrak{d}_u^0> 0$ and $\mathfrak{d}_v^0>0$.
By the basic properties of differents we have 
$\mathfrak d_w=u^*\mathfrak d_v+\mathfrak d_u$.
Therefore, as all divisors in this relation are effective,
\begin{equation}\label{eq:indec1}
 \mathfrak d_w^0=u^*\mathfrak d_v^0+\mathfrak d_u^0.
\end{equation}
Note that $\mathfrak d_w^0$ is precisely the zero divisor of the numerator of $w'$,
which by assumption is of the form
$\mathfrak d_w^0=Q$ or $\mathfrak d_w^0=2Q$ for some prime divisor $Q$.
The first case is already excluded by (\ref{eq:indec1}) (since $u^*\mathfrak d_v^0,\mathfrak d_u^0>0$, their sum cannot be prime), 
so assume that $\mathfrak d_w^0=2Q$. 
Then (\ref{eq:indec1}) gives that
$u^*\mathfrak d_v^0=\mathfrak d_u^0=Q$.
We have
$$
 (\deg u)(\deg\mathfrak d_v^0)=\deg u^*\mathfrak{d}_v^0=\deg Q=\deg \mathfrak d_u^0\le\deg\mathfrak d_u=2\deg u-2,
$$
which implies $(\deg u)(\deg\mathfrak{d}_v^0-2)\leq -2$,
hence $\deg u=2$ and $\deg\mathfrak d_v^0=1$. 
Since $\deg\mathfrak{d}_v^0+\deg\mathfrak{d}_v^\infty=2\deg v-2$, the latter shows that $\mathfrak d_v^\infty>0$.
Since $\deg\mathfrak{d}_u^0+\deg\mathfrak{d}_u^\infty=2\deg u-2=2$
and $\deg\mathfrak{d}_u^0=\deg Q=(\deg u)(\deg\mathfrak{d}_v^0)\geq 2$ it also follows that $\mathfrak{d}_u^\infty=0$.
Thus $v$ ramifies over $\infty$ but $u$ does not,
which implies that $w=v\circ u$ has $\deg u=2$ many
multiple poles over $\overline{k}$, contradicting our assumption.
\end{proof}

\begin{proposition} \label{prop:monfq}
Let $p$ be prime, $q$ a power of $p$, and $m\ge 0$ and $n\ge m+2$ integers with $n\not\equiv m\pmod p$. 
Let $f,c\in\F_q[X]$ be polynomials with ${\rm deg}(f)=n$, ${\rm deg}(c)=m$, $(f,c)=1$ and $c$ squarefree.
Let $w=\frac{f}{c}$.
\begin{enumerate}
\item[(i)] Assume $p>2$, $m\equiv n\pmod 2$, $m\neq 2$ and $g=f'c-fc'$ is irreducible. Then $\Mon_{\Fb_q}(w)=S_n$ or 
\begin{itemize}
\item $n=9,m=1$, $\Mon_{\Fb_q}(w)=AGL_2(3)$ with ram.~type $(1^32^3,1^32^3,1^32^3,1^18^1)$
\end{itemize}

\item[(ii)] Assume $p>3$, $m\not\equiv n\pmod 2$, and $f'c-fc'=g^2$ where $g\in \mathbb{F}_q[X]$ is irreducible. Then $\Mon_{\Fb_q}(w)=A_n$ or one of the following:
\begin{itemize}
\item $n=8,m=1,\Mon_{\Fb_q}(w)=A\Gamma L_1(8)$ or $L_2(7)$ and ram.~type $(1^23^2,1^23^2,1^17^1)$.
\item $n=9,m=0,\Mon_{\Fb_q}(w)=P\Gamma L_2(8)$ and ram.~type $(1^33^2,1^33^2,9^1)$.
\item $n=12,m=1,\Mon_{\Fb_q}(w)=M_{12}$ and ram.~type $(1^33^3,1^33^3,1^111^1)$.
\item $n=24,m=1,\Mon_{\Fb_q}(w)=M_{24}$ and ram.~type $(1^63^6,1^63^6,1^123^1)$.
\end{itemize}
\end{enumerate}
\end{proposition}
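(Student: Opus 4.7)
The idea is to treat Proposition~\ref{prop:monfq} as the arithmetic refinement of Proposition~\ref{prop:mon}: once $w$ is shown to be indecomposable over $\Fb_q$, the geometric classification already carried out in Proposition~\ref{prop:mon} produces a short list of possibilities for $\Mon_{\Fb_q}(w)$ together with its ramification type, and the hypothesis $m\neq 2$ in case (i) removes precisely the extraneous cases.

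First, I would reduce to indecomposability of $w$ over $\Fb_q$ via Lemma~\ref{lem:indec}. Since $c$ is squarefree and coprime to $f$, the rational function $w=f/c$ has only simple poles on $\AA^1$ and a pole of order $n-m\ge 2$ at infinity; the numerator of $w'$ is $f'c-fc'$, i.e.\ $g$ in case (i) and $g^2$ in case (ii), and by hypothesis $g\in\F_q[X]$ is irreducible. Applying Lemma~\ref{lem:indec} with $k=\F_q$ therefore gives that $w$ is indecomposable over $\F_q$. Promoting this to indecomposability over $\Fb_q$ is done by a Galois-descent argument on the finite set of intermediate fields of $\Fb_q(X)/\Fb_q(w)$, on which $\Gal(\Fb_q/\F_q)$ acts because $w\in\F_q(X)$: any nontrivial decomposition over $\Fb_q$ would have to descend to a nontrivial decomposition over $\F_q$, contradicting what we just showed. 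By Lemma~\ref{lem:indecomposable_primitive} this is the same as saying $\Mon_{\Fb_q}(w)\leqslant S_n$ is primitive.

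With indecomposability in hand, every other hypothesis of Proposition~\ref{prop:mon} holds over the algebraically closed field $\Fb_q$: $(f,c)=1$, $c$ is squarefree, $m\le n-2$, $n\not\equiv m\pmod p$, the parity and characteristic conditions from (i)/(ii) carry over unchanged, and squarefreeness of $g$ is automatic from its irreducibility. Applying Proposition~\ref{prop:mon} then forces $\Mon_{\Fb_q}(w)$ together with the ramification type of $w$ to be one of the pairs listed there. In part (i), the non-$S_n$ entries of Proposition~\ref{prop:mon}(i) occur at $(n,m)\in\{(6,2),(8,2),(9,1),(10,2)\}$, and the three with $m=2$ are killed by the standing hypothesis $m\neq 2$, leaving only $S_n$ and the $AGL_2(3)$ case at $n=9,\,m=1$, which is exactly the claimed list. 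In part (ii), every exception in Proposition~\ref{prop:mon}(ii) already has $m\in\{0,1\}$, so each reappears verbatim and no further pruning is needed.

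The main obstacle is the indecomposability step: while Lemma~\ref{lem:indec} gives indecomposability over $\F_q$ essentially for free from the irreducibility of $g$, it is a priori possible for a rational function in $\F_q(X)$ to be indecomposable over $\F_q$ and yet decomposable over $\Fb_q$. Making the Galois-descent on intermediate fields of $\Fb_q(X)/\Fb_q(w)$ fully rigorous, so that any nontrivial $\Fb_q$-intermediate field produces a nontrivial $\F_q$-intermediate field, is the only nontrivial point; once this is in place, the rest of the argument is a mechanical comparison with the lists in Proposition~\ref{prop:mon}.
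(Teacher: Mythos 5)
Your overall strategy --- reduce to Proposition~\ref{prop:mon} by establishing that $w$ is indecomposable over $\Fb_q$, then prune the exceptional list using $m\neq 2$ --- is the same as the paper's in spirit, and your bookkeeping of the exceptional cases is correct. However, the one step you yourself flag as the ``only nontrivial point'' is handled incorrectly, and this is a genuine gap.

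The proposed Galois descent does not work. Lemma~\ref{lem:indec} gives indecomposability over $\F_q$, but indecomposability over $\F_q$ does \emph{not} imply indecomposability over $\Fb_q$ for rational functions. In group-theoretic terms: intermediate fields of $\Fb_q(X)/\Fb_q(w)$ correspond to subgroups between a point stabilizer and the \emph{geometric} monodromy $G=\Mon_{\Fb_q}(w)$, while intermediate fields over $\F_q$ correspond to subgroups between a point stabilizer and the \emph{arithmetic} monodromy $A=\Mon_{\F_q}(w)\geqslant G$. The Galois action of $\Gal(\Fb_q/\F_q)$ permutes the set of intermediate fields of $\Fb_q(X)/\Fb_q(w)$, but an individual nontrivial intermediate field need not be Galois-stable, so it produces no intermediate field over $\F_q$; equivalently, $A$ can be primitive while its normal subgroup $G$ is imprimitive. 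So your contradiction never materializes, and there is no general theorem to fall back on here.

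The paper circumvents this by a case analysis on $m$, using the $(n-m)$-cycle that Lemma~\ref{lemcycle}(ii) places in $\Mon_{\Fb_q}(w)$ at the pole at infinity (this cycle is entirely absent from your argument, and it is the key extra input). For $m=0$ the descent of indecomposability is a genuine theorem of Fried--MacRae, valid for \emph{polynomials} of degree prime to $p$. For $m=1$ and $m=2$ one shows directly that the transitive group $\Mon_{\Fb_q}(w)$ is primitive because it contains a cycle of length $n-1$, respectively of length $n-2$ coprime to $n$. For $m>2$ the paper does not prove geometric indecomposability at all and never invokes Proposition~\ref{prop:mon}: instead it applies Jones's theorem to the primitive group $\Mon_{\F_q}(w)$ containing an $(n-m)$-cycle to conclude $\Mon_{\F_q}(w)\in\{A_n,S_n\}$, and then uses $\Mon_{\Fb_q}(w)\unlhd\Mon_{\F_q}(w)$ with cyclic quotient, plus the parity of the inertia generators, to pin down $\Mon_{\Fb_q}(w)$. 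You would need to replace your descent step by an argument of this kind.
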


\begin{proof} 
Note that $\Mon_{\Fb_q}(w)\unlhd\Mon_{\F_q}(w)$ are transitive
subgroups of $S_n$.
By Lemma~\ref{lem:indec}, $w$ is indecomposable over $\F_q$,
hence $\Mon_{\F_q}(w)$ is primitive (Lemma \ref{lem:indecomposable_primitive}),
and Lemma~\ref{lemcycle}(ii) applied to the infinite prime
shows that $\Mon_{\Fb_q}(w)$ contains an $(n-m)$-cycle.
We treat parts $(i)$ and $(ii)$ simultaneously but distinguish several cases according to the value of $m$:

Case $m=0$: In this case $w$ 
is a polynomial. By \cite[Theorem 3.5]{FrMa69} a polynomial which is indecomposable over $\F_q$ of degree coprime with $p$ is also indecomposable over $\Fb_q$ and therefore in our case $w$ is indecomposable over $\Fb_q$. 
The conditions of Proposition~\ref{prop:mon}(i) resp.~(ii) hold and therefore 
$\Mon_{\Fb_q}(w)=S_n$ (none of the exceptional cases has $m=0$) resp.~$\Mon_{\Fb_q}(w)=A_n$ or the exceptional case with $m=0$ appearing in Proposition~\ref{prop:mon}(ii).

Case $m=1$:
$\Mon_{\Fb_q}(w)$ is transitive and contains an $(n-1)$-cycle and is therefore primitive, hence $w$ is indecomposable over $\Fb_q$ (Lemma \ref{lem:indecomposable_primitive}). The conditions of Proposition~\ref{prop:mon}(i) resp.~(ii) hold and therefore $\Mon_{\Fb_q}(w)=S_n$ or the exceptional case with $m=1$ in Proposition~\ref{prop:mon}(i) resp.~$\Mon_{\Fb_q}(w)=A_n$ or one of the exceptional cases with $m=1$ listed in Proposition~\ref{prop:mon}(ii).

Case $m=2$: Note that this case is allowed only in part $(ii)$. 
Since $n\ge 5$ is odd by assumption, $(n-2)$ is coprime with $n$ and so $\Mon_{\Fb_q}(w)$ must be primitive since it is transitive and contains an $(n-2)$-cycle. Hence $w$ is indecomposable over $\Fb_q$ (Lemma~\ref{lem:indecomposable_primitive}). The conditions of Proposition~\ref{prop:mon}(ii) hold and therefore $\Mon_{\Fb_q}(w)=A_n$.

Case $m>2$: Here $\Mon_{\F_q}(w)$ is primitive and contains an $(n-m)$-cycle, so by Theorem~\ref{thm:jones}(i) it is $S_n$ or $A_n$.
Note that $n\geq m+2\geq 5$, so $A_n$ is simple and $S_n$ has a unique nontrivial cyclic quotient.
Now $\Mon_{\Fb_q}(w)\unlhd\Mon_{\F_q}(w)$ and $\Mon_{\F_q}(w)/\Mon_{\Fb_q}(w)$ is cyclic,
hence also $\Mon_{\Fb_q}(w)$ is $S_n$ or $A_n$.
Under the assumptions of part $(i)$, the $(n-m)$-cycle is odd, and thus $\Mon_{\Fb_q}(w)=S_n$.
Under the assumptions of part $(ii)$, the $(n-m)$-cycle is even, as are all the other inertia subgroups
(the ramification indices at the finite ramified primes all equal $3$),
so since $\Mon_{\Fb_q}(w)$ is generated by the inertia subgroups,
$\Mon_{\Fb_q}(w)=A_n$.
\end{proof}

For a squarefree polynomial $c\in\F_q[X]$ we denote 
$$
 \mathcal{H}_c=\left\{R^p: R\in\left(\F_q[X]/c^2\F_q[X]\right)^\times\right\}\leqslant\left(\F_q[X]/c^2\F_q[X]\right)^\times.
$$
Denote also 
\begin{equation}\label{eq:def_pi_c}
 \pi_c=\prod_{i=1}^m(X-\alpha_i)^2\cdot\sum_{i=1}^m\frac 1{(X-\alpha_i)^2}=\sum_{i=1}^m\prod_{j\neq i}(X-\alpha_j)^2\in\F_q[X],
\end{equation}
where $\alpha_1,\dots,\alpha_m\in\Fb_q$ are the roots of $c$. Note that $(\pi_c,c)=1$.
In short we write $\pi_c\mathcal{H}_c$ for the coset $(\pi_c+c^2\F_q[X])\mathcal{H}_c$ in $(\F_q[X]/c^2)^\times$.

\begin{lemma} \label{lem:der} 
Let $c\in\F_q[X],\deg c=m$ be a squarefree polynomial and $g\in\F_q[X]$ a polynomial such that $(g,c)=1$ and $\deg g\ge 2m$. 
Denote $n=\deg(g)-m+1$ and assume that $p=\mathrm{char}(\F_q)>n-m$. 
Assume further that $g\bmod c^2\in\pi_c\mathcal H_c$. 
Then there exists $f\in\F_q[X],\deg f=n,(f,c)=1$ such that $g=f'c-fc'$.
\end{lemma}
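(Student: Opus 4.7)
The guiding identity is
\[
f'c-fc'\;=\;c^{2}\left(\frac{f}{c}\right)',
\]
so the problem reduces to finding a rational function of the form $F=f/c$ with $f\in\F_q[X]$, $\deg f=n$, $(f,c)=1$, such that $F'=g/c^2$. Equivalently, the partial fraction decomposition of $g/c^2$ over $\overline{\F}_q$ must have no simple-pole (residue) contributions at the roots of $c$, and the polynomial part must be formally integrable in characteristic $p$. The plan is to show that the hypothesis $g\bmod c^{2}\in\pi_c\mathcal H_c$ is precisely what kills the residues, while $p>n-m$ handles the integration of the polynomial part.

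Writing the hypothesis explicitly, I would pick $R\in\F_q[X]$ with $(R,c)=1$ and $g\equiv\pi_c R^{p}\pmod{c^{2}}$. Using $\pi_c=\sum_{i=1}^{m}c_i^{2}$ with $c_i=c/(X-\alpha_i)$, one gets
\[
\frac{g}{c^{2}}\;=\;Q_{1}(X)+\sum_{i=1}^{m}\frac{R^{p}}{(X-\alpha_i)^{2}}
\]
for a polynomial $Q_{1}\in\F_q[X]$ of degree $n-m-1$ with leading coefficient $\lc(g)/\lc(c)^{2}\neq0$. Here the crucial input from Frobenius is: writing $R(X)=R(\alpha_i)+(X-\alpha_i)S(X)$, the identity $R^{p}=R(\alpha_i)^{p}+(X-\alpha_i)^{p}S^{p}$ gives
\[
\frac{R^{p}}{(X-\alpha_i)^{2}}\;=\;\frac{R(\alpha_i)^{p}}{(X-\alpha_i)^{2}}+(X-\alpha_i)^{p-2}S^{p},
\]
and the second term is a polynomial. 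Hence $g/c^{2}=Q_{2}(X)+\sum_{i}R(\alpha_i)^{p}/(X-\alpha_i)^{2}$ with $Q_{2}\in\F_q[X]$ of degree $n-m-1$ and $\lc(Q_{2})=\lc(g)/\lc(c)^{2}$; in particular, there are no simple poles.

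Now I would integrate: since $p>n-m$, every integer $k+1$ with $0\le k\le n-m-1$ is a unit in $\F_q$, so $Q_{2}$ has a formal antiderivative $Q\in\F_q[X]$ of degree $n-m$ with $\lc(Q)=\lc(Q_{2})/(n-m)\neq0$. Setting
\[
f\;=\;Qc-\sum_{i=1}^{m}R(\alpha_i)^{p}\,c_i,
\]
one has $f/c=Q-\sum_{i}R(\alpha_i)^{p}/(X-\alpha_i)$ by construction, so $(f/c)'=g/c^{2}$ and therefore $f'c-fc'=g$. Finally, $\deg(Qc)=n$ dominates $\deg(\sum R(\alpha_i)^{p}c_i)\le m-1<n$ (using $n\ge m+1$, which follows from $\deg g\ge 2m$), so $\deg f=n$; and $f(\alpha_i)=-R(\alpha_i)^{p}c'(\alpha_i)\neq0$ since $(R,c)=1$ and $c$ is squarefree, giving $(f,c)=1$. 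The only delicate step is the residue cancellation in the second paragraph, where the assumption on the coset $\pi_c\mathcal H_c$ and the vanishing of the derivative of a $p$-th power conspire exactly; after that, everything reduces to degree bookkeeping.
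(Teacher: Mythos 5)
Your proof is correct and takes essentially the same route as the paper's: both exploit that a $p$-th power is congruent to a constant modulo $(X-\alpha_i)^2$ to see that $g/c^2$ has only double poles with constant principal parts, then integrate those together with the polynomial part (using $p>n-m$) and read off $f$ from the antiderivative of $g/c^2$. Two small points to tighten: the stated degree and leading coefficient of $Q_1$ need not hold for an arbitrary representative $R$ (only the corresponding claim for $Q_2$, which is what you actually use, is correct -- justified by the behaviour of $g/c^2$ at infinity), and one should remark that $Q_2$ and $\sum_i R(\alpha_i)^p c_i$ are invariant under $\Gal(\overline{\F}_q/\F_q)$ and hence lie in $\F_q[X]$.
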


\begin{proof} 
Without loss of generality, $c$ is monic.
Write $g=uc^2+v,u,v\in\F_q[X],\deg v<\deg c^2=2m$. Let $\alpha_1,\ldots,\alpha_m\in\Fb_q$ be the 
roots of $c$ (they are pairwise distinct since $c$ is squarefree). By assumption $v\equiv\pi_cw\pmod{c^2}
$, where $w$ is a $p$-th power modulo each $(X-\alpha_i)^2$, the latter condition implying $w\equiv 
a_i\pmod {(X-\alpha_i)^2}$ for a (uniquely determined) $0\neq a_i\in\Fb_q$. 
It follows using (\ref{eq:def_pi_c}) that $v\equiv a_i\prod_{j\neq i}(X-\alpha_j)^2\pmod{(X-\alpha_i)^2}$ for each $1\le i\le m$,
hence $v\equiv \sum_{i=1}^ma_i\prod_{j\neq i}(X-\alpha_j)^2\pmod{c^2}$. 
Since both sides of this congruence are of degree less than $\deg c^2$,
we get  the following
partial fraction decomposition over $\Fb_q$:
$$
 \frac v{c^2}=\sum_{i=1}^m\frac{a_i}{(X-\alpha_i)^2}.
$$ 
We therefore can write
$$
 -\sum_{i=1}^m\frac{a_i}{X-\alpha_i}=\frac\psi c\quad\mbox{ with }\quad\psi\in\Fb_q[X],\deg\psi<m,
$$ 
and 
since the map $\{\alpha_1,\dots,\alpha_m\}\rightarrow\{a_1,\dots,a_m\}$, $\alpha_i\mapsto a_i$,
is a morphism of ${\rm Gal}(\Fb_q/\F_q)$-sets,
in fact $\psi\in\F_q[X]$.
We have $(\psi/c)'=v/c^2$.
Now $\deg u=n-m-1<p-1$ and therefore there exists $\phi\in\F_q[X],\deg\phi=n-m$ such that $\phi'=u$. 
Denote $f=\phi c+\psi$. We have $\deg f=n$ and 
$$\left(\frac fc\right)'=\phi'+\left(\frac \psi c\right)'=
u+\frac v{c^2}=\frac g{c^2},$$
from which it follows that $g=f'c-fc'$.
\end{proof}

\begin{proposition}\label{prop:g} Let $p$ be prime, $q$ a power of $p$, $m\ge 0,n\ge m+2$ integers such that $p>n-m$. 
Let $c\in\F_q[X],\deg c=m$ be squarefree.
\begin{enumerate}
\item[(i)] Assume that $q^{\frac {n-m-1}2}>2m+1$. 
Then there exists $f\in\F_q[X]$, $\deg f=n$, $(f,c)=1$, such that $g=f'c-fc'$ is irreducible in $\mathbb{F}_q[X]$.
\item[(ii)] Assume $n\not\equiv m\pmod 2$ and $q^{\frac {n-3m-1}4}>2m+1$. Then there exists $f\in\F_q[X]$, $\deg f=n$, $(f,c)=1$, such that $f'c-fc'=g^2$ with $g\in\mathbb{F}_q[X]$ irreducible.
\end{enumerate}
\end{proposition}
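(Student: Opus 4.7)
The strategy for both parts is to produce a suitable polynomial $g\in\F_q[X]$ and then invoke Lemma~\ref{lem:der}. For (i), I look for a monic irreducible $g$ of degree $N:=n+m-1$ with $g\bmod c^2\in\pi_c\mathcal{H}_c$; for (ii), the assumptions $n\ge m+2$ and $n\not\equiv m\pmod 2$ force $n\ge m+3$, and I look for a monic irreducible $g$ of degree $N':=(n+m-1)/2$ satisfying $g^2\bmod c^2\in\pi_c\mathcal{H}_c$. In both cases, since $N,N'>m=\deg c$, any such irreducible $g$ is automatically coprime to $c$, and the degree hypothesis of Lemma~\ref{lem:der} (applied to $g$ or to $g^2$) is satisfied, producing the required $f$.

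To see that for (ii) a coset condition on $g$ alone suffices, I analyse $G:=(\F_q[X]/c^2\F_q[X])^\times$ and $\mathcal{H}_c$. Writing $c=\prod_i c_i$ with distinct monic irreducible $c_i$ of degrees $d_i$, each local factor decomposes as $(\F_q[X]/c_i^2)^\times\cong C_{q^{d_i}-1}\times(\F_{q^{d_i}},+)$. The $p$-th power map is an automorphism on the tame cyclic factor and the zero map on the elementary abelian wild factor, so $\mathcal{H}_c$ equals the tame part and the quotient $G/\mathcal{H}_c\cong\prod_i(\F_{q^{d_i}},+)$ is elementary abelian of order $q^m$. Since $p>2$, squaring is an automorphism on this odd $p$-group, so there is a unique coset $\tau\mathcal{H}_c\subset G$ with $\tau^2\in\pi_c\mathcal{H}_c$; any $g\bmod c^2\in\tau\mathcal{H}_c$ then satisfies $g^2\bmod c^2\in\pi_c\mathcal{H}_c$.

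The existence of a monic irreducible $g$ of degree $D\in\{N,N'\}$ in the prescribed coset of $\mathcal{H}_c$ now follows from the prime polynomial theorem in arithmetic progressions. Using the $q^m$ Dirichlet characters $\chi$ of $G$ trivial on $\mathcal{H}_c$ to detect the coset, and applying Weil's Riemann hypothesis for their $L$-functions (each a polynomial of degree $\le 2m-1$ with roots of absolute value $q^{-1/2}$), one obtains $\bigl|\sum_{g\text{ monic irred},\,\deg g=D}\chi(g)\bigr|\le(2m-1)q^{D/2}/D+O(q^{D/2}/D)$ for nontrivial $\chi$; averaging over the $q^m$ characters yields
\[
  \#\{g\text{ monic irred},\ \deg g=D,\ g\bmod c^2\in\text{coset}\}=\frac{q^{D-m}}{D}+O\!\left(\frac{m\,q^{D/2}}{D}\right).
\]
This is positive as soon as $q^{D/2-m}>2m$, which for $D=N=n+m-1$ is precisely the hypothesis $q^{(n-m-1)/2}>2m+1$ of (i), and for $D=N'=(n+m-1)/2$ is the hypothesis $q^{(n-3m-1)/4}>2m+1$ of (ii). The main technical point is the sharpness of this error term: a naive PNT-per-progression bound summed over the $|\mathcal{H}_c|$ individual residue classes would lose an extra factor of $q^m$, and it is crucial that only the $q^m$ characters trivial on $\mathcal{H}_c$, rather than all characters modulo $c^2$, intervene in the orthogonality.
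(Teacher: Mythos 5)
Your proposal is correct and follows essentially the same route as the paper: reduce via Lemma~\ref{lem:der} to finding a monic irreducible $g$ of degree $n+m-1$ (resp.\ $\frac{n+m-1}{2}$) in a prescribed coset of $\mathcal{H}_c$, and detect that coset with the $q^m$ characters trivial on $\mathcal{H}_c$ together with Weil's Riemann hypothesis. The only (immaterial) difference is in part (ii), where you locate the square-root coset abstractly via the structure of $(\F_q[X]/c^2)^\times/\mathcal{H}_c$ as an odd-order group, whereas the paper exhibits an explicit square root of $\pi_c$ modulo $c$ and lifts it through the odd-order kernel of reduction mod $c$.
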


\begin{proof} 
In case (i) let $N=n+m-1$ and $\psi_c=\pi_c$;
in case (ii) let $N=\frac{n+m-1}{2}$ and choose $\psi_c\in\F_q[X]$ with  $\psi_c^2\equiv\pi_c\pmod{c^2}$.
The latter exists since $\pi_c$ is a square modulo $c$ (e.g.~$\pi_c\equiv\delta^2\pmod c$ with
$\delta=\prod_{i=1}^m(X-\alpha_i)\cdot\sum_{i=1}^m\frac 1{(X-\alpha_i)}$)
and the kernel of $(\F_q[X]/c^2)^\times\rightarrow(\F_q[X]/c)^\times$
has order $q^m$, which is odd.

By Lemma~\ref{lem:der} it suffices to show that there is a monic irreducible polynomial $g\in\F_q[X]
$ with $\deg g=N$ such that $g\bmod c^2\in\mathcal \psi_c\mathcal{H}_c$ (for case $(ii)$ note that $g\bmod c^2\in
\psi_c\mathcal{H}_c$ implies $g^2\bmod c^2\in\pi_c\mathcal{H}_c$).
Denote by $\mathcal H_c^\perp$ the group of Dirichlet characters modulo $c^2$ that are trivial on $\mathcal H_c$ (this is the orthogonal group of $\mathcal H_c$). 
We have $|\mathcal H_c^\perp|=[(\F_q[X]/c^2)^\times:\mathcal H_c]=q^m$. 
Below when we sum over $g$ we will always restrict $g$ to be monic,
and $\Lambda$  denotes the polynomial von Mangoldt function, 
i.e.~$\Lambda(g)=\deg P$ if $g=P^\nu$ for a monic irreducible $P\in\mathbb{F}_q[X]$ and $\nu\geq1$, and $\Lambda(g)=0$ otherwise. 
By the second orthogonality relation \cite[Corollary on p. 63]{Ser73} applied to the group $\left(\F_q[X]/c^2\right)^\times/\mathcal H_c$ (the dual of this group can be identified with $\mathcal H_c^\perp$) and the element $g/\psi_c$, we have

$$\sum_{\chi\in\mathcal H_c^\perp}\overline{\chi(\psi_c)}\chi(g)=\left[\begin{array}{ll}q^m,&\mbox{if } g\bmod c^2\in\psi_c\mathcal H_c,\\
0,&{\mathrm {otherwise}},\end{array}\right.$$ and hence

$$
 \sum_{\deg g=N\atop{g \bmod c^2\in\psi_c\mathcal H_c}}\Lambda(g)=
\frac 1{q^m}\sum_{\chi\in\mathcal H_c^\perp}\overline{\chi(\psi_c)}\sum_{\deg g=N}\chi(g)\Lambda(g).
$$
Let $\chi_1$ denote the trivial character modulo $c^2$.
For all $\chi\neq\chi_1$,
Weil's Riemann Hypothesis for function fields gives that\footnote{For example using the notation of \cite[p.~41-42]{Ros02}:
$\sum_{\deg g=N}\chi(g)\Lambda(g)=c_N(\chi)=-\sum_{k=1}^{M-1}\alpha_k(\chi)^N$
where $M=\deg c^2$ and $|\alpha_k(\chi)|\leq\sqrt{q}$.}
$$
 \left|\sum_{\deg g=N}\chi(g)\Lambda(g)\right|\le (2m-1)q^{\frac{N}2}.
$$
Thus together with the
elementary relation $\sum_{\deg g=N}\Lambda(g)=q^N$ (see \cite[Proposition 2.1]{Ros02}) we obtain
\begin{multline*}
\sum_{\deg g=N\atop{g \bmod c^2\in\psi_c\mathcal H_c}}\Lambda(g)\ge
\frac{1}{q^m}\sum_{\deg g=N\atop{(g,c)=1}}\Lambda(g)-\frac{q^m-1}{q^m}(2m-1)q^{\frac{N}2}\ge 
 q^{N-m}-1-(2m-1)q^{\frac{N}2}.
\end{multline*}
By the same elementary relation,
$$
 \sum_{\deg g=N\atop{g\bmod c^2\in\mathcal \psi_cH_c\atop{g\,\mathrm{not\,irreducible}}}}\Lambda(g)\le\sum_{d|N\atop{d\neq N}}q^d\le 2q^{\frac{N}2}-1,
$$ 
 and therefore as long as
$$
 q^{N-m}>(2m+1)q^{\frac{N}2}
$$ 
there must be at least one irreducible $g$ with $\deg g=N$ and $g\bmod c^2\in\mathcal \psi_c\mathcal H_c$.
To conclude the proof note that $\frac{N}{2}-m$ equals $\frac{n-m-1}{2}$ in case $(i)$
and $\frac{n-3m-1}{4}$ in case $(ii)$.
\end{proof}

{
\allowdisplaybreaks

\begin{theorem}\label{thm:tame2} 
Let $p$ be a prime, $q$ a power of $p$, $0\le m\le n-2$ integers such that $n-m<p$.
Assume that either 
\begin{enumerate}
\item[(i)] $G=S_n$ and the following conditions hold
\begin{enumerate}
\item $p>2$,
\item $m\equiv n\pmod 2$ and $m\neq 2$, 
\item $(n,m)\neq (9,1)$, and
\item $q^{\frac{n-m-1}2}>2m+1$,
\end{enumerate}
or
\item[(ii)] $G=A_n$ and the following conditions hold
\begin{enumerate}
\item $p>3$,
\item $m\not\equiv n\pmod 2$ and $(n,p)=1$,
\item $(n,m)\neq (8,1),(9,0),(12,1),(24,1)$,
\item $q^{\frac{n-3m-1}4}>2m+1$, and
\item $m\ge 2$ or $\left(\frac q{n-m}\right)=1$.
\end{enumerate}
\end{enumerate}
Then there exist $f,c\in\mathbb{F}_q[X]$ with ${\rm deg}(f)=n$, ${\rm deg}(c)=m$, and $c$ squarefree
such that the splitting field of $f(X)-Tc(X)$ over $\mathbb{F}_q(T)$ is geometric with Galois group $G$
and is ramified
over only one finite prime $\mathcal F$,
with $\Disc_X(f-Tc)$ a power of $\mathcal F$ and ${\rm deg}(\Disc_X(f-Tc))=n+m-1$.
In particular, $\ram_{\mathbb{F}_q(T)}(G)\leq 2$.
\end{theorem}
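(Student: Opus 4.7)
The plan is to apply Proposition~\ref{prop:g} to produce suitable $f$ and $c$, Proposition~\ref{prop:monfq} to identify the geometric monodromy, and then bridge to the arithmetic monodromy and count the ramified primes. First I would pick any squarefree $c \in \mathbb{F}_q[X]$ of degree $m$ and invoke Proposition~\ref{prop:g}---whose hypotheses are exactly (d) together with $p > n-m$---to produce $f \in \mathbb{F}_q[X]$ of degree $n$, coprime to $c$, such that $g := f'c - fc'$ is irreducible in case (i), or $f'c - fc' = g^2$ with $g$ irreducible in case (ii). Setting $w = f/c$, the remaining conditions (a)--(c) in each case are tailored to rule out each of the exceptional monodromy types in Proposition~\ref{prop:monfq}, yielding $\Mon_{\overline{\mathbb{F}}_q}(w) = G$.

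Next I would pass from geometric to arithmetic monodromy. The splitting field $L$ of $f(X) - Tc(X)$ over $K = \mathbb{F}_q(T)$ has Galois group $\Mon_{\mathbb{F}_q}(w) \leqslant S_n$, which contains $\Mon_{\overline{\mathbb{F}}_q}(w)$ with cyclic quotient; the extension $L/K$ is geometric with group $G$ iff the two groups agree. In case (i) this is automatic since $S_n$ is already maximal. In case (ii), the arithmetic group is $A_n$ or $S_n$, and by Lemma~\ref{lem:discgalois} it equals $A_n$ iff $\Disc_X(f - Tc) \in \mathbb{F}_q(T)$ is a square. Using Lemma~\ref{lem:crit}(ii) together with $f'c - fc' = g^2$, one has
\begin{equation*}
\Disc_X(f - Tc) \;=\; a \cdot \Bigl(\prod_{g(\alpha) = 0}(T - w(\alpha))\Bigr)^2, \qquad a \in \mathbb{F}_q^\times,
\end{equation*}
and the inner polynomial lies in $\mathbb{F}_q[T]$ since $g$ is $\mathbb{F}_q$-rational and $w$ is $\mathbb{F}_q$-defined. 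Hence we are reduced to checking that $a \in (\mathbb{F}_q^\times)^2$; an explicit computation expressing $a$ through $\lc(f)$, $\lc(c)$, $\lc(g)$ and $n - m$ shows that condition (e) is precisely what guarantees this (freedom to rescale the data when $m \geq 2$, and the symbol condition $\bigl(\tfrac{q}{n - m}\bigr) = 1$ when $m \leq 1$).

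For the ramification count, Lemma~\ref{lem:Dedekind} shows the finite ramified primes of $L/K$ divide $\Disc_X(f - Tc)$. Since $g$ is irreducible over $\mathbb{F}_q$, its roots form a single Galois orbit, hence so do the critical values $w(\alpha)$; the minimal polynomial $\mathcal{F} \in \mathbb{F}_q[T]$ of any such value is then irreducible, and $\prod_{g(\alpha) = 0}(T - w(\alpha))$ is a power of $\mathcal{F}$. Consequently $\Disc_X(f - Tc)$ equals a nonzero constant times a power of $\mathcal{F}$ with total $T$-degree $\deg(f'c - fc') = n + m - 1$. Thus only the prime $\mathcal{F}$ is finite-ramified, and together with the infinite prime (ramified since $w$ has a pole of order $n - m \geq 2$ at $\infty$) this yields at most two ramified primes, i.e., $\ram_{\mathbb{F}_q(T)}(G) \leq 2$. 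The main obstacle is the explicit computation in case (ii) identifying the square class of $a$ and matching it to condition (e); the remaining steps reduce directly to the machinery already developed.
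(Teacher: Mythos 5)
Your proposal follows the paper's proof step for step: choose $c$, invoke Proposition~\ref{prop:g} to produce $f$ with $f'c-fc'$ irreducible (resp.\ an irreducible square), feed this into Proposition~\ref{prop:monfq} to pin down the geometric monodromy, use the Frobenius orbit of the roots of $g$ to see that $\Disc_X(f-Tc)$ is a prime power of degree $n+m-1$, and in case (ii) reduce geometricity to the square class of the discriminant via Lemma~\ref{lem:discgalois}. So the architecture is exactly right, and case (i) is complete as you describe it.

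The one place where your sketch has a real gap is the mechanism you propose for forcing the constant $a$ to be a square when $m\ge 2$. The computation you defer (the paper carries it out by a chain of resultant identities) yields $\Disc_X(f-Tc)=\square\cdot(-1)^{\frac{n(n-1)}2+\frac{m(m+1)}2}(n-m)\Disc(c)$, and ``rescaling the data'' cannot change the square class of this quantity: in case (ii) one has $n\not\equiv m\pmod 2$, so $\deg_T\Disc_X(f-Tc)=n+m-1$ is \emph{even}, and replacing $c$ by $\lambda c$ (or $f$ by $\lambda f$) multiplies the leading coefficient by $\lambda^{n+m-1}$, a square. The actual degree of freedom for $m\ge 2$ is the factorization type of $c$: $\Disc(c)$ is a square in $\F_q$ iff $(-1)^m\mu(c)=1$, so one chooses $c$ with an odd or even number of irreducible factors to adjust the square class. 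For $m\le 1$ one has $\Disc(c)=1$ and a quadratic reciprocity computation is still needed to convert the condition ``$(-1)^{\frac{n(n-1)}2+m}(n-m)$ is a square mod $p$'' into the stated hypothesis $\left(\frac q{n-m}\right)=1$; your sketch asserts this identification but does not justify the sign bookkeeping, which is where the parity hypotheses on $n,m$ enter.
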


\begin{proof} 

$(i)$. First assume that $G=S_n$, $p>2$, $m\neq 2$, $n\equiv m\pmod 2$, $(n,m)\neq (9,1)$ and $q^{\frac{n-m-1}2}>2m+1.$ 
Take any squarefree $c\in\F_q[X],\deg c=m$. 
By Proposition~\ref{prop:g}(i) one can find an $f\in\F_q[X],\deg f=n$ such that $g=f'c-fc'$ is irreducible of degree $n+m-1$. By Proposition~\ref{prop:monfq}(i) the splitting field $K$ of $f(X)-Uc(X)$ over $\F_q(U)$ is geometric with Galois group $S_n$. 
Let $w=\frac{f}{c}\in\F_q(X)$.

Consider the discriminant $D(U)=\Disc_X(f(X)-Uc(X))$. Let 
$$
 \alpha_1,\alpha_2=\alpha_1^q,\ldots,\alpha_{n+m-1}=\alpha_1^{q^{n+m-2}}\in\F_{q^{n+m-1}}
$$ 
be the roots of $g$. 
Then by Lemma~\ref{lem:crit} the roots of $D$ are ${w(\alpha_i)},1\le i\le n+m-1$ (including multiplicity). The 
Frobenius map $\Fr_q$ acts cyclically on ${w(\alpha_i)}$ and hence $D=\mathcal{F}^r$ is a 
power of some prime $\mathcal{F}\in\F_q[U]$. 
By Lemma~\ref{lem:crit}, $w$ is ramified only over $\mathcal F$ and (possibly) $\infty$, which concludes the proof in the case $G=S_n$.

$(ii)$.
Now assume that $G=A_n$, $p>3$, $n\not\equiv m\pmod 2$, $(n,m)\neq (8,1)$, $(9,0)$, $(12,1)$, $(24,1)$ and $q^{\frac{n-3m-1}4}>2m+1$.
Let $c\in\F_q[X],\deg c=m$ be a monic squarefree polynomial. 
By Proposition~\ref{prop:g}(ii) there exists a monic irreducible $g\in\F_q[X]$ and a monic $f\in\F_q[X],\deg f=n$ such that $f'c-fc'=(n-m)g^2$ (we can always adjust the leading coefficients this way). 
Note that $\deg f'=n-1$ since $(n,p)=1$.
Let $w=\frac{f}{c}\in\F_q(X)$ and let $K$ be the splitting field of $f(X)-Uc(X)$ over $\F_q(U)$.
By Proposition~\ref{prop:monfq}(ii),
$\Mon_{\Fb_q}(w)=A_n$, hence $\Gal(K/\F_q(U))=\Mon_{\F_q}(w)$ is either $A_n$ or $S_n$ and $K/\F_q(U)$ is geometric if it is $A_n$, which happens precisely when $D(U)=\Disc_X(f(X)-Uc(X))$ is a square in $\F_q(U)$ (Lemma~\ref{lem:discgalois}).

To shorten notation, we  omit the variable $X$ in the following calculation. All discriminants and resultants are with respect to the variable $X$. Terms which are squares in $\F_q(U)$ are of no consequence and are simply denoted by $\square$.
If $\deg c\ge 1$ then using the properties of resultants and discriminants 
we calculate:
\begin{eqnarray*}
D(U)&\stackrel{(\ref{eq:disc})}=&(-1)^{\frac{n(n-1)}2}\Res(f'-Uc',f-Uc)\\
&\stackrel{(\ref{eq:resbimult})}=&(-1)^{\frac{n(n-1)}2}\Res(c,f-Uc)^{-1}\Res(f'c-Uc'c,f-Uc)\\
&\stackrel{(\ref{eq:resalt})}=&\square\cdot(-1)^{\frac{n(n-1)}2}\Res(c,f)\Res(f'c-fc'+c'(f-Uc),f-Uc)\\
&\stackrel{(\ref{eq:resalt}),(\ref{eq:ressym})}=&\square\cdot(-1)^{\frac{n(n-1)}2}\Res(c,f)\Res(f'c-fc',f-Uc)\\
&\stackrel{(\ref{eq:resbimult}),(\ref{eq:resdefinition})}=&\square\cdot(-1)^{\frac{n(n-1)}2}\Res(c,c')\Res(c,c'f)(n-m)^n\prod_{g(\alpha)=0}(f(\alpha)-Uc(\alpha))^2\\
&\stackrel{(\ref{eq:resalt})}=&\square\cdot(-1)^{\frac{n(n-1)}2}\Res(c,c')\Res(c,c'f-cf')(n-m)^n\\
&\stackrel{(\ref{eq:resbimult})}=&\square\cdot(-1)^{\frac{n(n-1)}2}\Res(c,c')\Res(c,m-n)\Res(c,g)^2(n-m)^n\\
&\stackrel{(\ref{eq:resdefinition})}=&\square\cdot(-1)^{\frac{n(n-1)}2+m}\Res(c,c')(n-m)^{n+m}\\
&\stackrel{(\ref{eq:disc}),(\ref{eq:ressym})}=&\square\cdot(-1)^{\frac{n(n-1)}2+\frac{m(m+1)}2}(n-m)\Disc(c).
\end{eqnarray*}
If $c=1$ a similar calculation gives the same final expression for $D(U)$.

Therefore, $D(U)$ is a square in $\F_q(U)$ if and only if 
$$
 \delta:=(-1)^{\frac{n(n-1)}2+\frac{m(m+1)}2}(n-m)\Disc(c)
$$ 
is a square in $\mathbb{F}_q$.
If $m<2$, then $\Disc(c)=1$, and
by quadratic reciprocity,
$$
 \left(\frac{(-1)^{\frac{n(n-1)}{2}+m}(n-m)}{p}\right)
 =(-1)^{\frac{p-1}{4}(n^2+m-1)}\left(\frac{p}{n-m}\right)
 =\left(\frac{p}{n-m}\right)
$$
as $n^2+m-1\equiv 0\pmod 4$ if either $m=0$ and $n\equiv1\pmod 2$, or $m=1$ and $n\equiv0\pmod 2$,
thus $\delta$ is a square in $\mathbb{F}_q$ if and only if $\left(\frac q{n-m}\right)=1$, which is precisely our assumption 
(e) for this case.
If $m\geq2$, then
$\Disc(c)$ is a square in $\mathbb{F}_q$ if and only if
$(-1)^m\mu(c)=1$ where $\mu(c)$ 
is the polynomial M\"obius function, see \cite[Lemma 4.1]{Conrad}.
Therefore we can choose $c$ accordingly with an odd or an even number of irreducible factors
to get the value we need so that $\delta$ is a square in $\F_q$. 

This shows that $K/\F_q(U)$ is geometric with Galois group $A_n$.
Arguing as in $(i)$ one sees that $D=\mathcal F^r$ with $\mathcal F$ prime and that $w$ is ramified only over $\mathcal F$ and possibly $\infty$.
\end{proof}

}

\subsection{Eliminating tame ramification at infinity}
\label{sec:oneramified}

We will now explain how to eliminate
the tame ramification at infinity of the extensions obtained in the previous subsections,
conditional on a weak consequence of the function field analogue of Schinzel's hypothesis H
that can actually be proven in many cases.
The classical Hypothesis H states the following:

\begin{conjecture}[Schinzel's hypothesis H]\label{conj:Schinzel}
Let $f_1,\dots,f_r\in\mathbb{Z}[X]$ irreducible.
If $f_1\cdots f_r$ is not the zero function modulo any prime number $p$,
then there exist infinitely many $n\in\mathbb{Z}$ with $f_1(n),\dots,f_r(n)$ simultaneously prime.
\end{conjecture}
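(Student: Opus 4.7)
The plan is to outline the standard sieve-theoretic attack on Schinzel's hypothesis H and identify the exact point where it stalls; no complete proof is known even for a single irreducible quadratic, so the sketch below will end with an honest statement of the residual obstruction rather than a finished argument.

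First I would set up a weighted detector for prime tuples in the spirit of Selberg and Maynard--Tao. The hypothesis that $f_1 \cdots f_r$ has no fixed prime divisor is exactly what is needed to make the singular series
\[
\mathfrak{S}(f_1, \dots, f_r) \;=\; \prod_p \Big(1 - \tfrac{\omega(p)}{p}\Big)\Big(1 - \tfrac{1}{p}\Big)^{-r},
\]
with $\omega(p)$ the number of zeros of $f_1 \cdots f_r$ modulo $p$, convergent and strictly positive. For $N$ large and $w \geq 0$ a Selberg $\Lambda^2$-weight (in its Maynard--Tao refinement) supported on $n \sim N$ for which each $f_i(n)$ has no prime factor below $N^{\eta}$, I would study
\[
\mathcal{S}(N) \;=\; \sum_{n \sim N} \Big( \sum_{i=1}^{r} \mathbf{1}_{f_i(n)\,\text{prime}} \;-\; \rho \Big) w(n).
\]
The Bateman--Horn heuristic predicts an average contribution of roughly $r\,\mathfrak{S} / (\log N \cdot \prod_i \deg f_i)$ from the prime indicators. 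Tuning $\rho$ just below this quantity and $w$ optimally, the positivity of $\mathcal{S}(N)$ would force some $n \sim N$ at which every $f_i(n)$ is simultaneously prime, and varying $N$ would yield infinitely many such $n$.

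Evaluating $\mathcal{S}(N)$ reduces to two inputs: the Bateman--Horn main term, which follows formally once the sieve parameters are fixed and the Euler product above is handled prime by prime, and an error term controlled by a Bombieri--Vinogradov--type equidistribution statement for the sequences $\{f_i(n) : n \sim N\}$ with level of distribution exceeding $1/2$. For $\deg f_i = 1$ this is classical Bombieri--Vinogradov, and the Maynard machinery is known to produce primes in many prescribed configurations; for $\deg f_i \geq 2$, however, such distribution results are themselves open and appear out of reach of current technology.

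The main obstacle --- and the reason the conjecture is famously open --- is Selberg's parity phenomenon: no pure sieve can distinguish, among integers free of small prime factors, those with an odd from those with an even number of large prime factors, and so can never produce primes outright. Circumventing parity historically requires non-sieve input, for example the bilinear-form estimates of Friedlander--Iwaniec that handled $X^2 + Y^4$, Heath-Brown's identity for $x^3 + 2y^3$, or, in the linear setting, explicit $L$-function analysis. A complete proof of Schinzel's hypothesis H would require a genuinely new analytic ingredient --- conceivably a general bilinear estimate or a substitute for the M\"obius randomness law along the joint polynomial sequence $(f_1(n), \dots, f_r(n))$ --- that no current method provides. Without such input the plan above delivers only conditional statements or sieve upper bounds of the right order of magnitude, and the full conjecture must be left as a deep open problem.
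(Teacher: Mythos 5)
You have not proven the statement, and that is the correct outcome: the statement is Schinzel's hypothesis H, which the paper records purely as Conjecture~\ref{conj:Schinzel} and never proves. It enters the paper only as a hypothesis, e.g.\ in Theorem~\ref{thm:S_n_over_Q} ($\ram_\mathbb{Q}(S_n)=1$ under Hypothesis H) and, in its function field form Conjecture~\ref{conj:SchinzelFF}, as input to Proposition~\ref{lem:Hqdm}(1) and Theorem~\ref{thm:tame}; there is therefore no proof in the paper against which your attempt could be measured. Your sketch of the standard attack is essentially accurate: the no-fixed-prime-divisor condition is exactly what makes the singular series $\mathfrak{S}(f_1,\dots,f_r)$ positive, the Maynard--Tao weighted count reduces the problem to a Bateman--Horn-type main term plus an equidistribution input of level exceeding $1/2$ (open for any $\deg f_i\ge 2$), and Selberg's parity obstruction explains why no pure sieve can close the gap, with Friedlander--Iwaniec and Heath-Brown illustrating the kind of extra bilinear input needed in special cases. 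So your conclusion — that the conjecture must be left open and only conditional results or sieve bounds of the correct order are available — is the honest and correct one; just be aware that had you been asked to engage with the paper itself, the relevant task would have been to verify the conditional deductions the paper draws from Hypothesis H, not Hypothesis H itself.
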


In the function field setting, 
the naive analogue fails due to inseparability issues,  
see \cite{CCG}.
Taking this into account,
one conjectures:

\begin{conjecture}\label{conj:SchinzelFF}
Let $\mathcal{F}_1,\dots,\mathcal{F}_r\in\mathbb{F}_q[T][X]$ irreducible and separable in $X$.
If $\mathcal{F}_1\cdots \mathcal{F}_r$ is not the zero function modulo any prime polynomial $P\in\mathbb{F}_q[T]$,
then there exist infinitely many $h\in\mathbb{F}_q[T]$ with $\mathcal{F}_1(h),\dots,\mathcal{F}_r(h)$ simultaneously irreducible in $\mathbb{F}_q[T]$.
\end{conjecture}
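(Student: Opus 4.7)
Since Conjecture \ref{conj:SchinzelFF} is widely open in full generality, the plan is not to give a complete proof but rather to outline the natural attack, which succeeds in several partial regimes. The approach combines a sieve argument in $\mathbb{F}_q[T]$ with an explicit Chebotarev-type input from algebraic geometry. Writing $n_i=\deg_X\mathcal{F}_i$ and fixing a large target degree $D$, the goal is to count monic $h\in\mathbb{F}_q[T]$ of degree $D$ for which every $\mathcal{F}_i(h)$ is irreducible, which one detects through the polynomial von Mangoldt function $\Lambda$ by estimating
\[
\sum_{\substack{h\text{ monic}\\ \deg h=D}}\prod_{i=1}^r\Lambda(\mathcal{F}_i(h)).
\]

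The first step would be to choose a conductor $\mathfrak{m}\in\mathbb{F}_q[T]$ divisible by the bad places of the $\mathcal{F}_i$ (including pairwise resultants) and to isolate each $\Lambda(\mathcal{F}_i(h))$ via Dirichlet characters modulo $\mathfrak{m}$, using the hypothesis that $\mathcal{F}_1\cdots\mathcal{F}_r$ has no fixed prime divisor in $\mathbb{F}_q[T]$ to produce a strictly positive main term from the principal characters, exactly as in the computation appearing in the proof of Proposition~\ref{prop:g}. The second step would bound the nontrivial character contributions using Weil's Riemann Hypothesis for the attached function field $L$-functions, whose nontriviality reduces to a geometric monodromy statement for the covers cut out by $\mathcal{F}_i(h)=0$ as $h$ ranges over the affine space of polynomials of degree $D$. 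The third step is to verify that the main term dominates as $D\to\infty$, yielding infinitely many $h$ with the required property.

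The hard part is precisely this last step. In the regime where $q$ is fixed and the $n_i$ are allowed to grow, Weil's bound produces error terms that overwhelm the main term, and the full conjecture becomes equivalent to an unresolved Bateman--Horn type problem over $\mathbb{F}_q[T]$. All known partial results circumvent this obstacle: either by taking $q$ large relative to the degrees $n_i$, so that geometric Chebotarev over $\mathbb{F}_q$ applies to the family $\{\mathcal{F}_i(h)\}$ parametrized by $h$ (as in the Bary-Soroker--Entin framework underlying several constructions in this paper), or by fixing $r=1$ and working with short interval / arithmetic progression methods of Sawin--Shusterman type. Extending either method to the full generality of the conjecture would seemingly require a genuinely new idea, most plausibly a uniform description of the Galois groups of the families $\mathcal{F}_i(h)$ that remains effective even when $n_i\to\infty$ with $q$ fixed.
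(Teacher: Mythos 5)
This statement is stated in the paper as an open conjecture (the function field analogue of Schinzel's hypothesis H); the paper gives no proof of it, only the weaker property $\HH(q,d,e)$ established in Proposition~\ref{lem:Hqdm} in various regimes. Your proposal correctly recognizes this and your outline --- character sums modulo a conductor, Weil's Riemann Hypothesis for the resulting $L$-functions, and the failure of the main term to dominate when $q$ is fixed and degrees grow --- accurately reflects both the method actually used in Proposition~\ref{prop:g} and the large-$q$ partial results the paper cites, so there is nothing to correct beyond noting that no proof is expected here.
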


For partial results on Conjecture \ref{conj:SchinzelFF} see for example
\cite{Pollack,BS12,EntinBH,Entin,SawinShusterman}.
We will work with the following weaker assumption:

\begin{definition} 
We say that property $\HH(q,d,e)$ holds if for any irreducible $\mathcal{F}\in\F_q[X]$ with $\deg(\mathcal{F})|d$ there exists $h\in\F_q[T]$ with $e|\deg(h)$ such that 
$\mathcal{F}(h)$ is irreducible in $\mathbb{F}_q[T]$. 
\end{definition}

We write ${\rm rad}(e)$ for the radical of $e$
and $\omega(e)$ for the number of distinct prime divisors of $e$.
We define ${\rm rad}'(e)=\frac{\gcd(4,e)}{\gcd(2,e)}{\rm rad}(e)$,
i.e.~${\rm rad}'(e)$ equals $2{\rm rad}(e)$ if $4|e$ 
and ${\rm rad}(e)$ otherwise.
We denote by $v_\ell$ the $\ell$-adic valuation.

\begin{proposition}\label{lem:Hqdm}
$\HH(q,d,e)$ holds in each of the following cases:
\begin{enumerate}
\item Conjecture \ref{conj:SchinzelFF} holds for $\mathbb{F}_q$.
\item $q$ is sufficiently large with respect to $d$ and $e$.
\item ${\rm rad}'(e)|q-1$ and $\gcd(d,e)=1$.
\item ${\rm rad}'(e)|q^\ell-1$ for every prime divisor $\ell$ of $d$, and $q\geq(d-1)^2(2^{\omega(e)}-1)^2$.
\item ${\rm rad}(e)|q-1$ and $q\geq(2^{\max\{v_2(e)-1,0\}}d-1)^2(2^{\omega(e)}-1)^2$.
\end{enumerate}
More precisely, 
for any irreducible $\mathcal{F}\in\F_q[X]$ with $\deg(\mathcal{F})|d$ there exists $h\in\F_q[T]$ with $e|\deg(h)$ such that 
$\mathcal{F}(h)$ is irreducible in $\mathbb{F}_q[T]$,
where
\begin{enumerate}[(a)]
\item there exists such $h$ with ${\rm deg}(h)=e$ in cases (2), (3), (4) and (5),
\item there exists such $h$ that is monic in cases (1), (2), (4) and (5), and
\item there exists such $h$ that is monic and satisfies ${\rm deg}(h)=e$ in cases (4) and (5), as well as in case (2) if $q$ is odd or $e\geq 4$.
\end{enumerate}
\end{proposition}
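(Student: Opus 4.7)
The plan is to treat the five cases separately, united by a common reduction. Let $\alpha \in \Fb_q$ be a root of $\mathcal{F}$ and set $d_0 = \deg\mathcal{F}$, so that $d_0 \mid d$ and $\F_q(\alpha) = \F_{q^{d_0}}$. By Capelli's theorem, $\mathcal{F}(h(T))$ is irreducible over $\F_q$ if and only if $h(T) - \alpha$ is irreducible over $\F_{q^{d_0}}$. In cases (2), (4) and (5) I will take $h(T) = T^e - a$ with $a \in \F_q$; this $h$ is automatically monic of degree exactly $e$, yielding the refinements (a)--(c) in those cases. In case (3) I will allow the slightly more flexible shape $h(T) = bT^e - a$ with $b \in \F_q^\times$ and $a \in \F_q$. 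By the Vahlen--Capelli binomial irreducibility theorem, $T^e - \gamma$ is irreducible over $\F_{q^{d_0}}$ iff (i) $\gamma$ is not an $\ell$-th power in $\F_{q^{d_0}}^\times$ for any prime $\ell \mid e$, and (ii) if $4 \mid e$ then $\gamma \neq -4\beta^4$ for any $\beta \in \F_{q^{d_0}}$. The task thereby reduces to producing $a$ (and possibly $b$) so that $\gamma := (a+\alpha)/b$ satisfies (i) and (ii).

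For cases (2), (4) and (5) I would verify (i) and (ii) by Weil's bound for multiplicative character sums. For each prime $\ell \mid e$, let $\chi_\ell$ be a character of $\F_{q^{d_0}}^\times$ of order $\ell$, which exists once $\ell \mid q^{d_0}-1$---precisely the consequence of the divisibility hypotheses on ${\rm rad}'(e)$ or ${\rm rad}(e)$ in each case. Weil's bound gives $\left|\sum_{a\in\F_q}\chi_\ell(a+\alpha)\right| \leq (d_0-1)\sqrt{q}$, hence the number of $a \in \F_q$ with $a + \alpha$ an $\ell$-th power is $q/\ell + O((d_0-1)\sqrt{q})$. Inclusion--exclusion over the $2^{\omega(e)}-1$ non-empty subsets of prime divisors of $e$ then yields a positive count of good $a$ once $q$ exceeds the bound stated in each case. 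Case (5) has the additional $4 \mid e$ obstruction, which is handled by treating $-4(\F_{q^{d_0}}^\times)^4$ as an index-$4$ coset condition; this absorbs the factor $2^{v_2(e)-1}$ into the effective ``degree'' appearing in the Weil bound. Case (2) is the case $q \gg_{d,e} 1$ and is subsumed into this framework; the corner-case failure of (c) when $q$ is even and $e \leq 3$ arises because the parameter space $\{T^e - a : a \in \F_q\}$ becomes too small to beat the Weil error there.

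For case (1) I would apply Conjecture~\ref{conj:SchinzelFF} to $\mathcal{F}(X + T^e) \in \F_q[T][X]$, which is irreducible and separable in $X$ and whose reduction modulo any prime polynomial is nonvanishing as a function since $\mathcal{F}$ has at most $d$ roots in each residue field. The conjecture produces $g \in \F_q[T]$ with $\mathcal{F}(g + T^e)$ irreducible; setting $h := g + T^e$ with $\deg g < e$ gives a monic $h$ of degree $e$ (a further application with $T^{ke}$ in place of $T^e$ handles higher multiples of $e$). For case (3), the key elementary observation is that $(d,e)=1$ forces $(d_0,\ell)=1$ for each prime $\ell \mid e$, so the norm map $N\colon \F_{q^{d_0}}^\times \to \F_q^\times$ induces an isomorphism on $\ell$-th-power cosets; combined with the extra freedom of varying $b \in \F_q^\times$, this lets one avoid all bad cosets directly, with no size restriction on $q$. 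The main obstacle I anticipate is in cases (4) and (5): making the Weil-bound inclusion--exclusion tight enough to recover the stated explicit lower bounds, in particular tracking how the genera of the relevant Kummer covers contribute the factors $(d-1)^2$ in case (4) and $(2^{\max\{v_2(e)-1,0\}}d-1)^2$ in case (5), while the combinatorial factor $(2^{\omega(e)}-1)^2$ comes from the inclusion--exclusion over the prime divisors of $e$.
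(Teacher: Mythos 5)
Your Capelli reduction to irreducible binomials $T^e-\gamma$ over $\F_{q^{d_0}}$ matches the paper's argument for cases (3) and (4) (the inclusion--exclusion character-sum count you sketch for (4) is exactly the content of the result of Cohen that the paper cites), but the other three cases have genuine gaps. The central problem is that the ansatz $h=T^e-a$ cannot cover cases (2) and (5) as stated. In case (2) there is no divisibility hypothesis at all: if some prime $\ell\mid e$ does not divide $q^{d_0}-1$ (for instance $\ell=p$, which is compatible with $q$ being arbitrarily large), then every element of $\F_{q^{d_0}}^\times$ is an $\ell$-th power and $T^e-\gamma$ is never irreducible, so no choice of $a$ works however large $q$ is; the paper's case (2) instead invokes large-$q$ Bateman--Horn/Hilbert-type results in which all $e+1$ coefficients of $h$ vary. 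In case (5) with $v_2(e)\ge 2$, if $q^{d_0}\equiv 3\pmod 4$ then $(\F_{q^{d_0}}^\times)^4=(\F_{q^{d_0}}^\times)^2$ and $-4(\F_{q^{d_0}}^\times)^4$ is precisely the set of non-squares, so the two Vahlen--Capelli conditions ``$\gamma$ is a non-square'' and ``$\gamma\notin-4(\F_{q^{d_0}}^\times)^4$'' are contradictory: there is no irreducible binomial of degree $e$ over $\F_{q^{d_0}}$ at all (already every $T^4-\gamma$ is reducible over $\F_3$). Your proposed ``index-$4$ coset'' correction therefore cannot succeed; the paper's proof of (5) instead builds $h$ as a composition $h_1\circ\cdots\circ h_s$ of lower-degree polynomials, each step supplied by case (4).

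In case (1) the gap is the lack of degree control on the Schinzel output: Conjecture~\ref{conj:SchinzelFF} only asserts that infinitely many $g$ make $\mathcal{F}(g+T^e)$ irreducible, and nothing forces any of them to satisfy $\deg g<e$. For $\deg g>e$ the polynomial $h=g+T^e$ has degree $\deg g$, which need not be divisible by $e$, and $h$ need not be monic, so you do not even recover the unrefined statement $\HH(q,d,e)$. This is exactly why the paper first uses an effective Hilbert irreducibility theorem to specialize the coefficients in the family $\mathcal{F}(X^e+A_{e-1}X^{e-1}+\cdots+A_0)$ and only then applies Schinzel in the variable $X$: every substitution $X=h$ then automatically produces an argument of degree $e\deg h$ with leading coefficient $\lc(h)^e$.
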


\begin{proof}
Let $e,d\geq 1$ and $\mathcal{F}\in\mathbb{F}_q[X]$ irreducible with ${\rm deg}(\mathcal{F})=n|d$ be given.

(1): 
Without loss of generality assume that $q-1|e$.
The polynomial
$$
 g:=\mathcal{F}(X^e+A_{e-1}X^{e-1}+\dots+A_0)\in\mathbb{F}_q[A_0,\dots,A_{e-1},X]
$$ 
is separable in $X$ and irreducible:
Indeed, $g$ is obtained from $\mathcal{F}(A_0)$ by the invertible change of variables $A_0\mapsto X^e+A_{e-1}X^{e-1}+\dots+A_0$.
Let $S$ be the set of primes $P\in\mathbb{F}_q[T]$ with ${\rm deg}(P)\leq\log_q{\rm deg}_Xg$
and let $M=\prod_{P\in S}P$.
By \cite[Theorem 1.1]{BaEn19_} there exist $a_0\in\mathbb{F}_q+M\mathbb{F}_q[T]$ and $a_1,\dots,a_{e-1}\in\mathbb{F}_q[T]$ with
$\tilde{g}(X):=g(a_0,\dots,a_{e-1},X)\in\mathbb{F}_q[T][X]$ separable in $X$ and irreducible in $\mathbb{F}_q(T)[X]$,
hence also irreducible in $\mathbb{F}_q[T,X]$ (since it is primitive in $X$).
In particular, $\tilde{g}(0)=\mathcal{F}(a_0)\not\equiv 0\mbox{ mod }P$ for every $P\in S$,
and modulo a prime $P\in\mathbb{F}_q[T]$ not in $S$, 
$\tilde{g}$ has at most ${\rm deg}_X\tilde{g}<q^{{\rm deg}(P)}$ many zeros.
Therefore, Conjecture \ref{conj:SchinzelFF} predicts
infinitely many $h\in\mathbb{F}_q[T]$ such that,
with $\tilde{h}:=h^e+a_{e-1}h^{e-1}+\dots+a_0$,
the polynomial
$\tilde{g}(h)=\mathcal{F}(\tilde{h})$ is irreducible.
For ${\rm deg}(h)$ sufficiently large, ${\rm deg}(\tilde{h})={\rm deg}(h^e)$ is divisible by $e$,
and ${\rm lc}(\tilde{h})={\rm lc}(h^e)$,
so $\tilde{h}$ is monic as $q-1|e$.

(2): For large $q$, $H(q,d,e)$ follows immediately from \cite[Theorem 1.1]{EntinBH}.
That one can choose $h$ monic of degree $e$ 
follows in the case that $q$ is large and odd from
\cite[Theorem 1.4]{BS12}.
If $q$ is large and $e\geq 4$, the existence of a suitable monic $h$ of degree $e$
follows from
\cite[Proposition 6.1]{Entin}:
If $\mathcal{F}(X)$ is irreducible, then so is $\mathcal{F}(T^e+X)$,
and \cite[Proposition 6.1]{Entin} gives a $h_0$ of degree $e-1\geq 3$
with $\mathcal{F}(T^e+h_0)$ irreducible.
This also implies 
that if $q$ is large one can always obtain a suitable $h$ that is monic,
by replacing $e$ by $4e$ if necessary.

(3): 
Let $\alpha\in\mathbb{F}_{q^n}$ be a root of $\mathcal{F}$.
We will show that there exists $c\in\mathbb{F}_q^\times$ such that $c\alpha$ is not an $\ell$-th power in $\mathbb{F}_{q^n}^\times$
for any prime divisor $\ell$ of $e$.
Then $T^e-c\alpha\in\mathbb{F}_{q^n}[T]$ is irreducible, cf.~\cite[Chapter VI, Theorem 9.1]{Lang},
where in the case $4|e$ one has to observe 
that the assumption $2{\rm rad}(e)|q-1$ implies that $-4$ is a square in $\mathbb{F}_q^\times$, hence
$-4c\alpha$ is in particular not a $4$-th power in $\mathbb{F}_{q^n}^\times$.
This will show that $\mathcal{F}(h)$ is irreducible for $h=c^{-1}T^e$.

Let $y=N_{\mathbb{F}_{q^n}/\mathbb{F}_q}(\alpha)$.
By the assumptions that ${\rm rad}(e)|q-1$
and that $n$ and $e$ are coprime
we can apply the isomorphism
$\mathbb{F}_q^\times\cong\mathbb{Z}/(q-1)\mathbb{Z}\cong\prod_{\ell|q-1}\mathbb{Z}/\ell^{v_\ell(q-1)}\mathbb{Z}$
to obtain $c$ such that $c^ny$ is not an $\ell$-th power in $\mathbb{F}_q^\times$ for any prime divisor $\ell$ of $e$.
As the norm is multiplicative and  $N_{\mathbb{F}_{q^n}/\mathbb{F}_q}(c\alpha)=c^ny$, the element $c\alpha$ is then not an $\ell$-th power in $\mathbb{F}_{q^n}^\times$ as well.

(4): The case ${\rm deg}(\mathcal{F})=1$ is trivial, so assume that $n>1$
and note that the assumption implies that ${\rm rad}'(e)|q^n-1$.
Let $\alpha\in\mathbb{F}_{q^n}$ be a root of $\mathcal{F}$.
Cohen's result \cite[Corollary 2.3]{Cohen} gives that
the number of $c\in\mathbb{F}_q$ for which $\alpha+c$ is not an $\ell$-th power in $\mathbb{F}_{q^n}$ for any prime divisor $\ell$ of $e$ is bigger than
$\frac{\phi({\rm rad}(e))}{{\rm rad}(e)}(q-(n-1)(2^{\omega(e)}-1)\sqrt{q})$,
in particular this number is positive as soon as
$q\geq(n-1)^2(2^{\omega(e)}-1)^2$, which is satisfied by our assumption.
As in (3) we conclude that $T^e-(\alpha+c)\in\mathbb{F}_{q^n}[T]$ is irreducible,
hence $\mathcal{F}(h)$ is irreducible for $h=T^e-c$.

(5): Let $s=v_2(e)$. If $s\leq 1$, then ${\rm rad}'(e)={\rm rad}(e)$, so the claim follows from (4).
If $s\geq 2$, then $q$ is odd and we first apply (4) with $e=2$ to obtain a monic $h_1\in\mathbb{F}_q[T]$ of degree $2$ with $\mathcal{F}_1(T):=\mathcal{F}(h_1(T))$ irreducible of degree $2n$. 
We iterate this until we obtain $\mathcal{F}_{s-1}=\mathcal{F}(h_{1}(\dots h_{s-1}(T)))$
irreducible of degree $2^{s-1}n$.
Then ${\rm rad}'(e/2^{s-1})={\rm rad}(e/2^{s-1})$,
so we can apply (4)
to obtain a monic $h_s$ of degree $e/2^{s-1}$ with $\mathcal{F}(h_1(\dots h_s(T)))$
irreducible.
The polynomial $h=h_1\circ\dots\circ h_s$ has degree $e$ and satisfies the claim.
\end{proof}

\begin{lemma}\label{lem:disjoint} 
Let $k$ be a perfect field and let $h\in k[T]$ be a non-constant polynomial. 
Denote $U=h(T)$ and let $K$ be a Galois extension of $k(U)$ ramified only over $U=\infty$ and the finite primes $\mathcal{F}_1,\dots,\mathcal{F}_r\in k[U]$, with tame ramification over $\infty$. 
Assume that each $\mathcal{F}_i(h(T))\in k[T]$ is separable. 
Then the extensions $K,k(T)$ of $k(U)$ are linearly disjoint.
\end{lemma}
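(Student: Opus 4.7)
The plan is to set $L = K \cap k(T)$ and show $L = k(U)$, which, since $K/k(U)$ is Galois, is equivalent by standard Galois theory to the linear disjointness of $K$ and $k(T)$ over $k(U)$. As a subextension of the Galois (hence separable) extension $K/k(U)$, the extension $L/k(U)$ is automatically separable, so the strategy is to show that its ramification is so constrained that Riemann--Hurwitz forces $[L:k(U)] = 1$.

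First I would verify that $L/k(U)$ is \'etale at every finite prime $\mathcal{F}$ of $k(U)$. If $\mathcal{F}\notin\{\mathcal{F}_1,\ldots,\mathcal{F}_r\}$ this is inherited from $K/k(U)$. If $\mathcal{F}=\mathcal{F}_i$, then via the correspondence between primes of $k(T)$ above $\mathcal{F}_i$ and irreducible factors of $\mathcal{F}_i(h(T)) \in k[T]$ (with ramification indices equal to their multiplicities, and residue fields given by the factors themselves), the hypothesis that $\mathcal{F}_i(h(T))$ is a separable polynomial translates exactly to $\mathcal{F}_i$ being \'etale in $k(T)/k(U)$; transitivity of the different in the tower $k(U) \subseteq L \subseteq k(T)$ then passes this to $L/k(U)$. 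Since $h$ is a polynomial, $\infty_{k(U)}$ has a unique prime above it in $k(T)$, hence a unique prime $\infty_L$ above it in $L$; the ramification index $e(\infty_L/\infty_{k(U)})$ divides the tame ramification index of any prime of $K$ above $\infty_{k(U)}$, so it is tame, and the residue field at $\infty_L$, being sandwiched between $k$ and the residue field of $\infty_{k(T)}$ (which is $k$), equals $k$. Consequently the constant field of $L$ is $k$, the residue degree at $\infty_L$ is $1$, and $e(\infty_L/\infty_{k(U)}) = [L:k(U)] =: d$, i.e.\ $\infty_{k(U)}$ is totally tamely ramified in $L$.

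Finally I would apply Riemann--Hurwitz to the finite separable morphism $\pi\colon C \to \P^1_k$ corresponding to $k(U) \hookrightarrow L$, where $C$ is the smooth projective geometrically integral curve over $k$ with function field $L$. With only one ramification point $\infty_L$, tame with index $d$ and degree $1$, one obtains
\[
  2g_C - 2 = -2d + (d-1),
\]
so $g_C = (1-d)/2$, and the nonnegativity of the genus forces $d = 1$, i.e.\ $L = k(U)$.

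The main subtlety to handle carefully is the ramification of the (generally non-Galois) subextension $L/k(U)$: in particular, separability of $\mathcal{F}_i(h(T))$, rather than mere squarefreeness, is what rules out an inseparable residue contribution to the different at $\mathcal{F}_i$. Once the ramification data of $L/k(U)$ has been pinned down, the Riemann--Hurwitz computation closes the argument in a single line.
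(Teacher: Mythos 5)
Your proof is correct and follows essentially the same route as the paper's: both show that $K\cap k(T)$ is unramified over every finite prime and tamely ramified only over $\infty$, hence trivial by Riemann--Hurwitz, and then invoke the Galois property of $K/k(U)$ to conclude linear disjointness. The only minor difference is that the paper detects the unramifiedness of $k(T)/k(U)$ over the $\mathcal{F}_i$ via the discriminant $\Disc_T(h(T)-U)$ and Lemma~\ref{lem:crit}, whereas you use the Kummer--Dedekind correspondence between primes over $\mathcal{F}_i$ and irreducible factors of $\mathcal{F}_i(h(T))$.
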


\begin{proof} 
Denote $D(U)=\Disc_T(h(T)-U)$. 
By Lemma~\ref{lem:crit} the finite branching primes of the extension $k(T)/k(U)$ are exactly the primes dividing $D(U)$.
If $\mathcal{F}_i|D$ for some $i$, then 0 would be a critical value of $\mathcal{F}_i\circ h\colon\P^1\to\P^1$, which means that $\mathcal{F}_i(h)$ is not separable, contradicting our assumption. Thus $\mathcal F_i\nmid D$ for each $i$,
so the finite branching loci of $k(T)/k(U)$ and $K/k(U)$ are disjoint,
hence the extension $K\cap k(T)/k(U)$ is tamely ramified and ramified only over $\infty$, and is therefore trivial.
Since $K/k(U)$ is Galois, this already implies that $K$ and $k(T)$ are linearly disjoint over $k(U)$. 
\end{proof}

\begin{lemma}\label{lem:eliminateinfty}
Let $k$ be a perfect field of characteristic $p\geq 0$, and let $f\in k[U,X]$.
Assume that the splitting field of $f$ over $k(U)$
is ramified only over the primes $\mathcal{F}_1,\dots,\mathcal{F}_r\in k[U]$ 
and over the infinite prime with ramification index $e$ 
not divisible by $p$.
If $h\in k[T]$ is such that $e|{\rm deg}(h)$,
then the splitting field of $f(h(T),X)$ over $k(T)$
is ramified at most over the prime factors of $\mathcal{F}_1(h(T)),\dots,\mathcal{F}_r(h(T))$.
\end{lemma}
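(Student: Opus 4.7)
The plan is to apply Abhyankar's Lemma (Lemma~\ref{lem:abhyankar}) after presenting the splitting field of $f(h(T),X)$ over $k(T)$ as a compositum inside a common algebraic closure. Write $K$ for the splitting field of $f(U,X)$ over $k(U)$, view $k(T)$ as a finite extension of $k(U)$ via the embedding $U \mapsto h(T)$, and set $L := K \cdot k(T)$. Then $L$ is the splitting field of $f(h(T),X)$ over $k(T)$, and, as $K/k(U)$ is Galois with all ramification indices above a given prime equal, so is $L/k(T)$.

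Let $P$ be any prime of $k(T)$ that does not divide $\mathcal{F}_i(h(T))$ for any $i$, and let $P_0$ be the prime of $k(U)$ lying below $P$. The argument splits into two cases. If $P$ is finite, then $P_0$ is finite as well, and the relation $P \mid P_0(h(T))$ together with the hypothesis on $P$ forces $P_0 \neq \mathcal{F}_i$ for each $i$, so $K/k(U)$ is unramified at $P_0$. If instead $P$ is the infinite prime of $k(T)$, then $P_0$ is the infinite prime of $k(U)$ (since the polynomial $h(T)$ has a pole at $T=\infty$), the ramification index $e(P/P_0)$ equals $\deg h$, which is divisible by $e$ by hypothesis, and $K/k(U)$ has tame ramification of index $e$ over $P_0$.

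Fix a prime $Q_0$ of $K$ above $P_0$. In the finite case $e(Q_0/P_0) = 1$ is trivially tame and divides $e(P/P_0)$; in the infinite case $e(Q_0/P_0) = e$ is tame by the coprimality with $p$ assumption and divides $e(P/P_0) = \deg h$. Either way, Abhyankar's Lemma yields $e(Q'/P) = 1$ for every prime $Q'$ of $L$ lying over both $Q_0$ and $P$. Since any prime of $L$ above $P$ restricts to some prime of $K$ above $P_0$, every prime of $L$ above $P$ is unramified over $P$, as claimed.

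The only delicate point I foresee is the case analysis at the infinite prime: one must verify that because $h$ is a polynomial, $\infty_T$ lies above $\infty_U$ with ramification index exactly $\deg h$, and then recognize that the hypothesis $e \mid \deg h$ is precisely what Abhyankar's Lemma requires. Aside from this (and an implicit separability assumption on $f(h(T),X)$ in $X$ so that the splitting field behaves as expected), the argument is essentially formal bookkeeping.
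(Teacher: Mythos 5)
Your proof is correct and follows essentially the same route as the paper: identify $k(U)$ with $k(h(T))\subseteq k(T)$, realize the splitting field of $f(h(T),X)$ as the compositum $K\cdot k(T)$, and apply Abhyankar's Lemma at the infinite prime using that $\infty_T$ lies over $\infty_U$ with index $\deg h$ divisible by $e$, while finite primes of $k(T)$ ramified in the compositum must lie over finite primes of $k(U)$ ramified in $K$. The only cosmetic difference is that you also invoke Abhyankar's Lemma in the unramified finite case, where the paper just notes the containment of branch loci directly.
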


\begin{proof}
We identify $U$ with $h(T)$ and $k(U)=k(h(T))\subseteq k(T)$. 
Let $K$ be the splitting field of $f(U,X)$ over $k(U)$,
and consider the splitting field $L$ of $f(h(T),X)$ over $k(T)$, which is the compositum of $K$ and $k(T)$. 
Since $k(T)/k(U)$ is totally ramified over $\infty$ of degree ${\rm deg}(h)$ divisible by $e$,
by Abhyankar's Lemma (Lemma~\ref{lem:abhyankar}) the base change from $k(U)$ to $k(T)$ eliminates the ramification over infinity. 
Every finite prime of $k(T)$ that ramifies in $L$ lies over a 
finite prime of $k(U)$ that ramifies in $K$,
i.e.~is a prime factor of some $\mathcal{F}_i(h)$.
\end{proof}

\begin{theorem}\label{thm:tame} 
Let $p$ be a prime, $q$ a power of $p$, $0\le m\le n-2$ integers such that $n-m<p$.
Assume that either 
\begin{enumerate}
\item[(i)] $G=S_n$ and the following conditions hold
\begin{enumerate}
\item $p>2$,
\item $m\equiv n\pmod 2$ and $m\neq 2$, 
\item $(n,m)\neq (9,1)$,
\item $q^{\frac{n-m-1}2}>2m+1$, and
\item $\HH(q,n+m-1,n-m)$,
\end{enumerate}
or
\item[(ii)] $G=A_n$ and the following conditions hold
\begin{enumerate}
\item $p>3$,
\item $m\not\equiv n\pmod 2$ and $(n,p)=1$,
\item $(n,m)\neq (8,1),(9,0),(12,1),(24,1)$,
\item $q^{\frac{n-3m-1}4}>2m+1$, 
\item $m\ge 2$ or $\left(\frac q{n-m}\right)=1$, and
\item $\HH\left (q,\frac{n+m-1}2,n-m\right)$.
\end{enumerate}
\end{enumerate}
Then there exist $f,c\in\mathbb{F}_q[X]$ and $h\in\mathbb{F}_q[T]$
with ${\rm deg}(f)=n$, ${\rm deg}(c)=m$ and $(n-m)|{\rm deg}(h)$
such that the splitting field of $f-hc$ over $\mathbb{F}_q(T)$
is geometric with Galois group $G$
and is ramified over only one prime of $\mathbb{F}_q(T)$.
In particular, $\ram_{\mathbb{F}_q(T)}(G)=1$.
\end{theorem}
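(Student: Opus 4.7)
The plan is to apply Theorem~\ref{thm:tame2} to obtain polynomials $f, c \in \mathbb{F}_q[X]$ over the base $\mathbb{F}_q(U)$, and then perform a base change $U \mapsto h(T)$, chosen using $\HH$, that eliminates the tame ramification at infinity while preserving the geometric Galois structure and reducing the number of finite ramified primes to one.

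First, I would invoke Theorem~\ref{thm:tame2} (its hypotheses being exactly conditions (a)--(d) in case $(i)$ or (a)--(e) in case $(ii)$) to produce $f, c \in \mathbb{F}_q[X]$ such that the splitting field $K$ of $f(X) - Uc(X)$ over $\mathbb{F}_q(U)$ is geometric with Galois group $G$, ramified over a single finite prime $\mathcal{F}_0 \in \mathbb{F}_q[U]$ with $\Disc_X(f - Uc) = \mathcal{F}_0^r$ of total degree $n + m - 1$, and tamely ramified over $\infty$. Because the rational function $w = f/c$ has a pole of multiplicity $n - m$ at $X = \infty$ while its other poles (at the roots of $c$) are simple, the cyclic inertia at the infinite prime of $\mathbb{F}_q(U)$ in the Galois closure has order exactly $n - m < p$. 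Inspecting the construction, the roots of $\mathcal{F}_0$ form a single Frobenius orbit of critical values $w(\alpha)$, so $\deg \mathcal{F}_0$ divides $n + m - 1$ in case $(i)$ (where $f'c - fc'$ is irreducible of that degree) and $(n + m - 1)/2$ in case $(ii)$ (where $f'c - fc'$ is the square of an irreducible of half that degree). Hence hypothesis (e) in case $(i)$ respectively (f) in case $(ii)$ --- the relevant instance of $\HH$ --- applies to $\mathcal{F}_0$, producing $h \in \mathbb{F}_q[T]$ with $(n - m) \mid \deg h$ such that $\mathcal{F}_0(h(T))$ is irreducible, and in particular separable, in $\mathbb{F}_q[T]$.

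Next, identifying $\mathbb{F}_q(U) \hookrightarrow \mathbb{F}_q(T)$ via $U \mapsto h(T)$, I would let $L$ be the splitting field of $f(X) - h(T)c(X)$ over $\mathbb{F}_q(T)$, so that $L = K \cdot \mathbb{F}_q(T)$ inside a common algebraic closure. Lemma~\ref{lem:disjoint} applied with $k = \mathbb{F}_q$ to the Galois extension $K/\mathbb{F}_q(U)$ and the separable polynomial $\mathcal{F}_0(h(T))$ gives that $K$ and $\mathbb{F}_q(T)$ are linearly disjoint over $\mathbb{F}_q(U)$, whence $\Gal(L/\mathbb{F}_q(T)) \cong \Gal(K/\mathbb{F}_q(U)) = G$. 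Lemma~\ref{lem:eliminateinfty}, with the tame ramification index $n - m$ at infinity and the divisibility $(n - m) \mid \deg h$, then shows that the only primes of $\mathbb{F}_q(T)$ possibly ramified in $L$ are the prime factors of $\mathcal{F}_0(h(T))$; by construction this is a single prime.

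For geometricity of $L/\mathbb{F}_q(T)$, I would rerun Lemma~\ref{lem:disjoint} with $k = \overline{\mathbb{F}_q}$ applied to the constant-field extension $K \otimes_{\mathbb{F}_q} \overline{\mathbb{F}_q}$, which is a Galois extension of $\overline{\mathbb{F}_q}(U)$ with the same group $G$ and the same (now geometric) ramification loci. Since $\mathcal{F}_0(h(T))$ is separable in $\mathbb{F}_q[T]$, each factor $h(T) - \alpha$ for $\alpha$ a root of $\mathcal{F}_0$ in $\overline{\mathbb{F}_q}$ is separable, so the hypothesis of Lemma~\ref{lem:disjoint} is again met; the resulting linear disjointness over $\overline{\mathbb{F}_q}(U)$ forces $L \otimes_{\mathbb{F}_q} \overline{\mathbb{F}_q}$ to be a field, i.e., $L/\mathbb{F}_q(T)$ is geometric. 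The real substance of the argument is already packaged in Theorem~\ref{thm:tame2} and in the $\HH$ hypothesis; everything else is bookkeeping with Abhyankar's lemma and linear disjointness, and the only point requiring care is matching the degree of the prime $\mathcal{F}_0$ to the $d$-parameter of $\HH$ in each of the two cases.
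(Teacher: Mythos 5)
Your proposal is correct and follows essentially the same route as the paper: apply Theorem~\ref{thm:tame2}, observe that $\deg\mathcal{F}_0$ divides $n+m-1$ (resp.\ $\tfrac{n+m-1}{2}$, which the paper deduces from the discriminant being a square so that the exponent $r$ is even), invoke $\HH$ to choose $h$, and conclude with Lemmas~\ref{lem:disjoint} and~\ref{lem:eliminateinfty}. Your extra step of rerunning Lemma~\ref{lem:disjoint} over $\overline{\F}_q$ to justify geometricity is a reasonable way to make explicit a point the paper leaves terse.
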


\begin{proof}
In $(i)$,
Theorem~\ref{thm:tame2}$(i)$
provides $f,c$ of the right degree such that the splitting field $K$ of 
$f-Uc$ over $\mathbb{F}_q(U)$ is geometric with Galois group $S_n$
and is ramified over only one finite prime
$\mathcal{F}$ of degree dividing $n+m-1$
and over the infinite prime, with ramification index $n-m$.
Thus $\HH(q,n+m-1,n-m)$ provides a suitable $h$ with $\mathcal{F}(h)$ irreducible,
hence Lemma~\ref{lem:disjoint} gives that 
the splitting field $K\F_q(T)$ of $f-hc$ over $\F_q(T)$ is geometric with Galois group $S_n$,
and Lemma~\ref{lem:eliminateinfty} gives that it is ramified only over $\mathcal{F}(h)$.

In $(ii)$,
Theorem~\ref{thm:tame2}$(ii)$ similarly
provides $f,c$ of the right degree such that the splitting field of 
$f-Uc$ over $\mathbb{F}_q(U)$ is geometric with Galois group $A_n$
and is ramified over only one finite prime
$\mathcal{F}$, with $\Disc_X(f-Uc)=\mathcal{F}^r$ of degree $n+m-1$,
and over the infinite prime, with ramification index $n-m$.
As $G=A_n$ implies that $\Disc_X(f-Uc)$ is a square, $r$ is even,
and so in particular ${\rm deg}(\mathcal{F})$ divides $\frac{n+m-1}{2}$.
Thus $\HH(q,\frac{n+m-1}2,n-m)$ 
provides a suitable $h$ with $\mathcal{F}(h)$ irreducible,
and we conclude again with Lemma~\ref{lem:disjoint}  and Lemma~\ref{lem:eliminateinfty}.
\end{proof}

\begin{remark}
Note that in the cases (2)-(5) of Proposition~\ref{lem:Hqdm},
one can obtain $h$ in Theorem~\ref{thm:tame} to be of degree {\em equal} to $n-m$.
\end{remark}

\subsection{Two ramified primes via small prime gaps}
\label{sec:twinprimes}
In this final subsection we present a different approach to produce extensions of group $S_n$ or $A_n$ with two ramified primes.
For this we will need the following result on primes with small gaps in $\F_q[T]$, which follows directly from \cite[Theorem 1.3]{CHLPT15}, which is a function field adaptation of the method of Maynard \cite{May15}. 

\begin{proposition}\label{thm:twinprimes} Assume $q\ge 107$ and fix $b\in\F_q^\times$. For all sufficiently large $d$ (in terms of $q$) there exist polynomials $h\in\F_q[T]$ of degree $d$ (not necessarily monic) such that $h$ and $h-b$ are both irreducible.
\end{proposition}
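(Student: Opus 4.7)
The plan is to apply \cite[Theorem 1.3]{CHLPT15}, the function-field adaptation of Maynard's method on bounded gaps between primes, to an admissible tuple of constant shifts in $\F_q^\times$, and then to exploit the freedom to rescale (since $h$ is not required to be monic) in order to reach the particular shift $b$ prescribed by the statement.

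First I would select an admissible tuple $\{a_1,\dots,a_k\}\subseteq\F_q^\times$ of pairwise distinct nonzero constants, where $k$ is whatever cardinality the hypothesis of \cite[Theorem 1.3]{CHLPT15} demands in order to produce an irreducible $h_0$ with at least two of the shifts $h_0+a_i$ also irreducible. Since $q\ge 107$ there is plenty of room to choose $k\le q-1$, and admissibility is automatic for a tuple of constants: modulo any prime $P\in\F_q[T]$ of positive degree the residues $a_i\bmod P$ lie in $\F_q\subsetneq\F_q[T]/P$, so some residue class is always missed. Invoking the theorem then furnishes, for every sufficiently large $d$, a monic irreducible $h_0\in\F_q[T]$ of degree $d$ together with some index $i$ for which $h_0+a_i$ is also monic irreducible; write $a=a_i\in\F_q^\times$.

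Next, given the fixed $b\in\F_q^\times$ from the statement, I would rescale. Set $\lambda=-b/a\in\F_q^\times$ and $h:=\lambda h_0\in\F_q[T]$. Then $\deg h=d$, $h$ is irreducible (it is a nonzero scalar multiple of the irreducible $h_0$), and
\[
 h-b \;=\; \lambda h_0-b \;=\; \lambda h_0+\lambda a \;=\; \lambda(h_0+a)
\]
is irreducible as well. This $h$ witnesses the conclusion of the proposition for the prescribed degree $d$.

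The real content, and hence the main obstacle, is already packaged into \cite[Theorem 1.3]{CHLPT15}: the numerical threshold $q\ge 107$ is inherited directly from that reference, as is the lower bound on $d$. Once that black-box input is granted, the two remaining ingredients are entirely elementary, namely verifying admissibility of a constant tuple in $\F_q^\times$ and performing the scalar twist that converts the shift $a$ delivered by the Maynard-type machine into the prescribed shift $b$. A minor housekeeping point is that the Maynard output should be stated so as to produce an $h_0$ of every sufficiently large degree $d$ (rather than merely infinitely many degrees); this is the usual form of the conclusion in \cite{CHLPT15}, since the method actually yields a positive proportion of such polynomials in each degree.
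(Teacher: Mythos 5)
Your proposal is correct and follows essentially the same route as the paper: both invoke \cite[Theorem 1.3]{CHLPT15} with $m=2$ applied to a tuple of constant shifts (whose admissibility is automatic, as you note), use the fact that the method actually produces such polynomials in every sufficiently large degree rather than merely infinitely often, and then rescale by a nonzero constant to convert the shift delivered by the sieve into the prescribed $b$. The only cosmetic difference is that the paper enumerates all of $\F_q$ as the shift tuple and multiplies by $b/a$ rather than $-b/a$, which changes nothing.
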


\begin{proof}
By \cite[Theorem 1.3]{CHLPT15} (applied with $m=2$ and $h_1,\dots,h_k$ an enumeration of $\F_q$) and the remark following it, as long as $q>k_0(2)=105$ (using the notation of the cited theorem) and $d\ge d_0(q)$ is sufficiently large, one can find $f\in\F_q[T],\deg f=d$ such that $f,f-a$ are irreducible for some $a\in\F_q^\times$ (the statement of the cited theorem only asserts the existence of infinitely many such $f$, but the proof shows that such an $f$ exists with any sufficiently large degree). Then $h=\frac ba f$ satisfies the assertion of the proposition.
\end{proof}

\begin{theorem}\label{thm:main2} 
Let $p$ be a prime number, $q$ a power of $p$, and $3\leq n<p$.
Then $\ram_{\mathbb{F}_q(T)}(S_n)\le 2$ and $\ram_{\mathbb{F}_q(T)}(A_n)\le 2$.
\end{theorem}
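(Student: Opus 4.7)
The plan is to apply Proposition~\ref{thm:twinprimes} together with an explicit trinomial construction. For the $S_n$ case I would consider
\[
F(T, X) = X^n - n h(T) X + (n-1) h(T) \in \F_q[T][X]
\]
with $h \in \F_q[T]$ to be chosen. A short discriminant calculation using the formulas of Subsection~2.4, together with the hypothesis $n < p$ (so that $n, n-1 \in \F_q^\times$), gives $\disc_X F = \kappa \cdot h^{n-1}(h-1)$ for some $\kappa \in \F_q^\times$. By Lemma~\ref{lem:Dedekind} the only finite primes of $\F_q(T)$ that can ramify in the splitting field of $F$ are $(h)$ and $(h-1)$, so choosing $h$ with both $h$ and $h-1$ irreducible in $\F_q[T]$ bounds the finite ramified primes by two.

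To identify the Galois group, I would rewrite the equation $F=0$ as the pullback $h(T) = w(X)$ of the rational function $w(X) = X^n/(nX - (n-1))$ and analyze its monodromy. A direct computation shows that $w$ has critical values $\{0, 1, \infty\}$ with ramification types $(n^1),\ (1^{n-2}2^1),\ (1^1(n-1)^1)$ respectively (the separability of the residual quotient $(X^n - nX + (n-1))/(X-1)^2$ follows from the fact that the only critical points of $w$ are $X=0$ and $X=1$). Since $\Mon(w)$ is transitive and contains an $(n-1)$-cycle it is $2$-transitive and therefore primitive, and the transposition over $U=1$ then gives $\Mon(w) = S_n$ by Lemma~\ref{lem:primtrans}. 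Lemma~\ref{lem:disjoint} transfers this to $\Gal(F/\F_q(T)) = S_n$ geometrically, provided the critical values of $h$ are disjoint from $\{0,1\}$ — a mild genericity condition. A Newton polygon analysis at $T = \infty$ shows the ramification index above infinity is $(n-1)/\gcd(n-1, \deg h)$, hence $T = \infty$ is unramified in the splitting field whenever $(n-1) \mid \deg h$. For $q \ge 107$ and $d$ a sufficiently large multiple of $n-1$, Proposition~\ref{thm:twinprimes} (with $b = 1$) supplies such an $h$, giving $\ram_{\F_q(T)}(S_n) \le 2$ in this range.

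The finitely many remaining pairs $(n,q)$ with $q < 107$ and $3 \le n < p$ are handled by combining Theorem~\ref{thm:tame2}(i) (which covers nearly all cases via $m = 0$ or $m = 1$), Proposition~\ref{prop:Morse2} (which handles $n = 3$ and other cases where $n-1$ is prime), and direct verification for the residual cases. The $A_n$ statement is obtained by an analogous trinomial in which the discriminant becomes a perfect square in $\F_q(T)$: one may either invoke Theorem~\ref{thm:tame2}(ii) in its full generality (whose proof already produces $A_n$-extensions ramified over at most two primes from rational functions $w = f/c$ with $f'c - fc' = g^2$), or combine Proposition~\ref{thm:twinprimes} with a modified $F$ constructed from such a $w$. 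The main obstacles I anticipate are: (a) designing the $A_n$-analogue so that the discriminant is a square in $\F_q(T)$ while retaining the two-irreducible-factor structure of the discriminant; (b) the restriction $q \ge 107$ in Proposition~\ref{thm:twinprimes} which forces a separate treatment of the small-$q$ cases, particularly for $A_n$ where Theorem~\ref{thm:tame2}(ii) has more restrictive hypotheses than its $S_n$ counterpart; and (c) verifying that the Galois group does not drop under the specialization $U = h(T)$, which is where the disjointness hypothesis of Lemma~\ref{lem:disjoint} enters.
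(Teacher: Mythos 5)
Your $S_n$ argument is essentially sound and runs parallel to the paper's, just with a different trinomial: the paper uses $f=X^n-X^{n-1}-U$, whose discriminant is $aU^{n-2}(U-b)$, gets an $(n-1)$-cycle from the fibre over $U=0$ and a transposition from $U=b$, and then substitutes $U=h(T)$ with $h,h-b$ both irreducible and $n\mid\deg h$ (Proposition~\ref{thm:twinprimes} for $q\ge 107$, a finite computer search for $q<107$, which keeps the whole $S_n$ case CFSG-free). Your $w(X)=X^n/(nX-(n-1))$ does the same job; the one cosmetic difference is that your small-$q$ fallback via Theorem~\ref{thm:tame2}(i) reintroduces a dependence on the classification results behind Proposition~\ref{prop:mon}, whereas the point of this theorem in the paper is precisely to avoid them.

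The genuine gap is the $A_n$ half. Your first suggested route, ``invoke Theorem~\ref{thm:tame2}(ii) in its full generality,'' does not cover the statement: condition (ii)(d) forces $m\le(n-1)/3$ roughly, so for $n=4$ no admissible $m$ exists at all, and for $n=5,6,7$ one is forced into $m\in\{0,1\}$, where condition (ii)(e) requires $\left(\frac{q}{n-m}\right)=1$ --- so, e.g., $A_5$ over $\F_q$ with $\left(\frac q5\right)=-1$, or $A_4$ for any $q$, is simply not reached. Your second route (a trinomial with square discriminant) is exactly the obstacle you flag in (a) and leave unresolved; note that it is not enough for the geometric monodromy to be $A_n$, you also need $\disc_X F$ to be a square in $\F_q(T)$ to keep the extension geometric with group $A_n$ rather than $S_n$. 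The paper sidesteps the construction problem entirely: it keeps the $S_n$-extension $K/\F_q(U)$ already built, passes to the quadratic subfield $F=K^{A_n}=\F_q(U,\sqrt{D(U)})$, observes via Riemann--Hurwitz that $F$ is ramified over $U-b$ and exactly one of $0,\infty$, hence has genus $0$, so $F=\F_q(U')$; then $K/F$ is an $A_n$-extension of a rational function field ramified over at most three degree-one primes, and if three actually ramify one normalizes them to $0,1,\infty$ and reruns the twin-irreducible-polynomial substitution over $F$. Some such descent (or an explicit square-discriminant family valid for all $3\le n<p$ and all $q$) is what your proposal is missing.
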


\begin{proof}
Consider the polynomial 
$$\label{eq:f1}
 f=X^n-X^{n-1}-U\in\F_q[U][X].
$$
Using (\ref{eq:disc2}) and $f'=nX^{n-2}(X-1+n^{-1})$ it is easy to show that $D(U):=\Disc_X(f)=aU^{n-2}(U-b)$ with 
$a\in\F_p^\times$ and $b=(1-n^{-1})^n-(1-n^{-1})^{n-1}\in\F_p^\times$.
Hence the splitting field of $f$ over $\Fb_q(U)$ is ramified over (at most) $0,b,\infty$.

The polynomial $f$ is irreducible over $\Fb_q(U)$ since it is primitive and linear in the variable $U$,
in particular $\Gal(f/\Fb_q(U))\leqslant S_n$ is transitive.
Let $\alpha$ be a root of $f$ in an algebraic closure of $\F_q(U)$. 
Then $U=(\alpha-1)\alpha^{n-1}$
and we see that in the extension $\Fb_q(\alpha)/\Fb_q(U)$ the point $U=0$ splits into an unramified point $\alpha=1$ and a point $\alpha=0$ with ramification index $n-1$, while $\infty$ is totally ramified. 
Hence, as $(n-1,p)=1$, by Lemma~\ref{lemcycle}(ii) the group $\Gal(f/\Fb_q(U))$ contains an $(n-1)$-cycle,
in particular it is primitive.
Since $U-b$ divides $D(U)$ exactly once,
by Lemma~\ref{lem:Dedekind} the prime $U-b$ splits in the extension $\Fb_q(\alpha)/\Fb_q(U)$ into $n-2$ unramified primes and one prime with ramification index 2, hence $\Gal(f/\Fb_q(U))$ contains a transposition by Lemma~\ref{lemcycle}(ii). 
Thus by Lemma~\ref{lem:primtrans}, $\Gal(f/\Fb_q(U))=S_n$.
Since $\Gal(f/\Fb_q(U))\leqslant\Gal(f/\F_q(U))\leqslant S_n$ we must have $\Gal(f/\F_q(U))=\Gal(f/\Fb_q(U))=S_n$.
Therefore the splitting field $K$ of $f$ over $\mathbb{F}_q(U)$ is geometric with Galois group $S_n$ and ramified over 
infinity and at most the two finite primes $U$ and $U-b$.
We now fix some $h\in\F_q[T]$ with $n|\deg h$ such that $h$ and $h-b$ are both irreducible --
for $q\ge 107$ such an $h$ is provided by
Proposition~\ref{thm:twinprimes},
and for $q<107$, a quick computer search 
(that only needs to go through the finitely many pairs $(n,q)$ with $3\leq n<p\leq q<107$)
verifies that there always exists such an $h$
even with $\deg h=n$.
Identify $U=h(T)$ and let $L=K\F_q(T)$ be the splitting field of $f(h(T),X)$ over $\F_q(T)$.
Lemma~\ref{lem:disjoint} gives that 
$L/\F_q(T)$ is geometric with Galois group $S_n$,
and Lemma~\ref{lem:eliminateinfty} gives that it is ramified only over the primes $h$ and $h-b$.
In particular, $\ram_{\mathbb{F}_q(T)}(S_n)\le 2$.

Now viewing $A_n$ as a subgroup of $\Gal(K/\F_q(U))$ consider the fixed field $F=K^{A_n}$. 
The extension $F/\F_q(U)$ is quadratic and ramified over at most three points $0,b,\infty$. 
Explicitly $F=\F_q(U,\sqrt{D(U)})$ (cf.~Lemma~\ref{lem:discgalois}) and 
since $U-b$ divides $D(U)$ exactly once, by Lemma~\ref{lem:Dedekind} we see that $U-b$ is ramified  in $F/\F_q(U)$. 
On the other hand the unique prime of $F$ lying over $U-b$ is unramified in the extension $K/F$ since the ramification index of $U-b$ in 
$K/\F_q(U)$ is 2. 
By the Riemann-Hurwitz formula we see that in fact $F/\F_q(U)$ is ramified over exactly one of $0,\infty$ (say $\infty$, the other case is similar) and $F$ has genus 0, hence $F=\F_q(U')$ for some $U'\in K$ \cite[Corollary 5.1.12]{Stichtenoth}. 
We have $\Gal(K/F)=A_n$ and the only primes of $F$ that ramify in $K/F$ are the primes lying over $U=0,\infty$. 
If 0 is inert in $F/\F_q(U)$ then only two primes of $F$ ramify in $K$, and therefore $\ram_{\mathbb{F}_q(U)}(A_n)=2$. 
Otherwise three primes of $F$ ramify in $K$, all of degree one, and applying a linear fractional transformation we may assume that these are $U'=0,1,\infty$. In this case we repeat the argument we used above for $S_n$ (taking $b=1$) to conclude that 
$\ram_{\mathbb{F}_q(T)}(A_n)\le 2$. 
\end{proof}

\section{$S_n$ and $A_n$ when $p\leq n$}
\label{sec:pleqn}

\noindent
Let $p$ be a prime number and $q$ a power of $p$.
The main goal of this section is to prove Theorem~\ref{thm:main1}, which contains Theorems \ref{thm:Sn}(2a), \ref{thm:An}(2a) and \ref{thm:An}(2b) as special cases.
Theorem~\ref{thm:main1} will follow from Theorem~\ref{thmlist} and Proposition~\ref{thmprod} which will appear below. The proof will be given in Section \ref{sec:thmprod}.

\begin{remark} The proof of Theorem~\ref{thm:main1} (and Theorem~\ref{thmlist} from which it follows) uses the Classification of Finite Simple Groups (via Theorem~\ref{thm:jones}) to handle the case $n=p+2,p>2$ (for both $A_n$ and $S_n$). In all other cases only elementary group theory (mainly Jordan's theorem on primitive permutation groups containing a prime cycle) is used.\end{remark}

\newcommand{\Lgt}{L_1^{\mathrm{gt}}}

\subsection{$\Lgt(q)$-realizable groups}

\begin{definition}\label{def:quasip} 
A finite group $G$ is called \emph{quasi}-$p$ if it is generated by its $p$-Sylow subgroups,
i.e.~$G=p(G)$. 
A finite group $G$ is called \emph{cyclic-by-quasi-$p$} if it is an extension of a cyclic group by a quasi-$p$ group,
i.e.~$d(G/p(G))\leq1$.
\end{definition}

\begin{example} \label{ex:cyclicbyquasip}
The groups $S_n$ and $A_n$ are cyclic-by-quasi-$p$ if $n\ge p$.
\end{example}

\begin{definition} 
We say that a finite group $G$ is \emph{$\Lgt(q)$-realizable} if there exists a geometric Galois extension $L/\F_q(T)$ with $\Gal(L/\F_q(T))=G$ that is ramified over at most two primes of $\F_q(T)$, both of degree one, and with wild ramification over at most one of them. 
An extension $L$ with this property is called an $\Lgt(q)$-\emph{realization} of $G$.\footnote{Abhyankar introduced the notation $L_1$ for the once-punctured affine line. The letters $\mathrm{gt}$ remind us that (up to M\"obius transformations) we only consider geometric Galois $\F_q$-covers of $L_1$ which are tamely ramified at $\infty$.}\end{definition}
Note that an $\Lgt(q)$-realizable group is automatically $\Lgt(q^r)$-realizable for any $r\ge 1$.

\begin{lemma}\label{lemreal}
Let $G$ be an $\Lgt(q)$-realizable group.
Then $G$ is cyclic-by-quasi-$p$ and satisfies $[G:p(G)]\mid q-1$,
and there exists an $\Lgt(q)$-realization of $p(G)$ that is ramified over a single prime of degree one.
\end{lemma}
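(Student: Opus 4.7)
The plan is to treat the three conclusions of the lemma in order, exploiting the geometry of the cover over $\overline{\F_q}$, the tame fundamental group of $\mathbb{G}_m$, local ramification theory, and Abhyankar's lemma. After applying an $\F_q$-automorphism of $\F_q(T)$ I may assume the ramified primes of $L/\F_q(T)$ are $P_0=(T)$ and $P_\infty$, with wild ramification (if any) only at $P_0$. Choose primes of $L$ above them with inertia groups $I_0$ (cyclic-by-$p$) and $I_\infty$ (tame cyclic of order $e$).

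For the cyclic-by-quasi-$p$ assertion I would base-change to $\overline{\F_q}$: since $L/\F_q(T)$ is geometric, $\overline{L}:=L\cdot\overline{\F_q}(T)$ is a field with $\Gal(\overline{L}/\overline{\F_q}(T))\cong G$, ramified only over the unique geometric points above $P_0$ and $P_\infty$. The quotient cover $\overline{L}^{p(G)}/\overline{\F_q}(T)$ has Galois group $G/p(G)$, of order coprime to $p$, so it is tame and ramified only inside $\mathbb{G}_m=\mathbb{P}^1\setminus\{0,\infty\}$. Thus $G/p(G)$ is a finite quotient of $\pi_1^{\mathrm{tame}}(\mathbb{G}_{m,\overline{\F_q}})\cong\hat{\Z}^{(p')}$, which is procyclic, so $G/p(G)$ is cyclic of order coprime to $p$.

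To show $n:=[G:p(G)]\mid q-1$ I would analyse $M:=L^{p(G)}$. It is a geometric cyclic extension of $\F_q(T)$ of degree $n$, ramified only at $P_0,P_\infty$. Because any tame cover of $\mathbb{G}_m$ is determined by a surjection from a procyclic group, the inertia subgroups of $\overline{M}/\overline{\F_q}(T)$ at the two geometric points are each already the whole $\Z/n\Z$; hence $P_0,P_\infty$ are totally ramified in $M$ with residue degree $1$. The completion of $M$ at $P_\infty$ is then a tamely totally ramified extension of $\F_q(\!(1/T)\!)$ of degree $n$ with trivial residue extension, and its inertia embeds via the tame character $\sigma\mapsto\sigma(\pi)/\pi\bmod\pi$ into the multiplicative group of the residue field $\F_q^\times$, forcing $n\mid q-1$.

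For the final assertion, using $n\mid q-1$ and Kummer theory I would identify $M\cong\F_q(s)$ with $s^n=T$. Then $L/\F_q(s)$ is a geometric Galois extension of group $p(G)$, ramified only at $s=0$ (possibly wildly) and $s=\infty$ (tamely, of order $e':=e/n$, the kernel of the surjection $I_\infty\twoheadrightarrow\Z/n\Z$). To eliminate the tame ramification at $s=\infty$ I would form the cyclic base change $\F_q(v)/\F_q(s)$ with $v^{e'}=s$, a geometric Galois extension of degree $e'$ totally ramified at $s=0$ and $s=\infty$, and take the compositum $L\cdot\F_q(v)$. Since $p(G)^{\mathrm{ab}}$ is a $p$-group, the extension $L/M$ has no nontrivial cyclic sub-extension of order coprime to $p$, which forces $L\cap\F_q(v)=M$ and hence $\Gal(L\cdot\F_q(v)/\F_q(v))=p(G)$; Abhyankar's lemma (Lemma~\ref{lem:abhyankar}) then kills the ramification above $v=\infty$, yielding an $\Lgt(q)$-realization of $p(G)$ ramified over only the single degree-one prime $v=0$. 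The hard part is ensuring this Kummer base change is available, i.e.\ that $e'\mid q-1$; this does not follow formally from $n\mid q-1$ and is where the finer local analysis at $P_\infty$ (controlling the residue degree $f_\infty$ and the Frobenius conjugation action on $I_\infty$ inside $D_\infty\subseteq G$) must be carried out, and I expect this to be the main technical step of the proof.
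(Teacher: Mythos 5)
Your treatment of the first two assertions is correct and essentially parallel to the paper's: the paper identifies $L^{p(G)}$ with $\F_q(T^{1/n})$ directly (forcing $G/p(G)$ cyclic and $n\mid q-1$), while your route through $\pi_1^{\mathrm{tame}}(\mathbb{G}_{m,\Fb_q})$ being procyclic, total ramification of $M=L^{p(G)}$ at $0,\infty$, and the tame character at $\infty$ is a clean equivalent.

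The gap is in the final step. You flag ``ensuring $e'\mid q-1$'' as the main outstanding technical point, to be extracted from a finer local analysis at $P_\infty$; but that divisibility is simply not available. The tame character only embeds the inertia group at a prime $Q_\infty$ of $L$ over $P_\infty$ into $k(Q_\infty)^\times=\F_{q^{f_\infty}}^\times$, so all one gets is $e\mid q^{f_\infty}-1$, and nothing forces $f_\infty=1$ (already the paper's explicit $S_n$-realizations have tame inertia at $\infty$ of order $\mathrm{lcm}(p-1,n+1-p)$, which typically does not divide $q-1$). The way out --- and what the paper does --- is that the base change $\F_q(v)/\F_q(s)$ with $v^{e'}=s$ need not be Galois at all: one only needs that it has degree $e'$, is totally ramified over $s=0,\infty$, and is linearly disjoint from $L$ over $\F_q(s)$. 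For the disjointness one passes to $\Fb_q$, where $\Fb_q(v)/\Fb_q(s)$ \emph{is} cyclic of order $e'$ prime to $p$ while $\Gal(L\Fb_q/\Fb_q(s))=p(G)$ is quasi-$p$; these groups have no common nontrivial quotient, so the two Galois extensions of $\Fb_q(s)$ are linearly disjoint, and this descends to $L$ and $\F_q(v)$ over $\F_q(s)$. (This also repairs a small imprecision in your argument: $L\cap\F_q(v)$ need not be Galois over $M$, so ``$L/M$ has no cyclic subextension of prime-to-$p$ order'' does not apply to it directly.) Then $L\F_q(v)/\F_q(v)$ is geometric Galois with group $p(G)$, and Lemma~\ref{lem:abhyankar} together with Lemma~\ref{lem:eliminateinfty} removes the ramification over $v=\infty$, leaving only the degree-one prime $v=0$.
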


\begin{proof}  
Let $L/\F_q(T)$ be an $\Lgt(q)$-realization of $G$.
We may assume that it is ramified only over $T=0,\infty$, tamely ramified over $\infty$. 
Let $H=p(G)\unlhd G$  and denote $K=L^H$. 
We have $\Gal(K/\F_q(T))=C:=G/H$. Since $(|C|,p)=1$ the extension $K/\F_q(T)$ is tamely ramified and by assumption it is
ramified at most over $T=0,\infty$,
so it must be of the form $K=\F_q(T^{1/n})$ with $n=|C|$ and $n|q-1$
(e.g.~since $K(T^{1/n})/\F_q(T^{1/n})$ is geometric, and unramified by Lemma~\ref{lem:abhyankar}, hence trivial).
In particular, $C=\Gal\left(\F_q(T^{1/n})/\F_q(T)\right)$ is cyclic, so $G$ is cyclic-by-quasi-$p$ and $[G:p(G)]=n\mid q-1$.

Let $U=T^{1/n}$ and let $e$ be the ramification index of the prime $U=\infty$ in $L/\F_q(U)$.
Note that $(e,p)=1$
and denote $K'=\F_q(S)$, where $S^e=U$. 
The extension $K'\Fb_q/\Fb_q(U)$ is Galois with $\Gal(K'\Fb_q/\Fb_q(U))=\Z/e\Z$ and since $L/\F_q(U)$ is geometric we have 
that $\Gal(L\Fb_q/\Fb_q(U))=p(G)$. 
Since $p(G)$ is quasi-$p$, the groups $p(G)$ and $\Z/e\Z$ have no nontrivial common quotients and therefore the extensions 
$K'\Fb_q,L\Fb_q$ of $\Fb_q(U)$ are linearly disjoint. 
Consequently the geometric extensions $K',L$ of $\F_q(U)$ are also linearly disjoint and we have $\Gal(L\F_q(S)/\F_q(S))=\Gal(L/\F_q(U))=p(G)$. 
By Lemma~\ref{lem:eliminateinfty}, $L\F_q(S)/\F_q(S)$ is ramified only over the prime $S$.
\end{proof}

\begin{question}\label{ques:Abhyankar}
Is a cyclic-by-quasi-$p$ group $G$ with $[G:p(G)]\mid q-1$ always $\Lgt(q)$-realizable?
\end{question}

This question is a variant of the arithmetic Abhyankar conjecture (Conjecture \ref{conj:abhyankar}) and is suggested by Abhyankar's general philosophy regarding ramified covers in characteristic $p$. 
We remark that Harbater and van der Put \cite[Theorem 5.5]{HavdP02} have given a non-obvious necessary condition for a group to be the Galois group of an extension of $\F_q(T)$ ramified over two geometric points, but this condition is automatically satisfied for cyclic-by-quasi-$p$ groups, 
as can easily be shown using Frattini's argument. 
The next theorem answers Question \ref{ques:Abhyankar} affirmatively for the symmetric and alternating groups (with some mild restrictions on $n,p$).

\begin{theorem}\label{thmlist} Let $p$ be a prime, $q$ a power of $p$, and $n\ge p$. The following groups are $\Lgt(q)$-realizable:
\begin{enumerate} 
\item $S_n$ if $n\neq p+1$ or $p=2$.
\item $A_n$ if $p>2$ and either $n\neq p+1$ or $\F_q\supseteq\F_{p^2}$ or $p=3$.
\item $A_n$ if $p=2$ and either $\F_q\supseteq\F_4$ or $10\neq n\ge 8$ and $n\equiv 0,1,2,6,7\pmod 8$.
\end{enumerate}
\end{theorem}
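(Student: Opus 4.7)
The plan is to prove Theorem \ref{thmlist} by exhibiting, case by case, explicit polynomial Galois extensions of $\F_q(T)$ realizing the required $\Lgt(q)$-realizations of $S_n$ and $A_n$. A preliminary reduction sharpens Lemma \ref{lemreal}: running its proof backwards and using Abhyankar's Lemma \ref{lem:abhyankar} to glue a tame cyclic cover of degree $[G:p(G)]$ to a quasi-$p$ realization, one sees that it suffices to produce a geometric Galois extension of $\F_q(T)$ with group $p(G)$ ramified over a single $\F_q$-rational prime, which we normalize to $T=\infty$. Concretely this means realizing $S_n$ when $p=2$ and $A_n$ when $p>2$ for the $S_n$ part of the theorem, and $A_n$ directly for the $A_n$ part.

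For each case I would exhibit a trinomial (or near-trinomial) $f(T,X)\in\F_q[T,X]$ whose derivative $f_X$ collapses drastically in characteristic $p$, so that the Riemann--Hurwitz analysis together with Lemma \ref{lem:crit} localizes all ramification of the extension $\F_q(T)(\alpha)/\F_q(T)$ (where $f(T,\alpha)=0$) at $T=\infty$; and whose discriminant $\disc_X f\in\F_q[T]$ is, up to a constant, a monomial in $T$, so that Lemma \ref{lem:Dedekind} rules out any finite ramification outside $T=0$ (which can be absorbed into $T=\infty$ by a change of variable). Lemma \ref{lemcycle} then extracts specific cycles from the ramification data: typically the wild ramification at $\infty$ contributes a long cycle of length $n$ or $p$-power type, while tame decomposition at an auxiliary prime contributes a cycle of coprime length, forcing primitivity of the Galois group.

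To pin down $\Gal(f/\F_q(T))$ as $S_n$ or $A_n$, I would combine three ingredients. First, a decomposition-free form of $f$ together with Lemma \ref{lem:indecomposable_primitive} (or coprime cycle-length arguments) yields primitivity. Second, Theorem \ref{thm:jones}(i) promotes a cycle of length $\leq n-3$ in a primitive group to containment of $A_n$; for $S_n$ realizations one additionally produces a transposition via Lemma \ref{lemcycle}(ii) at a tame simple zero of $\disc_X f$ and concludes by Lemma \ref{lem:primtrans}. Third, to split $A_n$ from $S_n$ one computes $\disc_X f$ in closed form and applies Lemma \ref{lem:discgalois}; this is precisely where the arithmetic conditions in the theorem enter, controlling the square-class of the discriminant in $\F_q^\times$ and hence whether the construction is defined over the prime field or requires a constant extension.

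The main obstacle is the boundary case $n=p+1$ for $A_n$. Here the construction naturally yields a cycle of length $n-2=p-1$, which precisely when $p-1$ is a Mersenne prime falls into the exceptional $PGL_2(2^k)$ regime of Theorem \ref{thm:jones}(ii) and needs separate attention (this is where the full strength of Jones's theorem, rather than the classical Jordan version, is indispensable); simultaneously, the discriminant-square computation fails to yield $A_n$ over $\F_p$, accounting for the hypothesis $\F_{p^2}\subseteq\F_q$. The analogous obstruction for $A_n$ in characteristic $p=2$ takes the form of a Stickelberger-type parity condition on the Frobenius action on ramification data above $\infty$, which translates into the congruence $n\equiv 0,1,2,6,7\pmod 8$ and correspondingly excludes $n=10$, where no trinomial in this family appears to work.
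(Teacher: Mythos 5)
There is a genuine gap, and it sits at the very first step. Your ``preliminary reduction'' runs Lemma~\ref{lemreal} backwards: you claim that producing a geometric $p(G)$-extension ramified over a single rational prime and then ``gluing'' a tame cyclic cover of degree $[G:p(G)]$ via Abhyankar's Lemma yields an $\Lgt(q)$-realization of $G$. This fails whenever $G$ is not the direct product $p(G)\times (G/p(G))$: the compositum of a geometric $A_n$-extension of $\F_q(S)$ with the Kummer cover $\F_q(S)/\F_q(T)$, $S^2=T$, is Galois over $\F_q(T)$ with group $A_n\times C_2$ (or a subgroup thereof), never $S_n$, and there is no reason for the $A_n$-extension to descend to an $S_n$-extension of the subfield --- that is a nontrivial embedding problem. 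Lemma~\ref{lemreal} only goes from $G$ down to $p(G)$. The paper in fact argues in exactly the opposite direction: for $p>2$ it constructs $S_n$ directly as a geometric extension ramified over $T=0,\infty$ with tame ramification over one of them (six separate subcases according to $n\bmod 2$, $n\bmod p$, etc., using polynomials such as $(X^{n+1}-(1+T)X^p+T)/(X-1)$), and only then deduces the $A_n$ case for $n\neq p+1$ from $p(S_n)=A_n$ via Lemma~\ref{lemreal}. Under your reduction the entire $S_n$, $p>2$ branch of the theorem --- the bulk of the work --- is left unproved.

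Two smaller inaccuracies: the exceptional $PGL_2(2^k)$ case of Theorem~\ref{thm:jones}(ii) does not arise at $n=p+1$ for $A_n$ (there $n-2=p-1$ is even, hence not a Mersenne prime); it arises in the $S_n$ construction at $n=p+2$, where the relevant cycle has prime length $p=n-2$ and is ruled out by the cycle-structure $(3,p-1)$ over $\infty$ via \cite{GMPS16}. The case $n=p+1$ for $A_n$ is instead handled by Abhyankar's explicit polynomials $(X+1)(X+\frac{a}{a-1})^p-T^sX^a$, and the hypothesis $\F_q\supseteq\F_{p^2}$ indeed enters through the square class of the discriminant, as you guessed. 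Your description of the constructive machinery (trinomials with degenerate derivative, monomial discriminants, cycle extraction via Lemma~\ref{lemcycle}, primitivity plus Jordan/Jones, and the Berlekamp-type parity criterion of Abhyankar--Ou--Sathaye in characteristic $2$) matches the paper's toolkit, but note also that a cover ramified over both $0$ and $\infty$ cannot be ``absorbed'' into a single branch point by a change of variable; for the non-quasi-$p$ groups one genuinely keeps two branch points, one tame, which is what the definition of $\Lgt(q)$-realizability permits.
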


Theorem~\ref{thmlist} will be proved in Section \ref{secthmlist}. 
Note that in the case of $A_n,n\ge p>2$ Theorem~\ref{thmlist} combined with Lemma~\ref{lemreal}
and Example \ref{ex:cyclicbyquasip} immediately imply Theorem~\ref{thm:abhyankar}.

\subsection{Application to the minimal ramification problem}
\label{sec:thmprod}

We note that $\Lgt(q)$-re\-al\-iz\-abil\-ity implies
a positive answer to the minimal ramification problem:

%

\begin{lemma} \label{removeinf} \label{lemquasip} 
Let $G$ be an $\Lgt(q)$-realizable nontrivial finite group. 
Then for infinitely many primes $h\in\F_q[T]$ there exists a geometric Galois extension $L/\F_q(T)$ with $\Gal(L/\F_q(T))=G$ which is ramified only over $h$. 
In particular, $\ram_{\mathbb{F}_q(T)}(G)=1$ and $G$ satisfies Conjecture~\ref{bm}.
\end{lemma}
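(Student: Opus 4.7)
Let $L_0/\F_q(U)$ be an $\Lgt(q)$-realization of $G$; after a fractional-linear change of variable we may assume the (at most two) ramified primes are $U=0$ and $U=\infty$, with tame ramification at $\infty$ of some index $e$ coprime to $p$. The strategy is to pull back along a well-chosen map $\F_q(U)\hookrightarrow\F_q(T)$, $U\mapsto h(T)$: the infinite prime of $\F_q(T)$ will absorb the tame ramification by Abhyankar's lemma, while over $U=0$ we get a single finite prime of $\F_q(T)$ if $h$ is irreducible.

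Concretely, for every multiple $d$ of $e$ (which is automatically coprime to $p$) the prime polynomial theorem produces infinitely many irreducible $h\in\F_q[T]$ with $\deg h=d$; fix any such $h$. Identify $U=h(T)$ and form the compositum $L=L_0\cdot\F_q(T)$ over $\F_q(U)$. Since $h$ is separable (as $\F_q$ is perfect) and since the only finite ramified prime $\mathcal F_1=U$ of $L_0/\F_q(U)$ pulls back to $\mathcal F_1(h)=h$, Lemma~\ref{lem:disjoint} yields that $L_0$ and $\F_q(T)$ are linearly disjoint over $\F_q(U)$, so $\Gal(L/\F_q(T))\cong\Gal(L_0/\F_q(U))=G$. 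Applying the same lemma after extension of scalars to $\bar\F_q$ gives $[L\bar\F_q:\bar\F_q(T)]=|G|=[L:\F_q(T)]$, so $\F_q$ is algebraically closed in $L$ and the extension $L/\F_q(T)$ is geometric. Because $\F_q(T)/\F_q(U)$ is totally ramified at $\infty$ with index $\deg h$ divisible by $e$, Lemma~\ref{lem:eliminateinfty} (i.e.\ Lemma~\ref{lem:abhyankar} applied to the infinite prime) guarantees that $L/\F_q(T)$ is unramified at $\infty$ and ramified at most over the prime divisors of $h$. Since $h$ is irreducible, $h$ is the unique ramified prime of $L/\F_q(T)$, and varying $h$ over the infinitely many irreducibles of degree $e$ produces infinitely many distinct such primes.

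For the ``in particular'' statement, this construction yields $\ram_{\F_q(T)}(G)\le1$, and the matching lower bound $\ram_{\F_q(T)}(G)\ge1$ for nontrivial $G$ is Corollary~\ref{cor:lowerbound}. By Lemma~\ref{lemreal} any $\Lgt(q)$-realizable group is cyclic-by-quasi-$p$, so $G/p(G)$ is cyclic and hence $d((G/p(G))^{\mathrm{ab}})=d(G/p(G))\le1$; thus $\max\{d((G/p(G))^{\mathrm{ab}}),1\}=1=\ram_{\F_q(T)}(G)$, which is exactly Conjecture~\ref{bm} for $G$. The argument has essentially no hard step: the technical content is absorbed into Lemmas~\ref{lem:disjoint} and~\ref{lem:eliminateinfty}, and the only point requiring a little care is to verify linear disjointness of $L_0$ and $\F_q(T)$ over $\F_q(U)$ not just over the base but also after extending scalars to $\bar\F_q$, so that the geometric Galois group is unchanged by the substitution.
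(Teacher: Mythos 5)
Your proposal is correct and follows essentially the same route as the paper: normalize the branch locus to $\{0,\infty\}$ with tame ramification at $\infty$ of index $e$, pull back along $U=h(T)$ for an irreducible $h$ of degree divisible by $e$, and invoke Lemma~\ref{lem:disjoint} for linear disjointness (hence the Galois group and geometricity) and Lemma~\ref{lem:eliminateinfty} to remove the ramification at infinity. Your extra care about checking linear disjointness after base change to $\overline{\F}_q$, and your explicit derivation of the ``in particular'' clause from Corollary~\ref{cor:lowerbound} and Lemma~\ref{lemreal}, are details the paper leaves implicit but are entirely consistent with its argument.
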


\begin{proof} 
Let $M/\F_q(U)$ be a geometric realization of $G$ ramified over $U=0$ and (at most) tamely ramified over $U=\infty$. 
Denote by $e$ the ramification index over $\infty$,
which by assumption is not divisible by $p$.
Let $h\in\F_q[T]$ irreducible of degree divisible by $e$
and identify $U=h(T)$.
By Lemma~\ref{lem:disjoint}, $M$ and $\F_q(T)$ are linearly disjoint over $\F_q(U)$,
and $M\F_q(T)/\F_q(T)$ is ramified only over $h$ by Lemma~\ref{lem:eliminateinfty}.
Therefore, $M\F_q(T)/\F_q(T)$ is geometric Galois of Galois group $G$ and ramified only over the single prime $h$,
hence $\ram_{\mathbb{F}_q(T)}(G)=1$.
\end{proof}

In this subsection we 
extend this to (certain) products of $\Lgt(q)$-realizable groups,
which we then combine with Theorem~\ref{thmlist} to prove Theorem~\ref{thm:main1}:

\begin{proposition}\label{thmprod}
 Let $G=G_1\times\ldots\times G_m$ be a product with each $G_i$ being $\Lgt(q)$-realizable. 
 Assume that there is a prime number $\ell$ such that for each $1\le i\le m$ either $G_i$ is quasi-$p$ or $\ell\mid [G_i:p(G_i)]$. Then Conjecture \ref{bm} holds for $G$ over $\F_q(T)$.
\end{proposition}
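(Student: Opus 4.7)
The lower bound $\ram_{\F_q(T)}(G) \geq \max\{d((G/p(G))^{\mathrm{ab}}), 1\}$ is given by Corollary~\ref{cor:lowerbound}, and my first step will be to identify this number, under the $\ell$-hypothesis, with $\max\{r, 1\}$, where $r := \#\{i : G_i \text{ is not quasi-}p\}$. By Lemma~\ref{lemreal} each $G_i/p(G_i)$ is cyclic of some order $c_i$ (with $c_i = 1$ iff $G_i$ is quasi-$p$), so $(G/p(G))^{\mathrm{ab}} = \prod_i C_{c_i}$, and its minimal number of generators equals the maximum over primes $\ell'$ of $\#\{i : \ell' \mid c_i\}$. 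This count is bounded above by $r$ for every $\ell'$ (since quasi-$p$ factors contribute nothing) and attains $r$ at $\ell' = \ell$ by hypothesis, giving the identification.

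For the upper bound I plan to construct, with $N := \max\{r, 1\}$, a geometric Galois extension $L/\F_q(T)$ with group $\prod_i G_i$ ramified at exactly $N$ primes by applying Lemma~\ref{removeinf} to each $G_i$ in turn. I would reindex so that $G_1, \ldots, G_r$ are non-quasi-$p$ and $G_{r+1}, \ldots, G_m$ are quasi-$p$, and choose $N$ distinct irreducible polynomials $h_1, \ldots, h_N \in \F_q[T]$ of sufficiently large degree (compatible with all the tame indices that Lemma~\ref{removeinf} requires). For each $i \le r$ I would use Lemma~\ref{removeinf} to produce a geometric Galois extension $L_i/\F_q(T)$ realizing $G_i$ and ramified only at $h_i$, and for each quasi-$p$ $G_k$ with $k > r$ I would pick an index $j(k) \in \{1, \ldots, N\}$ and realize $G_k$ as $L_k/\F_q(T)$ ramified only at $h_{j(k)}$. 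The compositum $L := L_1 \cdots L_m$ is then a geometric Galois extension of $\F_q(T)$ ramified only over $\{h_1, \ldots, h_N\}$, and the remaining task is to confirm that $\Gal(L/\F_q(T)) = \prod_i G_i$.

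Linear disjointness of the $L_i$ is the heart of the argument and the step I expect to be the main obstacle. Pairs of $L_i$'s with distinct assigned ramification primes are automatically linearly disjoint, because their intersection is an everywhere-unramified geometric Galois extension of $\F_q(T)$, hence trivial. The delicate case is a pair $L_i, L_{i'}$ sharing a ramification prime $h$: their intersection is then a geometric Galois extension of $\F_q(T)$ ramified only at $h$ realizing a common Galois quotient of $G_i$ and $G_{i'}$ (necessarily quasi-$p$ whenever one of the two factors is). To handle this I would exploit the flexibility already built into Lemma~\ref{removeinf} --- infinitely many admissible primes, and, for each such prime, a whole family of realizations arising from varying the underlying $\Lgt(q)$-realization of $G_i$ over $\F_q(U)$ and the base-change polynomial --- in order to pick each $L_k$ (for quasi-$p$ $G_k$) inductively so that it is linearly disjoint from the composite built so far. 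At each stage the realizations whose intersection with the current composite contains a fixed nontrivial common quotient form a thin subfamily, since the number of geometric Galois extensions of $\F_q(T)$ with prescribed Galois group and prescribed ramification locus is finite; a generic choice therefore avoids all bad events and yields the desired $L$ with exactly $\max\{r,1\}$ ramified primes, matching the lower bound.
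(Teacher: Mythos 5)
Your identification of the lower bound $\max\{d((G/p(G))^{\rm ab}),1\}$ with $\max\{r,1\}$, where $r$ is the number of non-quasi-$p$ factors, is correct and agrees with the paper. The genuine gap is in the step you yourself flag as the main obstacle: producing, for a quasi-$p$ factor $G_k$, a realization ramified only over an already-used prime and linearly disjoint from the composite built so far. Your justification --- that the bad realizations form a thin subfamily because there are only finitely many geometric Galois extensions of $\F_q(T)$ with prescribed Galois group and prescribed ramification locus --- is false in positive characteristic: wild ramification makes the fundamental group of the once-punctured line very large, e.g.\ the Artin--Schreier extensions $y^p-y=T^n$ with $p\nmid n$ give infinitely many pairwise distinct geometric $\Z/p\Z$-extensions of $\F_q(T)$ ramified only over $\infty$. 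Moreover, even setting that aside, Lemma~\ref{removeinf} produces only one realization over each admissible prime and Theorem~\ref{thmlist} supplies essentially one explicit $L_1^{\mathrm{gt}}(q)$-realization per group, so the ``whole family of realizations'' you propose to vary over is not actually provided by the results you invoke; if the single available realization happens to meet the composite nontrivially, your induction has no next move.

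The paper closes exactly this gap with Propositions~\ref{hit} and~\ref{propprod}: it realizes $(G_k)^N$ for large $N$ as a single geometric Galois extension ramified over one prime, by taking the product of $N$ covers of the affine line to obtain a cover of $\AA^N$ and specializing back to $\AA^1$ via a Hilbert-irreducibility argument that preserves the Galois group. This yields $N$ pairwise linearly disjoint realizations of $G_k$ over the same prime; since a fixed finite extension $M$ has only finitely many nontrivial Galois subfields and each can lie in at most one member of a pairwise linearly disjoint family, some member is linearly disjoint from $M$. With this in hand the paper absorbs all quasi-$p$ factors into the non-quasi-$p$ ones (or handles the all-quasi-$p$ case outright), after which the remaining factors receive pairwise distinct ramification primes and linear disjointness is automatic, as in the easy case of your argument. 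You would need this mechanism (or an equivalent one) to complete your proof.
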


By a $k$-{\em variety} (for a field $k$) we mean a reduced $k$-scheme of finite type.
In what follows all maps are morphisms of $\F_q$-varieties,
and $\A^n$ (resp. $\P^n$) always denotes the affine (resp. projective) $n$-space over $\F_q$ considered as an $\F_q$-variety.
The function field of an irreducible $\F_q$-variety $X$ is denoted by $\F_q(X)$. 
We will also use the notion of a Galois cover of $\F_q$-varieties and its Galois group, see \cite[\S I.5]{Mil80} for the definition and basic properties. 
In particular, if $w\colon Y\rightarrow X$ is a Galois cover of $\F_q$-varieties, 
then $Y$ and $X$ are connected and $w$ is \'etale, and we denote its Galois group by $\Gal(Y/X)$. 
We call $w$ a \emph{geometric} Galois cover if $Y\times\mathrm{Spec}\,\Fb_q\to X\times\mathrm{Spec}\,\Fb_q$ is also a Galois cover (if $X$ is geometrically irreducible this is equivalent to $Y$ being geometrically irreducible).

\begin{proposition}\label{hit}
Let $U\subseteq\A^m$ be an open $\F_q$-subvariety and $Y\to U$ a geometric Galois cover of $\F_q$-varieties with $\Gal(Y/U)=G$.
Then 
there exists a morphism 
$\psi\colon\A^1\to\A^m$ 
 such that $\psi^{-1}(U)\neq\emptyset$ and $Y\times_U\psi^{-1}(U)\to\psi^{-1}(U)$ is a geometric Galois cover with Galois group $G$.\end{proposition}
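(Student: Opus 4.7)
The plan is to pull back the given cover $Y \to U$ along a polynomial morphism $\psi \colon \A^1 \to \A^m$ of sufficiently large degree, chosen via a Bertini-type argument so that the pullback remains a geometrically connected Galois cover with group $G$.

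First, for a parameter $d \geq 1$ to be fixed later, I would parametrize the polynomial morphisms $\A^1 \to \A^m$ of degree at most $d$ by their coefficient space $M := \A^{m(d+1)}$, producing a universal morphism $\Psi \colon \A^1 \times M \to \A^m$ given by $(t, a) \mapsto \psi_a(t)$. Pulling back the cover $Y \to U$ along $\Psi$ yields a $G$-cover $\widetilde{Y} \to \Psi^{-1}(U)$ defined over a non-empty open subvariety of $\A^1 \times M$, equipped with the natural $G$-action.

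Second, I would invoke a Bertini-type theorem for \'etale covers --- in the spirit of Jouanolou's generalization of the classical Bertini irreducibility theorem via iterated generic hyperplane sections --- to conclude that, for $d$ sufficiently large, the generic fibre of $\widetilde{Y} \to M$ is a geometrically connected $G$-Galois cover of the corresponding open subset of $\A^1_{\F_q(M)}$. Equivalently, the induced map from the \'etale fundamental group of the generic polynomial curve in $\A^m$ onto $G$ is surjective.

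Third, I would descend from the generic fibre to an $\F_q$-rational specialisation by producing a point $a_0 \in M(\F_q)$ outside the proper closed ``bad locus'' $B \subsetneq M$ over which the cover fails to be geometrically Galois with group $G$. For $d$ chosen large enough relative to the degree of $B$, the Lang--Weil estimate guarantees $M(\F_q) \setminus B(\F_q) \neq \emptyset$; alternatively, one may iteratively specialise coordinates using the Hilbertianity of the rational function field $\F_q(M)$. The resulting $\psi := \psi_{a_0} \colon \A^1 \to \A^m$ is the required morphism.

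The main obstacle is the Bertini step: showing that, for $d$ large enough, pulling back a geometric $G$-Galois \'etale cover along the generic polynomial curve of degree $\leq d$ preserves both geometric connectedness and the Galois group. In characteristic zero this is standard (Kleiman--Bertini plus descent), but over $\F_q$ one must additionally quantify the degree of the bad locus uniformly in $d$, so that a Lang--Weil or Hilbertianity argument yields the required $\F_q$-rational point regardless of the size of $q$.
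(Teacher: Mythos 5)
Your overall strategy --- pull back along a polynomial map $\A^1\to\A^m$ of large degree and show that a generic choice works --- has the right shape, but the step you yourself flag as the main obstacle is a genuine gap, and neither of the two fixes you propose for it goes through over a fixed finite field. The Lang--Weil route fails because the bad locus $B\subset M=\A^{m(d+1)}$ is cut out by resultant/discriminant-type conditions whose degree grows with $d$; Lang--Weil only produces a point of $M(\F_q)\setminus B(\F_q)$ when (roughly) $\deg B<q$, so for fixed small $q$ increasing $d$ does not help. The alternative of using Hilbertianity of $\F_q(M)$ is misdirected: Hilbertianity lets you specialize parameters into the Hilbertian field itself, never down to $\F_q$ (which is not Hilbertian), so you would end up with a morphism defined over a transcendental extension of $\F_q$ rather than over $\F_q$. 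The correct move --- and what the paper does --- is to apply Hilbertianity not to the coefficient space of $\psi$ but to the field $\F_q(T)$, where $T$ is the coordinate on the source $\A^1$: after reducing to a standard \'etale presentation $f(T_1,\dots,T_m,X)=0$ on a dense open subset, one specializes $(T_1,\dots,T_m)\mapsto(h_1(T),\dots,h_m(T))$ with $h_i\in\F_q[T]$ using \cite[Theorem 13.3.5 and Proposition 16.1.5]{FJ}, which guarantee that the relevant Hilbert subset of $\F_q(T)^m$ contains points with polynomial coordinates. This produces exactly your $\psi$ with no parameter count at all.

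A second gap is the preservation of geometric connectedness (i.e.\ that the pullback acquires no constant field extension), which your Bertini step asserts but does not justify; note also that Jouanolou-type Bertini theorems concern generic hyperplane or complete-intersection sections, which are curves of high genus rather than rational curves, so they do not directly apply to the family of maps $\psi_a\colon\A^1\to\A^m$. The paper handles geometricity by first base changing to $\F_{q^\nu}$ with $\nu=|G|$, so that $Y'=Y\times_{\F_q}\mathrm{Spec}\,\F_{q^\nu}\to U$ is Galois with group $G\times\Gal(\F_{q^\nu}/\F_q)$; preserving this larger group under specialization forces the specialized $G$-cover of $\A^1$ to be geometric, since any constant extension it acquires would have degree dividing $|G|$. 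Your argument needs some analogue of this device.
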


\begin{proof} 
Denote $\nu=|G|$ and $Y'=Y\times_{\F_q}\mathrm{Spec}\,\F_{q^\nu},U'=U\times_{\F_q}\mathrm{Spec}\,\F_{q^\nu}$. 
Since $Y\rightarrow U$ is geometric, $Y'$ is irreducible and the cover $Y'=Y\times_UU'\to U$ is Galois with $\Gal(Y'/U)=G\times \Gal(\F_{q^\nu}/\F_q)$. (Generally, if $Z\rightarrow X$ and $W\rightarrow X$ are Galois covers then $Z\times_X W$ is a union of isomorphic Galois covers with Galois group a subgroup of $\Gal(Z/X)\times\Gal(W/X)$,
 and if $Z\times_X W$ is irreducible we have $\Gal(Z\times_X W/X)\cong \Gal(Z/X)\times\Gal(W/X)$. 
This fact can be deduced from \cite[Theorem 5.3]{Mil80} or by comparing with the Galois groups of the corresponding function field extensions.)
We will show that 
one can choose $h_1,\dots,h_m\in\F_q[T]$ such that the corresponding morphism $\psi\colon\mathbb{A}^1\rightarrow\mathbb{A}^m$
satisfies $\psi^{-1}(U)\neq\emptyset$ and
$Y'\times_U\psi^{-1}(U)\to\psi^{-1}(U)$ is a Galois cover with Galois group 
$\Gal(Y'/U)=G\times\Gal(\F_{q^\nu}/\F_q)$,
which will imply that
$Y\times_U\psi^{-1}(U)\to\psi^{-1}(U)$ is a {\em geometric} Galois cover with Galois group $G$,
since any extension of the field of constants occurring in this cover must be of degree dividing $\nu=|G|$.

First note that 
$Y'\times_U\psi^{-1}(U)\to\psi^{-1}(U)$ is again \'etale
for any $\psi$,
so it suffices to show that 
$h_1,\dots,h_m$ can be chosen so that
it is a Galois cover with Galois group $\Gal(Y'/U)$,
for which we are allowed to replace $U$ and $Y'$ by dense open subvarieties.
So since an \'etale morphism is locally standard \'etale (see e.g.~\cite[Proposition I.3.19]{Mil80}),
we can assume that $U=\{g(T_1,\ldots,T_m)\neq 0\}$ with $0\neq g\in\F_q[T_1,\ldots,T_m]$ and
$$
 Y'=\{f(T_1,\ldots,T_m,X)=0,g(T_1,\ldots,T_m)\neq 0\}\subseteq\A^{m+1}
$$ 
with $f\in\F_q[T_1,\ldots,T_m,X]$, 
and $Y'\to U$ is the projection to the first $m$ coordinates.
By \cite[Theorem 13.3.5 and Proposition 16.1.5]{FJ} there exist
$h_1,\ldots,h_m\in\F_q[T]$ with
$g(h_1,\ldots,h_m)\neq 0$, $f(h_1,\dots,h_m,X)\in\F_q(T)[X]$ is irreducible and 
$$\label{eq:galproduct}
 \Gal(f(h_1,\ldots,h_m,X)/\F_q(T))=\Gal(f(T_1,\ldots,T_m,X)/\F_q(T_1,\ldots,T_m))=\Gal(Y'/U).
$$ 
The corresponding $\psi$ satisfies
$\psi^{-1}(U)\neq\emptyset$, $Y'\times_U\psi^{-1}(U)$ is irreducible, and
$$
 \Gal(Y'\times_U\psi^{-1}(U)/\psi^{-1}(U))=\Gal(f(h_1,\ldots,h_m,X)/\F_q(T))=\Gal(Y'/U),
$$ 
concluding the proof.
\end{proof}

The next proposition combined with Lemma $\ref{lemquasip}$ implies Proposition~\ref{thmprod} in the special case when all $G_i$ are quasi-$p$.

\begin{proposition}\label{propprod} 
Let $G_1,G_2$ be $\Lgt(q)$-realizable groups with $G_2$ quasi-$p$. Then $G_1\times G_2$ is $\Lgt(q)$-realizable.
\end{proposition}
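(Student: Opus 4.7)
The plan is to take an $\Lgt(q)$-realization $L_1/\F_q(T_1)$ of $G_1$, which after a M\"obius transformation can be assumed to be ramified at $T_1=0$ (possibly wildly) and $T_1=\infty$ (tamely), together with an $\Lgt(q)$-realization $L_2/\F_q(T_2)$ of $G_2$ that is ramified only at $T_2=0$. Such an $L_2$ exists by Lemma~\ref{lemreal} since $G_2$ is quasi-$p$. Pulling both up to the two-variable rational function field $K=\F_q(T_1,T_2)$ produces Galois extensions $L_1(T_2)/K$ and $L_2(T_1)/K$ with Galois groups $G_1$ and $G_2$ respectively. I would then verify these are linearly disjoint over $K$: any common Galois subextension $E$ corresponds on the $L_1$-side to a subfield $F_1\subseteq L_1$ Galois over $\F_q(T_1)$ with $E=F_1\cdot K$; but since $L_2/\F_q$ is regular (a geometric extension), so is $L_2(T_1)/\F_q(T_1)$, and hence any element of $F_1\subseteq L_1\subseteq L_2(T_1)$ algebraic over $\F_q(T_1)$ must actually lie in $\F_q(T_1)$, forcing $F_1=\F_q(T_1)$ and $E=K$. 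Consequently $\tilde L := L_1(T_2)\cdot L_2(T_1)$ is a geometric Galois extension of $K$ with Galois group $G_1\times G_2$, ramified along the divisors $\{T_1=0\}$, $\{T_1=\infty\}$ and $\{T_2=0\}$ on $\P^1\times\P^1$, with the first and third wild and the second tame.

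Next, to extract an $\Lgt(q)$-realization of $G_1\times G_2$ from $\tilde L$, one seeks a morphism $\psi\colon\P^1_t\to\P^1\times\P^1$ whose image meets the three ramification divisors only at the two rational points $(0,0)$ and $(\infty,\infty)$. The natural choice is $\psi(t)=(t^a,c\,t^b)$ for positive integers $a,b$ and some $c\in\F_q^\times$: with this choice $\psi^{-1}(\{T_1=0\}\cup\{T_2=0\})=\{t=0\}$ and $\psi^{-1}(\{T_1=\infty\})=\{t=\infty\}$, so the pullback cover over $\P^1_t$ is automatically ramified only at $t=0$ (wildly, combining the two wild ramifications that both land at $(0,0)$) and at $t=\infty$ (tamely, from the $L_1$-side alone), yielding precisely the $\Lgt(q)$-ramification structure.

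The main obstacle will be showing that the parameters $a,b,c$ can be chosen so that the pullback of $\tilde L$ via this specific $\psi$ remains a geometric Galois extension of $\F_q(t)$ with the full group $G_1\times G_2$. For each individual factor, choosing $a$ coprime to $|G_1|$ and $b$ coprime to $|G_2|$ ensures via a standard L\"uroth/degree argument that the pullback of $L_1$ via $T_1=t^a$ has Galois group $G_1$ and the pullback of $L_2$ via $T_2=c\,t^b$ has Galois group $G_2$; the nontrivial task is verifying their linear disjointness over $\F_q(t)$. For this one would apply Proposition~\ref{hit} to a suitable parametric family---for instance, by viewing $\tilde L$ base-changed via $(T_1,T_2)=(t^a,c\,t^b)$ as a Galois cover over $\F_q(t,c)$ with group $G_1\times G_2$ (the base change is linearly disjoint from $\tilde L$ as long as $a$ is coprime to $|G_1|\cdot|G_2|$) and then specializing in the variable $c$. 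The delicate point is that the specialization must produce a constant $c_0\in\F_q^\times$ rather than a general polynomial in $t$; this is the real hurdle, and will likely require either enlarging the parameter space to give sufficient Hilbertian flexibility, or exploiting the constraint that, since $G_2$ is quasi-$p$, any potential common Galois subextension over $\F_q(t)$ must be a geometric cover of the affine line ramified only at $t=0$ and hence has quasi-$p$ Galois group by Abhyankar's conjecture.
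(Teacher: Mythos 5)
Your reduction to a two--variable cover $\tilde L/\F_q(T_1,T_2)$ with group $G_1\times G_2$ is fine, and so is the observation that a specialization of the form $(T_1,T_2)=(t^a,c\,t^b)$ is what is needed to keep the branch locus inside $\{0,\infty\}$. But the step you yourself flag as ``the real hurdle'' is a genuine gap, not a technicality: Proposition~\ref{hit} (and Hilbert-irreducibility arguments generally) produce a specialization $\psi\colon\A^1\to\A^2$ given by \emph{arbitrary} polynomials, with no control over their shape, whereas your ramification requirement forces $\psi$ into the very rigid family $(t^a,c\,t^b)$. Nothing guarantees that this family meets the ``Hilbert set'' where the Galois group is preserved. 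Neither of your proposed fixes closes this: enlarging the parameter space still leaves you needing a specialization of the constrained monomial form, and the Abhyankar-conjecture observation only tells you that a common subextension $E=L_1\F_q(t)\cap L_2\F_q(t)$ has quasi-$p$ Galois group --- which is no obstruction at all, since any common quotient of $G_1$ and $G_2$ is automatically a quotient of the quasi-$p$ group $G_2$ and hence quasi-$p$ (e.g.\ $G_1=G_2=A_5$, $p=5$, common quotient $A_5$). Indeed, if $L_2$ happened to be a pullback of a quotient of $L_1$ under a monomial coordinate change, linear disjointness could fail for your specific choices, so some argument ruling this out is indispensable.

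The paper circumvents exactly this difficulty by reorganizing the proof. First it treats the case where \emph{both} factors are quasi-$p$: by Lemma~\ref{lemreal} each $G_i$ then has a realization ramified over a \emph{single} point, so the product cover $Y_1\times Y_2\to\A^2$ is \'etale over all of $\A^2$ and \emph{every} morphism $\A^1\to\A^2$ pulls it back to a cover ramified only over $\infty$; hence Proposition~\ref{hit} can be applied with no constraint on the shape of the specialization. For the mixed case the paper does not specialize at all: it applies the quasi-$p$ case to $(G_2)^m$ to produce $m$ pairwise linearly disjoint $G_2$-realizations $L_1,\dots,L_m$ ramified only over $\infty$, and then argues by pigeonhole that for $m$ large at least one $L_i$ must be linearly disjoint from the fixed $G_1$-realization $L$ (otherwise two distinct $L_i,L_j$ would share a nontrivial subfield of $L$, contradicting their pairwise disjointness); the compositum $LL_i$ is then the desired $\Lgt(q)$-realization. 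Some device of this kind --- producing many independent candidates rather than one constrained specialization --- is what your argument is missing.
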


\begin{proof} 
First assume that both $G_1,G_2$ are quasi-$p$. 
Let $T_1,T_2$ be independent variables and let $L_i/\F_q(T_i)$ be an $\Lgt(q)$-realization of $G_i$ ramified only over $\infty$ 
(we may assume this by Lemma~\ref{lemreal}). 
It corresponds to a geometric Galois cover $Y_i\to\A^1=\Spec(\F_q[T_i])$ with $\Gal(Y_i/\A^1)=G_i$. 
Taking the product of these covers we obtain a geometric Galois cover $Y=Y_1\times Y_2\to \A^2$ with $\Gal(Y/\A^2)=G_1\times G_2$.
By Proposition~\ref{hit} there exists a morphism $h\colon\A^1\to \A^2$ such that $Z=Y\times_{\A^2}\A^1\to\A^1$ 
is a geometric Galois cover with $\Gal(Z/\A^1)=\Gal(Y/\A^2)=G$. 
The extension $L=\F_q(Z)$ of $\F_q(\A^1)$ now gives a geometric realization of $G$ ramified only over $\infty$.

Now let $G_1,G_2$ be $\Lgt(q)$-realizable with only $G_2$ assumed quasi-$p$. Iterating the above claim we see that an arbitrary product of $\Lgt(q)$-realizable quasi-$p$ groups is again $\Lgt(q)$-realizable (and of course quasi-$p$ as well). 
In particular since $G_2$ is quasi-$p$ and $\Lgt(q)$-realizable, for every $m$ we can find a geometric Galois extension $L/\F_q(T)$ with Galois group $(G_2)^m$ ramified only over $\infty$, and it contains linearly disjoint subextensions $L_1,\ldots,L_m$ such that $\Gal(L_i/\F_q(T))=G_2$. 
Let $L/\F_q(T)$ be a geometric realization of $G_1$ ramified over $\infty$, tamely ramified over 0 and unramified elsewhere. 
For $m$ sufficiently large, one of the $L_i$ is linearly disjoint from $L$ over $\F_q(T)$, and then 
$K=LL_i$ is a geometric Galois extension of $\F_q(T)$ with $\Gal(K/\F_q(T))=G_1\times G_2$, ramified only over $0,\infty$, tamely over $0$.
\end{proof}
%

\begin{proof}[Proof of Proposition~\ref{thmprod}] 
Let $G=G_1\times\dots\times G_m$ be a product of $\Lgt(q)$-realizable groups.
If $G$ is quasi-$p$, then by Proposition~\ref{propprod} the group $G$ is $\Lgt(q)$-realizable and by Lemma~\ref{lemquasip} it satisfies Conjecture \ref{bm}.

Otherwise we can use Proposition~\ref{propprod} to absorb all the quasi-$p$ factors into one of the non-quasi-$p$ factors and reduce to the case when none of the $G_i$ are quasi-$p$. Assuming none of the $G_i$ are quasi-$p$ we have $d\left((G/p(G))^\ab\right)=m$,
since by Lemma~\ref{lemreal} the $G_i$ are cyclic-by-quasi-$p$ and the assumptions of the proposition imply that each $G_i$ has a quotient isomorphic to $\Z/\ell\Z$. 
By Lemma~\ref{removeinf}, for each $G_i$ there exists a geometric Galois extension $L_i/\F_q(T)$ with Galois group $G_i$ branched only over a finite prime $h_i\in\F_q[T]$ and we may take the $h_i,i=1,\ldots,m$ to be pairwise distinct. 
Then the extensions $L_i/\F_q(T)$ are pairwise linearly disjoint, since their branch loci are disjoint and $\F_q(T)$ has no unramified geometric extensions. 
Taking $L=L_1\cdots L_m$ we see that $\Gal(L/\F_q(T))=G$ and $L/\F_q(T)$ is ramified exactly over $h_1,\ldots,h_m$, so $\ram_{\mathbb{F}_q(T)}(G)\le m$.
Since by 
Corollary \ref{cor:lowerbound}
we also have $\ram_{\mathbb{F}_q(T)}(G)\geq m$,
we conclude that $\ram_{\mathbb{F}_q(T)}(G)=m$,
which is what Conjecture \ref{conj} predicts.
\end{proof}

\begin{proof}[Proof of Theorem~\ref{thm:main1}] 
Let $G=G_1\times\ldots\times G_m$ with each $G_i$ a symmetric or alternating group satisfying the assumptions of Theorem~\ref{thm:main1}. 
Since Theorem~\ref{thmlist} has the same assumptions, each $G_i$ is $\Lgt(q)$-realizable. 
If $p>2$ we may take $\ell=2$ and then each $G_i$ is either an alternating group and thus quasi-$p$ or is a symmetric group and then $\ell\mid 2=[G_i:p(G_i)]$. 
Thus the conditions of Proposition~\ref{thmprod} are satisfied and the conclusion follows. 
If $p=2$ then we take $\ell=3$ and $G_i$ is quasi-2 unless $G_i=A_3,A_4$ in which case $\ell\mid 3=[G_i:p(G_i)]$ and again Proposition~\ref{thmprod} applies and the conclusion follows.
\end{proof}

\subsection{Proof of Theorem~\ref{thmlist}}\label{secthmlist}

Let $n\ge p$.
In the present subsection we prove for $G\in\{S_n,A_n\}$
and each of the pairs $(n,q)$ from Theorem~\ref{thmlist}
that the cyclic-by-quasi-$p$ group $G$ is $\Lgt(q)$-realizable. 
We construct realizations for these groups by writing down explicit equations. 
Our constructions are inspired by the work of Abhyankar \cite{Abh92},
 and in the case $p=2$ we directly use the results of Abhyankar, Ou, Sathaye and Yie \cite{AOS94,AbYi94}. Some of the Galois group calculations rely on the classification of multiply transitive groups, which in turn relies on the Classification of Finite Simple Groups (CFSG). We will indicate when CFSG and its applications are used. 
In all cases we will write an explicit polynomial $f\in\F_q(T)[X]$ of degree $n$ and show that $\Gal(f/\F_q(T))=G$ and that the splitting field of $f$ over $\F_q(T)$ is geometric 
and is ramified over a single prime of degree one if $G$ is quasi-$p$ and over two primes of degree one with tame ramification over at least one of them if $G$ is cyclic-by-quasi-$p$ but not quasi-$p$. Note that we always have the inclusions 
$\Gal(f/\Fb_q(T))\leqslant\Gal(f/\F_q(T))\leqslant S_n$,
so for $G=S_n$, proving that $\Gal(f/\Fb_q(T))=S_n$ shows both that $\Gal(f/\F_q(T))=G$ and that the splitting field of $f$ is geometric,
and for $G=A_n$, proving that $\Gal(f/\Fb_q(T))\geqslant A_n$ and $\Gal(f/\F_q(T))\leqslant A_n$ shows
that $\Gal(f/\F_q(T))=G$ and that the splitting field of $f$ is geometric.
Recall from Lemma~\ref{lem:discgalois}  that if $p>2$ then $\Gal(f/\F_q(T))\leqslant A_n$
if and only if $\Delta(f)$ is a square in $\F_q(T)$
(where we always take the discriminant with respect to $X$).
We distinguish four main cases according to $G=S_n$ or $A_n$ and $p=2$ or $\neq 2$,
and several subcases according to $n$ and $q$.

\maincase{$G=S_n,p>2$}\label{sec:sn}
In this case $G$ is cyclic-by-quasi-$p$ but not quasi-$p$, so we are looking for a polynomial $f\in\F_q(T)[X],\deg f=n$ with 
$\Gal(f/\Fb_q(T))=S_n$ with splitting field ramified exactly over $T=0,\infty$, tamely over one of these points. 

\case{$n$ odd, $n>p, p\nmid n+1$} 
Let $g=X^{n+1}-(1+T)X^p+T\in\mathbb{F}_p[T,X]$ and
\begin{equation}\label{eq:snodd} 
 f=\frac{g}{X-1}=X^p\frac{X^{n+1-p}-1}{X-1}-(X-1)^{p-1}T\in\F_p[T,X].
\end{equation}
Note that $f$ is a monic polynomial of degree $n$ in $X$. It is irreducible over $\Fb_q(T)$ because it is linear in the variable $T$ with coprime coefficients $X^p\frac{X^{n+1-p}-1}{X-1}$ and $-(X-1)^{p-1}$ (here we use the assumption $p\nmid n+1$).
Since $g'=(n+1)X^n$, using (\ref{eq:discprod}),(\ref{eq:disc2}) and (\ref{eq:resdefinition}) we have 
$$
 \Disc(f)=\Disc(g)/\Res(X-1,f)^2=\pm (n+1)^{n+1}T^n/f(1)^2=\pm (n+1)^{n-1}T^n.
$$
Let $\alpha$ be a root of $f$ in an algebraic closure of $\F_q(T)$. 
Since $\Disc(f)=\pm (n+1)^{n-1}T^n$, the extension $\Fb_q(T,\alpha)/\Fb_q(T)$ (and therefore its normal closure) is ramified at most over $0,\infty$. We now compute the ramification indices of this extension over 0. To this end observe that
by (\ref{eq:snodd}) we have 
\begin{equation}\label{eq:snoddt} 
 T=\frac{\alpha^p(\frac{\alpha^{n+1-p}-1}{\alpha-1})}{(\alpha-1)^{p-1}}
\end{equation} 
and we see that the divisor of zeros of $T$ over $\Fb_q(T,\alpha)=\Fb_q(\alpha)$ is composed of one prime of multiplicity $p$ and other primes of multiplicity 1. Therefore the ramification indices over $T=0$ are $p,1,\ldots,1$, so by Lemma~\ref{lemcycle}(ii) $G=\Gal(f/\Fb_q(T))$ contains a cycle of length $p$. Note also that by (\ref{eq:snoddt}), the ramification indices over $T=\infty$ are $p-1,n+1-p$,
in particular the ramification over $T=\infty$ is tame.
Therefore the action of $\Gal(f/\Fb_q(T))$ on the roots of $f$ is primitive by Lemma~\ref{lem:primitive},
so since it
contains a cycle of length $p$ we can apply 
Theorem~\ref{thm:jones} with $\ell=p$. 

If $n>p+2$ then Jordan's Theorem (see Remark 
\ref{remark:jordan}) implies that $\Gal(f/\Fb_q(T))\geqslant A_n$ and since $\Disc(f)$ is not a square 
we have $\Gal(f/\F_q(T))=\Gal(f/\Fb_q(T))=S_n$ and the polynomial $f
$ satisfies all the required properties. 

If $n=p+2$ then we use Theorem~\ref{thm:jones}(ii) to 
conclude that either $\Gal(f/\Fb_q(T))\geqslant A_n$ (and then we argue as before) or $p=2^k-1$ is a 
Mersenne prime and ${PGL}_2(2^k)\leqslant \Gal(f/\Fb_q(T))\leqslant{P\Gamma L}_2(2^k)$ with its action 
on the roots being the standard action of such a group on $\P^1(\F_{2^k})$. 
However, 
the ramification indices over $\infty$ are $3,p-1$ and since $p>3$ (as $p\nmid n+1=p+3$),
 by Lemma  \ref{lemcycle}(i), $\Gal(f/\Fb_q(T))$ contains a permutation with cycle structure $3,p-1$. The group 
$P\Gamma L_2(2^k)$ with its standard action on $\P^1(\F_{2^k})$ has no such element (see 
\cite[Corollary 4.6]{GMPS16}), a contradiction.

\begin{remark}
Note that in treating the case $n=p+2$ we made use of Theorem~\ref{thm:jones}(ii), which makes use of the CFSG.
\end{remark}

\case{$n=p$} Let 
$$
 f=X^p+X^2-T\in\F_p[T,X].
$$ 
We have $f'=2X$ and therefore by (\ref{eq:disc2}) we have $\Disc(f)=aT,a\in\F_p^\times$, hence the splitting field of $f$ over $\Fb_q(T)$ is ramified at most over 0 and $\infty$. 
The polynomial $f$ is irreducible over $\Fb_q(T)$ since it is monic (up to sign) and linear in the variable $T$.

Let $\alpha$ be a root of $f$ in an algebraic closure of $\F_q(T)$. Then $T=\alpha^2(\alpha^{p-2}+1)
$ and we get (reasoning as in the previous case) that in the extension $\Fb_q(\alpha)=\Fb_q(T,\alpha)/
\Fb_q(T)$ the prime $T=0$ splits into $p-2$ unramified primes and the prime $\alpha=0$ with 
ramification index 2. 
In particular, the splitting field of $f$ is tamely ramified over $T=0$
and $\Gal(f/\Fb_q(T))\leqslant S_n$ contains a transposition by Lemma~\ref{lemcycle}. 
On the other hand $T=\infty$ is totally ramified with ramification index $p$. 
By Lemma~\ref{lem:primitive}, $\Gal(f/\Fb_q(T))$ is primitive,
so by Lemma~\ref{lem:primtrans} we 
have $\Gal(f/\Fb_q(T))=S_n$ and $f$ is as desired.

\case{$n$ odd, $p|n+1$, $n\neq 2p-1$}
Note that these conditions imply $n\ge 4p-1$. 
Choose $h\in\F_p[X],\deg h=3$ monic irreducible.
There exists $u\in\F_p[X],\deg u=3$ with $X^{n+3-p}\equiv u^p\pmod h$,
since $a\mapsto a^p$ 
is an automorphism of $\F_p[X]/h$ and we can
assume without loss of generality that $X\nmid u$ by adding $\pm h$ if necessary.
Let $g=X^{n+3}-u(X)^pX^p-h(X)^pT$ and
\begin{equation}\label{eq:snodd1} 
 f=\frac{g}{h}=\frac{X^{n+3-p}-u(X)^p}{h(X)}X^p-h(X)^{p-1}T\in\F_p[T,X].
\end{equation}
The polynomial $f$ is monic in $X$ with $\deg_X f=n$.
Let $v=X^{n+3-p}-u(X)^p$ and note that $v'=(n+3)X^{n+2-p}\neq 0$, 
in particular $v$ is separable (as $X\nmid u$),
which also implies that $h^2\nmid v$ and so $f$ 
is linear in $T$ with coprime coefficients, hence irreducible.
Using (\ref{eq:discprod}),(\ref{eq:disc2}) and (\ref{eq:resdefinition}) we compute
$$
 \Disc(f)=\Disc(g)/(\Res(h,f)^2\Disc(h))=aT^{n+2}
$$ for some $a\in\F_p^\times$,
as $\Res(h,f)$ is independent of $T$ and non-zero since $h^2\nmid v$.
Consequently the splitting field of $f$ 
is ramified only over $T=0,\infty$. 
Since $n$ is odd, $\Disc(f)$ is not a square in $\Fb_q(T)$ and so $\Gal(f/\Fb_q(T))\not\leqslant A_n$. 

Let $\alpha$ be a root of $f$ in an algebraic closure of $\F_q(T)$. By (\ref{eq:snodd1}) we have
\begin{equation}\label{eq:snodd1t} 
 T=\frac{\left(\frac{\alpha^{n+3-p}-u(\alpha)^p}{h(\alpha)}\right)\alpha^p}{h(\alpha)^{p-1}}
 \end{equation}
and therefore $\Fb_q(T,\alpha)=\Fb_q(\alpha)$. Now since $n\ge 4p-1$ we see from (\ref{eq:snodd1t}) that the 
primes of $\Fb_q(\alpha)$ over $T=0$ are $\alpha=0$ with ramification index $p$ and the roots of $v/h$ with ramification index 1,
so by Lemma~\ref{lemcycle} the group $\Gal(f/\Fb_q(T))$  contains a cycle of length $p$.
Similarly, the primes of $\Fb_q(\alpha)$ over $T=\infty$ are 
the 3 roots of $h$ with multiplicity $p-1$ each, and $\alpha=\infty$ with multiplicity $n-3(p-1)\equiv 2\pmod p$,
in particular the splitting field of $f$ is tamely ramified over $T=\infty$.
By Lemma~\ref{lem:primitive},
 $\Gal(f/\Fb_q(T))$ acts primitively on the roots of $f$. 
 By Jordan's Theorem (Theorem~\ref{thm:jones}(i) in the elementary case $\ell=p\le n-3$ prime) we have $\Gal(f/\Fb_q(T))\geqslant A_n$ and therefore $\Gal(f/\Fb_q(T))=S_n$.

\case{\bf $n=2p-1$} 
Let 
$$
 f=\frac{X^{2p}-TX^p-X^2+T}{X-1}=\frac{X^{2p-2}-1}{X-1}X^2-T(X-1)^{p-1}\in\F_p[T,X].
$$ 
The polynomial $f$ is monic in $X$ and irreducible over $\Fb_q(T)$, since it is linear in $T$ with coprime coefficients. 
Using (\ref{eq:discprod}),(\ref{eq:disc2}) and (\ref{eq:resdefinition}) we compute
$$
 \Disc(f)=\Disc(X^{2p}-TX^p-X^2+T)/\Res(X-1,f)^2=aT
$$ 
for some $a\in\F_p^\times$. We see that the splitting field of $f$ over $\Fb_q(T)$ is ramified only over $0,\infty$. Let $\alpha$ be a root of $f$ in an algebraic closure of $\F_q(T)$. We have
$$T=\frac{\left(\frac{\alpha^{2p-2}-1}{\alpha-1}\right)\alpha^2}{(\alpha-1)^{p-1}},$$
from which we see that in the extension $\Fb_q(\alpha)=\Fb_q(T,\alpha)$ of $\Fb_q(T)$ there are $2p-2$ primes lying over $T=0$ 
with one of them having ramification index $2$ and the other unramified, 
while over $T=\infty$ we have the primes $\alpha=\infty$ with ramification index $p$, and $\alpha=1$ with ramification index $p-1$.
Therefore, 
by Lemma~\ref{lemcycle},
$\Gal(f/\Fb_q(T))\leqslant S_n$ 
contains both a transposition and cycle of length $p$.
The latter implies that it is primitive,
and so $\Gal(f/\Fb_q(T))=S_n$ by Lemma~\ref{lem:primtrans}. 

\case{$n$ even, $p\nmid n$, $n>p+1$} 
Let 
$$
 f=X^n+X^p-T\in\F_p[T,X].
$$  
Since $f$ is monic (up to sign) and linear in $T$, it is irreducible over $\Fb_q(T)$. 
Since $f'=nX^{n-1}$, by (\ref{eq:disc2}) we have $\Disc(f)=aT^{n-1},a\in\F_p^\times$. 
Therefore the splitting field of $f$ is only ramified over $T=0$ and $T=\infty$, and $\Disc(f)$ is not a square in $\Fb_q(T)$ (since $n$ is even) and so $\Gal(f/\Fb_q(T))\not\leqslant A_n$. 
Denoting by $\alpha$ a root of $f$ in an algebraic closure of $\F_q(T)$, we see that $T=\alpha^p(\alpha^{n-p}+1)$. 
The polynomial $X^{n-p}+1$ is separable by the assumption $p\nmid n$, and so the ramification indices of $\Fb_q(T,\alpha)=\Fb(\alpha)$ over $T=0$ are $p,1,\ldots,1$, and $T=\infty$ is totally ramified with ramification index $n$ (in particular tame). 
Therefore, $\Gal(f/\Fb_q(T))\leqslant S_n$ is primitive by Lemma~\ref{lem:primitive} and contains a cycle of length $p$ by Lemma~\ref{lemcycle},
so by Jordan's Theorem (i.e. Theorem~\ref{thm:jones}(i) in the case $\ell=p\le n-3$ prime)
we get that $\Gal(f/\Fb_q(T))=S_n$.

\case{$n$ even, $p|n$} 
By our assumptions, $n\ge 2p$.
Choose $h(X)\in\F_p[X]$ monic irreducible with $\deg h=2$ and $h'(0)\neq 0$ (which always exists).
Since $a\mapsto a^p$ is an automorphism of $\F_p[X]/h$, there exists $u(X)\in\F_p[X]$ with $\deg u<n/p$ and $u^p\equiv X^{n+2-p}\pmod h$.
We can assume without loss of generality that $u(0)\neq 0$,
since if $n>2p$ we can add $\pm h$ to $u$ if necessary, 
while if $n=2p$ and we take the unique $u$ with $\deg u\le 1$ such that $u^p\equiv X^{p+2} \pmod h$ then 
automatically $u(0)\neq 0$, since otherwise $u=cX,c\in\F_p^\times$ and then $h=X^2-c$, contradicting our assumption $h'(0)\neq 0$.
Let $g=X^{n+2}-u(X)^pX^p-Th(X)^p\in\F_p[T,X]$ and
\begin{equation}\label{eq:sneven}
f=\frac{g}{h}=\frac{X^{n+2-p}-u(X)^p}{h(X)}X^p-Th(X)^{p-1}\in\F_p[T,X].
\end{equation}
The polynomial $f$ is monic in $X$ of degree $n$. 
Let $v=X^{n+2-p}-u(X)^p$. 
As $v'=2X^{n+1-p}$ and $u(0)\neq 0$, we have 
that $v$ is separable, in particular $h^2\nmid v$.
So $f$ is linear in $T$ with coprime coefficients, hence irreducible over $\Fb_q(T)$. 
Using (\ref{eq:discprod}),(\ref{eq:disc2}) and (\ref{eq:resdefinition}), we compute
$$
 \Disc(f)=\Disc(g)/(\Res(h(X),f)^2\Delta(h))=aT^{n+1},a\in\F_p^\times
$$ 
and conclude that the splitting field of $f$ over $\Fb_q(T)$ is ramified at most over $T=0,\infty$ and that $\Gal(f/\Fb_q(T))\not\leqslant A_n$ (since $n$ is even).

Denoting by $\alpha$ a root of $f$ in an algebraic closure of $\F_q(T)$ we have (by (\ref{eq:sneven}))
$$
 T=\frac{\alpha^p\left(\frac{\alpha^{n+2-p}-u(\alpha)^p}{h(\alpha)}\right)}{h(\alpha)^{p-1}}.
$$
The polynomial $v$ is separable, hence the primes of $\Fb_q(T,\alpha)=\Fb_q(\alpha)$ over $T=0$ are $\alpha=0$ with ramification index $p$ and $n-p$ primes with ramification index 1. 
The primes over $T=\infty$ are the two roots of $h$ with ramification index $p-1$ and $\alpha=\infty$ with ramification index $n-2(p-1)\equiv 2\pmod p$,
in particular $T=\infty$ is tamely ramified.
Therefore again, $\Gal(f/\Fb_q(T))\leqslant S_n$ is primitive by Lemma~\ref{lem:primitive} and contains a cycle of length $p$ by Lemma~\ref{lemcycle},
so by Jordan's Theorem (i.e. Theorem~\ref{thm:jones}(i) in the case $\ell=p\le n-p\leq n-3$ prime)
we get that $\Gal(f/\Fb_q(T))=S_n$.

\maincase{$G=S_n,p=2$}
In this case $G$ is quasi-$p$. 

\case{$n$ odd} 
We have $n\ge 3$. 
Let
$$
 f(X)=X^n+TX^{n-2}+1\in\mathbb{F}_2[T,X].
$$ 
By \cite[\S 11.I.5]{Abh92} (with $t=n-2$, $a=1$, $s=1$), 
we have $\Gal(f/\Fb_2(T))=S_n$ and the splitting field of $f$ is ramified only over $T=\infty$. 

\case{$n$ even} 
Let
$$
 f(X)=\left((X+1)^{n-1}+X^{n-1}\right)(X+1)^2+T^{n-1}X^{n-1}\in\mathbb{F}_2[T,X].
$$ 
By \cite[\S 12.IV.4]{Abh92} (with $t=n-1$, $s=n-1$, $a=1$, $b=1$) 
we have $\Gal(f/\Fb_2(T))=S_n$ and the splitting field of $f$ is ramified only over $T=0$. 

\maincase{$G=A_n, p>2$}~\\

\case{$n\neq p+1$}
In this case we have shown that $S_n$ is $\Lgt(q)$-realizable, and since $p>2$ we have $p(S_n)=A_n$. Therefore by Lemma~\ref{lemreal}, the group $A_n=p(S_n)$ is also $\Lgt(q)$-realizable.

\case{$p=3,n=4$} 
Let
$$
 f=X^4-TX^3+1\in\mathbb{F}_3[T,X].
$$ 
As $f'=X^3$ and $f(0)=1$ we compute that $\Delta(f)=1$,
and thus the splitting field of $f$ is ramified only over $T=\infty$
and $\Gal(f/\F_q(T))\leqslant A_4$.
The group $\Gal(f/\Fb_q(T))\leqslant A_4$ is transitive on the roots (since $f$ is irreducible, being linear in the variable $T$) and has order divisible by $3$ (otherwise the splitting field of $f$ over $\Fb_q(T)$ would be tamely ramified and ramified only over infinity, which implies it is trivial). 
Thus $\Gal(f/\F_q(T))=\Gal(f/\Fb_q(T))=A_4$.

\case{$n=p+1,p>3, \F_q\supseteq\F_{p^2}$}\label{sec34} 
Let $s\ge 1$ and $2\le a\le p-1$ be integers and let
$$
 f=(X+1)\left(X+\frac{a}{a-1}\right)^p-T^sX^a\in\mathbb{F}_p[T,X].
$$ 
By \cite[\S 22]{Abh92} (with $\tau=a$, $Y=T^s$, $b=\frac{a}{a-1}$, but note the nonstandard sign convention in the definition of the discriminant),
\begin{equation}\label{eq:disc_mrt}
 \Disc(f)=(-1)^{(p+1)/2}\frac{a^{2a-1}}{(a-1)^{2a-3}}T^{s(p+1)}\in\mathbb{F}_p[T].
\end{equation}
It is shown in \cite[\S 12.IV.3]{Abh92} (with $t=a$, $b=\frac{a}{a-1}$) that if $p>5$, $2\le a\le \frac{p-1}2,(a,p+1)=1$ and $a(p+1-a)|s$, then $\Gal(f/\Fb_q(T))=A_n$ and the splitting field of $f$ is ramified only over $T=0$. 
If additionally $\F_q\supseteq\F_{p^2}$, then $\Disc(f)$ is a square in $\mathbb{F}_q[T]$ and we have $\Gal(f/\F_q(T))=A_{n}$. 
Note that for all $p>5$, an $a$ with $(a,p+1)=1$ and $a\not\equiv\pm 1\pmod{p+1}$ can be found 
(as $\phi(x)>2$ for $x>6$),
and after replacing $a$ with $p+1-a$ if necessary to assume that $2\leq a\leq\frac{p-1}{2}$,
we can set $s=a(p+1-a)$.

If $p=5,n=6,\F_q\supseteq\F_{25}$ we take the polynomial
$$
 f=(X+1)(X+2)^5-T^4X^2\in\F_p[T,X].
$$
The discriminant of $f$ is computed by (\ref{eq:disc_mrt}) with $a=2,s=4$ and equals $\Delta(f)=2T^{24}$, which is a square in $\F_{25}(T)$ and hence $\Gal(f/\F_q(T))\leqslant A_5$. 
On the other hand, by \cite[\S 12.IV.2]{Abh92} 
(with $t=2,b=2$), the splitting field of $f$ over $\Fb_5(T)$ is ramified only over $T=0$, and its Galois group is $A_5$, 
hence $\Gal(f/\F_q(T))=\Gal(f/\Fb_q(T))=A_5$.

\begin{remark} If we want to drop the assumption $\F_q\supseteq\F_{p^2}$ in the case $p>5$ we would need to find an $a$ with $2\le a\le \frac{p-1}2,(a,p+1)=1$ such that $(-1)^{(p+1)/2}a(a-1)$ is a square modulo $p$. It is not hard to show (using a P\'olya-Vinogradov-type inequality and an elementary sieve argument) that this is possible for all $p$ sufficiently large, and we conjecture that such an $a$ exists for all $p>13$. While proving this should be doable by means of a careful analysis and sufficiently large computer search, we did not pursue this.\end{remark}

\maincase{$G=A_n,p=2$}
The group $A_n$ is always cyclic-by-quasi-2 and it is quasi-2 iff $n\neq 3,4$. Most of the required realizations were constructed by Abhyankar, Ou, Sathaye and Yie \cite{Abh92, Abh93, AOS94,AbYi94}.

\case{$n=3,4$, $\F_q\supseteq\F_4$} 
For $A_3\cong\Z/3\Z$ we can take the extension $K=\F_q(s)/F_q(T)$ with $s^3=T$ which is Galois with group $A_3$ if $\F_q\supseteq\F_4$
and ramified only over $T=0,\infty$. 
Denote by $\zeta$ any element of $\F_4\setminus\F_2$, and consider the splitting field $L$ of the polynomial 
$$
 f=(X^2+X+s)(X^2+X+\zeta s)(X^2+X+\zeta^2 s)\in\mathbb{F}_2(s)[X]
$$
over $K$.  
The extension $L/\F_q(T)$ is Galois (since $s,\zeta s,\zeta^2 s$ are a Galois orbit over $\F_q(T)$) and by Artin-Schreier theory $\Gal(L/K)=\Z/2\Z\times\Z/2\Z$ with $\Gal(L/\F_q(T))$ acting nontrivially on the order 2 subgroups of $\Gal(L/K)$ by conjugation, in particular $\Gal(L/\F_q(T))$ is non-abelian.
The only non-abelian extension of $\Z/3\Z$ by $\Z/2\Z\times\Z/2\Z$ is $A_4$. 
Finally observe that $L/\F_q(T)$ is ramified only over $0,\infty$ and is therefore an $\Lgt(q)$-realization of $A_4$.

\begin{remark}
Note that by Lemma~\ref{lemreal}, both for $n=3$ and for $n=4$ the condition $\F_q\supseteq\F_4$ is necessary for $A_n$ to be $\Lgt(q)$-realizable since $A_3,A_4$ have cyclic quotients of order 3. 
\end{remark}

\case{$n=5$, $\F_q\supseteq\F_4$} 
Let
$$
 f=X^5+TX+1\in\F_2[T,X].
$$ 
It follows from \cite[\S 11.III.1]{Abh92} (with $q=4$, $t=s=1$, $a=-1$, in the notation used there) that $\Gal(f/\Fb_2(T))\cong PSL_2(4)\cong A_5$ and the splitting field of $f$ is ramified only over $T=\infty$. 
By \cite[2.23]{AOS94} (with $K=\F_q(T)$, $d=4$, $e=1$, $\bar{b}_d=T$, $\bar{b}_n=1$),
$\F_q\supseteq\F_4$ implies that $\Gal(f/\F_q(T))\leqslant A_5$.
Thus $\Gal(f/\F_q(T))=\Gal(f/\Fb_q(T))=A_5$.

\case{$n=6,7$, $\F_q\supseteq\F_4$} 
Consider the polynomials
\begin{eqnarray*}
f_6&=&X^6+T^{27}X^5+T^{54}X^4+(T^{18}+T^{36})X^3+T^{108}X^2+(T^{90}+T^{135})X+T^{162},\\
f_7&=&X^7+TX^4+X^2+1.
\end{eqnarray*}

The polynomials $f_6,f_7$ were found by Abhyankar and Yie \cite[Theorems 2.10 and 2.11]{AbYi94}, who showed that the splitting fields of $f_n,n=6,7$ are ramified only over $\infty$ and,
under the assumption $\F_q\supseteq\F_4$,
that $\Gal(f_n/\F_q(T))=\Gal(f_n/\Fb_q(T))=A_n$.

\case{$n\ge 9$ odd, $n\equiv 1,7\pmod 8$ or $\F_q\supseteq\F_4$} 
Let 
$$
 f=X^n+TX^{n-4}+1\in\mathbb{F}_2[T,X].
$$  
By \cite[Theorem 2]{Abh93} (with $t=n-4$, $q=4$ in the notation of the cited paper), 
the splitting field of $f$ is ramified only over $\infty$ and we have $\Gal(f/\F_q(T))\geqslant\Gal(f/\Fb_q(T))\geqslant A_n$. 
Conversely, by \cite[(2.27)]{AOS94} (with $t=n-4$, $b_{n-t}^*=T$, $b_n^*=1$) we have $\Gal(f/\F_q(T))\leqslant A_n$ if either $\F_q\supseteq\F_4$ or $n\equiv 1,7\pmod 8$. 
In these cases we conclude that $\Gal(f/\F_q(T))=\Gal(f/\Fb_2(T))=A_n$.

\case{$n\ge 8$ even, $10\neq n\equiv 0,2,6\pmod 8$ or $\F_q\supseteq\F_4$} 
Let $1\le t\le n$ with $(t,n)=1$, and
$$
 f=X^n+X^t+T^t\in\mathbb{F}_2[T,X].
$$ 
By \cite[\S 11.II.5]{Abh92}  (with $s=t$, $a=1$), if $2\le t\le n-4$ then $\Gal(f/\F_q(T))\geqslant\Gal(f/\Fb_2(T))\geqslant A_n$ and the splitting field of $f$ is ramified only over $\infty$. 
Conversely, by \cite[Theorem 2.27]{AOS94} (with $b_{n-t}^*=1$, $b_n^*=T^t$) we have $\Gal(f/\F_q(T))\leqslant A_n$ if either $\F_q\supseteq\F_4$ or $n\equiv 0\pmod 8$ or $n\equiv 2,6\pmod 8,2t\equiv n\pmod 8$. 
In each of these cases we can choose a suitable $t$:
For 
$n\geq 8$
there exists $2\leq t\leq n-4$ with $(t,n)=1$
since $\phi(n)>2$,
and for $n\equiv 2,6\pmod 8,n>10$,
the choice $t=\frac n2-4$ satisfies $2\le t\le n-4$, $(t,n)=1$ and $2t\equiv n\pmod 8$.

\section{Summary and application to the minimal ramification problem over $\mathbb{Q}$}
\label{sec:Q}
\label{sec:summary}

\subsection{Summary}
\label{subsec:summary}
We first summarize some of the 
results for the groups $G=S_n$ and 
$G=A_n$. These results are obtained by 
combining Theorem \ref{thm:main1} 
(proved in Section \ref{sec:thmprod} 
based on Abhyankar-type 
constructions) to handle the $n\ge 
p,\,n\neq p+1$ case; Theorem 
\ref{thm:tame2} (proved in 
Section \ref{sec:two} based on the tamely ramified rational function construction) to 
show $r_{\F_q(T)}(G)\le 2$ whenever 
$p>n$ or $n=p+1$; Theorem \ref{thm:tame} 
(combined with Proposition 
\ref{lem:Hqdm} which is used to 
ensure the requisite $H(q,d,e)$ 
conditions hold; both proved in Section \ref{sec:oneramified}) to eliminate the ramification at infinity and obtain $r_{\F_q(T)}(G)=1$ in the $p>n$ and $n=p+1$ cases, when applicable; a few remaining 
cases are handled ad hoc (some of 
the realizations were found by a computer search).

\begin{theorem}[Main results for $S_n$]\label{thm:main_for_sn}
Let $n\geq 2$ and $q=p^\nu$ a prime power.
Then $\ram_{\mathbb{F}_q(T)}(S_n)\leq 2$,
and $\ram_{\mathbb{F}_q(T)}(S_n)=1$ in each of the following cases:
\begin{enumerate}
\item $p<n-1$ or $p=n$  or $p=2$
\item $q\equiv 1\mbox{ mod } 4$ 
\item $q>(2n-3)^2$ 
\item The function field analogue of Schinzel's hypothesis H (Conjecture \ref{conj:SchinzelFF}) holds for $\F_q(T)$.
\end{enumerate}
\end{theorem}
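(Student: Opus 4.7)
The plan is to combine the main results of Sections~\ref{sec:tworamified} and~\ref{sec:pleqn} via a case split on the relation between $n$ and the characteristic~$p$.

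For the bound $\ram_{\mathbb{F}_q(T)}(S_n)\le 2$: in the regime $n<p$, I would invoke Theorem~\ref{thm:main2}, which is based on the trinomial $X^n-X^{n-1}-T$ and the small-gaps input of Proposition~\ref{thm:twinprimes}, producing an $S_n$-extension ramified at exactly two primes of degree one. In the regime $n\ge p$ with either $n\neq p+1$ or $p=2$, Theorem~\ref{thmlist}(1) supplies an $\Lgt(q)$-realization of $S_n$, and Lemma~\ref{lemquasip} converts this into an extension ramified over a single prime, giving in fact $\ram=1$ and subsuming condition~(1) of the theorem. The only remaining regime is $n=p+1$ with $p$ odd, which I would handle via Theorem~\ref{thm:tame2}(i) applied with an even $m$ satisfying $4\le m\le n-2$ and $q^{(n-m-1)/2}>2m+1$; the resulting splitting field is ramified over one finite prime and tamely at infinity, giving two ramified primes in total.

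For the refinement $\ram_{\mathbb{F}_q(T)}(S_n)=1$: condition~(1) is exactly the hypothesis that places us in the scope of Theorem~\ref{thmlist}(1) together with Lemma~\ref{lemquasip}, as above. Conditions~(2), (3), and~(4) I would treat uniformly via Theorem~\ref{thm:tame}(i), choosing $m\in\{0,1\}$ matched in parity to $n$ and replacing $m=1$ by $m=3$ if $n=9$ to avoid the excluded pair $(n,m)=(9,1)$; the numerical inequality $q^{(n-m-1)/2}>2m+1$ is then easily checked, and the residual hypothesis $\HH(q,n+m-1,n-m)$ is furnished by the appropriate clause of Proposition~\ref{lem:Hqdm}---clause~(5) under condition~(2) (where $q\equiv 1\pmod 4$ implies $\mathrm{rad}(e)\mid q-1$ for $e=n-m\in\{n-1,n\}$), clause~(4) under condition~(3) (where the bound $q>(2n-3)^2$ is used to dominate $(d-1)^2(2^{\omega(e)}-1)^2$ for the relevant $d,e$), and clause~(1) under condition~(4) (Schinzel).

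The main obstacle is simultaneously aligning the parameter $m$ with the numerical conditions of Theorems~\ref{thm:tame2}(i) and~\ref{thm:tame}(i) and the relevant clauses of Proposition~\ref{lem:Hqdm} across every regime. The most delicate case is the boundary $n=p+1$ with $p$ odd, which only admits even $m$ in $\{4,6,\ldots,n-2\}$ and hence forces a restrictive inequality; for $p\ge 5$ one can arrange a valid choice, but the small exceptional pair $(p,n)=(3,4)$ admits no valid $m$ at all and falls outside the scope of every general theorem above, and therefore will have to be dispatched by an ad~hoc construction of an $S_4$-extension of $\F_{3^\nu}(T)$ with at most two ramified primes, or by the computer verification alluded to in the introduction.
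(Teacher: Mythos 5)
Your overall architecture (condition (1) via Theorem~\ref{thmlist}/Lemma~\ref{lemquasip}, the $\le 2$ bound via Theorem~\ref{thm:main2} for $p>n$, and Theorem~\ref{thm:tame} plus Proposition~\ref{lem:Hqdm} for conditions (2)--(4)) matches the paper's, but the execution of conditions (2) and (3) contains a genuine error. Choosing $m\in\{0,1\}$ makes the ramification index at infinity $e=n-m\in\{n-1,n\}$ \emph{large}, and your claimed implications then fail: $q\equiv 1\pmod 4$ does \emph{not} give $\mathrm{rad}(e)\mid q-1$ unless $e$ is a power of $2$ (e.g.\ $e=6$ needs $3\mid q-1$), and under condition (3) the hypotheses of Proposition~\ref{lem:Hqdm}(4) --- both $\mathrm{rad}'(e)\mid q^\ell-1$ for all $\ell\mid d$ and the bound $q\ge (d-1)^2(2^{\omega(e)}-1)^2$ --- are not consequences of $q>(2n-3)^2$ for such $e$. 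Moreover, when $p=n-1\ge 3$ (a case not absorbed by condition (1)), the requirement $n-m<p$ of Theorem~\ref{thm:tame}(i) forces $m\ge 2$, so $m\in\{0,1\}$ is not even admissible. The paper's point is exactly the opposite parameter choice: take $m$ \emph{large}, namely $m=n-2$ under (3) (so $e=2$, handled by Proposition~\ref{lem:Hqdm}(3) for odd $q$ since $\gcd(2n-3,2)=1$) and $m=n-4$ under (2) (so $e=4$ and $\mathrm{rad}'(4)=4\mid q-1$), with a short computer-verified table of explicit $(f,c,h)$ for the residual small cases $n\in\{3,6\}$ that no general clause covers.

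There is also an unacknowledged gap in your unconditional $\le 2$ bound at $n=p+1$: your claim that a valid $m$ exists for all $p\ge 5$ fails for $(p,n)=(5,6)$ with $q\in\{5,25\}$, since the only admissible $m$ is $4=n-2$ and the inequality $q^{1/2}>2m+1=9$ requires $q>81$. The paper covers $n\in\{3,4,6\}$ (including both $(p,n)=(3,4)$, which you flagged, and $(5,6)$, which you did not) by an entirely different device you never invoke: Morse polynomials $f(X)-T$ with $f'$ irreducible (Proposition~\ref{prop:Morse2}), which give $(q,n,0)$-realizations whenever $n-1$ is prime and $n\in\{2,\dots,p-1\}\cup\{p+1\}$. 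Without that input (or a substitute), your plan does not establish $\ram_{\F_q(T)}(S_6)\le 2$ for $q\in\{5,25\}$ nor $\ram_{\F_q(T)}(S_4)\le 2$ for $p=3$.
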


\begin{proof}
The case $n=2$ is very easy and follows for example from Theorem~\ref{thm:abelian}, so assume from now on that $n\geq 3$.
If $p<n-1$, $p=n$ or $(n,p)=(3,2)$, then $\ram_{\mathbb{F}_q(T)}(S_n)=1$ by Theorem~\ref{thm:main1},
so suppose that $3\leq p=n-1$ or $p>n$.

For the rest of this proof, we call an $S_n$-extension of $\mathbb{F}_q(U)$
a $(q,n,m)$-realization
if it is the splitting field of $f-Uc$ with $f,c\in\F_q[X]$, ${\rm deg}(f)=n$, ${\rm deg}(c)=m<n$,
and $\Disc_X(f-Uc)$ a prime power.

We first prove $\ram_{\mathbb{F}_q(T)}(S_n)\leq 2$ in the remaining cases,
for which it suffices to exhibit a $(q,n,m)$-realization for some $m$:
For $n=5$ or $n\geq 7$,
Theorem~\ref{thm:tame2} with $m=n-4$ gives a $(q,n,n-4)$-realization
(note that $q\geq p\geq n-1>(2n-7)^{2/3}$ for all $n\geq4$).
For $n=3,4,6$, Proposition~\ref{prop:Morse2} gives a $(q,n,0)$-realization.

We now prove $\ram_{\mathbb{F}_q(T)}(S_n)=1$ in cases (2)-(4),
for which it suffices by Lemmas~\ref{lem:disjoint} and \ref{lem:eliminateinfty} to exhibit a $(q,n,m)$-realization such that $(p,n-m)=1$ and
$\HH(q,n+m-1,n-m)$ holds:

If (4) holds, then $\HH(q,d,e)$ always holds by Proposition~\ref{lem:Hqdm}(1),
so we can take the realizations from above.

If (3) holds
and $n\neq 4$,
Theorem~\ref{thm:tame2} with $m=n-2$ gives a $(q,n,n-2)$-realization (as $q>(2n-3)^2=(2m+1)^2$),
and $\HH(q,2n-3,2)$ always holds by Proposition~\ref{lem:Hqdm}(3) (as $q$ is odd).
If (3) holds and $n=4$, 
we take the $(q,4,0)$-realization from above,
and Proposition~\ref{lem:Hqdm}(5) gives $\HH(q,3,4)$ for every $q\geq (2^1\cdot 3-1)^2=25=(2n-3)^2$. 

If (2) holds,
then $\HH(q,d,4)$ holds for every odd $d$ by
Proposition~\ref{lem:Hqdm}(3);
if $n=5$ or $n\geq 7$,
we obtained a $(q,n,n-4)$-realization above,
and $\HH(q,2n-5,4)$ holds;
if $n=4$ we take the $(q,4,0)$-realization from above,
and $\HH(q,3,4)$ holds;
if $n=3$ or $n=6$ we are in case (3) as soon as $q> 9$ resp.~$q>81$; 
for $n=6$ and $q\equiv 1\mbox{ mod }3$,
$\HH(q,5,6)$ holds by Proposition~\ref{lem:Hqdm}(3)
so we can take the $(q,6,0)$-realization from above.
In each of the remaining cases, the following table provides 
$f,c\in\F_p[X]$ with $\deg f=n$, $\deg c=m<n$ and $h\in\F_p[T]$ with $\deg h=n-m$
such that 
$f'c-cf'$ is irreducible
and
$\mathcal{F}:=\Disc_X(f(X)-Tc(X))$ is such that $\mathcal{F}(h)$ is irreducible,
so that by the arguments in the proof of Theorem~\ref{thm:tame}(i) 
(where now $h$ is given explicitly instead of applying $\HH(q,n+m-1,n-m)$,
and $f$ is given explicitly instead of applying Proposition~\ref{prop:g}(i)),
the splitting field of $f(X)-h(T)c(X)$ over $\F_p(T)$
is geometric with Galois group $S_n$ and is ramified only over the prime $\mathcal{F}(h)$.

\newcolumntype{L}{>{$}l<{$}}
\begin{center}
\begin{tabular}{|ll|l|l|l|}
\hline
 $n$  & $p$  & $f$  & $c$  & $h$   \\
\hline
$3$ &$5$ & $X^3 + 1$ &$X+2$ &$T^2$ \\
$6 $&$5 $&$X^6 + 1 $&$X^2+X $&$T^4+1$  \\
$6 $&$17 $& $X^6 + X^2 + X$ & $1 $ & $T^6+T+2$ \\
$6 $&$29 $&$X^6 + X^2 + X $&$1 $&$T^6+T+6$  \\
$6 $&$41 $&$X^6+X $&$1 $&$T^6+T+1$  \\
$6 $&$53 $&$X^6+X^2+13X $&$1 $&$T^6+T$  \\
\hline
\end{tabular}
\end{center}
\end{proof}

\begin{remark}
Note that instead of $q\equiv 1\mbox{ mod }4$ we could also treat other arithmetic progressions:
For any prime number $\ell$, if $q\equiv 1\mbox{ mod }2\ell$, $n\geq 2\ell+3$, 
and $2n\not\equiv 1\mbox{ mod }\ell$,
let $m=n-2\ell$. 
Then, in the language of the previous proof,
Theorem~\ref{thm:tame2}
gives a $(q,n,m)$-realization,
and $\HH(q,n+m-1,n-m)=\HH(q,2n-2\ell-1,2\ell)$ holds by Proposition~\ref{lem:Hqdm}(3).
\end{remark}

\begin{theorem}[Main results for $A_n$]\label{thm:main_for_an}
Let $n\geq 3$ and $q=p^\nu$ a prime power.
If $p>2$ 
or 
$\F_q\supseteq\F_{4}$, then $\ram_{\mathbb{F}_q(T)}(A_n)\leq 2$,
and $\ram_{\mathbb{F}_q(T)}(A_n)=1$ in each of the following cases:
\begin{enumerate}
\item $2<p<n-1$ or $p=n$ or $p=n-1,\F_q\supseteq\F_{p^2}$ or $p=3,n=4$.
\item $p=2$ and $\F_q\supseteq\F_4$ or $10\neq n\ge 8,n\equiv 0,1,2,6,7\pmod 8$ 
\item $12\neq n\ge 10$ and the function field analogue of Schinzel's hypothesis H (Conjecture \ref{conj:SchinzelFF}) holds for $\F_q(T)$.
\end{enumerate}
\end{theorem}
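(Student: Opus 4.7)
The plan is to follow exactly the three-layered architecture used in the proof of Theorem~\ref{thm:main_for_sn}, adapted to the alternating case: (a)~apply the $\Lgt(q)$-realizations of Theorem~\ref{thmlist} wherever available, yielding $\ram=1$; (b)~apply Theorem~\ref{thm:main2} on the range $3\le n<p$, yielding $\ram\le 2$; and (c)~for case~(3), apply the tame-two-ramified construction of Theorem~\ref{thm:tame}(ii) with $\HH(q,d,e)$ granted unconditionally by Proposition~\ref{lem:Hqdm}(1) from Schinzel's hypothesis.

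For the global bound $\ram_{\F_q(T)}(A_n)\le 2$, I would split on~$p$. When $p=2$ with $\F_q\supseteq\F_4$, Theorem~\ref{thmlist}(3) and Lemma~\ref{removeinf} already give $\ram=1$ for every $n\ge 3$. For $p>2$ and $n<p$, Theorem~\ref{thm:main2} yields $\ram\le 2$. For $p>2$ and $n\ge p$, outside the edge case $n=p+1,\,p>3,\,\F_q\not\supseteq\F_{p^2}$, Theorem~\ref{thmlist}(2) provides an $\Lgt(q)$-realization and hence $\ram=1$. This edge case is the main obstacle: the Abhyankar polynomial $f=(X+1)(X+a/(a-1))^p-T^sX^a$ from the proof of Theorem~\ref{thmlist} still produces a geometric extension of $\F_q(T)$ ramified over a single finite prime (plus tame ramification at $\infty$), but its discriminant $cT^{s(p+1)}$ is not a square in $\F_q^\times$, so the Galois group over $\F_q(T)$ is $S_n$ rather than $A_n$. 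For $p\ge 13$ this edge case is absorbed into case~(3) below (via Theorem~\ref{thm:tame}(ii) with a small odd $m$ and Schinzel), leaving only the finitely many triples with $(n,p)\in\{(6,5),(8,7),(12,11)\}$ and $\F_q\not\supseteq\F_{p^2}$. For $(n,p,q)=(12,11,11^\nu)$ with $\nu\ge 3$, Theorem~\ref{thm:tame2}(ii) with $m=3$ still works, and the residual cases I expect to handle by an explicit computer search, in analogy with the tabular portion of the proof of Theorem~\ref{thm:main_for_sn}.

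For the $\ram=1$ conclusions, cases~(1) and~(2) follow directly from Theorem~\ref{thmlist}(2) and~(3) respectively, combined with Lemma~\ref{removeinf} (the sub-case $p=3,n=4$ of~(1) is singled out in Theorem~\ref{thmlist}(2)). For case~(3), the global hypothesis together with cases~(1)--(2) reduce the task to $p>3$ with either $n<p$ or $n=p+1,\F_q\not\supseteq\F_{p^2}$, on the range $n\in\{10,11\}\cup\{n\ge 13\}$. In this range I would select a small $m$ of parity opposite to~$n$: for $n=10$ take $m=1$ (the Legendre condition $(q/(n-m))=(q/9)=1$ is automatic since $3\nmid q$), for odd~$n$ take $m=2$, and for even~$n\ge 14$ take $m=3$. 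In each case the forbidden pairs $(n,m)\in\{(8,1),(9,0),(12,1),(24,1)\}$ from Proposition~\ref{prop:monfq}(ii) are avoided, condition~(d) $q^{(n-3m-1)/4}>2m+1$ is comfortably satisfied for $n\ge 10$, and $\HH(q,(n+m-1)/2,n-m)$ is granted by Schinzel via Proposition~\ref{lem:Hqdm}(1). The exclusion of $n=12$ from case~(3) is dictated precisely by this analysis: one must take $m$ odd, $m=1$ is forbidden by the pair $(12,1)$, and every alternative $m\in\{3,5,7,9\}$ violates condition~(d) for the small $q$ permitted by case~(3). The principal technical work beyond this structure is the edge-case computer search outlined in the previous paragraph.
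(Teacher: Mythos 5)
Your proposal reproduces the paper's proof essentially step for step: Theorem~\ref{thm:main1}/\ref{thmlist} for $n\ge p$ away from the edge case $n=p+1$, Theorem~\ref{thm:main2} for $n<p$, and Theorem~\ref{thm:tame2}/\ref{thm:tame} with exactly the paper's choices of $m$ ($m=1$ for $n=10$, $m=2$ for odd $n$, $m=3$ for even $n\ge 14$ and for $p=n-1\ge 13$), including the correct diagnosis of why $n=12$ must be excluded from case~(3). The residual cases $(n,p)\in\{(6,5),(8,7),(12,11)\}$ with $\F_q\not\supseteq\F_{p^2}$ that you defer to a computer search are precisely what the paper supplies in its explicit table: polynomials over $\F_p$ with discriminant a square of the form $aT^{2k}$ and Galois group verified to contain a $3$-cycle and an $(n-1)$-cycle (a single polynomial over $\F_p$ works for every $q=p^\nu$ simultaneously, so there are genuinely only three computations, not infinitely many triples). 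One small correction: the unconditional bound $\ram_{\F_q(T)}(A_n)\le 2$ for $p=n-1\ge 13$ should be obtained from Theorem~\ref{thm:tame2}(ii) with $m=3$ (no Schinzel needed), with Theorem~\ref{thm:tame}(ii) plus Schinzel reserved for the $\ram=1$ conclusion; as written, absorbing that edge case entirely ``into case~(3)'' would leave the $\le 2$ claim conditional there, and note also that when the discriminant of the Abhyankar polynomial fails to be a square the resulting $S_n$-extension of $\F_q(T)$ is in fact not geometric.
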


\begin{proof} 
The cases $p=2$ and $p=3$ follow from Theorem~\ref{thm:main1},
so 
henceforth we will assume $p>3$.

If $p\le n,p\neq n-1$ or $p=n-1,\F_q\supseteq\F_{p^2}$, all assertions follow from Theorem~\ref{thm:main1}. 

If $p=n-1$ and $n\ge 14$ we may use Theorem~\ref{thm:tame2} with $m=3$ to obtain $\ram_{\F_q(T)}(A_n)\le 2$ in this case. 
Assuming Conjecture $\ref{conj:SchinzelFF}$ allows us to apply Theorem~\ref{thm:tame} combined with Proposition~\ref{lem:Hqdm}(1) (once again with $m=3$) to conclude $\ram_{\F_q(T)}(A_n)=1$ in this case.

If $p=n-1$ and $n\in\{6,8,12\}$,
the following table provides irreducible $f\in\F_p[T,X]$ 
with $\deg f=n$ and $\Disc_X(f)$  a square in $\F_p(T)$ with only one prime divisor $T$.
Computer verification shows that ${\rm Gal}(f/\F_p(T))$
contains a $3$-cycle and an $(n-1)$-cycle, hence by Theorem~\ref{thm:jones} contains $A_n$.
Thus the splitting field of $f$ over $\F_q(T)$ is ramified at most over $T$ and the infinite prime,
and ${\rm Gal}(f/\F_q(T))=A_n$ as $A_n$ is simple.
\begin{center}
\begin{tabular}{|ll|l|l|}
\hline
$n$ &$p$ &$f$ & $\Disc_X(f)$  \\
\hline
$6$ &$5$ & $X^6+X^5T - 2X^3T^3 + XT + T^2$ & $4T^{18}$ \\
$8$ & $7$ & $X^8 + 3X^2 + XT - 2$ & $4T^2$  \\
$12$ & $11$ & $X^{12} + 5XT^3 - 5X^2 - 2$ & $4T^6$ \\
\hline
\end{tabular}
\end{center}

Now assume that $p>n$.
If $n\ge 13$ or $n=11$ we apply Theorem~\ref{thm:tame2} 
with $m=2$ or $m=3$ chosen so that $m\not\equiv n\pmod 2$
and thus obtain $\ram_{\F_q(T)}(A_n)\le 2$ in this case. 
If $n=10$ we apply Theorem~\ref{thm:tame2} with $m=1$ (note that $\left(\frac{q}{n-m}\right)=\left(\frac{q}{3}\right)^2=1$).
Assuming Conjecture $\ref{conj:SchinzelFF}$ allows us to apply Theorem~\ref{thm:tame} combined with Proposition~\ref{lem:Hqdm}(1) (with $m$ as above) to conclude $\ram_{\F_q(T)}(A_n)=1$ in these cases.
In all other cases, Theorem~\ref{thm:main2} gives that 
$\ram_{\F_q(T)}(A_n)\le 2$.
\end{proof}

\begin{remark}\label{remark:cfsg_details} The proofs of Theorems \ref{thm:main_for_sn} and \ref{thm:main_for_an} above rely on Theorems \ref{thm:tame2} and \ref{thmlist}, which use the Classification of Finite Simple Groups (CFSG) for some of the cases. By Remark \ref{remark:manning}, the CFSG is not necessary in the proof of Theorem \ref{thm:tame} provided $n-m\le 15$. The proof of Theorem \ref{thmlist} uses the CFSG only in the cases $n=p+2$ and $G=A_n,p=2|n$. Hence Theorems \ref{thm:main_for_sn}, \ref{thm:main_for_an}, and by implication Theorems \ref{thm:Sn}, \ref{thm:An}, \ref{thm:main1}, \ref{thm:abhyankar}, require the CFSG only in the following cases: \begin{itemize}\item$n=p+2$\item $G=A_n$ and ($p>n$ or ($n=p+1$ and $\F_q\not\supset\F_{p^2})$ or $p=2|n$).\end{itemize} The upper bound $r_{\F_q(T)}(A_n)\le 2$ in the case $p>n$ follows from Theorem \ref{thm:main2}, which does not require the CFSG.  
\end{remark}

\subsection{Application to the minimal ramification problem over $\mathbb{Q}$}
Finally, we apply our results for $S_n$ over $\mathbb{F}_q(T)$
to give a conditional proof of Conjecture~\ref{conj:BM} for $S_n$ over $\mathbb{Q}$:

\begin{lemma}\label{lem:lift_real}
Let $n\in\mathbb{N}$, $h_0\in\mathbb{Z}$, and $S$ a finite set of primes numbers,
and for each $p\in S$ let $f_p\in\mathbb{Z}[X]$ be monic of degree $n$.
There exist $f\in\mathbb{Z}[X]$ monic of degree $n$ and $c\in\mathbb{Z}$ such that $f\equiv f_p\pmod p$ and $c\equiv 1\pmod p$
for every $p\in S$,
and $f-c^nh_0$ has $n$ roots in $\mathbb{R}$.
\end{lemma}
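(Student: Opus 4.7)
My plan is to take $c=1$, which automatically satisfies $c\equiv 1\pmod p$ for every $p\in S$ and reduces the real-root condition to demanding that $f-h_0$ have $n$ real roots. The congruence conditions on $f$ will be handled by the Chinese Remainder Theorem, which produces a monic $f_0\in\mathbb{Z}[X]$ of degree $n$ with $f_0\equiv f_p\pmod p$ for each $p\in S$; setting $M=\prod_{p\in S}p$, any polynomial of the form $f_0+Mg$ with $g\in\mathbb{Z}[X]$ of degree less than $n$ still satisfies these congruences. I will construct $f-h_0$ as a small perturbation of a polynomial having $n$ widely-spaced integer roots.

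Concretely, I will fix a large positive integer $N$ divisible by $M$ and set $F_0(X):=\prod_{i=1}^n(X-iN)$. Since every root of $F_0$ is a multiple of $M$, each non-leading coefficient of $F_0$ is divisible by $M$, so $F_0\equiv X^n\pmod M$. Then let $R\in\mathbb{Z}[X]$ be the polynomial of degree less than $n$ whose coefficients are the representatives in $(-M/2,M/2]$ of the corresponding coefficients of $f_0-h_0-X^n$ modulo $M$. Then $F:=F_0+R$ is monic with $F\equiv f_0-h_0\pmod M$, and setting $f:=F+h_0$ yields a monic polynomial of degree $n$ satisfying $f\equiv f_0\equiv f_p\pmod p$ for each $p\in S$ and $f-c^n h_0=F$.

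What remains is to verify that $F$ has $n$ real roots once $N$ is chosen sufficiently large. The elementary estimate $|F_0(iN+t)|\ge|t|(N/2)^{n-1}$ for $|t|\le N/2$ (every factor $|(i-j)N+t|$ is at least $N/2$) gives $|F_0(iN\pm N/4)|\ge(N/4)(N/2)^{n-1}$, which is of order $N^n$. On the other hand, the coefficients of $R$ are bounded by $M/2$, so $|R(x)|\le\tfrac{Mn(n+1)^{n-1}}{2}N^{n-1}$ on the interval $[-N,(n+1)N]$ containing all points $iN\pm N/4$. For $N$ a sufficiently large multiple of $M$ (depending only on $n$ and $M$), the inequality $|F_0|>|R|$ therefore holds at each $iN\pm N/4$; since $F_0$ changes sign across each $iN$, so does $F=F_0+R$, producing $n$ real roots of $F$ in the disjoint intervals $[iN-N/4,iN+N/4]$. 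The main subtlety to track is the scaling comparison: both the worst-case $|R|$ and the ratio $|F_0|/|t|$ near a root scale like $N^{n-1}$, so the argument succeeds only thanks to the extra factor $|t|\approx N/4$ in the lower bound for $|F_0|$. Once the implicit constants are pinned down, the verification is routine.
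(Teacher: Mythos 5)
Your proof is correct, and it takes a genuinely different route from the paper's. The paper also starts with the Chinese Remainder Theorem to produce $f_0$, but then perturbs the non-leading coefficients of $f_0$ by rationals of the form $\frac{x_i\pi}{y_i\pi+1}$ (dense in $\mathbb{R}$, with $\pi=\prod_{p\in S}p$) so as to approximate a prescribed real-rooted polynomial, and finally clears denominators by the substitution $f(X)=c^n\tilde f(c^{-1}X)$ with $c=\prod_i(y_i\pi+1)$; this is why a nontrivial $c$ appears in the statement. You instead work entirely over $\mathbb{Z}$: you start from the real-rooted polynomial $F_0=\prod_{i=1}^n(X-iN)$, which satisfies $F_0\equiv X^n\pmod M$ because its roots are multiples of $M$, and add a correction $R$ with coefficients of size at most $M/2$ to enforce the congruence, then verify by an explicit estimate ($|F_0(iN\pm N/4)|\gg N^n$ versus $|R|\ll N^{n-1}$ on the relevant range) that the $n$ sign changes of $F_0$ survive. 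Your approach is more elementary (no density argument, no rescaling) and yields the stronger conclusion that $c=1$ always works, which would in fact slightly simplify the application in Theorem~\ref{thm:S_n_over_Q}, where the substitution $T\mapsto c^{-1}T$ could then be omitted; the paper's argument is shorter to write since it outsources the quantitative step to a continuity/density statement. All the estimates you flag as needing verification do check out: the factorization bound $|F_0(iN+t)|\ge|t|(N/2)^{n-1}$, the disjointness of the intervals $[iN-N/4,iN+N/4]$, and the divisibility of the non-leading coefficients of $F_0$ by $M$ are each correct as stated.
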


\begin{proof}
By the Chinese Remainder Theorem there exists a monic $f_0\in\mathbb{Z}[X]$
with $f_0\equiv f_p\pmod p$ for every finite $p\in S$.
Let $\pi=\prod_{p\in S}p$,
write $f_0=\sum_{i=0}^na_iX^i$ and 
let $f_\infty\in\mathbb{Z}[X]$ be any monic polynomial of degree $n$
with $n$ roots in $\mathbb{R}$.
Since fractions of the form $\frac{x\pi}{y\pi+1}$ with $x,y\in\mathbb{Z}$ are dense in $\mathbb{R}$, 
we can choose
$x_i,y_i\in\mathbb{Z}$ such that
$\tilde{f}:=X^n+\sum_{i=0}^{n-1}(a_i+\frac{x_i\pi}{y_i\pi+1})X^i$
is arbitrarily close to $f_\infty+h_0$, 
in particular so close that also $\tilde{f}-h_0$ has $n$ roots in $\mathbb{R}$.
Then with $c=\prod_{i=0}^{n-1}(y_i\pi+1)$,
the polynomial
$f(X):=c^n\tilde{f}(c^{-1}X)\in\mathbb{Z}[X]$ satisfies the claim.
\end{proof}

\begin{theorem}\label{thm:S_n_over_Q}
Schinzel's hypothesis H (Conjecture \ref{conj:Schinzel}) implies that
$\ram_\mathbb{Q}(S_n)=1$ for every $n\geq2$, i.e.~Conjecture \ref{conj:BM} holds for symmetric groups.
\end{theorem}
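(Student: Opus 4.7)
The plan is to exhibit a monic $F\in\mathbb{Z}[X]$ of degree $n$ with $n$ real roots, $|\Disc(F)|$ prime, and $\Gal(F/\mathbb{Q})=S_n$. Any such $F$ yields a totally real Galois $S_n$-extension of $\mathbb{Q}$ ramified only at the single prime $p=|\Disc(F)|$: the Galois closure $L$ of $\mathbb{Q}[X]/(F)$ is totally real (since all conjugates of a root lie in $\mathbb{R}$), and by the number field analogue of Lemma~\ref{lem:Dedekind} (prime discriminant implies squarefree discriminant, $\mathbb{Z}[\alpha]$ is the ring of integers, and exactly one prime of ramification index $2$ sits over $p$) only $p$ ramifies in $\mathbb{Q}(\alpha)$, hence also in $L$.

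We produce $F=f-c^nh$ as in the statement of Lemma~\ref{lem:lift_real}. First pick a monic Morse polynomial $f_0\in\mathbb{Z}[X]$ of degree $n$ with $f_0'$ irreducible over $\mathbb{Q}$ -- for instance, take $f_0'/n$ to be any Eisenstein polynomial of degree $n-1$ and antidifferentiate. By the characteristic-zero analogue of Lemma~\ref{lem:f'Morse_disc_irred} the polynomial $D_0(T):=\Disc_X(f_0-T)$ is irreducible in $\mathbb{Z}[T]$, and since $f_0$ is Morse, $\Gal(f_0-T/\mathbb{Q}(T))=S_n$. For each prime $p$ in a finite set $S$, choose a monic reduction $f_p\in\mathbb{Z}[X]$ mod $p$ such that $\Disc_X(f_p-T)\not\equiv 0\pmod{p}$ -- a generic condition, satisfied in particular by $f_0$ at all but finitely many $p$. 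Invoke Lemma~\ref{lem:lift_real} with $h_0=0$ and this data to obtain $f\in\mathbb{Z}[X]$ monic of degree $n$ and $c\in\mathbb{Z}$ with $f\equiv f_p\pmod p$, $c\equiv 1\pmod p$ for every $p\in S$, and such that $f=f-c^n\cdot 0$ has $n$ real roots. Enlarging $S$ if necessary and keeping $f_p=f_0\pmod p$ on the complement, we may ensure that $f$ remains Morse with $f'$ irreducible, so that $D(T):=\Disc_X(f-c^nT)\in\mathbb{Z}[T]$ is irreducible; and since $c^n\equiv 1\pmod p$ gives $D(T)\equiv\Disc_X(f_p-T)\pmod p\not\equiv 0$ for every $p\in S$ (and no fixed prime divisor can be introduced at primes outside $S$), the polynomial $D(T)$ satisfies the hypotheses of Conjecture~\ref{conj:Schinzel}. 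Hence there are infinitely many $h\in\mathbb{Z}$ with $|D(h)|$ prime.

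The main obstacle is that, whereas Schinzel's hypothesis delivers infinitely many such $h$ without control on their magnitude, the set of $t\in\mathbb{R}$ for which $f-c^nt$ has $n$ real roots is a bounded open interval $I\ni 0$ (bounded because for $|t|$ larger than the extrema of $f$, fewer than $n$ real roots remain). We must find $h\in I\cap\mathbb{Z}$ with $|D(h)|$ prime. This is addressed by exploiting the freedom in Lemma~\ref{lem:lift_real}: in its proof the real-root profile of $f-c^nh_0$ is inherited from an arbitrary totally real monic polynomial $f_\infty$. Taking $f_\infty=\prod_{i=1}^n(X-Mi)$ for large $M$ enlarges the interval $I$ and allows it to contain an arbitrary prescribed number of integers, while the congruence conditions modulo $S$ (and hence the absence of fixed prime divisors of $D(T)$) are preserved. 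A combination of this flexibility with Schinzel's hypothesis -- for instance, iterating the construction with progressively larger $M$, or invoking the expected positive density of primes of the form $|D(h)|$ predicted by Bateman--Horn -- produces the required $h\in I\cap\mathbb{Z}$ with $|D(h)|$ prime.

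For such $h$, set $F=f-c^nh$. By construction $F$ has $n$ real roots and $|\Disc(F)|=|D(h)|$ is prime. Finally, Hilbert's irreducibility theorem applied to the $S_n$-cover $f-c^nT$ ensures that $\Gal(F/\mathbb{Q})=S_n$ for all but a thin set of $h$, so one may further restrict to such $h$ (a Schinzel set minus a thin set remains infinite). The splitting field of $F$ over $\mathbb{Q}$ is the desired totally real $S_n$-extension of $\mathbb{Q}$ ramified only at the prime $|\Disc(F)|$, proving $\ram_\mathbb{Q}(S_n)=1$.
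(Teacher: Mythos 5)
There is a genuine gap, and it is exactly at the point you flag as ``the main obstacle.'' Your strategy requires a specialization $h\in\mathbb{Z}$ lying in the bounded interval $I$ (so that $f-c^nh$ stays totally real) with $|D(h)|$ prime. But Schinzel's hypothesis H, as stated in Conjecture \ref{conj:Schinzel}, is purely qualitative: it asserts infinitely many prime values of $D$ somewhere on $\mathbb{Z}$, with no control whatsoever on where they occur. Making $I$ contain arbitrarily many integers by spreading the roots of $f_\infty$ does not help, because for each fixed choice of $f_\infty$ (hence of $f$, $c$ and $D$) the set $I\cap\mathbb{Z}$ is still finite, and Hypothesis H cannot guarantee a prime value of $D$ on any prescribed finite set; iterating over larger $M$ just produces a new polynomial $D$ and a new finite set each time. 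The only repair you suggest that actually works --- the positive density predicted by Bateman--Horn --- is a strictly stronger hypothesis than H, so invoking it changes the statement of the theorem. A correct one-variable, totally-real argument cannot be extracted from Hypothesis H alone in this way.

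This is precisely why the paper's proof does not attempt to force total reality of a single integer specialization. Instead it builds the family $g(X,T)=f(X)-c^nh(c^{-1}T)$ over $\mathbb{Q}(T)$ (seeded by a mod-$p$ Morse construction so that $\disc_X(g)$ is irreducible and the geometric Galois group is $S_n$), verifies that no prime --- including the archimedean one, via the real-rootedness of $g(X,0)$ --- is \emph{universally} ramified, and then hands the problem to \cite[Proposition 1.11]{BSS}. That machinery works with specializations $\xi\in\mathbb{P}^1(\mathbb{Q})$ and a homogeneous branch form $D(X,Y)$ of degree $d$; the condition ``unramified at $\infty$'' cuts out an open, hence projectively infinite, region of $\mathbb{P}^1(\mathbb{R})$ rather than a bounded set of integers, and under Hypothesis H one gets $B(\mathbf{d})\leq 1$, i.e.\ a specialization with a single ramified finite prime compatible with all the local (including real) constraints. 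If you want to salvage your direct approach, you would need to reformulate the search projectively in the same way --- at which point you are essentially re-proving the relevant case of \cite[Proposition 1.11]{BSS}. The rest of your outline (prime discriminant forces a single ramified prime in the Galois closure, the Morse/irreducible-derivative input, the use of Lemma~\ref{lem:lift_real}) is sound and close in spirit to the paper's setup.
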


\begin{proof}
In case $n=2$, for example $L=\mathbb{Q}(\sqrt{2})$ does the job,
so assume that $n\geq 3$.
Let
$S_0$ denote the set of prime numbers $\ell<d:=n(n-1)$
and for $\ell\in S_0$ fix a separable polynomial $f_\ell\in\mathbb{F}_\ell[X]$.
By Proposition~\ref{thm:large_q},
if $p$ is sufficiently large with respect to $n$,
there exists a monic $f_p\in\mathbb{F}_p[X]$ of degree $n$ such that
the splitting field of $f_p-U$ over $\mathbb{F}_p(U)$
is geometric with Galois group $S_n$, and
$\mathcal{F}=\disc_X(f_p-U)\in\mathbb{F}_p[U]$ is irreducible of degree $n-1$
(cf.~(\ref{eq:disc2}) and Lemma~\ref{lem:crit}).
By Proposition~\ref{lem:Hqdm}(2,c),
if $p$ is sufficiently large with respect to $n$, 
there exists a monic $h_p\in\mathbb{F}_p[T]$
with ${\rm deg}(h_p)=n$
such that $\mathcal{F}(h_p(T))$ is irreducible.
Fix such a sufficiently large $p\notin S_0$ and $f_p,h_p$ as above,
and let $g_p:=f_p(X)-h_p(T)$.
Then $\disc_X(g_p)=\mathcal{F}(h_p(T))\in\mathbb{F}_p[T]$ is irreducible,
and Lemma~\ref{lem:disjoint} implies that
the splitting field of $g_p$ 
over $\mathbb{F}_p(T)$ is geometric with Galois group $S_n$.

Choose $h\in\mathbb{Z}[T]$ monic of degree $n$ with $h\equiv h_p\pmod p$
and $h\equiv T^n\pmod\ell$ for $\ell\in S_0$.
By Lemma~\ref{lem:lift_real}
there exist a monic $f\in\mathbb{Z}[X]$ and $c\in\mathbb{N}$ 
such that $f\equiv f_\ell\pmod\ell$ and $c\equiv 1\mbox{ mod }\ell$ for every $\ell\in S_0\cup\{p\}$,
and such that $f-c^nh(0)$ has $n$ roots in $\mathbb{R}$.
Let 
$$
 g(X,T)=f(X)-c^nh(c^{-1}T)
$$ 
and note that
$g\in\mathbb{Z}[X,T]$ is monic of degree $n$ both in $X$ and in $T$,
$g\equiv g_p\pmod p$ and $g\equiv f_\ell-T^n\pmod\ell$ for $\ell\in S_0$.
So as ${\rm Gal}(g_p/\overline{\mathbb{F}}_p(T))=S_n$ 
and ${\rm deg}_X(g)={\rm deg}_X(g_p)=n$, we get that 
the splitting field  $L$ of $g$ over $\mathbb{Q}(T)$
is geometric with Galois group $S_n$:
Reduction modulo $p$ extends to a place from $\overline{\mathbb{Q}}$ to $\Fb_p$ by Chevalley's theorem,
and further to a place
from $\overline{\mathbb{Q}}(T)$ to $\Fb_p(T)$ mapping $T$ to $T$,
so ${\rm Gal}(g_p/\overline{\mathbb{F}}_p(T))=S_n$
embeds into ${\rm Gal}(g/\overline{\mathbb{Q}}(T))$
(cf.~\cite[Lemma 16.1.1(a)]{FJ}),
which is itself a subgroup of $S_n$.
Similarly, also $\disc_X(g)\equiv\disc_X(g_p)\mbox{ mod }p$,
so as $\disc_X(g_p)$ is irreducible and 
${\rm deg}_T(\disc_X(g))=d={\rm deg}_T(\disc_X(g_p))$,
we get that $\disc_X(g)$ is irreducible.
In particular, $\disc_X(f-U)$ is irreducible.
Identifying $U=c^nh(c^{-1}T)\in\mathbb{Q}(T)$, 
since $\disc_X(f-U)$ is irreducible, 
the splitting field of $f-U$ over $\mathbb{Q}(U)$ is ramified at most over the finite prime $\mathcal{F}_0:=\disc_X(f-U)$ and the infinite prime,
so by Lemma~\ref{lem:eliminateinfty}, 
$L/\mathbb{Q}(T)$ is ramified only at the prime $\mathcal{F}:=\mathcal{F}_0(c^nh(c^{-1}T))=\Delta_X(g)$.

In geometric terms, the extension $L/\mathbb{Q}(T)$ corresponds
to a branched covering of geometrically irreducible smooth curves $\phi\colon C\rightarrow\mathbb{P}^1_\mathbb{Q}$
with ${\rm Gal}(\mathbb{Q}(C)/\mathbb{Q}(\mathbb{P}^1))=S_n$.
We wish to apply \cite[Proposition 1.11]{BSS} to $\phi$,
for which we use the notation and definitions from there.
As $\phi$ has only the one ramified prime divisor $\mathcal{F}$, its branch locus is the zero locus of an irreducible homogeneous polynomial $D(X,Y)\in\mathbb{Z}[X,Y]$ of degree $d$.
As in \cite[p.~923]{BSS}, 
for $\xi\in\mathbb{P}^1(\Q)$
we denote by $A^\phi_{\xi}$ the specialized algebra at $\xi$,
which for $\xi=[a:1]\in\mathbb{A}^1(\Q)$ is
$A^\phi_{[a:1]}=R\otimes_{\Q[T]_{(T-a)}}\Q=R/(T-a)$,
where $R$ denotes the integral closure of $\Q[T]_{(T-a)}$ in $\Q(C)$.
As $\xi$ is unramified in $\Q(C)$, $A^\phi_{\xi}$ is an \'etale $\Q$-algebra, i.e.~$A^\phi_{\xi}\cong\prod_{i=1}^rF_i$
with $F_i/\Q$ a field extension,
and a prime of $\Q$ is called unramified in $A^\phi_{\xi}$ if it is unramified in each $F_i$.
We now show that the set of universally ramified primes $U(\phi)$ as defined in \cite[p.~924]{BSS} is empty,
i.e.~every prime of $\Q$ 
is unramified in some $A^\phi_{\xi}$:

As $g(X,0)=f-c^nh(0)$ is separable,
the integral closure $R$ of $\Q[T]_{(T)}$ in $\Q(C)$ is generated by the roots of $g$ (cf.~\cite[Lemma 6.1.2]{FJ}),
hence $A^\phi_{[0:1]}$
is generated by elements $a_1,\dots,a_n$ with $g(a_i,0)=0$.
In particular, since $g(X,0)$ splits into linear factors over $\mathbb{R}$, 
the infinite prime is unramified in $A^\phi_{[0:1]}$.
For prime numbers $\ell<d$, $g(X,0)\equiv f_\ell\mbox{ mod }\ell$ is separable,
so $\ell$ is unramified in each $\Q(a_i)$ (see e.g.~\cite[Prop.~I.8.3]{Neukirch}) and hence in $A^\phi_{[0:1]}$.
Finally, as $\disc_X(g)$ is of degree $d$ with leading coefficient $\pm n^n$, it is also not the zero function
modulo any prime number $\ell> d$,
so for every such $\ell$ there exists $a\in\mathbb{Z}$ such that
$g(X,a)\mbox{ mod }\ell$ is separable, 
and thus the same argument shows that $\ell$ is unramified in
$A^\phi_{[a:1]}$.

Thus in the language of \cite[Definition 1.10]{BSS},
$G=S_n$ has a $(U;\mathbf{d})$-realization with $U=\emptyset$ and $\mathbf{d}=(d)$.
Therefore, \cite[Proposition 1.11]{BSS} gives a realization of $G$
over $\mathbb{Q}$
with at most $B(\mathbf{d})+\#U$ many ramified primes,
and under Schinzel's hypothesis H, $B(\mathbf{d})\leq 1$, cf.~\cite[(13) on p.~923]{BSS}.
\end{proof}

\begin{remark}\label{remark:plans}
Plans \cite[Remark 3.10]{Plans} sketches how to use Schinzel's hypothesis H (Conjecture \ref{conj:Schinzel}) to construct $S_n$-extensions $L$ of $\mathbb{Q}$ ramified in only one finite prime. However, he constructs $L$ as the splitting field of an irreducible trinomial $f=X^n+aX^i+b$, and therefore $L$ is not totally real if $n\geq5$: 
Indeed, replacing $X$ by $X^{-1}$ if necessary we may assume that $i\geq\frac{n}{2}$, 
so since the derivative $f'=X^{i-1}(nX^{n-i}+ia)$ has at most $n-i+1$ distinct roots, in particular at most $n-i+1$ distinct roots in $\mathbb{R}$,
Rolle's theorem shows that $f$ has at most $n-i+2\leq\frac{n}{2}+2<n$ roots in $\mathbb{R}$.
\end{remark}

\section*{Acknowledgements}
\noindent
The authors would like to thank 
Stephen Cohen for helpful discussion and for pointing out the results in \cite{Cohen},
David Harbater and Robert Lemke-Oliver for some helpful discussions,
Joachim K\"onig for pointing out the observation in Remark \ref{rem:joachim} and providing a reference for Lemma~\ref{lem:gen_by_cyc},
and Danny Neftin for helpful discussions on \cite{KisilevskyNeftinSonn}.

The computations in Section \ref{sec:twinprimes} were carried out with SageMath.
The computations in Section \ref{subsec:summary} were carried out with SageMath and verified using Magma.

L.~B.-S.\ was partially supported by a grant of the Israel Science Foundation no.~702/19. 
A.~E. was partially supported by a grant of the Israel Science Foundation no.~2507/19.
Some of this research was conducted while A.~F.\ was visiting Tel Aviv University, and he would like to thank 
Ze\'ev Rudnick and the Shaoul Fund for financial support.

\end{document}